\documentclass[11pt]{article}
\usepackage{fullpage}
\usepackage{amsmath,amsthm,amsfonts,amssymb,dsfont,bm}
\usepackage{enumerate,color,xcolor}
\usepackage{graphicx}
\usepackage[colorlinks,linkcolor=cyan,citecolor=blue]{hyperref}
\usepackage{url}
\usepackage{enumitem}

\numberwithin{equation}{section}

\theoremstyle{plain}
\newtheorem{theorem}{Theorem}[section]  
\newtheorem{corollary}[theorem]{Corollary}
\newtheorem{lemma}[theorem]{Lemma}
\newtheorem{proposition}[theorem]{Proposition}
\newtheorem{assumption}[theorem]{Assumption}

\theoremstyle{definition}
\newtheorem{definition}[theorem]{Definition}

\theoremstyle{remark}
\newtheorem{remark}[theorem]{Remark}

\newcommand{\E}{\mathbb{E}}
\newcommand{\R}{\mathbb{R}}
\newcommand{\cL}{\mathcal{L}}
\newcommand{\cW}{\mathcal{W}}

\newcommand{\ep}{\varepsilon}

\newcommand{\eps}{\epsilon}

\title{Sampling Non-Log-Concave Densities\\
  via Hessian-Free High-Resolution Dynamics}

\author{Xiaoyu Wang\thanks{FinTech Thrust,
  Hong Kong University of Science and Technology (Guangzhou), Guangzhou, Guangdong, People's Republic of China; 
  \texttt{xiaoyuwang@hkust-gz.edu.cn}}
  \and
  Yingli Wang\thanks{Corresponding author. School of Mathematics, Shanghai University of Finance and Economics, Shanghai, People's Republic of China; \texttt{naturesky1994@gmail.com}}
  \and
  Lingjiong Zhu\thanks{Department of Mathematics, Florida State University, Tallahassee, Florida, United States of America;
  \texttt{zhu@math.fsu.edu}}}

\date{\today}

\begin{document}
\maketitle

\begin{abstract}
We study the problem of sampling from a target distribution
$\pi(q)\propto e^{-U(q)}$ on $\mathbb{R}^d$, 
where $U$ can be non-convex, via the Hessian-free
high-resolution (HFHR) dynamics, which is a second-order Langevin-type process that has $e^{-U(q)-\frac12|p|^2}$ as its unique invariant distribution, and it reduces to
kinetic Langevin dynamics (KLD) as the resolution parameter $\alpha\to0$.
The existing theory for HFHR dynamics in the literature is restricted to strongly-convex $U$, although numerical experiments are promising
for non-convex settings as well. 
We focus on studying the convergence of HFHR dynamics when $U$ can be non-convex, which bridges
a gap between theory and practice.
Under a standard assumption of dissipativity and smoothness on $U$, we adopt
the reflection/synchronous coupling method. This yields a Lyapunov-weighted
Wasserstein distance in which the HFHR semigroup is exponentially
contractive for all sufficiently small $\alpha>0$ whenever KLD is. We further show that, under an additional assumption that asymptotically $\nabla U$ has linear growth at infinity, the contraction rate for HFHR dynamics is strictly better than that of KLD, with an explicit gain. As a case study, we verify the assumptions and the resulting acceleration for three examples: a multi-well potential, Bayesian linear regression with $L^p$ regularizer and Bayesian binary classification.
We conduct numerical experiments based
on these examples, as well as an additional example of Bayesian logistic regression with real data processed by the neural networks, which illustrates the efficiency of the algorithms based on HFHR dynamics and verifies the acceleration and superior performance compared to KLD.
\end{abstract}

%\tableofcontents

%%%%%%%%%%%%%%%%%%%%%%%
\section{Introduction}

We consider the problem of sampling from a target distribution
\[
  \pi(q)\propto e^{-U(q)}, \qquad q\in\mathbb{R}^d,
\]
where $U:\R^d\to\R$ is a potential function. Such sampling problems arise
routinely in Bayesian statistics, inverse problems and modern machine
learning, e.g.\ posterior sampling for high--dimensional models and
Bayesian formulations of large--scale optimization
\cite{gelman1995bayesian,stuart2010inverse,andrieu2003introduction,teh2016consistency,DistMCMC19,GIWZ2024,DIGing2025}.

A classical approach is based on the \textit{overdamped Langevin dynamics} (OLD),
\begin{equation}\label{eq:overdamped-2}
  dq_t = -\nabla U(q_t)\,dt + \sqrt{2}\,dB_t,
\end{equation}
whose invariant distribution (under mild conditions) has density
$\pi(q)\propto e^{-U(q)}$; see e.g.\
\cite{chiang1987diffusion,stroock-langevin-spectrum}.
In practice, one can simulate \eqref{eq:overdamped-2} via the Euler--Maruyama
scheme
\begin{equation}\label{discrete:overdamped}
  q_{k+1}
  = q_k - \eta \nabla U(q_k) + \sqrt{2\eta}\,\xi_{k+1},
\end{equation}
often referred to as the unadjusted Langevin algorithm (ULA)
\cite{Dalalyan,DM2017,DM2016}, where $\xi_{k}$ are independent and identically distributed (i.i.d.) Gaussian random vectors $\mathcal{N}(0,I_{d})$.
Over the last decade, a sharp non--asymptotic theory has been developed
for \eqref{discrete:overdamped} in various distances (total variation,
Wasserstein, Kullback--Leibler, $\chi^2$, R\'enyi), and in settings with
stochastic gradients
\cite{Dalalyan,DM2017,DM2016,DK2017,Raginsky,Barkhagen2021,Chau2019,Zhang2019,CB2018,EHZ2022}.

To accelerate convergence, one can introduce a momentum variable and
consider the \textit{kinetic Langevin dynamics} (KLD) (also known as underdamped or second-order Langevin dynamics)
\cite{mattingly2002ergodicity,Villani2009,cheng2018underdamped,cheng-nonconvex,CLW2020,JianfengLu,dalalyan2018kinetic,GGZ2,Ma2019,GGZ}:
\begin{equation}\label{eqn:underdamped}
\begin{cases}
  dp_t = -\gamma p_t\,dt - \nabla U(q_t)\,dt + \sqrt{2\gamma}\,dB_t,\\[0.3em]
  dq_t = p_t\,dt,
\end{cases}
\end{equation}
where $(B_t)_{t\ge0}$ is a $d$--dimensional Brownian motion and
$\gamma>0$ is the friction parameter.
Under mild assumptions, \eqref{eqn:underdamped} admits a unique
invariant measure with density $\propto e^{-U(q)-\frac12|p|^2}$, whose
$q$--marginal coincides with $\pi$.
It is by now well--understood that, both at the continuous and discrete
levels, kinetic Langevin dynamics and its discretized algorithms can converge faster than the overdamped
counterpart, with improved dependence on the dimension $d$ and accuracy
$\eps$ \cite{Eberle,JianfengLu,cheng2018underdamped,GGZ}.

Kinetic Langevin dynamics is closely related to \textit{Nesterov's accelerated gradient} (NAG) method in optimization \cite{Nesterov1983,Nesterov2013,Ma2019,GGZ}. Motivated by the high--resolution
ordinary differential equation (ODE) viewpoint on NAG, \cite{li2022hessian} proposed the \emph{Hessian-free
high-resolution} (HFHR) dynamics, a $2d$--dimensional Langevin-type
dynamics with state $(q_t,p_t)\in\R^{2d}$:
\begin{align}
  d q_t &= \left(p_t - \alpha\nabla U(q_t)\right)\,dt
           + \sqrt{2\alpha}\,d B_t^q,\label{HFHR1}\\
  d p_t &= \left(-\gamma p_t - \nabla U(q_t)\right)\,dt
           + \sqrt{2\gamma}\,d B_t^p,\label{HFHR2}
\end{align}
where $B^q,B^p$ are independent $d$--dimensional Brownian motions and
$\alpha>0$ is a ``resolution'' parameter.
Formally, as $\alpha\to0$ the system \eqref{HFHR1}--\eqref{HFHR2} reduces
to \eqref{eqn:underdamped}, while for fixed $\alpha>0$ it preserves the
Gibbs measure with density $\propto e^{-U(q)-\frac12|p|^2}$
\cite{li2022hessian}.
The drift in \eqref{HFHR1}--\eqref{HFHR2} depends only on $\nabla U$ and
is therefore ``Hessian-free'', in contrast to other high--resolution
ODEs for NAG which involve $\nabla^2 U$; see e.g.\ \cite{Shi2022}.
Recent works have further exploited the connection to NAG method in optimization to design gradient-adjusted dynamics for accelerated sampling which includes HFHR dynamics as a special case \cite{zuo2025gradient}.

Numerical experiments in \cite{li2022hessian} show that HFHR dynamics can exhibit
substantial acceleration over kinetic Langevin dynamics on non-convex sampling
tasks. However, the available theory is essentially restricted to
strongly convex (log-concave) potentials \cite{li2022hessian}. The non-convex case is much
more delicate: when $U$ is non-convex, the Jacobian of the drift has
expanding directions and na\"{i}ve Lyapunov estimates may fail to control
the dynamics globally. At the same time, for kinetic Langevin dynamics
\eqref{eqn:underdamped} a sharp coupling-based theory is available in
non-convex landscapes thanks to the work of
\cite{eberle2016reflection,Eberle}, who constructed a reflection/synchronous
coupling and a weighted Wasserstein distance in which the Markov
semigroup is exponentially contractive.

This motivates the following questions:
\begin{itemize}
  \item[(Q1)] \emph{Can HFHR dynamics be shown to converge exponentially
    fast to equilibrium for non-log-concave targets, under the same type
    of conditions on $U$ that are used for kinetic Langevin dynamics?}
  \item[(Q2)] \emph{Does HFHR dynamics genuinely accelerate mixing, in the sense
    that its contraction rate in a suitable Wasserstein distance is
    strictly better than that of kinetic Langevin dynamics, at least for small
    $\alpha>0$?}
\end{itemize}

Our goal in this paper is to answer both questions within a unified
coupling framework. We adapt the reflection/synchronous coupling of
\cite{eberle2016reflection,Eberle} to HFHR dynamics and combine it with a
Lyapunov-weighted distance, in the spirit of \cite{Eberle}, to obtain non-asymptotic
global contractivity. The analysis reveals precisely how the additional
Hessian-free drift in \eqref{HFHR1}--\eqref{HFHR2} affects the Lyapunov
structure and the Wasserstein contraction rate.

\medskip

%\subsection*{Contributions}

%We work under some dissipativity and regularity assumptions on possibly non-convex potential $U$ and obtain non-asymptotic convergence rates for HFHR dynamics. 

Our contributions can be summarized as follows.

\begin{itemize}[leftmargin=*]
  \item[(1)] \emph{Lyapunov structure and global contractivity for HFHR dynamics.}
    We first show that under smoothness and dissipativity assumptions on possibly non-convex $U$ (Assumption~\ref{assump:potential}), for all sufficiently small $\alpha>0$, the
    kinetic Langevin Lyapunov function $\mathcal V_0$ remains a
    Lyapunov function for the HFHR infinitesimal generator $\mathcal L_\alpha$ and
    hence already implies \textit{non-asymptotic exponential convergence} of HFHR dynamics as for kinetic Langevin dynamics under the
    same set of assumptions on $U$; see
    Proposition~\ref{prop:V0-drift-HFHR} and
    Corollary~\ref{cor:convergence-V0}.
    More generally, given any Lyapunov function $\mathcal V$ satisfying
    the drift condition \eqref{eq:generic-drift-again}, we adapt the
    \textit{reflection/synchronous coupling} of
    \cite{eberle2016reflection,Eberle} to HFHR dynamics and construct a
    Lyapunov-weighted semimetric $\rho_{\mathcal V}$ that combines a
    concave function of a phase-space distance with the weight
    $1+\mathcal V(z)+\mathcal V(z')$. We show that the associated weighted Wasserstein
    distance $\cW_{\rho_{\mathcal V}}$ contracts exponentially under
    the HFHR semigroup with an \textit{explicit} contraction rate $c(\lambda)>0$; see
    Theorem~\ref{thm:master-contraction}.

  \item[(2)] \emph{Refined Lyapunov function and quantitative acceleration.}
    We construct a novel refined Lyapunov function for HFHR dynamics of the form
    $\mathcal V_\alpha = \mathcal V_0 + \alpha \mathcal M$, where
    $\mathcal V_0$ is the kinetic Langevin Lyapunov function and $\mathcal M$ is a Hessian-free corrector.
    Under an additional assumption that asymptotically $\nabla U$ has linear growth at infinity
    (Assumption~\ref{assump:asymptotic-linear-drift}), we show that $\mathcal V_\alpha$ yields an improved drift rate
    $\lambda_\alpha \ge \lambda + \Theta(\alpha)$ (Proposition~\ref{prop:Valpha-drift}), where $\lambda_\alpha$ is the drift constant in the generator/Lyapunov inequality for HFHR dynamics with parameter $\alpha$, and $\lambda$ denotes the baseline ($\alpha=0$) constant; this is the $\lambda$ in Assumption~\ref{assump:potential}(iii) (the dissipativity condition).
    This translates into a \textit{strict improvement} in the contraction rate.
    Specifically, denoting by $c_0$ and $c_\alpha$ the contraction rates of kinetic Langevin dynamics and HFHR dynamics respectively, we prove that (Corollary~\ref{cor:global-acceleration}) for all sufficiently small $\alpha>0$ there exists an \textit{explicitly computable}
   $\kappa_{\mathrm{global}}>0$ such that
    \[
      c_\alpha \;\ge\; c_0 + \kappa_{\mathrm{global}}\,\alpha.
    \]
    Crucially, we show that this acceleration is robust: it holds regardless of whether the convergence bottleneck is determined by the Lyapunov branch (recurrence from infinity) or the metric branch (barrier crossing). This implies that HFHR dynamics achieves a \textit{strictly better} contraction rate than kinetic Langevin dynamics in a weighted Wasserstein distance $\cW_{\rho_{\mathcal V_\alpha}}$, and hence also in the standard $2$-Wasserstein distance $\mathcal W_2$ (Corollary~\ref{cor:W2-acceleration}).

  \item[(3)] \emph{Case study}.
    As concrete illustrations, we study three examples where potential $U$ is non-convex in general: a multi-well potential (Section~\ref{sec:case:multi}), Bayesian linear regressions with $L^{p}$ regularizer (Section~\ref{sec:case:L:p}) and Bayesian binary classification (Section~\ref{sec:case:classification}). For all these examples, we verify that both Assumptions~\ref{assump:potential} and \ref{assump:asymptotic-linear-drift} are satisfied. Therefore, all the previous theoretical results from Sections~\ref{sec:global-contractivity} and \ref{sec:acceleration} are applicable, which shows that HFHR dynamics achieves a \textit{strictly better} contraction rate than kinetic Langevin dynamics for all these examples.

  \item[(4)] We illustrate our theory by numerical experiments based
    on these examples that satisfy all the assumptions for our theoretical results. In particular, we conduct experiments for a multi-well potential (Section~\ref{sec:num:multi}), Bayesian linear regressions with $L^p$ regularizer with synthetic data (Section~\ref{sec:num:linear})
    and Bayesian binary classification with real data (Section~\ref{sec:num:classification}) using the algorithms based on the discretizations of HFHR dynamics and kinetic Langevin dynamics. 
    Our experiments show acceleration
    and superior performance of algorithms based on HFHR dynamics compared to kinetic Langevin dynamics, 
    validating our theoretical findings.
    In addition, we conduct experiments of Bayesian logistic regression with real data processed by the neural networks which may not satisfy the assumptions in our theory, but still shows excellent numerical performance (Section~\ref{sec:num:neural}). 
\end{itemize}

We emphasize that the additional structural assumption used to obtain
the improved contraction rate in (2)-(3) is only needed for the
\emph{acceleration} results. The basic exponential convergence of HFHR dynamics
in a weighted Wasserstein distance already follows, for a small resolution parameter $\alpha$,
under the same assumptions on the potential function $U$ as in the kinetic Langevin case.

%%%%%%%%%%%%%%%%%%%%%%%%%%%%%%%%%%%%%%%%%%
\section{Preliminaries}
\label{sec:setup}

In this section, we summarize the precise stochastic dynamics that we
study, introduce its infinitesimal generator, and state the standing
assumption on the potential function $U$ under which all our results are derived.
Throughout the paper, we work in phase space $\R^{2d}$ with coordinates
$z=(q,p)$, where $q\in\R^d$ denotes the \textit{position} and $p\in\R^d$ the
\textit{momentum}.

\subsection{HFHR dynamics and infinitesimal generator}

We recall from \eqref{HFHR1}-\eqref{HFHR2} that the \textit{Hessian-free high-resolution} (HFHR) dynamics is defined by the
stochastic differential equation (SDE):
\begin{equation}\label{eq:HFHR-SDE}
\begin{aligned}
  dq_t &= \left(p_t - \alpha\nabla U(q_t)\right)\,dt
          + \sqrt{2\alpha}\,dB_t^q,\\
  dp_t &= \left(-\gamma p_t - \nabla U(q_t)\right)\,dt
          + \sqrt{2\gamma}\,dB_t^p,
\end{aligned}
\end{equation}
where $B^q$ and $B^p$ are independent standard Brownian motions in
$\R^d$, $\gamma>0$ is the \textit{friction} parameter and $\alpha>0$ is the
\textit{resolution} parameter. Formally, as $\alpha\to0$ the system reduces to
the kinetic Langevin dynamics \eqref{eqn:underdamped}, while for fixed $\alpha>0$ it preserves
the Gibbs measure with density $\propto e^{-U(q)-\frac12|p|^2}$.
The infinitesimal generator $\cL_\alpha$ of \eqref{eq:HFHR-SDE} acts on
$C^2$ test functions $\varphi:\R^{2d}\to\R$ as
\begin{equation}\label{eq:Lalpha-generator-def}
  \cL_\alpha\varphi(q,p)
  = (p-\alpha\nabla U(q))\cdot\nabla_q\varphi
    +(-\gamma p-\nabla U(q))\cdot\nabla_p\varphi
    +\alpha\Delta_q\varphi+\gamma\Delta_p\varphi.
\end{equation}
For perturbative arguments, it is convenient to decompose the drift
operator in \eqref{eq:Lalpha-generator-def} as
\begin{subequations}
\begin{align}
  \mathcal A_0 &:= p\cdot\nabla_q
    +(-\gamma p-\nabla U(q))\cdot\nabla_p, \label{eq:A0-def}\\
  \mathcal A'  &:= -\nabla U(q)\cdot\nabla_q, \label{eq:Aprime-def}
\end{align}
\end{subequations}
where $\mathcal A_0$ is the kinetic Langevin drift ($\alpha=0$) and
$\mathcal A'$ is the additional Hessian-free drift from the infinitesimal generator of the HFHR
dynamics. With the notation in \eqref{eq:A0-def}-\eqref{eq:Aprime-def}, we can re-write \eqref{eq:Lalpha-generator-def} as
\begin{equation}\label{eq:Lalpha-decomp}
  \cL_\alpha = \mathcal A_0 + \alpha \mathcal A' + \alpha\Delta_q + \gamma\Delta_p.
\end{equation}

\subsection{Assumptions on the potential}

We now state the main assumptions on the potential function $U$. 

\begin{assumption}\label{assump:potential}
There exist constants $L,A\in(0,\infty)$ and $\lambda\in(0,1/4]$ such
that $U$ satisfies:
\begin{enumerate}[label=(\roman*)]
  \item \textbf{Lower bound and regularity:} $U\in C^1(\R^d)$ and
    $U(q)\ge0$ for all $q\in\R^d$.
  \item \textbf{Lipschitz gradient:} $\nabla U$ is $L$-Lipschitz:
    \begin{equation}\label{eq:U-Lip}
      |\nabla U(q)-\nabla U(q')|
      \le L|q-q'|, \qquad q,q'\in\R^d.
    \end{equation}
  \item \textbf{Dissipativity:} $U$ satisfies the drift condition
    \begin{equation}\label{eq:U-drift}
      \frac{1}{2}q\cdot\nabla U(q)
      \;\ge\;
      \lambda\left(U(q) + \frac{\gamma^2}{4}|q|^2\right) - A,
      \qquad q\in\R^d.
    \end{equation}
\end{enumerate}
\end{assumption}

Assumption~\ref{assump:potential} is the same assumption that is used for kinetic
Langevin dynamics in \cite{Eberle} and, in particular, already implies
exponential convergence of the kinetic Langevin dynamics.
The lower bound
$U\ge0$ is imposed for convenience and could be relaxed to $U$ being
bounded from below.
Condition
\eqref{eq:U-Lip} is the  $L$-smoothness condition of $U$, which is standard in the Langevin literature \cite{DK2017,Raginsky,Eberle,dalalyan2018kinetic,GGZ,li2022hessian}.
Condition \eqref{eq:U-drift} is a dissipativity condition which controls the
growth of $U$ outside a compact set, which, together with its variants, are often assumed in the Langevin literature when the potential is non-convex \cite{Raginsky,Eberle,GGZ}.

\subsection{Kinetic Langevin Lyapunov function}

In this section, we review the Lyapunov function introduced for kinetic
Langevin dynamics in \cite{Eberle}. Define $\mathcal V_0:\R^{2d}\to\R$ by
\begin{equation}\label{eq:V0-general-quadratic}
  \mathcal V_0(q,p)
  := U(q)
     + \frac{\gamma^2}{4}\left(
         |q+\gamma^{-1}p|^2
         + |\gamma^{-1}p|^2
         - \lambda|q|^2
       \right),
\end{equation}
where $\lambda$ is as in Assumption~\ref{assump:potential}. 
Let $\mu_{\min}$ and $\mu_{\max}$ denote the smallest and largest eigenvalues of the symmetric matrix 
\begin{equation}\label{M:matrix}
M := \frac{1}{4}\begin{pmatrix} \gamma^2(1-\lambda) & \gamma \\ \gamma & 2 \end{pmatrix},
\end{equation}
such that
\begin{equation}\label{mu:min:max}
\begin{aligned}
&\mu_{\min}:=\frac{1}{8}\left(\gamma^2(1-\lambda) + 2 - \sqrt{(\gamma^2(1-\lambda) - 2)^2 + 4\gamma^2}\right),\\
&\mu_{\max}:=\frac{1}{8}\left(\gamma^2(1-\lambda) + 2 + \sqrt{(\gamma^2(1-\lambda) - 2)^2 + 4\gamma^2}\right).
\end{aligned}
\end{equation}
Since $\lambda \le 1/4$, we have $\det(M) = \frac{\gamma^2}{16}(1-2\lambda) > 0$, ensuring $\mu_{\min} > 0$.
Then, for all $(q,p)\in\R^{2d}$,
\begin{equation}\label{eq:V0-equivalent}
  c_1\left(1 + U(q) + |q|^2 + |p|^2\right)
  \le 1+\mathcal V_0(q,p)
  \le c_2\left(1 + U(q) + |q|^2 + |p|^2\right),
\end{equation}
holds with explicit constants 
\begin{equation}\label{eq:c1-c2-def}
c_1 := \min(1, \mu_{\min}), 
\qquad
c_2 := \max(1, \mu_{\max}). 
\end{equation}
Moreover, under Assumption~\ref{assump:potential}, $\mathcal V_0$ is a
Lyapunov function for the kinetic Langevin infinitesimal generator $\cL_0$:
\begin{equation}\label{eq:L0-V0-Lyapunov}
  \cL_0 \mathcal V_0(q,p)
  \;\le\; \gamma\left(d + A - \lambda\,\mathcal V_0(q,p)\right),
\end{equation}
where $\lambda$ and $A$ are the constants specified in Assumption~\ref{assump:potential}(iii); see \cite[Proposition~2.4]{Eberle}.
In particular, $\mathcal V_0$ already yields exponential convergence of
kinetic Langevin dynamics to equilibrium.

In the sequel, we will first show in Section~\ref{sec:global-contractivity}
that, for $\alpha$ small enough, the unimproved Lyapunov function
$\mathcal V_0$ still satisfies a drift condition for the HFHR infinitesimal generator
$\cL_\alpha$ and hence implies exponential convergence of the HFHR
dynamics. In Section~\ref{sec:acceleration}, we then construct an
improved Lyapunov function $\mathcal V_\alpha=\mathcal V_0+\alpha\mathcal M$
and, under an additional structural assumption on $U$, obtain an
\emph{improved} drift rate and contraction rate for HFHR dynamics.

\subsection{Baseline Lyapunov drift for HFHR dynamics}

We now record a simple perturbation result which shows that, for
$\alpha$ small enough, the kinetic Langevin Lyapunov function
$\mathcal V_0$ still satisfies a Lyapunov drift condition for the HFHR
infinitesimal generator $\cL_\alpha$.

\begin{proposition}[Baseline Lyapunov drift for HFHR dynamics]\label{prop:V0-drift-HFHR}
Suppose Assumption~\ref{assump:potential} holds and let $\mathcal V_0$ be defined as in \eqref{eq:V0-general-quadratic}.  
Then, for every $\alpha\ge0$, the HFHR infinitesimal generator $\cL_\alpha$ \eqref{eq:Lalpha-generator-def} satisfies the drift inequality
\begin{equation}\label{eq:Lalpha-V0-Lyapunov}
  \cL_\alpha \mathcal V_0(q,p)
  \;\le\;
  \gamma\left(d + A_\alpha - \hat\lambda_\alpha\,\mathcal V_0(q,p)\right),
  \qquad (q,p)\in\R^{2d},
\end{equation}
where 
\begin{equation}\label{eq:Aalpha-hatlambda-def}
  A_\alpha \;:=\; A + \frac{J_1}{\gamma}\,\alpha,
  \qquad
  \hat\lambda_\alpha \;:=\; \lambda - \frac{J_1}{\gamma}\,\alpha,
\end{equation}
$A$ and $\lambda$ are the constants from Assumption~\ref{assump:potential}(iii), and the constant $J_1$ can be chosen explicitly as
\begin{equation}\label{eq:K-explicit}
  J_1
  \;:=\;
  K_A + K_\Delta,
  \quad
  K_A \;:=\; \frac{1}{c_1}\left[
    \frac{\gamma^4}{4}(1-\lambda)^2 + \frac{\gamma^2}{4}
  \right],
  \quad
  K_\Delta \;:=\; Ld + \frac{\gamma^2}{2}d(1-\lambda),
\end{equation}
where $c_1 := \min(1, \mu_{\min})$, with $\mu_{\min}$ given explicitly in \eqref{mu:min:max}.
In particular,
\begin{equation}\label{eq:hatlambda-explicit}
  \hat\lambda_\alpha
  \;=\;
  \lambda
  - \frac{\alpha}{\gamma}\left\{  
      \frac{1}{c_1}\left[
        \frac{\gamma^4}{4}(1-\lambda)^2 + \frac{\gamma^2}{4}
      \right]
      + Ld + \frac{\gamma^2}{2}d(1-\lambda)
    \right\}.
\end{equation}
Consequently, if we choose
\begin{equation}\label{eq:alphastar-explicit}
  \alpha_0
  \;:=\;
  \frac{\gamma \lambda}{2}\left\{ 
      \frac{1}{c_1}\left[
        \frac{\gamma^4}{4}(1-\lambda)^2 + \frac{\gamma^2}{4}
      \right]
      + Ld + \frac{\gamma^2}{2}d(1-\lambda)
    \right\}^{-1},
\end{equation}
then $\hat\lambda_\alpha\ge \lambda/2>0$ for all $\alpha\in[0,\alpha_0]$.
\end{proposition}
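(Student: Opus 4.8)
The plan is to treat $\cL_\alpha$ as a perturbation of the kinetic Langevin generator $\cL_0$ and to recycle the known drift inequality \eqref{eq:L0-V0-Lyapunov}. Writing $\cL_0 = \mathcal A_0 + \gamma\Delta_p$, the decomposition \eqref{eq:Lalpha-decomp} gives $\cL_\alpha \mathcal V_0 = \cL_0\mathcal V_0 + \alpha\bigl(\mathcal A'\mathcal V_0 + \Delta_q\mathcal V_0\bigr)$, so the task reduces to bounding the two extra terms $\mathcal A'\mathcal V_0$ and $\Delta_q\mathcal V_0$ by affine functions of $\mathcal V_0$ with explicit constants. For $\alpha = 0$ the claim is exactly \eqref{eq:L0-V0-Lyapunov}.

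For the first-order term, I would compute from \eqref{eq:V0-general-quadratic} that $\nabla_q\mathcal V_0(q,p) = \nabla U(q) + \tfrac{\gamma^2}{2}(1-\lambda)q + \tfrac{\gamma}{2}p$, so that
\[
\mathcal A'\mathcal V_0 = -\nabla U(q)\cdot\nabla_q\mathcal V_0 = -|\nabla U(q)|^2 - \nabla U(q)\cdot\Bigl(\tfrac{\gamma^2}{2}(1-\lambda)q + \tfrac{\gamma}{2}p\Bigr).
\]
The crucial point is that the favorable term $-|\nabla U(q)|^2$ must be \emph{used}, not discarded: applying Young's inequality in the weighted form $-x\cdot y \le \tfrac14|x|^2 + |y|^2$ to each of the two cross terms with $x = \nabla U(q)$ exactly absorbs $|\nabla U(q)|^2$ and leaves $\mathcal A'\mathcal V_0 \le \tfrac{\gamma^4}{4}(1-\lambda)^2|q|^2 + \tfrac{\gamma^2}{4}|p|^2$. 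Invoking the norm equivalence \eqref{eq:V0-equivalent} together with $U\ge 0$ (which yields $|q|^2 + |p|^2 \le c_1^{-1}(1+\mathcal V_0)$) then gives $\mathcal A'\mathcal V_0 \le K_A(1+\mathcal V_0)$ with $K_A$ as in \eqref{eq:K-explicit}.

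For the second-order term, differentiating \eqref{eq:V0-general-quadratic} twice in $q$ gives $\Delta_q\mathcal V_0 = \Delta U(q) + \tfrac{\gamma^2}{2}(1-\lambda)d$, and the $L$-Lipschitz gradient assumption forces every eigenvalue of $\nabla^2 U$ into $[-L,L]$, hence $\Delta U(q) = \Tr\nabla^2 U(q) \le Ld$ and $\Delta_q\mathcal V_0 \le K_\Delta$. Combining, $\cL_\alpha\mathcal V_0 \le \gamma(d + A - \lambda\mathcal V_0) + \alpha\bigl(K_A(1+\mathcal V_0) + K_\Delta\bigr)$; since $\mathcal V_0 \ge U \ge 0$ (the quadratic form in \eqref{eq:V0-general-quadratic} being positive semidefinite for $\lambda\le 1/4$), the coefficient $K_A$ of $\alpha\mathcal V_0$ can be bounded crudely by $J_1 = K_A + K_\Delta$, and regrouping the constant and the $\mathcal V_0$-terms produces \eqref{eq:Lalpha-V0-Lyapunov} with $A_\alpha$ and $\hat\lambda_\alpha$ exactly as in \eqref{eq:Aalpha-hatlambda-def}. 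Finally, $\hat\lambda_\alpha = \lambda - (J_1/\gamma)\alpha$ is affine and decreasing in $\alpha$, so choosing $\alpha_0$ so that $\hat\lambda_{\alpha_0} = \lambda/2$, i.e.\ $\alpha_0 = \gamma\lambda/(2J_1)$ --- which is precisely \eqref{eq:alphastar-explicit} --- guarantees $\hat\lambda_\alpha \ge \lambda/2 > 0$ for every $\alpha\in[0,\alpha_0]$.

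Since this is a direct perturbation estimate, I do not expect a genuine obstacle. The only step needing care is the bookkeeping for $\mathcal A'\mathcal V_0$: the Young's-inequality weights must be picked so that the $-|\nabla U|^2$ term is at least fully compensated, since any residual positive multiple of $|\nabla U(q)|^2$ cannot be absorbed into $\mathcal V_0$ and would force a worse, smoothness-dependent constant.
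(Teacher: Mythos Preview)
Your proposal is correct and follows essentially the same route as the paper's proof: decompose $\cL_\alpha\mathcal V_0 = \cL_0\mathcal V_0 + \alpha(\mathcal A'\mathcal V_0 + \Delta_q\mathcal V_0)$, bound $\mathcal A'\mathcal V_0$ by using Young's inequality to cancel the favorable $-|\nabla U|^2$ term against the cross terms (the paper applies Young once to the combined vector $\tfrac{\gamma^2}{2}((1-\lambda)q+\gamma^{-1}p)$ and then splits via $|a+b|^2\le 2|a|^2+2|b|^2$, whereas you split first and apply Young twice with weights $1/4$--$1$; both yield the identical bound $\tfrac{\gamma^4}{4}(1-\lambda)^2|q|^2+\tfrac{\gamma^2}{4}|p|^2$), convert to $K_A(1+\mathcal V_0)$ via the norm equivalence, bound $\Delta_q\mathcal V_0\le K_\Delta$ using $|\Delta U|\le Ld$, and regroup. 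Your bookkeeping, including the crude replacement of the $\mathcal V_0$-coefficient $K_A$ by $J_1=K_A+K_\Delta$ (which the paper also does by writing $K_\Delta\le K_\Delta(1+\mathcal V_0)$), matches the paper's exactly.
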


\begin{proof}
    We provide the proof in Appendix~\ref{app:V0-drift-HFHR}.
\end{proof}

%%%%%%%%%%%%%%%%%%%%%%%%%%%%%%%%%%%%%%%%%%%%%%%%%%%%%%%%%%%%%%%%

\section{Global Contractivity: A General Framework}
\label{sec:global-contractivity}

In this section, we establish a general framework for the geometric ergodicity of the HFHR dynamics. We first define the reflection--synchronous coupling and the associated transport semimetric. We then prove a ``Master Theorem'' which states that if \emph{any} Lyapunov function satisfies a drift condition with rate $\lambda>0$, the dynamics contracts with a specific rate $c(\lambda)>0$ that is explicitly computable. Finally, we apply this framework to the kinetic Langevin Lyapunov function $\mathcal V_0$ to obtain global contractivity for HFHR dynamics when the resolution parameter $\alpha$ is small.

%%%%%%%%%%%%%%%%%%%%%%%%%

\subsection{Coupling construction}\label{subsec:coupling}

We construct a coupling of two HFHR processes $(z_t)_{t\ge0} = (q_t,p_t)_{t\ge0}$ and
$(z'_t)_{t\ge0} = (q'_t,p'_t)_{t\ge0}$ driven by the same parameters $\alpha,\gamma>0$.
Let
\begin{equation}\label{eq:difference-of-coupling}
  \Delta q_t := q_t - q'_t, \qquad
  \Delta p_t := p_t - p'_t.
\end{equation}
Following \cite{eberle2016reflection,Eberle}, we define the effective velocity difference
\begin{equation}\label{eq:R-def}
  \mathbf R_t := \Delta q_t + \gamma^{-1}\Delta p_t.
\end{equation}
Let $e_t := \mathbf R_t/|\mathbf R_t|$ if $\mathbf R_t\neq0$ and fix some unit vector otherwise.
Denote by $\mathcal P_t := e_t e_t^\top$ the orthogonal projection onto the span of $e_t$.

The coupling is defined as follows: both copies solve the HFHR SDE~\eqref{eq:HFHR-SDE}, driven by Brownian motions
$(B^q,B^p)$ and $(B^{q'},B^{p'})$ satisfying
\begin{equation}\label{eq:coupling-noise}
  dB_t^{q'} = dB_t^q, \qquad
  dB_t^{p'} = (I_d - 2\chi(t)\mathcal P_t)\,dB_t^p,
\end{equation}
where $\chi(t)\in\{0,1\}$ is a control process which interpolates between \textit{reflection coupling} in the effective velocity
direction ($\chi(t)=1$) and \textit{synchronous coupling} ($\chi(t)=0$). The precise choice of $\chi(t)$, depending on the
current distance, will be specified in the proof of Proposition~\ref{prop:regional-contractivity}.

\paragraph{Cutoff family of couplings.}
For later use, we introduce an approximate sticky family of couplings indexed by a cutoff parameter $\xi>0$.
Define
$\chi_\xi(t) := \mathbf{1}_{\{|\mathbf R_t|\ge \xi\}}$,
and use $\chi(t)=\chi_\xi(t)$ in~\eqref{eq:coupling-noise}. The limiting sticky coupling is obtained by sending
$\xi\downarrow0$.

%%%%%%%%%%%%%%%%%%%%%%%%%

\subsection{Distance function and admissible Lyapunov functions}

Next, we define the underlying distance and the Lyapunov-weighted semimetric. Set
\begin{equation}\label{L:eff}
  L_{\mathrm{eff}}(\alpha) := (1+\alpha\gamma)L,
\end{equation}
and fix a slack parameter $\eta_0>0$. Define the metric weight
\begin{equation}\label{eq:theta-def}
  \theta := (1+\eta_0)\,L_{\mathrm{eff}}(\alpha)\,\gamma^{-2}.
\end{equation}
Then, for $z=(q,p)$ and $z'=(q',p')$, we define
\begin{equation}\label{eq:r-def-again}
  r(z,z') := \theta\,|q-q'| + \left| (q-q') + \gamma^{-1}(p-p') \right|,
\end{equation}
where throughout the paper $|\cdot|$ denotes the Euclidean norm on $\R^d$.

Next, we introduce the class of admissible Lyapunov functions.

\begin{definition}[Admissible Lyapunov function]
\label{def:admissible-lyapunov}
A function $\mathcal V: \R^{2d} \to [0, \infty)$ is said to be
\emph{$(\lambda, D)$-admissible} for the infinitesimal generator $\cL_\alpha$
if it is $C^2$ (or $C^1$ with locally Lipschitz derivatives) and satisfies the drift inequality
\begin{equation}\label{eq:generic-drift-again}
  \cL_\alpha \mathcal V(q,p)
  \;\le\; \gamma\left(d + D - \lambda \mathcal V(q,p)\right),
  \qquad \text{for a.e. } (q,p)\in\R^{2d},
\end{equation}
for some constants $\lambda>0$ and $D\in\R$.
\end{definition}

\begin{remark}
While the generator $\cL_\alpha$ defined in \eqref{eq:Lalpha-generator-def}
involves the Laplacian $\Delta_q$, strict $C^2$ regularity of $U$ is not required.
Under Assumption~\ref{assump:potential}, $\nabla U$ is Lipschitz continuous; by
Rademacher's theorem, the second derivatives of $U$ (and hence of $\mathcal V_{0}$
in \eqref{eq:V0-general-quadratic} and $\mathcal V_{\alpha}$ in \eqref{eq:Valpha-def})
exist almost everywhere and are essentially bounded. The drift inequality
\eqref{eq:generic-drift-again} should therefore be understood in the almost-everywhere sense.
\end{remark}

\paragraph{A gradient constant associated with $\mathcal V$.}
For Lyapunov functions of the form $\mathcal V=\mathcal V_0+\mathfrak Q$ where $\mathfrak Q(z):=\frac12 z^\top \mathsf A z$ 
with $\mathsf A:=
\begin{pmatrix}
\mathsf A_{qq} & \mathsf A_{qp}\\
\mathsf A_{pq} & \mathsf A_{pp}
\end{pmatrix}$ and $\mathsf A_{qp}=\mathsf A_{pq}^\top$,
define
\begin{align}\label{grad:const}
\bar C_{\mathcal V}
:= \max\left\{1,(2\theta)^{-1}\right\} + \frac{C_{\mathfrak Q}}{\gamma k_1},
\qquad
C_{\mathfrak Q}:=\|\mathsf A_{pp}\|_{\mathrm{op}}+\|\mathsf A_{pq}\|_{\mathrm{op}},
\end{align}
where $\theta$ is the metric weight in \eqref{eq:theta-def} and $k_1$ is from Lemma~\ref{lem:r-equivalent}.

\paragraph{Concave distance profile.}
Note that through $\bar C_{\mathcal V}$ the profile $f_\lambda$ depends on the chosen Lyapunov function $\mathcal V$; we suppress this dependence in the notation. Fix parameters $\eta_0>0$, $c>0$ and $\varepsilon>0$ (to be chosen later).
Following \cite{eberle2016reflection,Eberle}, we construct
a concave distance profile $f_\lambda:[0,\infty)\to[0,\infty)$ adapted to the
metric weight~$\theta$ in \eqref{eq:theta-def} as follows.
%%%%%%%%%%%%%%%%%%%%%%%%
Let $R_1(\lambda)=R_1(\lambda;L_{\mathrm{eff}}(\alpha))>0$ be a cutoff radius to be specified
in the proof of Theorem~\ref{thm:master-contraction}. Define, for $s\ge0$,
\begin{equation}\label{eq:phi-def}
  \varphi_{\lambda}(s)
  := \exp\!\left(
      -\frac{1+\eta_0}{8}\,L_{\mathrm{eff}}(\alpha)\,s^2
      -\frac{\gamma^{2}}{2}\,\varepsilon\,\bar C_{\mathcal V}\,s^2
    \right),
\end{equation}
and
\begin{equation}\label{eq:Phi-def}
  \Phi_{\lambda}(s) := \int_{0}^{s}\varphi_{\lambda}(x)\,dx.
\end{equation}
Next define the auxiliary correction factor
\begin{equation}\label{eq:gint-def}
  g_{\lambda}(r)
  := 1-\frac{9}{4}\,c\,\gamma
      \int_{0}^{r}\Phi_{\lambda}(s)\,\left(\varphi_{\lambda}(s)\right)^{-1}\,ds,
\end{equation}
and finally set
\begin{equation}\label{eq:f-def}
  f_{\lambda}(r)
  := \int_{0}^{r\wedge R_{1}} \varphi_{\lambda}(s)\,g_{\lambda}(s)\,ds,
  \qquad r\ge0.
\end{equation}
In particular, $f_\lambda$ is increasing and concave on $[0,R_1(\lambda)]$, and it is
constant on $[R_1(\lambda),\infty)$. Moreover, for $r\in(0,R_1(\lambda))$ we have
$f_\lambda'(r)=\varphi_\lambda(r)g_\lambda(r)$ and
$f_\lambda''(r)=\varphi_\lambda'(r)g_\lambda(r)+\varphi_\lambda(r)g_\lambda'(r)$
almost everywhere. The choice \eqref{eq:phi-def} is designed so that, in the
short-distance regime where reflection coupling is active, the term involving
$f_\lambda''$ cancels against the ``bad'' drift contribution proportional to
$r f_\lambda'$ (including the cross-variation term controlled by $\bar C_{\mathcal V}$)
in the regional estimate for the semimetric drift.

\paragraph{Lyapunov-weighted semimetric.}
Given a $(\lambda,D)$-admissible Lyapunov function $\mathcal V$, we define the
Lyapunov-weighted semimetric as
\begin{equation}\label{eq:rho-V-def-again}
  \rho_{\mathcal V}(z,z')
  := f_\lambda\!\left(r(z,z')\right)
     \left(1+\varepsilon \mathcal V(z)+\varepsilon \mathcal V(z')\right),
  \qquad z,z'\in\R^{2d},
\end{equation}
where $r$ and $f_\lambda$ are defined in \eqref{eq:r-def-again} and
\eqref{eq:f-def}. The associated Wasserstein distance between probability
measures $\mu,\nu$ on $\R^{2d}$ is
\[
  \cW_{\rho_{\mathcal V}}(\mu,\nu)
  := \inf_{\Gamma\in\Pi(\mu,\nu)}
     \int_{\mathbb{R}^{2d}\times\mathbb{R}^{2d}} \rho_{\mathcal V}(z,z')\,\Gamma(dz,dz'),
\]
where $\Pi(\mu,\nu)$ is the set of couplings of $\mu$ and $\nu$.

%%%%%%%%%%%%%%%%%%%%%%%%%

\subsection{Semimartingale decomposition and regional analysis}
\label{subsec:regional}

We work with the coupled HFHR processes defined by the HFHR SDE
\eqref{eq:HFHR-SDE} and the coupled noises \eqref{eq:coupling-noise}. Define
\begin{equation}\label{Z:t:etc}
  Z_t := q_t-q_t',
  \qquad
  W_t := p_t-p_t',
  \qquad
  \mathbf R_t := Z_t + \gamma^{-1}W_t,
  \qquad
  r_t := r(z_t,z_t').
\end{equation}
Given a $(\lambda,D)$-admissible Lyapunov function $\mathcal V$ and the
corresponding profile $f_\lambda$ constructed in
\eqref{eq:phi-def}--\eqref{eq:f-def}, set
\begin{equation}\label{G:t:rho:t}
  G_t := 1 + \varepsilon \mathcal V(z_t) + \varepsilon \mathcal V(z_t'),
  \qquad
  \rho_t := f_\lambda(r_t)\,G_t .
\end{equation}

Before proceeding to the drift analysis of $\rho_t$, we record that the
underlying distance $r$ is equivalent to the Euclidean metric on phase
space.

\begin{lemma}[Equivalence of $r$ and the Euclidean distance]
\label{lem:r-equivalent}
Let $r$ be defined by \eqref{eq:r-def-again}, i.e.
\[
  r(z,z')
  = \theta |q-q'|
    + \left|(q-q')+\gamma^{-1}(p-p')\right|,
  \qquad z=(q,p),\; z'=(q',p')\in\R^{2d},
\]
with $\theta=(1+\eta_0)L_{\mathrm{eff}}(\alpha)\gamma^{-2}>0$. Then, for all
$z,z'\in\R^{2d}$,
\begin{equation}\label{eq:r-euclidean-equiv-app}
  k_1\,|z-z'|
  \;\le\; r(z,z')
  \;\le\; k_2\,|z-z'|,
\end{equation}
where the constants $k_1,k_2$ are explicitly given by
\begin{equation}\label{eq:explicit_k}
  k_1 := \frac{\theta}{1+\gamma(1+\theta)},
  \qquad
  k_2 := \sqrt{(\theta+1)^2 + \gamma^{-2}}.
\end{equation}
In particular, $r$ is equivalent to the Euclidean distance on $\R^{2d}$.
\end{lemma}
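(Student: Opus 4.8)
The plan is to reduce everything to two Euclidean vectors in $\R^d$. Write $u := q-q'$ and $v := p-p'$, so that $|z-z'|^2 = |u|^2 + |v|^2$ and $r(z,z') = \theta|u| + |u + \gamma^{-1} v|$ with $\theta = (1+\eta_0)L_{\mathrm{eff}}(\alpha)\gamma^{-2} > 0$. Both inequalities then follow from elementary manipulations with the triangle inequality, and the positivity of $\theta$ (which holds since $\eta_0,L,\gamma,\alpha>0$) ensures the equivalence is genuine.

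For the upper bound, I would apply the triangle inequality to the second term, $|u+\gamma^{-1}v| \le |u| + \gamma^{-1}|v|$, to obtain $r(z,z') \le (\theta+1)|u| + \gamma^{-1}|v|$, and then Cauchy--Schwarz in $\R^2$ applied to the pairs $((\theta+1),\gamma^{-1})$ and $(|u|,|v|)$:
\[
  r(z,z') \;\le\; \sqrt{(\theta+1)^2 + \gamma^{-2}}\;\sqrt{|u|^2+|v|^2} \;=\; k_2\,|z-z'|.
\]

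For the lower bound, I would read off from the definition of $r$ that $\theta|u| \le r(z,z')$, hence $|u| \le r(z,z')/\theta$, and that $|u+\gamma^{-1}v| \le r(z,z')$. Combining these via the triangle inequality gives
\[
  \gamma^{-1}|v| \;\le\; |u+\gamma^{-1}v| + |u| \;\le\; r(z,z') + \frac{r(z,z')}{\theta} \;=\; \frac{\theta+1}{\theta}\,r(z,z'),
\]
so that $|v| \le \gamma\,\tfrac{\theta+1}{\theta}\,r(z,z')$. Bounding the Euclidean norm crudely by $|z-z'| \le |u| + |v|$ then yields
\[
  |z-z'| \;\le\; \frac{r(z,z')}{\theta} + \gamma\,\frac{\theta+1}{\theta}\,r(z,z') \;=\; \frac{1 + \gamma(1+\theta)}{\theta}\,r(z,z'),
\]
which rearranges to $r(z,z') \ge k_1\,|z-z'|$ with $k_1 = \theta/(1+\gamma(1+\theta))$, as claimed.

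There is no deep obstacle here; the only point requiring a little care is the deliberate use of the crude bound $|z-z'| \le |u|+|v|$ (rather than a sharper Cauchy--Schwarz step) in the lower bound, since this is exactly what produces the stated, clean constant $k_1$. I would also note that $k_1$ and $k_2$ depend on $\alpha$ only through $\theta = (1+\eta_0)L_{\mathrm{eff}}(\alpha)\gamma^{-2}$, hence stay uniformly bounded on any compact range of $\alpha$; this uniformity is what makes the equivalence usable later when $\alpha$ is taken small in Section~\ref{sec:global-contractivity}.
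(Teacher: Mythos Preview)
Your proof is correct and essentially identical to the paper's own argument: both use the triangle inequality followed by Cauchy--Schwarz in $\R^2$ for the upper bound, and for the lower bound both extract $|u|\le r/\theta$ and $|u+\gamma^{-1}v|\le r$ from the definition, combine them via the triangle inequality to bound $|v|$, and then use the crude estimate $|z-z'|\le |u|+|v|$ to arrive at exactly the stated $k_1$. Your closing remark on the $\alpha$-dependence of $k_1,k_2$ through $\theta$ is a useful observation not made explicit in the paper's proof.
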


\begin{proof}
We provide the proof in Appendix~\ref{app:r-equivalent}.
\end{proof}

\medskip
The next step is to analyze the semimartingale drift of the Lyapunov-weighted distance process
$\rho_t=f_\lambda(r_t)G_t$. We will repeatedly use the fact that $r$ is comparable to the Euclidean metric,
and we will also need a mild smallness condition on $\alpha$ to preserve a strictly dissipative
coefficient in the $|Z_t|$-term of the distance drift.

\paragraph{A smallness condition on $\alpha$.}
Fix a parameter $\kappa_{\mathrm{adjust}}\in(0,1)$.
Due to the drift term $-\alpha\nabla U(q)$ in the HFHR SDE \eqref{eq:HFHR-SDE},
the dynamics of the difference $Z_t=q_t-q'_t$ involves the term $-\alpha(\nabla U(q_t)-\nabla U(q'_t))$.
This produces an extra contribution of size $\alpha L|Z_t|$ in the one-sided estimate for $d|Z_t|$.
When we translate this into a drift bound for the distance process $r_t$, this term reduces the baseline
dissipation coefficient $\eta_0/(1+\eta_0)$ that is present in the kinetic Langevin case.
We therefore introduce the \emph{net} dissipation parameter
\begin{equation}\label{eq:delta-alpha-def}
  \delta_\alpha:=\frac{\eta_0}{1+\eta_0}-\frac{\alpha L}{\gamma}.
\end{equation}
The condition $\delta_\alpha>0$ means that the additional HFHR drift does not overwhelm the baseline contraction,
so that the drift bounds for $r_t$ retain a strictly dissipative linear term in $|Z_t|$, uniformly in time, which is
needed to establish the regional contraction estimates.
Throughout the regional analysis, we assume
\begin{equation}\label{eq:alpha-small-drift}
  \alpha \le (1-\kappa_{\mathrm{adjust}})\,\frac{\eta_0}{1+\eta_0}\,\frac{\gamma}{L},
\end{equation}
so that
$\delta_\alpha \ge \kappa_{\mathrm{adjust}}\,\frac{\eta_0}{1+\eta_0} > 0$.
Accordingly, in the drift bounds for $r_t$ we use the dissipation coefficient $\delta_\alpha$
(or, when a uniform bound is convenient, $\kappa_{\mathrm{adjust}}\,\frac{\eta_0}{1+\eta_0}$).

\begin{remark}
In the kinetic Langevin case $\alpha=0$, we have $\delta_\alpha=\eta_0/(1+\eta_0)$.
Hence one may take $\kappa_{\mathrm{adjust}}\uparrow 1$ (and effectively $\kappa_{\mathrm{adjust}}=1$)
in the bounds, recovering the corresponding kinetic Langevin contraction rate without the extra prefactor.
\end{remark}

We next derive a drift decomposition for $e^{ct}\rho_t$ for any fixed $c\in\mathbb{R}$.
For Lyapunov functions $\mathcal V$ of the form $\mathcal V=\mathcal V_0+\mathfrak Q$
(with $\mathfrak Q$ quadratic), we identify the key drift coefficient that will be
estimated region by region.

\begin{lemma}[Drift decomposition]\label{lem:drift-decomp}
Recall the definition of $\delta_\alpha$ from \eqref{eq:delta-alpha-def} and assume that
\[
  \delta_\alpha=\frac{\eta_0}{1+\eta_0}-\frac{\alpha L}{\gamma}>0,
  \qquad\text{i.e.}\qquad
  \alpha < \frac{\eta_0}{1+\eta_0}\frac{\gamma}{L}.
\]
Fix $\varepsilon>0$ and $c\in\R$. Recall from \eqref{Z:t:etc} and \eqref{G:t:rho:t} that
\begin{align}
 & Z_t:=q_t-q'_t,\quad W_t:=p_t-p'_t,\quad \mathbf R_t:=Z_t+\gamma^{-1}W_t,\quad
  r_t:=\theta|Z_t|+|\mathbf R_t|, \label{defn:r:t}\\
 &G_t:=1+\varepsilon\mathcal V(z_t)+\varepsilon\mathcal V(z'_t),\quad
 \rho_t:=f_\lambda(r_t)G_t, \label{defn:G:t}
\end{align}
where $\theta:=(1+\eta_0)L_{\rm eff}(\alpha)\gamma^{-2}$ for some $\eta_0>0$ as in \eqref{eq:theta-def}.
Assume that $\mathcal V$ is a $(\lambda,D)$-admissible Lyapunov function 
and that
\begin{equation}\label{eq:V-structure-assump}
\mathcal V(z)=\mathcal V_0(z)+\mathfrak Q(z),
\end{equation}
where $\mathcal V_0$ is the kinetic Lyapunov function \eqref{eq:V0-general-quadratic} and
$\mathfrak Q$ is a quadratic form on $\R^{2d}$ (in $z=(q,p)$), i.e.
\[
\mathfrak Q(z):=\frac12\,z^\top \mathsf A z,\qquad \mathsf A\in\R^{2d\times 2d}\ \text{is symmetric}.
\]
Then
\[
  e^{ct}\rho_t \le \rho_0+\gamma\int_0^t e^{cs}K_s\,ds + M_t,
\]
where $M_t$ is a continuous local martingale and $K_t$ satisfies
\begin{align}
K_t &\le\;
  4\gamma^{-2}\left(\chi(t)\right)^2 f_\lambda''(r_t)G_t
  +\left(\theta|\mathbf R_t|-\delta_\alpha\,\theta|Z_t|\right)\, f'_{\lambda,-}(r_t)\,G_t \nonumber\\
&\quad+ 4\varepsilon \bar{C}_{\mathcal{V}} \,\left(\chi(t)\right)^2\, r_t f'_{\lambda,-}(r_t) +\gamma^{-1}\varepsilon f_\lambda(r_t)\left[\cL_\alpha\mathcal V(z_t)+\cL_\alpha\mathcal V(z'_t)\right]
+\gamma^{-1}c f_\lambda(r_t)G_t,\label{K:t:ineq}
\end{align}
with $\bar{C}_{\mathcal{V}}$ defined in \eqref{grad:const} 
%\[
%\bar{C}_{\mathcal{V}}
%:= \max\left\{1,(2\theta)^{-1}\right\} + %\frac{C_{\mathfrak Q}}{\gamma k_1},
%\qquad
%C_{\mathfrak Q}:=\|\mathsf A_{pp}\|_{\mathrm{op}}+\|\mathsf A_{pq}\|_{\mathrm{op}},
%\]
and $k_1$ is the norm-equivalence constant from Lemma~\ref{lem:r-equivalent}.
\end{lemma}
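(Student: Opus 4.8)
\emph{Overview.} The plan is to obtain the stated bound by writing down the coupled difference dynamics explicitly, applying It\^o's formula (and the Meyer--It\^o formula for the non-smooth concave profile $f_\lambda$) to $\rho_t=f_\lambda(r_t)G_t$, and collecting the drift terms so that every power of $\gamma$ falls into place. \emph{Step 1 (coupled difference dynamics).} From the HFHR SDE \eqref{eq:HFHR-SDE} written for the two copies $(z_t)$ and $(z_t')$, together with the coupling noise \eqref{eq:coupling-noise} (i.e.\ $dB^{q'}=dB^q$ and $dB^{p'}=(I_d-2\chi(t)\mathcal P_t)dB^p$), the $q$-noise cancels and the $p$-noise survives only along $e_t=\mathbf R_t/|\mathbf R_t|$:
\[
  dZ_t=\bigl(W_t-\alpha(\nabla U(q_t)-\nabla U(q_t'))\bigr)dt,\qquad
  dW_t=\bigl(-\gamma W_t-(\nabla U(q_t)-\nabla U(q_t'))\bigr)dt+2\sqrt{2\gamma}\,\chi(t)\mathcal P_t\,dB_t^p .
\]
Hence $\mathbf R_t=Z_t+\gamma^{-1}W_t$ obeys $d\mathbf R_t=-(\alpha+\gamma^{-1})(\nabla U(q_t)-\nabla U(q_t'))\,dt+2\sqrt{2/\gamma}\,\chi(t)\mathcal P_t\,dB_t^p$; one records $(\alpha+\gamma^{-1})L=\gamma^{-1}L_{\mathrm{eff}}(\alpha)$ by \eqref{L:eff} and $W_t=\gamma(\mathbf R_t-Z_t)$.

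\emph{Step 2 (one-sided drift of $r_t$).} Since $Z_t$ has finite variation, $t\mapsto|Z_t|$ is locally Lipschitz and, using $W_t=\gamma(\mathbf R_t-Z_t)$, the $L$-Lipschitz bound on $\nabla U$, and the obvious bound at $Z_t=0$, one gets $\frac{d}{dt}|Z_t|\le\gamma|\mathbf R_t|-(\gamma-\alpha L)|Z_t|$ for a.e.\ $t$. For $|\mathbf R_t|$, It\^o's formula applied to $x\mapsto|x|$ produces no extra second-order term because the diffusion coefficient is a multiple of the rank-one projection onto $e_t$ (the transverse correction $|\mathbf R_t|^{-1}(\operatorname{tr}a_t-e_t^\top a_t e_t)$ vanishes), so $d|\mathbf R_t|\le(\alpha+\gamma^{-1})L|Z_t|\,dt+dN_t=\gamma^{-1}L_{\mathrm{eff}}(\alpha)|Z_t|\,dt+dN_t$ with $dN_t:=2\sqrt{2/\gamma}\,\chi(t)\,(e_t\cdot dB_t^p)$, whence $d[N]_t=(8/\gamma)\chi(t)^2\,dt$. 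Forming $r_t=\theta|Z_t|+|\mathbf R_t|$ and using $\theta=(1+\eta_0)L_{\mathrm{eff}}(\alpha)\gamma^{-2}$, the $|Z_t|$-coefficient collapses to $-\gamma\theta\bigl(\frac{\eta_0}{1+\eta_0}-\frac{\alpha L}{\gamma}\bigr)=-\gamma\theta\delta_\alpha$ with $\delta_\alpha$ from \eqref{eq:delta-alpha-def}; thus $dr_t\le\gamma(\theta|\mathbf R_t|-\delta_\alpha\theta|Z_t|)\,dt+dN_t$ and $d[r]_t=(8/\gamma)\chi(t)^2\,dt$.

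\emph{Step 3 (It\^o product rule for $\rho_t$).} Write $d\rho_t=G_t\,df_\lambda(r_t)+f_\lambda(r_t)\,dG_t+d[f_\lambda(r),G]_t$. For $f_\lambda(r_t)$, apply the Meyer--It\^o formula to the concave $C^1$ profile $f_\lambda$ whose derivative has bounded variation (smooth on $[0,R_1)$, zero on $(R_1,\infty)$, with a downward kink at $R_1$): its second-derivative measure is $f_\lambda''(r)\mathbf 1_{[0,R_1)}\,dr$ plus a nonpositive atom at $R_1$ which we discard for the upper bound; since $f'_{\lambda,-}\ge0$, multiplying the drift bound for $r_t$ preserves the inequality, giving a drift contribution $\le G_t\bigl[\gamma(\theta|\mathbf R_t|-\delta_\alpha\theta|Z_t|)f'_{\lambda,-}(r_t)+\frac12 f_\lambda''(r_t)(8/\gamma)\chi(t)^2\bigr]$. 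For $G_t=1+\varepsilon\mathcal V(z_t)+\varepsilon\mathcal V(z_t')$, It\^o gives drift $\varepsilon[\cL_\alpha\mathcal V(z_t)+\cL_\alpha\mathcal V(z_t')]$. In the cross-variation only the $B^p$/$B^{p'}$ noises of $\mathcal V$ couple to $dN_t$; using $dB^{p'}=(I_d-2\chi(t)\mathcal P_t)dB^p$ and $e_t^\top\mathcal P_t=e_t^\top$ yields $d[f_\lambda(r),G]_t=4\varepsilon\chi(t)^2 f'_{\lambda,-}(r_t)\,e_t\cdot(\nabla_p\mathcal V(z_t)-\nabla_p\mathcal V(z_t'))\,dt$. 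The structural assumption $\mathcal V=\mathcal V_0+\mathfrak Q$ is now used to bound this: from $\nabla_p\mathcal V_0=\frac\gamma2(q+2\gamma^{-1}p)$ one gets $|\nabla_p\mathcal V_0(z_t)-\nabla_p\mathcal V_0(z_t')|=\frac\gamma2|2\mathbf R_t-Z_t|\le\gamma\max\{1,(2\theta)^{-1}\}r_t$, while $|\nabla_p\mathfrak Q(z_t)-\nabla_p\mathfrak Q(z_t')|\le(\|\mathsf A_{pq}\|_{\mathrm{op}}+\|\mathsf A_{pp}\|_{\mathrm{op}})|z_t-z_t'|\le(C_{\mathfrak Q}/k_1)r_t$ by Lemma~\ref{lem:r-equivalent}; together $|\nabla_p\mathcal V(z_t)-\nabla_p\mathcal V(z_t')|\le\gamma\bar C_{\mathcal V}r_t$ with $\bar C_{\mathcal V}$ from \eqref{grad:const}, so $d[f_\lambda(r),G]_t\le4\varepsilon\gamma\bar C_{\mathcal V}\chi(t)^2 r_t f'_{\lambda,-}(r_t)\,dt$.

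\emph{Step 4 (assembly and main obstacle).} Multiplying by $e^{ct}$, so that $d(e^{ct}\rho_t)=ce^{ct}\rho_t\,dt+e^{ct}d\rho_t$, adding $ce^{ct}f_\lambda(r_t)G_t$, collecting the three drift contributions from Step 3, and factoring out $\gamma$ (using $\frac12\cdot\frac8\gamma=\frac4\gamma=\gamma\cdot\frac4{\gamma^2}$) produces exactly $\gamma K_t$ with $K_t$ as in \eqref{K:t:ineq}, while the (continuous, local) martingale parts of $r_t$ and $G_t$ assemble into $M_t$. The main obstacle is the non-smooth radial analysis in Steps 2--3: verifying that the reflection noise being aligned with $e_t$ kills the transverse second-order term of $|\mathbf R_t|$, correctly invoking Meyer--It\^o for the kinked concave $f_\lambda$ (and checking that the boundary atom has the favourable sign), and the careful constant-tracking through $\theta$, $\delta_\alpha$, $L_{\mathrm{eff}}(\alpha)$, $k_1$ and $\bar C_{\mathcal V}$ so that every power of $\gamma$ lands as stated; the remaining computations are routine given the $L$-Lipschitz bound on $\nabla U$ and the norm equivalence of Lemma~\ref{lem:r-equivalent}.
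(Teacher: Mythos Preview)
Your proposal is correct and follows essentially the same route as the paper: coupled difference dynamics, a one-sided drift bound for $r_t$ via the $L$-Lipschitz estimate and the rank-one noise structure, Meyer--It\^o for the concave profile $f_\lambda$, It\^o's product rule for $\rho_t=f_\lambda(r_t)G_t$, and the cross-variation estimate using the explicit form of $\nabla_p\mathcal V_0$ together with the quadratic structure of $\mathfrak Q$ and Lemma~\ref{lem:r-equivalent}. The constant-tracking (in particular the collapse of the $|Z_t|$-coefficient to $-\gamma\theta\delta_\alpha$ and the identification of $\bar C_{\mathcal V}$) matches the paper's computation line by line.
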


\begin{proof}
We provide the proof in Appendix~\ref{app:drift-decomp}.
\end{proof}

\begin{remark}\label{rem:Q-structure}
The structural assumption \eqref{eq:V-structure-assump} is tailored to the analysis of HFHR dynamics. 
In Section~\ref{sec:acceleration}, we will construct a refined Lyapunov function $\mathcal{V}_\alpha = \mathcal{V}_0 + \alpha\mathcal{M}$, corresponding to the choice $\mathfrak{Q}(z) := \alpha\mathcal{M}(z)$. 
Since $\mathcal{M}$ will be constructed as a quadratic polynomial (see Lemma~\ref{lem:first-order-improvement}), its gradient is linear, and thus the Lipschitz condition on $\nabla_p \mathfrak{Q}$ is automatically satisfied globally. 
For the baseline convergence result (Corollary~\ref{cor:convergence-V0}), we simply take $\mathfrak{Q} \equiv 0$ (so that $C_{\mathfrak{Q}}=0$).
\end{remark}

\medskip

The next step is to estimate $K_t$ in different regions of the state space.
We distinguish small and intermediate distances, where the concavity of
$f_\lambda$ and reflection coupling dominate, from large distances, where
the Lyapunov drift of $\mathcal V$ takes over.

\begin{proposition}[Regional contractivity]\label{prop:regional-contractivity}
Assume Assumption~\ref{assump:potential} holds.
Fix $\eta_0>0$ and $\kappa_{\mathrm{adjust}}\in(0,1)$.
Assume the smallness condition \eqref{eq:alpha-small-drift} holds:
\begin{equation}\label{eq:alpha-small-drift-prop}
  \alpha \le (1-\kappa_{\mathrm{adjust}})\,\frac{\eta_0}{1+\eta_0}\,\frac{\gamma}{L},
\end{equation}
so that $\delta_\alpha\ge \kappa_{\mathrm{adjust}}\frac{\eta_0}{1+\eta_0}>0$, where $\delta_\alpha$ is defined in \eqref{eq:delta-alpha-def}.
Let $\mathcal V$ be a $(\lambda,D)$-admissible Lyapunov function for $\cL_\alpha$
in the sense of Definition~\ref{def:admissible-lyapunov}. Assume in addition that
$\mathcal V$ is coercive: there exist constants $a_{\mathcal V}>0$ and
$b_{\mathcal V}\ge0$ such that
\begin{equation}\label{eq:V-coercive}
  \mathcal V(z)\;\ge\; a_{\mathcal V}|z|^2-b_{\mathcal V},
  \qquad z\in\R^{2d}.
\end{equation}
Set $\theta$ as in \eqref{eq:theta-def} and $r(z,z')$ as in \eqref{eq:r-def-again}.
Let $f_\lambda$ be defined by \eqref{eq:phi-def}--\eqref{eq:f-def}
with some cutoff radius $R_1(\lambda)=R_1(\lambda;L_{\mathrm{eff}}(\alpha))>0$.

For each $\xi>0$, consider the cutoff coupling obtained by choosing
$\chi(t)=\chi_\xi(t)=\mathbf 1_{\{|\mathbf R_t|\ge\xi\}}$ in \eqref{eq:coupling-noise}.
Let $\left(z_t^\xi,z_t^{\prime,\xi}\right)$ be the resulting coupled processes and define
\[
  G_t^\xi := 1+\varepsilon\mathcal V\left(z_t^\xi\right)+\varepsilon\mathcal V\left(z_t^{\prime,\xi}\right),
  \qquad
  \rho_t^\xi := f_\lambda\left(r\left(z_t^\xi,z_t^{\prime,\xi}\right)\right)\,G_t^\xi .
\]
Then there exist constants $c_0,\varepsilon_0>0$ and $C_{\mathrm{reg}}<\infty$
(depending only on $\lambda,D,\eta_0$, $\gamma$, $L_{\mathrm{eff}}(\alpha)$,
$\bar C_{\mathcal V}$, and the construction of $f_\lambda$, but independent of $\xi$) such that for any
$0<c\le c_0$ and $0<\varepsilon\le\varepsilon_0$, the drift coefficient
$K_t^\xi$ from Lemma~\ref{lem:drift-decomp} (applied to $\rho_t^\xi$) satisfies
\begin{equation}\label{eq:Kt-xi-bound}
  K_t^\xi \;\le\; C_{\mathrm{reg}}\,\xi\,G_t^\xi,
  \qquad t\ge0.
\end{equation}
Consequently, for every $t\ge0$,
\begin{equation}\label{eq:rho-exp-bound}
  \E\!\left[e^{ct}\rho_t^\xi\right]
  \;\le\; \E[\rho_0] + \gamma C_{\mathrm{reg}}\xi \int_0^t e^{cs}\E\left[G_s^\xi\right]\,ds,
\end{equation}
and hence
$\limsup_{\xi\downarrow0}\E\!\left[e^{ct}\rho_t^\xi\right] \le \E[\rho_0]$.
In particular, any limiting (``sticky'') coupling obtained along $\xi\downarrow0$
is contractive in expectation with rate $c$.
\end{proposition}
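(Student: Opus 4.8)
The plan is to reduce everything to a pointwise drift bound: I would show $K_t^\xi \le C_{\mathrm{reg}}\,\xi\,G_t^\xi$ for a constant $C_{\mathrm{reg}}$ independent of $\xi$, by estimating the right-hand side of \eqref{K:t:ineq} region by region, and then integrate the resulting exponential drift inequality and send $\xi\downarrow0$. First I would invoke Lemma~\ref{lem:drift-decomp} for $\rho_t^\xi$ — legitimate because $\mathcal V$ is $(\lambda,D)$-admissible and has the structure $\mathcal V=\mathcal V_0+\mathfrak Q$ with $\mathfrak Q$ quadratic (here $\mathfrak Q\equiv0$ in the baseline case, so $\bar C_{\mathcal V}=\max\{1,(2\theta)^{-1}\}$) — giving $e^{ct}\rho_t^\xi\le\rho_0+\gamma\int_0^t e^{cs}K_s^\xi\,ds+M_t$ with $M$ a continuous local martingale and $K_s^\xi$ bounded by \eqref{K:t:ineq} with $\chi=\chi_\xi$. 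I would fix $\eta_0$ and $\kappa_{\mathrm{adjust}}$, then choose the cutoff radius $R_1$, then $\varepsilon_0$, then $c_0$, in that order and with the compatibility relations forced by the estimates below, so that $c_0,\varepsilon_0,C_{\mathrm{reg}}$ depend only on the listed data and are uniform over the admissible $\alpha$-range (on which $L_{\mathrm{eff}}(\alpha)=(1+\alpha\gamma)L$ stays bounded).

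For the regional estimate I would distinguish three cases. \emph{(i) Large distance $r_t\ge R_1$:} here $f_\lambda$ is flat, so $f_\lambda'(r_t)=f_\lambda''(r_t)=0$ and only the Lyapunov and $c$ terms survive; admissibility of $\mathcal V$ turns these into $f_\lambda(R_1)\big[2\varepsilon(d+D)+\gamma^{-1}c-(\lambda-\gamma^{-1}c)(G_t-1)\big]$, and Lemma~\ref{lem:r-equivalent} together with the coercivity \eqref{eq:V-coercive} forces $G_t-1\gtrsim\varepsilon R_1^2$ once $r_t\ge R_1$, so for $R_1$ large (and $\gamma^{-1}c\le\lambda/2$) this is $\le0$. \emph{(ii) Reflection active, $r_t<R_1$ and $|\mathbf R_t|\ge\xi$ (so $\chi_\xi=1$):} here I would use $|\mathbf R_t|\le r_t$ and the defining identity of the profile, namely $4\gamma^{-2}f_\lambda''(r)+(\theta+4\varepsilon\bar C_{\mathcal V})\,r f'_{\lambda,-}(r)=-9\gamma^{-1}c\,\Phi_\lambda(r)$ on $(0,R_1)$ — which is exactly what the Gaussian factor in \eqref{eq:phi-def} and the correction $g_\lambda$ in \eqref{eq:gint-def} are engineered to produce — together with $\Phi_\lambda\ge f_\lambda$, admissibility, and the fact that the dissipative term $-\delta_\alpha\theta|Z_t|f'_{\lambda,-}(r_t)G_t$ is $\le0$, to arrive at $K_t^\xi\le f_\lambda(r_t)\big(2\varepsilon(d+D)-8\gamma^{-1}c\big)$, which is $\le0$ once $\varepsilon$ is small relative to $c$. \emph{(iii) Synchronous, $r_t<R_1$ and $|\mathbf R_t|<\xi$ (so $\chi_\xi=0$):} the $\chi$-terms vanish; the expanding drift obeys $\theta|\mathbf R_t|f'_{\lambda,-}(r_t)G_t\le\theta\xi G_t$ (since $f_\lambda'\le1$), and writing $r_t=\theta|Z_t|+|\mathbf R_t|<\theta|Z_t|+\xi$ I would absorb the remaining positive terms $\gamma^{-1}cf_\lambda(r_t)G_t$ and $2\varepsilon(d+D)f_\lambda(r_t)$ into $-\delta_\alpha\theta|Z_t|f'_{\lambda,-}(r_t)G_t$ using the lower bound $f'_{\lambda,-}(r_t)\ge c_f:=\varphi_\lambda(R_1)g_\lambda(R_1)>0$ valid on $[0,R_1]$ (once $c$ is small enough that $g_\lambda\ge1/2$ there); what remains is $O(\xi)G_t$. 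Taking $C_{\mathrm{reg}}$ to be the largest of the resulting constants gives \eqref{eq:Kt-xi-bound}.

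Then I would integrate. Localizing $M$ with the exit times $\tau_n$ of balls of radius $n$ in phase space, non-explosion of the HFHR dynamics (a consequence of the drift inequality with coercive $\mathcal V$) and the at-most-polynomial growth of the coefficients give $\E[M_{t\wedge\tau_n}]=0$; letting $n\to\infty$ (Fatou on the left, monotone convergence on the right) yields \eqref{eq:rho-exp-bound}. The key point for the limit is that for every $\xi$ the two marginals of the coupling are themselves HFHR laws — the reflection matrix $I_d-2\chi_\xi(t)\mathcal P_t$ is orthogonal, so $B^{p'}$ stays a Brownian motion by L\'evy's characterization — hence applying the drift inequality \eqref{eq:generic-drift-again} along each marginal and using Gr\"onwall gives $\sup_{\xi>0}\sup_{s\le t}\E[G_s^\xi]<\infty$. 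Therefore the error term in \eqref{eq:rho-exp-bound} is $O(\xi)$ and $\limsup_{\xi\downarrow0}\E[e^{ct}\rho_t^\xi]\le\E[\rho_0]$; a tightness argument finally extracts a limiting (``sticky'') coupling along which, by lower semicontinuity of $\rho_{\mathcal V}$, the same bound persists, which is the asserted contraction in expectation with rate $c$.

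The hard part will be case (ii): verifying the profile identity and, more importantly, checking that the concavity gain $4\gamma^{-2}f_\lambda''$ simultaneously dominates the Lipschitz ``bad drift'' $\theta|\mathbf R_t|f'_{\lambda,-}$, the cross-variation term $4\varepsilon\bar C_{\mathcal V}r_tf'_{\lambda,-}$, \emph{and} the positive part of the Lyapunov drift — and then arranging the choice of $R_1$, $\varepsilon_0$, $c_0$ (in the correct order, with the compatibility relations $\gamma^{-1}c\le\lambda/2$, $g_\lambda\ge1/2$ on $[0,R_1]$, $2\varepsilon(d+D)\le 8\gamma^{-1}c$, and $R_1$ large enough for case (i)) so that all three regional inequalities hold at once with $\xi$-independent constants. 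The bookkeeping around $\bar C_{\mathcal V}$, which enters both the profile \eqref{eq:phi-def} and — through $\mathfrak Q$ — the admissibility constant $D$, is the delicate point.
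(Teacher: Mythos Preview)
Your proposal is correct and follows essentially the same three-region strategy as the paper's proof: large distance via coercivity and the Lyapunov drift, reflection-active via the profile identity $4\gamma^{-2}f_\lambda''+(\theta+4\varepsilon\bar C_{\mathcal V})rf_\lambda'=-9\gamma^{-1}c\Phi_\lambda$, and synchronous via an $O(\xi)G_t$ residual, followed by localization, integration, and the marginal-moment argument for $\xi\downarrow0$. Your treatment of case~(iii) is in fact more careful than the paper's own: the paper drops the dissipative term $-\delta_\alpha\theta|Z_t|f'_{\lambda,-}(r_t)G_t$ at the outset and then asserts that the Lyapunov and $c$ terms ``can be absorbed into $C_{\mathrm{reg}}\xi G_t$,'' which as written is not justified since those terms are $O(1)G_t$ rather than $O(\xi)G_t$; your argument---writing $f_\lambda(r_t)\le r_t<\theta|Z_t|+\xi$ and using the retained dissipative term together with the uniform lower bound $f'_{\lambda,-}\ge c_f$ on $[0,R_1]$ to absorb the $\theta|Z_t|$ part---is the correct way to close this.
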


\begin{proof}
We provide the proof in Appendix~\ref{app:regional-contractivity}.
\end{proof}

%%%%%%%%%%%%%%%%%%%%%%%%%

\subsection{Master theorem on global contraction}

We denote by $(P_t^\alpha)_{t\ge0}$ the Markov semigroup associated with the HFHR dynamics \eqref{eq:HFHR-SDE}.
We have the following Master Theorem that shows the contraction of HFHR dynamics with an explicitly computable contraction rate.

\begin{theorem}[Master theorem on global contraction]\label{thm:master-contraction}
Assume Assumption~\ref{assump:potential} holds.
Fix $\kappa_{\mathrm{adjust}}\in(0,1)$ and assume the smallness condition
\eqref{eq:alpha-small-drift} holds, %i.e.
%\[
%  \alpha \le (1-\kappa_{\mathrm{adjust}})\,\frac{\eta_0}{1+\eta_0}\,\frac{\gamma}{L},
%\]
so that with $\delta_\alpha$ defined in \eqref{eq:delta-alpha-def} we have
$\delta_\alpha\ge \kappa_{\mathrm{adjust}}\frac{\eta_0}{1+\eta_0}$.

Let $\mathcal V$ be a $(\lambda,D)$-admissible Lyapunov function 
for the HFHR generator $\cL_\alpha$
in the sense of Definition~\ref{def:admissible-lyapunov}, with $\lambda\in(0,1/4]$ and $d+D>0$.
Assume moreover that $\mathcal V$ is coercive in the sense of Proposition~\ref{prop:regional-contractivity}, i.e. \eqref{eq:V-coercive} holds.

Let $\eta_0>0$ be a parameter (chosen explicitly below in \eqref{eq:Lambda0-eta0-def}) 
and recall the definitions of $\theta$ as in \eqref{eq:theta-def} and $r(z,z')$ as in \eqref{eq:r-def-again}.
Let $R_1(\lambda)>0$ satisfy
\begin{equation}\label{eq:R1-condition-v2}
  R_1^2(\lambda) \ge \frac{96(d+A)}{5\lambda(1-2\lambda)\gamma^2((1+\theta)^2+\gamma^{-2})}.
\end{equation}
Let $f_\lambda$ be defined by \eqref{eq:phi-def}--\eqref{eq:f-def} with this $R_1(\lambda)$, and let $\rho_{\mathcal{V}}(z,z')$ be the
Lyapunov-weighted semimetric defined in \eqref{eq:rho-V-def-again}:
\begin{equation}\label{eq:rhoV-master}
\rho_{\mathcal V}(z,z')
:= f_\lambda(r(z,z'))\left(1+\varepsilon\mathcal V(z)+\varepsilon\mathcal V(z')\right).
\end{equation}
Define
\begin{equation}\label{eq:Lambda0-eta0-def}
  \Lambda_0(\lambda):=\frac{L\,R_1^2(\lambda)}{8},
  \qquad
  \eta_0:=\Lambda_0^{-1}(\lambda).
\end{equation}
Recall that $L_{\mathrm{eff}}(\alpha)=(1+\alpha\gamma)L$ and set
\begin{equation}\label{eq:lambda-alpha-def}
\Lambda_\alpha(\lambda):=\frac{L_{\mathrm{eff}}(\alpha)\,R_1^2(\lambda)}{8}.
\end{equation}

Choose
\begin{equation}\label{eq:rate-explicit}
  c(\lambda)
  := \frac{\gamma}{384}
    \min\left\{\widetilde\Lambda_{1,\alpha}(\lambda),\,
               \widetilde\Lambda_{2,\alpha}(\lambda),\,
               \widetilde\Lambda_{3,\alpha}(\lambda)\right\},
\end{equation}
where
\begin{align}
  &\widetilde\Lambda_{1,\alpha}(\lambda) := \frac{\lambda L_{\mathrm{eff}}(\alpha)}{\gamma^{2}},\quad
  \widetilde\Lambda_{2,\alpha}(\lambda) := \Lambda_\alpha^{1/2}(\lambda)\, e^{-\Lambda_\alpha(\lambda)}\,\frac{L_{\mathrm{eff}}(\alpha)}{\gamma^{2}},\nonumber
  \\
  &\widetilde\Lambda_{3,\alpha}(\lambda) := \kappa_{\mathrm{adjust}}\,\Lambda_\alpha^{1/2}(\lambda)\, e^{-\Lambda_\alpha(\lambda)}.
  \label{eq:Lambda-branches}
\end{align}
Then the HFHR semigroup $P_t^\alpha$ is exponentially contractive in the weighted Wasserstein distance $\cW_{\rho_{\mathcal V}}$. Specifically, let $c(\lambda)$ be defined by \eqref{eq:rate-explicit}. Fix any $c\in(0,c(\lambda)]$ and set
\begin{equation}\label{eq:epsilon-def}
  \varepsilon:=\frac{4c}{\gamma(d+D)}.
\end{equation}
Then
\begin{equation}\label{eq:W-rho-contraction}
  \cW_{\rho_{\mathcal V}}(\mu P_t^\alpha,\nu P_t^\alpha)
  \le e^{-ct}\,\cW_{\rho_{\mathcal V}}(\mu,\nu),\qquad t\ge0,
\end{equation}
for all probability measures $\mu,\nu$ on $\R^{2d}$ with finite $\mathcal V$-moments. In particular, choosing $c=c(\lambda)$ yields an explicit admissible contraction rate.
\end{theorem}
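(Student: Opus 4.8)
The plan is to combine the regional drift estimate of Proposition~\ref{prop:regional-contractivity} with the Lyapunov drift of $\mathcal V$ to close a Gr\"onwall argument for $\E[e^{ct}\rho_t]$ along the sticky coupling, and then transfer the resulting pointwise contraction to the Wasserstein distance $\cW_{\rho_{\mathcal V}}$ by taking the infimum over couplings. First I would fix the free parameters in a way that makes the regional analysis go through: choose $R_1(\lambda)$ to satisfy \eqref{eq:R1-condition-v2}, which is exactly the threshold needed so that in the ``far'' region $r\ge R_1$ the Lyapunov drift term $-\lambda\mathcal V$ dominates the constant $d+D$ after invoking coercivity \eqref{eq:V-coercive}; then set $\eta_0=\Lambda_0^{-1}(\lambda)$ as in \eqref{eq:Lambda0-eta0-def}, which balances the Gaussian weight in $\varphi_\lambda$ against the cutoff radius so that $f_\lambda$ stays bounded below by a positive multiple of $\min(r,R_1)$ and $g_\lambda$ stays bounded in $[1/2,1]$; and finally set $\varepsilon=4c/(\gamma(d+D))$ so that the Lyapunov contribution $\gamma^{-1}\varepsilon f_\lambda(r)[\cL_\alpha\mathcal V(z)+\cL_\alpha\mathcal V(z')]$ from Lemma~\ref{lem:drift-decomp}, bounded using \eqref{eq:generic-drift-again} by $\varepsilon f_\lambda(r)(d+D-\lambda\mathcal V(z)-\lambda\mathcal V(z'))\le \varepsilon(d+D)f_\lambda(r) = 4c\gamma^{-1}f_\lambda(r)$, combines with the $\gamma^{-1}cf_\lambda(r)G_t$ term and is absorbed. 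The verification that $c(\lambda)$ given by \eqref{eq:rate-explicit}, through the three branches $\widetilde\Lambda_{1,\alpha},\widetilde\Lambda_{2,\alpha},\widetilde\Lambda_{3,\alpha}$, is small enough for all these absorptions to succeed is the quantitative heart of the argument: branch $1$ handles the metric weight $\theta$ and the $f_\lambda''$ cancellation in the short-distance reflection regime, branch $2$ handles the intermediate regime where $f_\lambda'=\varphi_\lambda g_\lambda$ is small (hence the factor $\Lambda_\alpha^{1/2}e^{-\Lambda_\alpha}$), and branch $3$ is the net-dissipation requirement $\delta_\alpha\theta|Z_t|$ surviving after the HFHR drift correction (hence the $\kappa_{\mathrm{adjust}}$).

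The key steps, in order, are as follows. (i) Apply Proposition~\ref{prop:regional-contractivity}: for each cutoff $\xi>0$ the drift coefficient satisfies $K_t^\xi\le C_{\mathrm{reg}}\,\xi\,G_t^\xi$ with $C_{\mathrm{reg}}$ independent of $\xi$, hence $\E[e^{ct}\rho_t^\xi]\le\E[\rho_0]+\gamma C_{\mathrm{reg}}\xi\int_0^t e^{cs}\E[G_s^\xi]\,ds$. (ii) Bound $\E[G_s^\xi]$ uniformly in $\xi$ on $[0,t]$: the Lyapunov drift \eqref{eq:generic-drift-again} gives $\frac{d}{ds}\E[\mathcal V(z_s^\xi)]\le\gamma(d+D)-\gamma\lambda\E[\mathcal V(z_s^\xi)]$, so $\sup_{s\le t}\E[\mathcal V(z_s^\xi)]$ is finite and bounded in terms of $\E[\mathcal V(z_0)]$ and $t$ only, uniformly in $\xi$ (this uses that the coupling \eqref{eq:coupling-noise} does not change the marginal law of either copy, so the one-copy drift estimate applies verbatim). (iii) Send $\xi\downarrow0$: the $\xi\int_0^t e^{cs}\E[G_s^\xi]\,ds$ term vanishes, yielding $\limsup_{\xi\downarrow0}\E[e^{ct}\rho_t^\xi]\le\E[\rho_0]$, i.e. $\E[\rho_t]\le e^{-ct}\E[\rho_0]$ for the limiting sticky coupling --- here one must justify that a limiting coupling exists (tightness of the family $\{(z_\cdot^\xi,z_\cdot^{\prime,\xi})\}$ on path space, which follows from the uniform moment bounds and standard SDE estimates) and that $\rho_{\mathcal V}$ is lower semicontinuous so the $\limsup$ controls $\E[\rho_t]$ along the limit. (iv) Optimize over the initial coupling: given $\mu,\nu$ with finite $\mathcal V$-moments, start the two copies from an optimal (or near-optimal) coupling $\Gamma$ for $\cW_{\rho_{\mathcal V}}(\mu,\nu)$; then $(z_t,z_t')$ has marginals $\mu P_t^\alpha$ and $\nu P_t^\alpha$, so $\cW_{\rho_{\mathcal V}}(\mu P_t^\alpha,\nu P_t^\alpha)\le\E[\rho_{\mathcal V}(z_t,z_t')]=\E[\rho_t]\le e^{-ct}\E[\rho_0]=e^{-ct}\cW_{\rho_{\mathcal V}}(\mu,\nu)$, which is \eqref{eq:W-rho-contraction}; a standard approximation argument handles the near-optimal case.

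The main obstacle I anticipate is step~(iii) --- making the passage to the sticky limit rigorous. The cutoff couplings $\chi_\xi(t)=\mathbf 1_{\{|\mathbf R_t|\ge\xi\}}$ are genuine strong solutions, but as $\xi\downarrow0$ the switching between reflection and synchronous coupling becomes singular and the limiting object is a sticky diffusion whose well-posedness is nonstandard; the clean way around this, following \cite{eberle2016reflection,Eberle}, is to avoid constructing the limit process altogether and instead argue directly at the level of the inequality: the bound $\cW_{\rho_{\mathcal V}}(\mu P_t^\alpha,\nu P_t^\alpha)\le\E[\rho_{\mathcal V}(z_t^\xi,z_t^{\prime,\xi})]$ holds for \emph{every} $\xi>0$ (since the marginals are $\mu P_t^\alpha,\nu P_t^\alpha$ for each $\xi$), and then one simply takes $\limsup_{\xi\downarrow0}$ of the right-hand side using \eqref{eq:rho-exp-bound} and the uniform $G$-moment bound from step~(ii). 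This sidesteps any need for a limiting coupling and reduces the whole argument to the $\xi$-uniform estimates already packaged in Proposition~\ref{prop:regional-contractivity}. A secondary technical point is the a.e.-in-the-sense-of-Rademacher interpretation of the drift inequalities (flagged in the remark after Definition~\ref{def:admissible-lyapunov}): one must check that the It\^o/Tanaka calculus underlying Lemma~\ref{lem:drift-decomp} is valid when $\nabla U$ is merely Lipschitz, which is standard but should be noted.
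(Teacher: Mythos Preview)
Your overall architecture is correct and matches the paper's: couple via the reflection/synchronous scheme, run the regional drift analysis from Lemma~\ref{lem:drift-decomp}, send $\xi\downarrow0$ using the uniform $G$-moment bound (and, as you rightly note, avoid constructing the sticky limit by taking the $\limsup$ directly on the Wasserstein inequality), then infimize over the initial coupling. The choice $\varepsilon=4c/(\gamma(d+D))$ is also exactly the paper's.

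However, your identification of the three branches is off, and this matters because the quantitative verification you defer is precisely where it bites. In the paper's proof the regions are: \emph{Region I} is $r_t\ge R_1(\lambda)$, where $f_\lambda'\equiv0$ and the Lyapunov drift alone must close the estimate---this is what produces $\widetilde\Lambda_{1,\alpha}=\lambda L_{\mathrm{eff}}(\alpha)/\gamma^2$ (note the $\lambda$); \emph{Region II} is $r_t<R_1$ with reflection active, where the $f_\lambda''$ cancellation and the $g_\lambda'$ term give the constraint $\widetilde\Lambda_{2,\alpha}$; \emph{Region III} is $r_t<R_1$ with synchronous coupling, where the $-\delta_\alpha\theta|Z_t|$ term carries the contraction and yields $\widetilde\Lambda_{3,\alpha}$ with its $\kappa_{\mathrm{adjust}}$ factor. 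You have branch~1 doing the short-distance $f_\lambda''$ work, which is branch~2's job; branch~1 is the far-field Lyapunov branch. Relatedly, invoking Proposition~\ref{prop:regional-contractivity} as a black box only gives \emph{some} admissible $c_0$, not the explicit formula \eqref{eq:rate-explicit}; the paper's proof of the theorem redoes the three-region computation with the specific parameter choices (including $\eta_0=\Lambda_0^{-1}$ and the Gaussian ratio estimate $\inf_s s\varphi_\lambda(s)/\Phi_\lambda(s)\ge\frac{2}{\sqrt\pi}\sqrt{\Lambda_\alpha}e^{-\Lambda_\alpha}$ in Region~III) precisely to extract the stated rate. So your plan is sound, but to complete it you must carry out that explicit three-region bookkeeping rather than cite Proposition~\ref{prop:regional-contractivity}.
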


\begin{proof}
We provide the proof in Appendix~\ref{app:master-contraction}.
\end{proof}

In particular, when $\alpha=0$ we have $L_{\mathrm{eff}}(0)=L$, and the rate \eqref{eq:rate-explicit} reduces to the expression obtained in~\cite[Theorem~2.3]{Eberle}.
Moreover, the explicit form of $c(\lambda)$ in~\eqref{eq:rate-explicit} already anticipates the two mechanisms that will later be separated into the \emph{Lyapunov branch} (denoted $\widetilde\Lambda_{1,\alpha}$) and the \emph{metric branches} (denoted $\widetilde\Lambda_{2,\alpha},\widetilde\Lambda_{3,\alpha}$).
Roughly speaking, the Lyapunov branch corresponds to the contribution of the Lyapunov drift (via the $(\lambda,D)$-admissibility of $\mathcal V$) which controls excursions to large energies and yields contraction once the process is sufficiently far out, while the metric branches correspond to the local contraction mechanism encoded in the semimetric $\rho_{\mathcal V}$ (through the concavity/flatness design of $f_\lambda$ and reflection vs.\ synchronous coupling) and dominate in the ``nearby'' regime.
Thus, $c(\lambda)$ can be interpreted as the effective global contraction rate obtained by balancing these two effects: it is the rate for which both the Lyapunov drift and the local metric estimates close simultaneously.

%%%%%%%%%%%%%%%%%%%%%%%%%%%%%%%%%%%%%%%%%%%%%%%
\subsection{Global convergence of HFHR dynamics}

We now apply Theorem~\ref{thm:master-contraction} with the kinetic Langevin Lyapunov function $\mathcal V_0$ defined in Section~\ref{sec:setup}. 
Recall from Proposition~\ref{prop:V0-drift-HFHR} that $\mathcal V_0$ satisfies the required drift condition for the HFHR infinitesimal generator $\cL_\alpha$ provided $\alpha \le \alpha_0$. This allows us to specialize the general master theorem (Theorem~\ref{thm:master-contraction}) to the baseline Lyapunov function, yielding the following exponential convergence result.

\begin{corollary}[Global convergence of HFHR dynamics]\label{cor:convergence-V0}
Assume Assumption~\ref{assump:potential}.
Let $\alpha\in[0,\alpha_0]$, where $\alpha_0$ is as in Proposition~\ref{prop:V0-drift-HFHR}.
Fix $\kappa_{\mathrm{adjust}}\in(0,1)$ and assume the smallness condition
\eqref{eq:alpha-small-drift} holds with $\eta_0:=\left(\Lambda_0(\lambda_\alpha)\right)^{-1}$, where $\Lambda_0(\cdot)$ is defined in \eqref{eq:Lambda0-eta0-def}.
Set $\lambda_\alpha:=\hat\lambda_\alpha>0$,
with $\hat\lambda_\alpha$ given in \eqref{eq:hatlambda-explicit}.
Let $R_1(\lambda)>0$ satisfy \eqref{eq:R1-condition-v2} 
with $\lambda=\lambda_\alpha$ and $A=A_\alpha$, note that $\eta_0=\left(\Lambda_0(\lambda)\right)^{-1}$.
Let $\theta$ and $r$ be defined by \eqref{eq:theta-def}--\eqref{eq:r-def-again},
let $f_\lambda$ be defined by \eqref{eq:phi-def}--\eqref{eq:f-def} with this $R_1(\lambda)$,
and choose any 
$0<c_\alpha\le c_*(\lambda_\alpha,A_\alpha)$ and
$0<\varepsilon_\alpha\le \varepsilon_*(\lambda_\alpha,A_\alpha)$
as provided by Theorem~\ref{thm:master-contraction} (applied with
$\mathcal V=\mathcal V_0$ and $(\lambda,A)=(\lambda_\alpha,A_\alpha)$).
Let $\rho_{\mathcal V_0,\alpha}$ denote the corresponding Lyapunov-weighted semimetric
\eqref{eq:rhoV-master}.

Then the HFHR dynamics \eqref{eq:HFHR-SDE} admits a unique invariant probability
measure $\pi_\alpha$ in the class
$\{\mu:\int_{\R^{2d}}\mathcal V_0\,d\mu<\infty\}$, and for all probability measures
$\mu,\nu$ with $\int_{\R^{2d}} \mathcal V_0\,d\mu+\int_{\R^{2d}} \mathcal V_0\,d\nu<\infty$,
\[
  \cW_{\rho_{\mathcal V_0,\alpha}}(\mu P_t^\alpha,\nu P_t^\alpha)
  \le e^{-c_\alpha t}\,\cW_{\rho_{\mathcal V_0,\alpha}}(\mu,\nu),
  \qquad t\ge0.
\]
In particular,
\[
  \cW_{\rho_{\mathcal V_0,\alpha}}(\mu P_t^\alpha,\pi_\alpha)
  \le e^{-c_\alpha t}\,\cW_{\rho_{\mathcal V_0,\alpha}}(\mu,\pi_\alpha),
  \qquad t\ge0.
\]
Moreover, an explicit admissible choice of $c_\alpha$ is given by \eqref{eq:rate-explicit}
in Theorem~\ref{thm:master-contraction} with $(\lambda,A)=(\lambda_\alpha,A_\alpha)$.
\end{corollary}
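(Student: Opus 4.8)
The plan is to obtain the statement as a direct specialization of the Master Theorem (Theorem~\ref{thm:master-contraction}) to the Lyapunov function $\mathcal V=\mathcal V_0$, followed by a standard fixed-point argument for the existence and uniqueness of the invariant measure.

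First I would check that $\mathcal V_0$ satisfies all the hypotheses of Theorem~\ref{thm:master-contraction} with $(\lambda,D)=(\lambda_\alpha,A_\alpha)$. Admissibility in the sense of Definition~\ref{def:admissible-lyapunov} is exactly Proposition~\ref{prop:V0-drift-HFHR}: for $\alpha\in[0,\alpha_0]$ we have $\cL_\alpha\mathcal V_0\le\gamma(d+A_\alpha-\hat\lambda_\alpha\mathcal V_0)$ with $\hat\lambda_\alpha\ge\lambda/2>0$, and since $\hat\lambda_\alpha\le\lambda\le 1/4$ the rate $\lambda_\alpha:=\hat\lambda_\alpha$ lies in $(0,1/4]$, while $d+A_\alpha>0$ because $A_\alpha>0$. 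Coercivity \eqref{eq:V-coercive} follows from the lower bound in \eqref{eq:V0-equivalent}: $1+\mathcal V_0(z)\ge c_1\bigl(1+U(q)+|q|^2+|p|^2\bigr)\ge c_1\bigl(1+|z|^2\bigr)$, hence $\mathcal V_0(z)\ge c_1|z|^2-(1-c_1)$ with $c_1=\min(1,\mu_{\min})\in(0,1]$, so one may take $a_{\mathcal V}=c_1$ and $b_{\mathcal V}=1-c_1\ge 0$. The structural hypothesis \eqref{eq:V-structure-assump} required by Lemma~\ref{lem:drift-decomp} holds trivially with the quadratic corrector $\mathfrak Q\equiv 0$, so that $C_{\mathfrak Q}=0$ and $\bar C_{\mathcal V_0}=\max\{1,(2\theta)^{-1}\}$ in \eqref{grad:const}. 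The smallness condition \eqref{eq:alpha-small-drift} and the choice $\eta_0=\Lambda_0(\lambda_\alpha)^{-1}$ from \eqref{eq:Lambda0-eta0-def} are imposed in the hypotheses, and $R_1(\lambda_\alpha)$ is chosen to satisfy \eqref{eq:R1-condition-v2} with $(\lambda,A)=(\lambda_\alpha,A_\alpha)$, exactly as in the theorem. Thus Theorem~\ref{thm:master-contraction} applies verbatim and yields, for the admissible $c_\alpha$ and $\varepsilon_\alpha$ it furnishes and for the associated semimetric $\rho_{\mathcal V_0,\alpha}$ from \eqref{eq:rhoV-master}, the contraction $\cW_{\rho_{\mathcal V_0,\alpha}}(\mu P_t^\alpha,\nu P_t^\alpha)\le e^{-c_\alpha t}\,\cW_{\rho_{\mathcal V_0,\alpha}}(\mu,\nu)$ for all $\mu,\nu$ with finite $\mathcal V_0$-moment; an explicit admissible $c_\alpha$ is then read off from \eqref{eq:rate-explicit} with $(\lambda,A)=(\lambda_\alpha,A_\alpha)$.

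Next I would extract existence and uniqueness of $\pi_\alpha$. The set $\mathcal P_{\mathcal V_0}:=\{\mu:\int_{\R^{2d}}\mathcal V_0\,d\mu<\infty\}$ is nonempty and, equipped with $\cW_{\rho_{\mathcal V_0,\alpha}}$, is a complete metric space: since $f_\lambda$ is increasing, concave and eventually constant, one has a lower bound of the form $\rho_{\mathcal V_0,\alpha}(z,z')\ge f_\lambda(r(z,z'))\ge \kappa\,\min\{|z-z'|,1\}$ for some $\kappa>0$ by Lemma~\ref{lem:r-equivalent}, so convergence in $\cW_{\rho_{\mathcal V_0,\alpha}}$ controls weak convergence, and completeness follows by the usual gluing/tightness argument together with the $\mathcal V_0$-weight — this is the same completeness statement used for kinetic Langevin dynamics in \cite{Eberle}. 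Moreover, the Lyapunov drift \eqref{eq:Lalpha-V0-Lyapunov} with Dynkin's formula and Gr\"onwall's inequality gives $\sup_{t\ge 0}\int_{\R^{2d}}\mathcal V_0\,d(\mu P_t^\alpha)<\infty$ whenever $\mu\in\mathcal P_{\mathcal V_0}$, so $P_t^\alpha$ maps $\mathcal P_{\mathcal V_0}$ into itself. Fixing any $t_0>0$, the map $P_{t_0}^\alpha$ is then a strict contraction on $(\mathcal P_{\mathcal V_0},\cW_{\rho_{\mathcal V_0,\alpha}})$ and hence has a unique fixed point $\pi_\alpha$ by the Banach fixed-point theorem; the semigroup property shows $\pi_\alpha P_s^\alpha$ is also a fixed point of $P_{t_0}^\alpha$ for every $s\ge 0$, whence $\pi_\alpha P_s^\alpha=\pi_\alpha$ and $\pi_\alpha$ is invariant for the whole semigroup, and any invariant probability in $\mathcal P_{\mathcal V_0}$ is a fixed point of every $P_{t_0}^\alpha$, giving uniqueness. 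Taking $\nu=\pi_\alpha$ in the contraction inequality yields the stated convergence to equilibrium.

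I expect the main obstacle to be the soft-analysis part rather than the estimates: confirming that $\rho_{\mathcal V_0,\alpha}$ — only a semimetric, built from the concave, eventually-constant profile $f_\lambda$ and an unbounded Lyapunov weight — induces a complete metric space on $\mathcal P_{\mathcal V_0}$, that the infimum defining $\cW_{\rho_{\mathcal V_0,\alpha}}$ is lower semicontinuous and attained, and that it satisfies a (possibly quasi-)triangle inequality adequate for the fixed-point argument. These points are established in \cite{Eberle} for the $\alpha=0$ case and carry over with only cosmetic changes, since $\rho_{\mathcal V_0,\alpha}$ has the same qualitative structure here; the remaining bookkeeping — that $\lambda_\alpha$, $A_\alpha$, $\eta_0$, $R_1(\lambda_\alpha)$, $c_\alpha$, $\varepsilon_\alpha$ are selected consistently — is inherited directly from Theorem~\ref{thm:master-contraction}, the only new point being that $\lambda_\alpha$ and $A_\alpha$ depend on $\alpha$ alone (through \eqref{eq:hatlambda-explicit}) and not on the auxiliary quantities $\eta_0$, $R_1$, $\varepsilon_\alpha$.
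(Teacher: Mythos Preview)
Your proposal is correct and follows essentially the same approach as the paper: verify that $\mathcal V_0$ is $(\lambda_\alpha,A_\alpha)$-admissible via Proposition~\ref{prop:V0-drift-HFHR}, apply Theorem~\ref{thm:master-contraction} to obtain the contraction, then use completeness of $(\mathcal P_{\mathcal V_0},\cW_{\rho_{\mathcal V_0,\alpha}})$ (citing \cite{Eberle}) together with the Lyapunov moment bound to extract the invariant measure. The only cosmetic difference is that you invoke the Banach fixed-point theorem on $P_{t_0}^\alpha$, whereas the paper argues directly that $(\mu_0 P_t^\alpha)_{t\ge0}$ is Cauchy via the semigroup property and contraction; these are equivalent, and your version has the minor advantage of being more explicit about checking the coercivity and structural hypotheses ($\mathfrak Q\equiv 0$) that the paper leaves implicit.
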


\begin{proof}
    We provide the proof in Appendix~\ref{app:convergence-V0}.
\end{proof}

\medskip

However, we observe that the drift rate $\hat\lambda_\alpha$ in \eqref{eq:hatlambda-explicit} might be smaller than the baseline rate $\lambda$ due to the perturbative treatment of the Hessian-free drift. Consequently, the resulting contraction rate $c(\hat\lambda_\alpha)$ does not yet exhibit acceleration over kinetic Langevin dynamics.
In Section~\ref{sec:acceleration}, we will apply the same abstract
framework with the improved Lyapunov function
$\mathcal V_\alpha=\mathcal V_0+\alpha\mathcal M$ in place of
$\mathcal V_0$ in order to obtain improved contraction rates.

\subsection{From the weighted Wasserstein distance to the 2-Wasserstein distance}
\label{subsec:W2}

In this subsection, we explain how to pass from a contraction in the
weighted Wasserstein distance $\mathcal W_{\rho_{\mathcal V}}$ to a
contraction in the standard Wasserstein distance $\mathcal W_2$.
Recall that the master contraction theorem (Theorem~\ref{thm:master-contraction}) is stated in terms of the weighted cost
$\rho_{\mathcal V}$ built from a Lyapunov function $\mathcal V$.
In our applications we will use two choices:
(i) the \emph{baseline} Lyapunov function $\mathcal V=\mathcal V_0$, for which we can give explicit quadratic bounds,
and (ii) a more general $(\lambda,D)$-Lyapunov function $\mathcal V$ appearing in Theorem~\ref{thm:master-contraction}.
The passage from the weighted Wasserstein distance $\mathcal W_{\rho_{\mathcal V}}$ to the standard 2-Wasserstein distance $\mathcal W_2$ only needs that $\mathcal V$ controls second moments
(via a quadratic lower bound), while $(\lambda,D)$-admissibility will be used later only to obtain uniform moment bounds for $\E[\mathcal V(z_t)]$.
To make constants explicit, we first record the quadratic bounds for the canonical choice $\mathcal V_0$.
First, we show that $\mathcal{V}_{0}(q,p)$ can be lower and upper bounded
by quadratic functions with explicit coefficients.
More precisely, we have the following lemma.

\begin{lemma}\label{lem:quadratic:bounds}
For any $(q,p)\in\R^{2d}$, 
\begin{equation}\label{eq:V0-second-moment}
  c_1'\left(1+|q|^2+|p|^2\right)
  \le 1+\mathcal V_0(q,p)
  \le c_2'\left(1+|q|^2+|p|^2\right),
\end{equation}
where
\begin{align}
c_1' := \min(1, \mu_{\min}),
\qquad
c_2' := \max(1, \mu_{\max}) + U(0)+\frac{L}{2}+\frac{1}{2}|\nabla U(0)|,
\end{align}
where $\mu_{\min}, \mu_{\max}$ are the eigenvalues of $M$ (defined in \eqref{M:matrix}) with explicit formulas given in \eqref{mu:min:max}. 
\end{lemma}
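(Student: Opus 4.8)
The plan is to establish the two-sided quadratic bound on $1+\mathcal V_0(q,p)$ by combining the already-established bound \eqref{eq:V0-equivalent} with an elementary upper bound on $U(q)$ in terms of $|q|^2$. For the lower bound, note that \eqref{eq:V0-equivalent} gives $1+\mathcal V_0(q,p)\ge c_1(1+U(q)+|q|^2+|p|^2)\ge c_1(1+|q|^2+|p|^2)$ since $U\ge 0$ by Assumption~\ref{assump:potential}(i); hence $c_1'=c_1=\min(1,\mu_{\min})$ works with no further effort. The content is therefore entirely in the upper bound: here \eqref{eq:V0-equivalent} only gives $1+\mathcal V_0\le c_2(1+U(q)+|q|^2+|p|^2)$, and we must eliminate the $U(q)$ term, which is not itself quadratic in general.

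For the upper bound, the key step is to bound $U(q)$ by a quadratic. Since $\nabla U$ is $L$-Lipschitz (Assumption~\ref{assump:potential}(ii)), the fundamental theorem of calculus along the segment $[0,q]$ gives $U(q)=U(0)+\int_0^1 \nabla U(tq)\cdot q\,dt$, and estimating $|\nabla U(tq)|\le|\nabla U(0)|+Lt|q|$ yields the standard descent-type inequality $U(q)\le U(0)+|\nabla U(0)|\,|q|+\tfrac{L}{2}|q|^2$. Then absorb the cross term via $|\nabla U(0)|\,|q|\le\tfrac12|\nabla U(0)|+\tfrac12|\nabla U(0)|\,|q|^2\le\tfrac12|\nabla U(0)|(1+|q|^2)$, so that $U(q)\le U(0)+\tfrac12|\nabla U(0)|+\bigl(\tfrac{L}{2}+\tfrac12|\nabla U(0)|\bigr)(1+|q|^2)$. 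Feeding this into \eqref{eq:V0-equivalent} and collecting terms gives $1+\mathcal V_0(q,p)\le\bigl(c_2+U(0)+\tfrac{L}{2}+\tfrac12|\nabla U(0)|\bigr)(1+|q|^2+|p|^2)$ up to a harmless reorganization of constants; one then checks this matches $c_2'=\max(1,\mu_{\max})+U(0)+\tfrac{L}{2}+\tfrac12|\nabla U(0)|$ after recalling $c_2=\max(1,\mu_{\max})$ from \eqref{eq:c1-c2-def}.

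The main (and only mild) obstacle is bookkeeping the constants so that the final coefficients agree exactly with the stated $c_1',c_2'$: in particular one should be slightly careful that the various $1+|q|^2$ absorptions are done so that the $U(0)$, $\tfrac{L}{2}$ and $\tfrac12|\nabla U(0)|$ contributions appear \emph{additively} to $\max(1,\mu_{\max})$ rather than multiplicatively, which is why the cross-term split uses $|q|\le\tfrac12(1+|q|^2)$ rather than a weighted Young inequality. No new ideas beyond the descent lemma and \eqref{eq:V0-equivalent} are needed, and the argument is self-contained given the material already in the excerpt.
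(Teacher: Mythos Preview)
Your plan uses the same key ingredient as the paper's proof---the descent inequality $U(q)\le U(0)+|\nabla U(0)|\,|q|+\tfrac{L}{2}|q|^2$ together with $|q|/(1+|q|^2)\le \tfrac12$---and the lower bound is handled identically. There is, however, a genuine bookkeeping issue in your upper-bound route: feeding the descent bound into \eqref{eq:V0-equivalent} produces
\[
1+\mathcal V_0 \le c_2\bigl(1+U(q)+|q|^2+|p|^2\bigr),
\]
and after substituting your bound on $U(q)$ you obtain $c_2$ \emph{times} an expression of size $1+U(0)+\tfrac{L}{2}+\tfrac12|\nabla U(0)|$, not $c_2$ \emph{plus} that quantity. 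The choice of cross-term split does not fix this; the multiplicative $c_2$ is baked in the moment you invoke \eqref{eq:V0-equivalent}.

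The paper avoids this by bypassing \eqref{eq:V0-equivalent} altogether for the upper bound and instead using the bare decomposition $\mathcal V_0=U(q)+Q(q,p)$ with $Q$ the quadratic form with matrix $M$. It bounds $1+Q(q,p)\le \max(1,\mu_{\max})(1+|q|^2+|p|^2)$ directly, then bounds $U(q)\le \sup_{q}\frac{U(q)}{1+|q|^2}\,(1+|q|^2+|p|^2)$ and estimates that supremum by $U(0)+\tfrac{L}{2}+\tfrac12|\nabla U(0)|$ via the descent lemma and $|q|/(1+|q|^2)\le\tfrac12$. Adding the two bounds gives exactly the additive constant $c_2'=\max(1,\mu_{\max})+U(0)+\tfrac{L}{2}+\tfrac12|\nabla U(0)|$. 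Your argument becomes correct with this small reorganization: work directly with $\mathcal V_0=U+Q$ rather than with \eqref{eq:V0-equivalent}.
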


\begin{proof}
    We provide the proof in Appendix~\ref{proof:lem:quadratic:bounds}.
\end{proof}

In particular, \eqref{eq:V0-second-moment} implies that $\mathcal V_0$ controls the Euclidean second moment on $\R^{2d}$, which is exactly what is needed to compare the weighted Wasserstein distance $\mathcal W_{\rho_{\mathcal V}}$ with the standard $2$-Wasserstein distance $\mathcal W_2$ (we will apply this with $\mathcal V=\mathcal V_0$ below, and more generally with any $\mathcal V$ satisfying a quadratic lower bound).
Moreover, the underlying phase-space distance $r$ defined in
\eqref{eq:r-def-again} is equivalent to the Euclidean distance on $\R^{2d}$:
\begin{equation}\label{eq:r-euclidean-equiv}
  k_1\,|(q,p)-(q',p')|
  \;\le\; r\left((q,p),(q',p')\right)
  \;\le\; k_2\,|(q,p)-(q',p')|,
  \qquad (q,p),(q',p')\in\R^{2d},
\end{equation}
where $k_1,k_2>0$ are explicit constants depending only on $\theta$ and $\gamma$ (see \eqref{eq:explicit_k}).
The first inequality in \eqref{eq:r-euclidean-equiv} is a straightforward consequence of the definition
\eqref{eq:r-def-again}, while the second inequality in \eqref{eq:r-euclidean-equiv} follows from the fact that $r$
is equivalent to the Euclidean distance; see Lemma~\ref{lem:r-equivalent}.

The next lemma quantifies how $\rho_{\mathcal V}$ controls the quadratic
transport cost.

\begin{lemma}\label{lem:rho-controls-W2}
Assume Assumption~\ref{assump:potential} holds. Fix $\eta_0>0$ and let
$f_\lambda$ be defined by \eqref{eq:phi-def}--\eqref{eq:f-def}
with cutoff radius $R_1(\lambda)>0$.  Assume moreover that
\footnote{This positivity condition is ensured, for instance, by the explicit parameter
choice in \eqref{eq:rate-explicit}; see the verification in the proof of
Theorem~\ref{thm:master-contraction}, Region~II, where we show $g_*\ge 1/2$.}
\begin{equation}\label{eq:g-positive}
  g_*(\lambda) := \inf_{0\le s\le R_1(\lambda)} g_\lambda(s) \;>\;0.
\end{equation}
Let $\mathcal V:\R^{2d}\to[1,\infty)$ satisfy the quadratic lower bound:
\begin{equation}\label{eq:V-quadratic-lower-new}
  |z|^2 \le C_V\left(1+\mathcal V(z)\right),\qquad z\in\R^{2d},
\end{equation}
for some constant $C_V\in(0,\infty)$.  For $\varepsilon\in(0,1]$, let $\rho_{\mathcal{V}}(z,z')$ be the
Lyapunov-weighted semimetric defined in \eqref{eq:rho-V-def-again}:
\[
  \rho_{\mathcal V}(z,z')
  := f_\lambda(r(z,z'))\left(1+\varepsilon \mathcal V(z)
                                 +\varepsilon \mathcal V(z')\right).
\]
Then there exists $C_\rho<\infty$ such that for all probability measures
$\mu,\nu$ on $\R^{2d}$ with $\int_{\mathbb{R}^{2d}} \mathcal V\,d\mu+\int_{\mathbb{R}^{2d}} \mathcal V\,d\nu<\infty$,
\[
  \mathcal W_2^2(\mu,\nu)
  \le C_\rho\, \mathcal W_{\rho_{\mathcal V}}(\mu,\nu).
\]
More explicitly, one may take
\begin{equation}\label{eq:Crho-explicit}
  C_\rho
  := \frac1\varepsilon
    \max\left\{
      \frac{k_1^{-2}R_1(\lambda)}{g_*\,c_r},\,
      \frac{4C_V}{c_0}
    \right\},
\end{equation}
where
\[
  c_r := \inf_{0\le s\le R_1(\lambda)}\varphi_\lambda(s),\qquad
  c_0 := f_\lambda(R_1(\lambda))=\int_{0}^{R_1(\lambda)}\varphi_\lambda(s)g_\lambda(s)\,ds,
\]
and $k_1=\frac{\theta}{1+\gamma(1+\theta)}$ is from Lemma~\ref{lem:r-equivalent}.
In particular, when $\mathcal V=\mathcal V_0$, Lemma~\ref{lem:quadratic:bounds}
implies \eqref{eq:V-quadratic-lower-new} with $C_V=1/c_1'$.
\end{lemma}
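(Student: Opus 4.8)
The plan is to reduce the statement to a \emph{pointwise} comparison $|z-z'|^2\le C_\rho\,\rho_{\mathcal V}(z,z')$ valid for all $z,z'\in\R^{2d}$, and then integrate against an arbitrary coupling. The reason a pointwise bound is available at all is that although $f_\lambda$ saturates (it equals the constant $c_0=f_\lambda(R_1(\lambda))$ for $r\ge R_1(\lambda)$) and hence cannot by itself dominate the unbounded quantity $|z-z'|^2$, the Lyapunov weight $1+\varepsilon\mathcal V(z)+\varepsilon\mathcal V(z')$ in \eqref{eq:rho-V-def-again} restores control precisely in the large-distance regime via the quadratic lower bound \eqref{eq:V-quadratic-lower-new}. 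Accordingly I would split into the two regimes $r(z,z')\le R_1(\lambda)$ and $r(z,z')>R_1(\lambda)$.

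In the short-distance regime $r:=r(z,z')\le R_1(\lambda)$ I would combine three elementary facts: Lemma~\ref{lem:r-equivalent} gives $|z-z'|\le k_1^{-1}r$; on $[0,R_1(\lambda)]$ the integrand of \eqref{eq:f-def} obeys $\varphi_\lambda g_\lambda\ge c_r g_*$ (by definition of $c_r$ and by the positivity hypothesis \eqref{eq:g-positive}), so $f_\lambda(r)=\int_0^r\varphi_\lambda(s)g_\lambda(s)\,ds\ge c_r g_*\,r$; and $\rho_{\mathcal V}(z,z')\ge f_\lambda(r)$ since $\mathcal V\ge0$. Together with $r^2\le R_1(\lambda)\,r$ these give
\[
 |z-z'|^2\le k_1^{-2}r^2\le k_1^{-2}R_1(\lambda)\,r\le\frac{k_1^{-2}R_1(\lambda)}{c_r g_*}\,f_\lambda(r)\le\frac{k_1^{-2}R_1(\lambda)}{c_r g_*}\,\rho_{\mathcal V}(z,z').
\]
In the long-distance regime $r(z,z')>R_1(\lambda)$ one has $\rho_{\mathcal V}(z,z')=c_0\big(1+\varepsilon\mathcal V(z)+\varepsilon\mathcal V(z')\big)\ge c_0\varepsilon\big(1+\mathcal V(z)+\mathcal V(z')\big)$ (using $\varepsilon\le1$), while $|z-z'|^2\le2|z|^2+2|z'|^2\le4C_V\big(1+\mathcal V(z)+\mathcal V(z')\big)$ by \eqref{eq:V-quadratic-lower-new} and $\mathcal V\ge0$; hence $|z-z'|^2\le\frac{4C_V}{c_0\varepsilon}\,\rho_{\mathcal V}(z,z')$. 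Since $\varepsilon\le1$, both regime constants are bounded by $C_\rho=\frac1\varepsilon\max\{k_1^{-2}R_1(\lambda)/(g_*c_r),\,4C_V/c_0\}$, which is the constant in \eqref{eq:Crho-explicit}, so the pointwise bound holds with this $C_\rho$.

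It remains to pass to the measures. Because $f_\lambda\le c_0$, for any coupling $\Gamma\in\Pi(\mu,\nu)$ we have $\int\rho_{\mathcal V}\,d\Gamma\le c_0\big(1+\varepsilon\int\mathcal V\,d\mu+\varepsilon\int\mathcal V\,d\nu\big)<\infty$ under the moment hypothesis, so the pointwise inequality may be integrated: $\int|z-z'|^2\,d\Gamma\le C_\rho\int\rho_{\mathcal V}\,d\Gamma$. Since $\mathcal W_2^2(\mu,\nu)\le\int|z-z'|^2\,d\Gamma$ for every $\Gamma$, taking the infimum over $\Gamma\in\Pi(\mu,\nu)$ yields $\mathcal W_2^2(\mu,\nu)\le C_\rho\,\mathcal W_{\rho_{\mathcal V}}(\mu,\nu)$. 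For the final assertion, Lemma~\ref{lem:quadratic:bounds} gives $c_1'(1+|z|^2)\le1+\mathcal V_0(z)$, i.e.\ \eqref{eq:V-quadratic-lower-new} with $\mathcal V=\mathcal V_0$ and $C_V=1/c_1'$. I do not anticipate a genuine analytical obstacle here; the only thing requiring attention is the constant bookkeeping across the two regimes, together with the structural observation that the boundedness of $f_\lambda$ forces the Lyapunov weight (and the hypothesis \eqref{eq:V-quadratic-lower-new}) to carry all the large-distance control.
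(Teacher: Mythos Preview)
Your proposal is correct and follows essentially the same approach as the paper: a pointwise two-regime bound (short range via $f_\lambda(r)\ge c_r g_* r$ together with $r\le k_1^{-1}|z-z'|$ and $r\le R_1(\lambda)$; long range via the quadratic lower bound on $\mathcal V$ and $f_\lambda\equiv c_0$), followed by integration against an arbitrary coupling and an infimum. The only cosmetic difference is that the paper first bounds $|z-z'|^2$ by $C\,f_\lambda(r)\big(1+\mathcal V(z)+\mathcal V(z')\big)$ uniformly over both regimes and then applies $1+\mathcal V+\mathcal V'\le\varepsilon^{-1}(1+\varepsilon\mathcal V+\varepsilon\mathcal V')$ once at the end, whereas you absorb the $\varepsilon$ factor inside the long-distance case; the resulting constant is the same.
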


\begin{proof}
    We provide the proof in Appendix~\ref{app:rho-controls-W2}.
\end{proof}

Combining Lemma~\ref{lem:rho-controls-W2} with the contraction
\eqref{eq:W-rho-contraction}, we obtain the following corollary, which
upgrades exponential contraction in $\cW_{\rho_{\mathcal V}}$ to an
estimate in the standard $2$-Wasserstein distance $\cW_2$.

\begin{corollary}[Exponential contraction in $\mathcal W_2$]\label{cor:W2-contraction}
Under the assumptions of Theorem~\ref{thm:master-contraction}, let $c>0$ and
$\varepsilon=\frac{4c}{\gamma(d+D)}$ be as in \eqref{eq:epsilon-def}, 
and let $\rho_{\mathcal V}$ be defined in \eqref{eq:rhoV-master}.  Let $C_{\rho}<\infty$
be the constant from Lemma~\ref{lem:rho-controls-W2} (computed with this $\varepsilon$).
Then, for all probability measures $\mu,\nu$ such that
$\int_{\R^{2d}}\mathcal V\,d\mu+\int_{\R^{2d}}\mathcal V\,d\nu<\infty$ and all $t\ge0$,
\[
  \mathcal W_2(\mu P_t^\alpha,\nu P_t^\alpha)
  \;\le\;
  C_{\rho}^{1/2}\,e^{-ct/2}
  \left(\cW_{\rho_{\mathcal V}}(\mu,\nu)\right)^{1/2}.
\]
In particular, if $\pi_\alpha$ is an invariant probability measure with
$\int_{\R^{2d}}\mathcal V\,d\pi_\alpha<\infty$, then taking $\nu=\pi_\alpha$
yields exponential convergence of $\mu P_t^\alpha$ to $\pi_\alpha$ in $\mathcal W_2$.
\end{corollary}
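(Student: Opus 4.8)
The plan is to obtain the $\mathcal W_2$ bound by chaining the two estimates already in place: the weighted-Wasserstein contraction of Theorem~\ref{thm:master-contraction} and the comparison $\mathcal W_2^2\le C_\rho\,\mathcal W_{\rho_{\mathcal V}}$ of Lemma~\ref{lem:rho-controls-W2}. The only point beyond ``cite and combine'' is to verify that the hypotheses of Lemma~\ref{lem:rho-controls-W2} remain valid for the evolved measures, i.e.\ that $\mu P_t^\alpha$ and $\nu P_t^\alpha$ still have finite $\mathcal V$-moments and that $\mathcal W_{\rho_{\mathcal V}}$ is finite along the flow.

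First I would establish propagation of the $\mathcal V$-moment. Using the drift inequality \eqref{eq:generic-drift-again} for $\mathcal V$ together with Dynkin's formula---justified by a standard localization argument, with non-explosion of the HFHR SDE guaranteed by the Lipschitz-gradient and dissipativity conditions of Assumption~\ref{assump:potential} and the coercive lower bound \eqref{eq:V-coercive} used to dominate the local martingale---one gets $\frac{d}{dt}\E[\mathcal V(z_t)]\le\gamma(d+D)-\gamma\lambda\,\E[\mathcal V(z_t)]$, hence by Gr\"onwall
\[
  \int_{\R^{2d}}\mathcal V\,d(\mu P_t^\alpha)
  \;\le\; e^{-\gamma\lambda t}\int_{\R^{2d}}\mathcal V\,d\mu+\frac{d+D}{\lambda}
  \;<\;\infty
\]
whenever $\int_{\R^{2d}}\mathcal V\,d\mu<\infty$, and similarly for $\nu$. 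Since $f_\lambda$ is bounded by $c_0=f_\lambda(R_1(\lambda))$, the semimetric obeys $\rho_{\mathcal V}(z,z')\le c_0\bigl(1+\varepsilon\mathcal V(z)+\varepsilon\mathcal V(z')\bigr)$, so evaluating $\mathcal W_{\rho_{\mathcal V}}$ on the product coupling shows it is finite for any pair of measures with finite $\mathcal V$-moments---in particular for $(\mu,\nu)$ and for $(\mu P_t^\alpha,\nu P_t^\alpha)$.

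Then I would simply combine. Theorem~\ref{thm:master-contraction}, applied with the prescribed $\varepsilon=\frac{4c}{\gamma(d+D)}$ from \eqref{eq:epsilon-def} and the semimetric $\rho_{\mathcal V}$ of \eqref{eq:rhoV-master}, gives $\mathcal W_{\rho_{\mathcal V}}(\mu P_t^\alpha,\nu P_t^\alpha)\le e^{-ct}\,\mathcal W_{\rho_{\mathcal V}}(\mu,\nu)$. Because $\mu P_t^\alpha$ and $\nu P_t^\alpha$ have finite $\mathcal V$-moments by the previous step, Lemma~\ref{lem:rho-controls-W2} applies to them and yields
\[
  \mathcal W_2^2(\mu P_t^\alpha,\nu P_t^\alpha)
  \;\le\; C_\rho\,\mathcal W_{\rho_{\mathcal V}}(\mu P_t^\alpha,\nu P_t^\alpha)
  \;\le\; C_\rho\,e^{-ct}\,\mathcal W_{\rho_{\mathcal V}}(\mu,\nu),
\]
and taking square roots gives the claimed inequality. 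For the final assertion, take $\nu=\pi_\alpha$: invariance gives $\pi_\alpha P_t^\alpha=\pi_\alpha$, and since $\int_{\R^{2d}}\mathcal V\,d\pi_\alpha<\infty$ the bound reads $\mathcal W_2(\mu P_t^\alpha,\pi_\alpha)\le C_\rho^{1/2}e^{-ct/2}\,\mathcal W_{\rho_{\mathcal V}}(\mu,\pi_\alpha)^{1/2}\to0$ as $t\to\infty$.

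The main (and rather mild) obstacle is the moment-propagation step: one must legitimately apply It\^o/Dynkin to the unbounded, only almost-everywhere twice-differentiable Lyapunov function $\mathcal V$, which calls for a localization/stopping-time argument and an a priori non-explosion bound for the HFHR dynamics. All of this is routine under Assumption~\ref{assump:potential}; everything after it is a one-line concatenation of Theorem~\ref{thm:master-contraction} and Lemma~\ref{lem:rho-controls-W2}.
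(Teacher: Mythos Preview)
Your proposal is correct and follows essentially the same approach as the paper: apply Lemma~\ref{lem:rho-controls-W2} to the evolved pair $(\mu P_t^\alpha,\nu P_t^\alpha)$, then use the contraction of Theorem~\ref{thm:master-contraction}, and take square roots. In fact you are slightly more careful than the paper's proof, which applies Lemma~\ref{lem:rho-controls-W2} to the evolved measures without explicitly verifying that the $\mathcal V$-moments propagate; your Gr\"onwall argument based on the drift condition \eqref{eq:generic-drift-again} fills this small gap cleanly.
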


\begin{proof}
    We provide the proof in Appendix~\ref{app:W2-contraction}.
\end{proof}

By combining Corollary~\ref{cor:W2-contraction} with the baseline Lyapunov function $\mathcal V_0$ and the existence and uniqueness of the invariant measure from Corollary~\ref{cor:convergence-V0}, we obtain the following baseline $\mathcal W_2$ convergence estimate.

\begin{corollary}[Baseline exponential convergence in $\mathcal W_2$]
\label{cor:W2-convergence-baseline}
Under the assumptions of Corollary~\ref{cor:convergence-V0}, apply
Theorem~\ref{thm:master-contraction} with $\mathcal V=\mathcal V_0$ and
$(\lambda,D)=(\hat\lambda_\alpha,A_\alpha)$. Let $c_\alpha>0$ and
$\varepsilon_\alpha=\frac{4c_\alpha}{\gamma(d+A_\alpha)}$ be the resulting parameters
(one admissible explicit choice of $c_\alpha$ is given by \eqref{eq:rate-explicit}
with $\lambda=\hat\lambda_\alpha$ and $D=A_\alpha$), and let
$\rho_{\mathcal V_0,\alpha}$ be the corresponding weighted semimetric.
Let $C_{\rho,\alpha}$ be the constant from Lemma~\ref{lem:rho-controls-W2} associated
with $\mathcal V_0$ and computed with $\varepsilon=\varepsilon_\alpha$.
Then, for all $t\ge0$ and any probability measure $\mu$ with
$\int_{\R^{2d}}\mathcal V_0\,d\mu<\infty$,
\[
  \mathcal W_2(\mu P_t^\alpha, \pi_\alpha)
  \;\le\;
  C_{\rho,\alpha}^{1/2}\,e^{-\frac12 c_\alpha t}
  \left(\cW_{\rho_{\mathcal V_0,\alpha}}(\mu,\pi_\alpha)\right)^{1/2}.
\]
\end{corollary}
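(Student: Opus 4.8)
The statement to prove is Corollary~\ref{cor:W2-convergence-baseline}, which is essentially a direct concatenation of three results that precede it in the excerpt, so the proof is a matter of checking that the hypotheses line up and then composing the conclusions. First I would invoke Corollary~\ref{cor:convergence-V0}: under Assumption~\ref{assump:potential} and $\alpha\in[0,\alpha_0]$, with $\eta_0=(\Lambda_0(\lambda_\alpha))^{-1}$ and the smallness condition \eqref{eq:alpha-small-drift}, the HFHR dynamics admits a unique invariant probability measure $\pi_\alpha$ in the class $\{\mu:\int\mathcal V_0\,d\mu<\infty\}$, and $\mathcal W_{\rho_{\mathcal V_0,\alpha}}(\mu P_t^\alpha,\pi_\alpha)\le e^{-c_\alpha t}\,\mathcal W_{\rho_{\mathcal V_0,\alpha}}(\mu,\pi_\alpha)$ for every $\mu$ with $\int\mathcal V_0\,d\mu<\infty$ (finiteness of the $\mathcal V_0$-moment of $\pi_\alpha$ being part of that corollary's conclusion).

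Next I would apply Corollary~\ref{cor:W2-contraction} with the choice $\mathcal V=\mathcal V_0$ and $(\lambda,D)=(\hat\lambda_\alpha,A_\alpha)$, which is exactly the instance of Theorem~\ref{thm:master-contraction} used in Corollary~\ref{cor:convergence-V0}. This requires the quadratic lower bound \eqref{eq:V-quadratic-lower-new} for $\mathcal V_0$, which is furnished by Lemma~\ref{lem:quadratic:bounds} with $C_V=1/c_1'$, and the positivity condition \eqref{eq:g-positive} on $g_*(\lambda)$, which the footnote in Lemma~\ref{lem:rho-controls-W2} notes is guaranteed by the explicit parameter choice \eqref{eq:rate-explicit} (Region~II of the proof of Theorem~\ref{thm:master-contraction} shows $g_*\ge 1/2$). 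With $\varepsilon_\alpha=\frac{4c_\alpha}{\gamma(d+A_\alpha)}$ as in \eqref{eq:epsilon-def}, Corollary~\ref{cor:W2-contraction} yields, for all $\mu,\nu$ with finite $\mathcal V_0$-moments,
\[
  \mathcal W_2(\mu P_t^\alpha,\nu P_t^\alpha)
  \;\le\;
  C_{\rho,\alpha}^{1/2}\,e^{-c_\alpha t/2}\,
  \bigl(\mathcal W_{\rho_{\mathcal V_0,\alpha}}(\mu,\nu)\bigr)^{1/2},
\]
where $C_{\rho,\alpha}<\infty$ is the constant from Lemma~\ref{lem:rho-controls-W2} computed with $\varepsilon=\varepsilon_\alpha$.

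Finally I would specialize to $\nu=\pi_\alpha$. Since $\pi_\alpha$ is invariant, $\pi_\alpha P_t^\alpha=\pi_\alpha$, and $\int\mathcal V_0\,d\pi_\alpha<\infty$ by Corollary~\ref{cor:convergence-V0}, so the displayed inequality applies with $\nu=\pi_\alpha$ and gives precisely
\[
  \mathcal W_2(\mu P_t^\alpha,\pi_\alpha)
  \;\le\;
  C_{\rho,\alpha}^{1/2}\,e^{-\frac12 c_\alpha t}\,
  \bigl(\mathcal W_{\rho_{\mathcal V_0,\alpha}}(\mu,\pi_\alpha)\bigr)^{1/2},
\]
for all $t\ge0$ and all $\mu$ with $\int\mathcal V_0\,d\mu<\infty$, which is the claimed estimate. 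There is no genuine obstacle here; the only point that needs a sentence of care is bookkeeping, namely confirming that the parameters $\eta_0$, $R_1(\lambda)$, $c_\alpha$, $\varepsilon_\alpha$ used in Corollary~\ref{cor:convergence-V0}, in Corollary~\ref{cor:W2-contraction}, and in Lemma~\ref{lem:rho-controls-W2} are instantiated consistently (all with $(\lambda,D)=(\hat\lambda_\alpha,A_\alpha)$ and the same $\mathcal V_0$), and that the admissibility of $\mathcal V_0$ for $\cL_\alpha$ on $[0,\alpha_0]$ — which is what makes Theorem~\ref{thm:master-contraction} applicable at all — comes from Proposition~\ref{prop:V0-drift-HFHR} together with the coercivity of $\mathcal V_0$ implied by the lower bound in \eqref{eq:V0-equivalent}.
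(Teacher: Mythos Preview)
Your proposal is correct and follows essentially the same approach as the paper: invoke Proposition~\ref{prop:V0-drift-HFHR} for admissibility, apply Corollary~\ref{cor:W2-contraction} with $\mathcal V=\mathcal V_0$ and $(\lambda,D)=(\hat\lambda_\alpha,A_\alpha)$, then specialize to $\nu=\pi_\alpha$ using the existence, uniqueness, and finite $\mathcal V_0$-moment of $\pi_\alpha$ from Corollary~\ref{cor:convergence-V0}. Your version is in fact slightly more careful than the paper's in spelling out the hypothesis checks (the quadratic lower bound via Lemma~\ref{lem:quadratic:bounds}, the positivity $g_*\ge 1/2$, and the coercivity of $\mathcal V_0$).
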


\begin{proof}
We provide the proof in Appendix~\ref{proof:cor:W2-convergence-baseline}.
\end{proof}

%%%%%%%%%%%%%%%%%%%%%%%%%%%%%%%%%%%%%%%%%%%%%%%%%%%%%%%%%%%%%%%

\section{Acceleration Analysis}
\label{sec:acceleration}

Corollary~\ref{cor:convergence-V0} yields exponential convergence of the HFHR
dynamics for $\alpha\in[0,\alpha_0]$ and
$\alpha \le (1-\kappa_{\mathrm{adjust}})\frac{\eta_0}{1+\eta_0}\frac{\gamma}{L}$,
when the baseline Lyapunov function $\mathcal V_0$ is used. However, the corresponding Lyapunov drift rate
$\hat\lambda_\alpha=\lambda-\mathcal O(\alpha)$ (Proposition~\ref{prop:V0-drift-HFHR})
is slightly smaller than $\lambda$, and therefore the contraction parameter
that can be selected in Theorem~\ref{thm:master-contraction} need not improve
over the unperturbed case $\alpha=0$.
In this section we show that, under an additional structural condition on $U$,
one can construct an \emph{improved} Lyapunov function $\mathcal V_\alpha$
whose drift rate increases at first order in $\alpha$, and reapply the abstract
contraction framework of Section~\ref{sec:global-contractivity} to obtain
an accelerated convergence bound.

\subsection{Structure condition and refined Lyapunov function}

We introduce the additional structural assumption that allows for a first-order
improvement of the Lyapunov drift.

\begin{assumption}[Asymptotically linear gradient]\label{assump:asymptotic-linear-drift}
There exist a constant $C_{\mathrm{linear}}>0$, a symmetric positive definite matrix
$Q_\infty\in\R^{d\times d}$, and a nonincreasing function
$\varrho:[0,\infty)\to[0,\infty)$ with $\varrho(r)\to 0$ as $r\to\infty$ such that
\begin{equation}\label{eq:gradU-asymptotic-assump}
  \left|\nabla U(q)-Q_\infty q\right|
  \le \varrho(|q|)\,|q|,
  \qquad |q|\ge C_{\mathrm{linear}}.
\end{equation}
\end{assumption}

Assumption~\ref{assump:asymptotic-linear-drift} means that $\nabla U$ is
asymptotically linear in a uniform relative sense: the ratio
$|\nabla U(q)-Q_\infty q|/|q|$ vanishes as $|q|\to\infty$.
Integrating \eqref{eq:gradU-asymptotic-assump} along rays yields the quadratic
tail behavior
$U(q)=\frac12 q^\top Q_\infty q + o(|q|^2)$ as $|q|\to\infty$.
A typical class covered by this assumption is
$U(q)=\frac12 q^\top Q_\infty q + W(q)$ with $\nabla W(q)=o(|q|)$ as $|q|\to\infty$.
We will verify in Section~\ref{sec:case-study} that the examples including multi-well potentials (Section~\ref{sec:case:multi}), Bayesian linear regression with an $L^p$
regularizer (Section~\ref{sec:case:L:p}), and Bayesian binary classification (Section~\ref{sec:case:classification}) all satisfy Assumption~\ref{assump:asymptotic-linear-drift}.

To motivate the refined Lyapunov construction, we first record the exact
contribution of the additional HFHR drift $\mathcal A'$ in \eqref{eq:Aprime-def} acting on the baseline
Lyapunov function $\mathcal V_0$.

\begin{lemma}[Exact decomposition of the interaction drift]
\label{lem:exact-drift-decomp}
Let $\mathcal V_0$ and $\mathcal A'$ be defined by \eqref{eq:V0-general-quadratic} and \eqref{eq:Aprime-def} respectively. Then, for any
potential $U\in C^1(\R^d)$,
\begin{equation}\label{eq:exact-drift-identity}
\mathcal A'\mathcal V_0(q,p)
= -|\nabla U(q)|^2
- \frac{\gamma^2}{2}(1-\lambda)\,\nabla U(q)\cdot q
- \frac{\gamma}{2}\,\nabla U(q)\cdot p.
\end{equation}
\end{lemma}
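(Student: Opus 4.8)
The identity is a direct first-order computation, so the plan is simply to differentiate $\mathcal V_0$ in the position variable and contract with $-\nabla U(q)$. First I would recall from the definition \eqref{eq:Aprime-def} that $\mathcal A'\mathcal V_0(q,p) = -\nabla U(q)\cdot\nabla_q\mathcal V_0(q,p)$, so only the $q$-gradient of $\mathcal V_0$ is needed; in particular only $C^1$ regularity of $U$ is used, consistent with Assumption~\ref{assump:potential}(i).

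Next I would compute $\nabla_q\mathcal V_0$ term by term from \eqref{eq:V0-general-quadratic}. The term $U(q)$ contributes $\nabla U(q)$; the term $\tfrac{\gamma^2}{4}|q+\gamma^{-1}p|^2$ contributes $\tfrac{\gamma^2}{2}(q+\gamma^{-1}p)$; the term $\tfrac{\gamma^2}{4}|\gamma^{-1}p|^2$ contributes $0$; and the term $-\tfrac{\gamma^2}{4}\lambda|q|^2$ contributes $-\tfrac{\gamma^2\lambda}{2}q$. Collecting these and combining the two quadratic pieces gives
\[
  \nabla_q\mathcal V_0(q,p)
  = \nabla U(q) + \frac{\gamma^2}{2}(1-\lambda)\,q + \frac{\gamma}{2}\,p .
\]

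Finally I would take the inner product with $-\nabla U(q)$, which distributes over the three summands to yield exactly $-|\nabla U(q)|^2 - \tfrac{\gamma^2}{2}(1-\lambda)\,\nabla U(q)\cdot q - \tfrac{\gamma}{2}\,\nabla U(q)\cdot p$, i.e.\ \eqref{eq:exact-drift-identity}. There is no genuine obstacle here; the only point requiring a moment's care is bookkeeping the factors of $\gamma$ and $\lambda$ when merging the gradient of $|q+\gamma^{-1}p|^2$ with that of $-\lambda|q|^2$, which is routine.
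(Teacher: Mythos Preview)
Your proposal is correct and follows essentially the same approach as the paper's own proof: both recall $\mathcal A'=-\nabla U(q)\cdot\nabla_q$, compute $\nabla_q\mathcal V_0$ term by term from \eqref{eq:V0-general-quadratic} to obtain $\nabla U(q)+\tfrac{\gamma^2}{2}(1-\lambda)q+\tfrac{\gamma}{2}p$, and then contract with $-\nabla U(q)$. There is nothing to add.
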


\begin{proof}
    We provide the proof in Appendix~\ref{app:exact-drift-decomp}.
\end{proof}

Heuristically, under Assumption~\ref{assump:asymptotic-linear-drift}, the interaction drift $\mathcal A'\mathcal V_0$ behaves like a negative multiple of $\mathcal V_0$ in the spatial tail, which suggests introducing a corrector $\mathcal M$ to realize a uniform drift gain. It motivates the structural condition and the constant $c_{\mathrm{imp}}$ introduced in the following lemma.

\begin{lemma}[First-order improvement]\label{lem:first-order-improvement}
Assume that $U$ satisfies Assumption~\ref{assump:potential} and
Assumption~\ref{assump:asymptotic-linear-drift}, and let $\mathcal V_0$
be the kinetic Langevin Lyapunov function defined in
\eqref{eq:V0-general-quadratic}. Then there exists a function
$\mathcal M:\R^{2d}\to\R$ with the following properties:

\begin{enumerate}[label=(\roman*)]
  \item \textbf{Growth and regularity.}
    The function $\mathcal M$ is $C^2$ and has at most quadratic growth:
    \begin{equation}\label{eq:M-growth}
      |\mathcal M(q,p)|
      \;\le\; C_\mathcal M\left(1+|q|^2+|p|^2\right),
      \qquad (q,p)\in\R^{2d},
    \end{equation}
    where 
    \begin{equation}\label{eq:C_M-def}
      C_\mathcal M:= \frac{\|\mathsf K\|_{\mathrm{op}}}{2}<\infty,
    \end{equation}
    where $\mathsf K$ is given in \eqref{eq:K-matrix}
    and its first derivatives have at most linear growth:
    \begin{equation}\label{eq:gradM-growth}
      |\nabla_q \mathcal M(q,p)| + |\nabla_p \mathcal M(q,p)|
      \;\le\; C_\mathcal M\left(1+|q|+|p|\right),
      \qquad (q,p)\in\R^{2d}.
    \end{equation}
    Moreover, 
    \begin{equation}\label{eq:lapM-growth}
      |\Delta_q \mathcal M(q,p)| + |\Delta_p \mathcal M(q,p)|
      \;\le\; C_\Delta\left(1+\mathcal V_0(q,p)\right),
      \qquad (q,p)\in\R^{2d},
    \end{equation}
    where
    \begin{equation}\label{eq:C_Delta-def}
      C_\Delta:=2d\,\|\mathsf K\|_{\mathrm{op}}<\infty.
    \end{equation}

  \item \textbf{First-order improvement.}
    There exist explicit constants $\underline c_{\mathrm{imp}}>0$ and $C_{\mathrm{imp}}\ge0$
    such that
    \begin{equation}\label{eq:improvement-condition}
      \mathcal A_0 \mathcal M(q,p) + \mathcal A'\mathcal V_0(q,p)
      \;\le\;
      C_{\mathrm{imp}} - \underline c_{\mathrm{imp}}\,\mathcal V_0(q,p),
      \qquad (q,p)\in\R^{2d},
    \end{equation}
    where $\mathcal A_0$ and $\mathcal A'$ are defined in
    \eqref{eq:A0-def}--\eqref{eq:Aprime-def}. Moreover, the constant
    $\underline c_{\mathrm{imp}}$ can be chosen as
    \begin{equation}\label{eq:cimp-lemma}
      \underline c_{\mathrm{imp}}
      :=
      \frac{3}{8}\cdot
      \frac{
        a_{\min}+1-\sqrt{(a_{\min}-1)^2+\gamma^2}
      }{
        a_{\max}+1+\sqrt{(a_{\max}-1)^2+\gamma^2}
        \;+\;
        8\delta_U(R_0)
      },
    \end{equation}
    and
    \begin{equation}
      C_{\mathrm{imp}}
      := \sup_{p,q\in\mathbb{R}^{d}:|q|\leq R_{0}}
          \left\{
            \mathcal A_0\mathcal M(q,p) + \mathcal A'\mathcal V_0(q,p)
            + \underline c_{\mathrm{imp}}\mathcal V_0(q,p)
          \right\}
      <\infty,
    \end{equation}
    where
    \begin{equation}\label{eq:amin-amax-lemma}
      a_{\min}:=\lambda_{\min}(Q_\infty)+\frac{\gamma^2}{2}(1-\lambda),
      \qquad
      a_{\max}:=\lambda_{\max}(Q_\infty)+\frac{\gamma^2}{2}(1-\lambda),
    \end{equation}
    and $\delta_U(R)$ is defined by
    \[
      \delta_U(R)
      :=\sup_{|q|\ge R}\frac{\left|U(q)-\frac12\langle Q_\infty q,q\rangle\right|}{1+|q|^2},
      \qquad R\ge 1.
    \]
    This choice suffices since on $\{|q|\ge R_0\}$ we have the uniform negative drift
    $\mathcal A_0\mathcal M+\mathcal A'\mathcal V_0\le -\underline c_{\rm imp}\mathcal V_0$.
    Finally, the cutoff radius $R_0$ is explicitly defined as follows.
    Let $Q_\infty$ and $\varrho(\cdot)$ be as in Assumption~\ref{assump:asymptotic-linear-drift}.
    Define the tail modulus
    \begin{equation}\label{defn:tail:modulus}
      \rho_\nabla(R)
      :=\sup_{|q|\ge R}\frac{\left|\nabla U(q)-Q_\infty q\right|}{|q|},
      \qquad R\ge 1.
    \end{equation}
    Note that by Assumption~\ref{assump:asymptotic-linear-drift}, $\rho_\nabla(R)\le \varrho(R)$ for all $R\ge C_{\mathrm{linear}}$.
    Then set
    \begin{equation}\label{eq:R0-def-lemma}
      \rho_\star
      :=\frac{-A+\sqrt{A^2+\frac54\underline{a}}}{2},
      \quad
      A
      := 2\left(\left\|\mathsf K_{pq}\right\|_{\mathrm{op}}+\left\|\mathsf K_{pp}\right\|_{\mathrm{op}}\right)
         +4\lambda_{\max}(Q_\infty)+\gamma^2|1-\lambda|+\gamma,
    \end{equation}
    and
    \begin{equation}\label{eq:R0-choice-lemma}
      R_0:=\inf\{R\ge \max\{1,C_{\mathrm{linear}}\}:\rho_\nabla(R)\le\rho_\star\},
    \end{equation}
    where $\underline{a}$ is given in \eqref{eq:alpha-bounds} in the proof.
\end{enumerate}

In particular, $\mathcal M$ can be chosen to be a quadratic polynomial
in $(q,p)\in\mathbb{R}^{2d}$ such that $\mathcal M(z)=\frac12 z^\top \mathsf K z$,
where
\begin{equation}\label{eq:K-matrix}
  \mathsf K=\begin{pmatrix}\mathsf K_{qq}&\mathsf K_{qp}\\ \mathsf K_{pq}&\mathsf K_{pp}\end{pmatrix},
\end{equation}
is symmetric and $\mathsf K$ is the solution to
\[
  B^\top \mathsf K + \mathsf K B = C_{B_1},
  \qquad
  B_1(z)=\frac12 z^\top C_{B_1} z,
\]
with
\[
  B_1(q,p) := -Q(q,p) - \left(
      \frac12|p|^2 + \frac{\gamma}{2}\langle q,p\rangle
      + \frac12\left\langle\left(Q_\infty+\frac{\gamma^2}{2}(1-\lambda)I_d\right)q,q\right\rangle
    \right).
\]
Equivalently, $C_{B_1}=\nabla^2 B_1$ is the explicit symmetric matrix associated with the quadratic form $B_1$.
\end{lemma}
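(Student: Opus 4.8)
\emph{Plan.} We take $\mathcal M$ to be precisely the quadratic form announced in the statement, $\mathcal M(z)=\tfrac12 z^\top \mathsf K z$ with $\mathsf K$ the symmetric solution of the Lyapunov equation $B^\top\mathsf K+\mathsf K B=C_{B_1}$, where $B:=\begin{pmatrix}0&I_d\\ -Q_\infty&-\gamma I_d\end{pmatrix}$ is the Jacobian of the $\mathcal A_0$-drift with $\nabla U(q)$ replaced by its asymptotic linearization $Q_\infty q$, and $C_{B_1}:=\nabla^2 B_1$. The first task is to show this object is well defined with the claimed norm bounds. Diagonalizing the symmetric positive-definite matrix $Q_\infty$ reduces $B$ to a family of $2\times2$ blocks $\begin{pmatrix}0&1\\ -\mu&-\gamma\end{pmatrix}$, $\mu\in\mathrm{spec}(Q_\infty)\subset(0,\infty)$, whose eigenvalues $\tfrac12(-\gamma\pm\sqrt{\gamma^2-4\mu})$ all have strictly negative real part; hence $B$ is Hurwitz with an explicit spectral gap. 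Therefore the Lyapunov equation has the unique solution $\mathsf K=-\int_0^\infty e^{sB^\top}C_{B_1}e^{sB}\,ds$, automatically symmetric since $C_{B_1}$ is, and $\|\mathsf K\|_{\mathrm{op}}\le\|C_{B_1}\|_{\mathrm{op}}\int_0^\infty\|e^{sB}\|_{\mathrm{op}}^2\,ds<\infty$; writing $C_{B_1}$ out explicitly gives the quantitative value of $\|\mathsf K\|_{\mathrm{op}}$, hence of $C_{\mathcal M}=\tfrac12\|\mathsf K\|_{\mathrm{op}}$ and $C_\Delta=2d\|\mathsf K\|_{\mathrm{op}}$.

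Part (i) is then immediate: since $\mathcal M$ is a quadratic polynomial it is $C^\infty$; $|\mathcal M(z)|\le\tfrac12\|\mathsf K\|_{\mathrm{op}}|z|^2$; $|\nabla\mathcal M(z)|=|\mathsf K z|\le\|\mathsf K\|_{\mathrm{op}}|z|$; and $\Delta_q\mathcal M=\Tr(\mathsf K_{qq})$, $\Delta_p\mathcal M=\Tr(\mathsf K_{pp})$ are constants of size at most $d\|\mathsf K\|_{\mathrm{op}}$ each, so bounded by $C_\Delta(1+\mathcal V_0)$ since $\mathcal V_0\ge0$. Splitting $|z|^2\le1+|q|^2+|p|^2$ and $|z|\le1+|q|+|p|$ yields \eqref{eq:M-growth}--\eqref{eq:lapM-growth}.

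Part (ii) is the heart of the matter. Because $\mathcal M$ is quadratic, $\nabla\mathcal M(z)=\mathsf K z$ and the linearized drift identity
\[
  \mathcal A_0^{\mathrm{lin}}\mathcal M(z)=(Bz)^\top\mathsf K z=\tfrac12 z^\top(B^\top\mathsf K+\mathsf K B)z=\tfrac12 z^\top C_{B_1}z=B_1(z)
\]
holds, where $\mathcal A_0^{\mathrm{lin}}$ is $\mathcal A_0$ with $\nabla U$ replaced by $Q_\infty q$. Let $Q(q,p)$ be the quadratic form obtained from $\mathcal A'\mathcal V_0$ (evaluated via Lemma~\ref{lem:exact-drift-decomp}) under the same linearization, so that, by the definition of $B_1$, $B_1+Q=-\mathcal V_0^{\mathrm{lin}}$, where $\mathcal V_0^{\mathrm{lin}}(q,p):=\tfrac12|p|^2+\tfrac\gamma2\langle q,p\rangle+\tfrac12\langle(Q_\infty+\tfrac{\gamma^2}{2}(1-\lambda)I_d)q,q\rangle$ is the quadratic part of $\mathcal V_0$ (one checks this equals the bracketed term in the definition of $B_1$). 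Writing $r(q):=\nabla U(q)-Q_\infty q$ and subtracting the linearized from the exact expressions gives the exact decomposition
\[
  \mathcal A_0\mathcal M(z)+\mathcal A'\mathcal V_0(z)=-\mathcal V_0^{\mathrm{lin}}(z)+\mathcal E(z),
\]
with $\mathcal E(z)=-r(q)\cdot(\mathsf K_{pq}q+\mathsf K_{pp}p)-2\langle Q_\infty q,r(q)\rangle-|r(q)|^2-\tfrac{\gamma^2}{2}(1-\lambda)\langle r(q),q\rangle-\tfrac\gamma2\langle r(q),p\rangle$. By \eqref{defn:tail:modulus}, $|r(q)|\le\rho_\nabla(|q|)|q|$ for $|q|\ge\max\{1,C_{\mathrm{linear}}\}$, so on $\{|q|\ge R_0\}$ each term is a fixed quadratic times one power of $\rho_\nabla$ (using $\rho_\nabla\le1$ there to absorb $|r|^2$), giving $|\mathcal E(z)|\le A\,\rho_\nabla(|q|)\,|z|^2$ with $A$ as in \eqref{eq:R0-def-lemma}. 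Reducing the quadratic form $\mathcal V_0^{\mathrm{lin}}$ to the $2\times2$ level gives $\tfrac14(a_{\min}+1-\sqrt{(a_{\min}-1)^2+\gamma^2})|z|^2\le\mathcal V_0^{\mathrm{lin}}(z)\le\tfrac14(a_{\max}+1+\sqrt{(a_{\max}-1)^2+\gamma^2})|z|^2$ with $a_{\min},a_{\max}$ as in \eqref{eq:amin-amax-lemma}. Choosing $R_0$ as in \eqref{eq:R0-choice-lemma}, i.e.\ so that $\rho_\nabla(R_0)\le\rho_\star$ with $\rho_\star$ from \eqref{eq:R0-def-lemma}, makes $|\mathcal E(z)|$ small enough against the lower bound on $\mathcal V_0^{\mathrm{lin}}(z)$ that $\mathcal A_0\mathcal M(z)+\mathcal A'\mathcal V_0(z)\le-\tfrac34\mathcal V_0^{\mathrm{lin}}(z)$ on $\{|q|\ge R_0\}$; using $\mathcal V_0(z)-\mathcal V_0^{\mathrm{lin}}(z)=U(q)-\tfrac12\langle Q_\infty q,q\rangle$ (bounded by $\delta_U(R_0)(1+|q|^2)$ there) together with the eigenvalue bounds converts this into $\mathcal A_0\mathcal M+\mathcal A'\mathcal V_0\le-\underline c_{\mathrm{imp}}\mathcal V_0$ on $\{|q|\ge R_0\}$ with exactly $\underline c_{\mathrm{imp}}$ as in \eqref{eq:cimp-lemma}. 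On $\{|q|\le R_0\}$, the function $\mathcal A_0\mathcal M+\mathcal A'\mathcal V_0+\underline c_{\mathrm{imp}}\mathcal V_0$ is a polynomial of degree $\le2$ in $(q,p)$ whose $|p|^2$-coefficient matrix is $\tfrac{\underline c_{\mathrm{imp}}-1}{2}I_d\prec0$ (the $(p,p)$-block of the Lyapunov equation forces $\tfrac12(\mathsf K_{qp}+\mathsf K_{pq})-\gamma\mathsf K_{pp}=-\tfrac12 I_d$, while $\mathcal A'\mathcal V_0$ is only linear in $p$ and $\mathcal V_0$ contributes $\tfrac12|p|^2$), and whose lower-order-in-$p$ coefficients are bounded uniformly over the compact ball (using that $\nabla U$ is $L$-Lipschitz); hence it has finite supremum $C_{\mathrm{imp}}$ over $\{|q|\le R_0\}\times\R^d$. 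Combining the two regions gives \eqref{eq:improvement-condition}, and the construction shows $\mathcal M(z)=\tfrac12 z^\top\mathsf K z$ with $\mathsf K$ solving the stated Lyapunov equation.

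\emph{Main obstacle.} The delicate part is the quantitative bookkeeping in the tail estimate on $\{|q|\ge R_0\}$: one must verify that every remainder term in $\mathcal E(z)$ is dominated by $\rho_\nabla(|q|)$ times the explicit quadratic with constant $A$, and then chase the eigenvalue comparisons for $\mathcal V_0^{\mathrm{lin}}$ together with the $\delta_U(R_0)$-correction so that the fractions combine to reproduce exactly the stated formulas for $\underline c_{\mathrm{imp}}$, $\rho_\star$ and $R_0$. The Hurwitz property of $B$ and the ensuing operator-norm bound on $\mathsf K$, though essential, are comparatively routine via the reduction to the scalar oscillator blocks.
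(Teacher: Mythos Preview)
Your approach is essentially identical to the paper's: both construct $\mathcal M$ as the quadratic solving the Lyapunov equation for the linearized drift $B$, decompose $\mathcal A_0\mathcal M+\mathcal A'\mathcal V_0=-\Xi+(\text{remainder})$ (your $\mathcal V_0^{\mathrm{lin}}$ is the paper's $\Xi$), absorb the remainder in the tail $\{|q|\ge R_0\}$ via the $\rho_\star$-threshold on $\rho_\nabla$, convert $\Xi$ to $\mathcal V_0$ through $\delta_U(R_0)$, and handle the interior $\{|q|\le R_0\}$ by observing the $p^2$-coefficient is negative definite. Your computation of that coefficient via the $(p,p)$-block of the Lyapunov equation is correct and matches the paper's reasoning.

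One minor numerical slip worth flagging, since you claim to reproduce the constants exactly: the $\rho_\star$ in \eqref{eq:R0-def-lemma} is the positive root of $A\rho+\rho^2=\tfrac{5}{16}\underline a$, which (after using $1+|z|^2\le 2|z|^2$ on $|q|\ge R_0\ge1$) yields a remainder bounded by $\tfrac58\,\Xi$, hence $\mathcal A_0\mathcal M+\mathcal A'\mathcal V_0\le -\tfrac38\,\Xi$, not $-\tfrac34\,\Xi$. The $\tfrac38$ is precisely the prefactor appearing in \eqref{eq:cimp-lemma}. Relatedly, your linear bound $|\mathcal E|\le A\rho_\nabla|z|^2$ loses the $(\rho_\nabla)^2$ term coming from $|r(q)|^2$; the paper keeps it and that is why $\rho_\star$ solves a quadratic rather than a linear equation in $\rho$.
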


\begin{proof}
    We provide the proof in Appendix~\ref{app:first-order-improvement}.
\end{proof}

\begin{remark}
Lemma~\ref{lem:first-order-improvement} is only needed to obtain a
\emph{first-order improvement} of the Lyapunov drift rate (and, via the master
contraction theorem, an improved Wasserstein contraction rate) for the HFHR
dynamics. Basic geometric ergodicity and contraction already follow from
Assumption~\ref{assump:potential} alone by using the uncorrected Lyapunov
function $\mathcal V_0$.
In Section~\ref{sec:case-study}, we verify Assumption~\ref{assump:asymptotic-linear-drift}
and illustrate the construction of the quadratic corrector $\mathcal M$ in
Lemma~\ref{lem:first-order-improvement} for several representative examples,
including multi-well potentials (Section~\ref{sec:case:multi}), Bayesian linear regression with an $L^p$
regularizer (Section~\ref{sec:case:L:p}), and Bayesian binary classification (Section~\ref{sec:case:classification}).
\end{remark}

Under the assumptions of Lemma~\ref{lem:first-order-improvement}, define the
refined Lyapunov function
\begin{equation}\label{eq:Valpha-def}
  \mathcal V_\alpha(q,p) := \mathcal V_0(q,p) + \alpha \mathcal M(q,p).
\end{equation}

We now show that $\mathcal V_\alpha$ satisfies a Lyapunov drift condition for
the HFHR generator $\cL_\alpha$ with a \emph{strictly improved} rate at
first order in $\alpha$.
Recalling \eqref{eq:Lalpha-decomp}, we write
\begin{equation}\label{eq:Lalpha-decomp-appB}
  \cL_\alpha = \cL_0 + \alpha \mathcal A' + \alpha \Delta_q,
\end{equation}
where $\cL_0=\mathcal A_0+\gamma\Delta_p$ is the kinetic Langevin generator and
$\mathcal A_0,\mathcal A'$ are defined in \eqref{eq:A0-def}--\eqref{eq:Aprime-def}.

Before analyzing the drift of $\mathcal V_\alpha$, we verify that the
perturbation term $\alpha \mathcal M$ does not change the global growth of the
Lyapunov function: for $\alpha$ sufficiently small, $\mathcal V_\alpha$ remains
equivalent to $\mathcal V_0$ up to explicit constants.

\begin{lemma}[Equivalence of $\mathcal V_\alpha$ and $\mathcal V_0$]
\label{lem:Valpha-V0-equivalence}
Assume Assumption~\ref{assump:potential} and Lemma~\ref{lem:first-order-improvement}.
Let $C_{\mathcal M}$ be as in \eqref{eq:C_M-def} and define
\begin{equation}\label{eq:alpha-star-explicit}
  \alpha_* := \frac{c_1}{2C_{\mathcal M}}.
\end{equation}
Then for all $\alpha\in[0,\alpha_*]$ and all $(q,p)\in\R^{2d}$,
\begin{equation}\label{eq:Valpha-V0-equivalence}
  \frac12\left(1+\mathcal V_0(q,p)\right)
  \;\le\; 1+\mathcal V_\alpha(q,p)
  \;\le\; \frac32\left(1+\mathcal V_0(q,p)\right).
\end{equation}
\end{lemma}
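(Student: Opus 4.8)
The plan is to reduce the claimed two-sided bound on $1 + \mathcal V_\alpha$ directly to the quadratic-growth estimate for $\mathcal M$ from Lemma~\ref{lem:first-order-improvement}(i) together with the lower bound on $1 + \mathcal V_0$ from the equivalence \eqref{eq:V0-equivalent}. First I would recall that $\mathcal V_\alpha = \mathcal V_0 + \alpha \mathcal M$, so that $|\mathcal V_\alpha(q,p) - \mathcal V_0(q,p)| = \alpha|\mathcal M(q,p)| \le \alpha\, C_{\mathcal M}\,(1 + |q|^2 + |p|^2)$ by \eqref{eq:M-growth}. The key observation is that the left inequality in \eqref{eq:V0-equivalent} gives $1 + |q|^2 + |p|^2 \le c_1^{-1}(1 + \mathcal V_0(q,p))$ (note $U(q)\ge 0$ only helps), so the perturbation is bounded by a small multiple of the unperturbed Lyapunov function: $\alpha|\mathcal M(q,p)| \le \frac{\alpha C_{\mathcal M}}{c_1}(1 + \mathcal V_0(q,p))$.

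The main step is then a one-line choice of $\alpha_*$. Since $\alpha \le \alpha_* = \frac{c_1}{2C_{\mathcal M}}$ forces $\frac{\alpha C_{\mathcal M}}{c_1} \le \frac12$, we obtain $\alpha|\mathcal M(q,p)| \le \frac12(1+\mathcal V_0(q,p))$ for all $(q,p)\in\R^{2d}$. Adding $1$ to both sides of $\mathcal V_0 - \alpha|\mathcal M| \le \mathcal V_\alpha \le \mathcal V_0 + \alpha|\mathcal M|$ and substituting the bound on $\alpha|\mathcal M|$ yields
\[
  1 + \mathcal V_\alpha(q,p) \ge 1 + \mathcal V_0(q,p) - \tfrac12(1+\mathcal V_0(q,p)) = \tfrac12(1+\mathcal V_0(q,p)),
\]
and similarly $1 + \mathcal V_\alpha(q,p) \le \frac32(1+\mathcal V_0(q,p))$, which is exactly \eqref{eq:Valpha-V0-equivalence}. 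One small point worth checking is that $1 + \mathcal V_0(q,p) > 0$ everywhere (so that the manipulations are legitimate and both printed bounds are nonnegative as claimed); this follows from the left inequality in \eqref{eq:V0-equivalent} with $c_1 > 0$, since the right-hand side there is at least $c_1 > 0$.

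There is essentially no obstacle here: the lemma is a routine perturbative equivalence, and the only ingredients are the already-established quadratic growth of $\mathcal M$ (Lemma~\ref{lem:first-order-improvement}(i)) and the two-sided quadratic control of $1 + \mathcal V_0$ in \eqref{eq:V0-equivalent} with the explicit constant $c_1 = \min(1,\mu_{\min})$. The ``hard part,'' such as it is, is purely bookkeeping: making sure the constant $\alpha_* = c_1/(2C_{\mathcal M})$ is the right threshold so that the factor $\alpha C_{\mathcal M}/c_1$ is bounded by $1/2$, which is where the $\frac12$ and $\frac32$ in the statement come from. I would present the proof in Appendix~\ref{app:Valpha-V0-equivalence} in three short displayed inequalities followed by the choice of $\alpha_*$.
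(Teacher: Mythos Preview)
Your proposal is correct and follows essentially the same approach as the paper's own proof: bound $|\mathcal M|$ via \eqref{eq:M-growth}, convert $1+|q|^2+|p|^2$ into $c_1^{-1}(1+\mathcal V_0)$ using \eqref{eq:V0-equivalent}, and then observe that $\alpha\le\alpha_*=c_1/(2C_{\mathcal M})$ makes the perturbation at most $\tfrac12(1+\mathcal V_0)$. The only cosmetic difference is that the paper writes out the two inequalities $1+\mathcal V_\alpha\ge(1-\alpha C_{\mathcal M}/c_1)(1+\mathcal V_0)$ and $1+\mathcal V_\alpha\le(1+\alpha C_{\mathcal M}/c_1)(1+\mathcal V_0)$ before specializing to $\alpha\le\alpha_*$, whereas you substitute the bound $\tfrac12$ directly.
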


\begin{proof}
    We provide the proof in Appendix~\ref{app:Valpha-V0-equivalence}.
\end{proof}

With the growth bounds established, we now turn to the analysis on the drift. The following lemma provides an expansion of the generator action $\cL_\alpha \mathcal V_\alpha$ in powers of $\alpha$, which allows us to isolate the first-order contribution responsible for the acceleration.

\begin{lemma}[Drift expansion for $\mathcal V_\alpha$]
\label{lem:Valpha-drift-expansion}
Under Assumption~\ref{assump:potential} and Lemma~\ref{lem:first-order-improvement},
let $\mathcal V_\alpha=\mathcal V_0+\alpha\mathcal M$, where $\mathcal M$ is the quadratic polynomial
constructed in Lemma~\ref{lem:first-order-improvement}.
Let $\underline c_{\mathrm{imp}}>0$ and $C_{\mathrm{imp}}\ge0$ be the explicit constants
in \eqref{eq:improvement-condition}--\eqref{eq:cimp-lemma}. Let $c_1$ be the constant in
\eqref{eq:V0-equivalent}. Define
\[
  K_\Delta
  := dL+\frac{\gamma^2}{2}d|1-\lambda|,
  \qquad
  \text{so that}\quad
  |\Delta_q\mathcal V_0(q,p)|\le K_\Delta
  \quad\text{for a.e. }(q,p)\in\R^{2d}.
\]
Moreover, since $\mathcal M(z)=\frac12 z^\top \mathsf K z$ with $\mathsf K$ as in \eqref{eq:K-matrix},
\[
  \Delta_p\mathcal M(q,p)=\mathrm{tr}(\mathsf K_{pp})
  \quad\text{and}\quad
  \Delta_q\mathcal M(q,p)=\mathrm{tr}(\mathsf K_{qq})
  \qquad\text{for all }(q,p)\in\R^{2d}.
\]
Then for all $\alpha\in[0,1]$ and all $(q,p)\in\R^{2d}$,
\begin{align}\label{eq:Lalpha-Valpha-bound-fixed}
  \cL_\alpha\mathcal V_\alpha(q,p)
  \le\;& \gamma(d+A) - \lambda\mathcal V_0(q,p)
      + \alpha\left( C_1 - \underline c_{\mathrm{imp}}\mathcal V_0(q,p)\right)
      + C_2\alpha^2(1+\mathcal V_0(q,p)),
\end{align}
where we can choose
\begin{equation}\label{eq:C1C2-explicit-fixed}
  C_1:=C_{\mathrm{imp}}+\gamma\,\mathrm{tr}(\mathsf K_{pp})+K_\Delta,
  \qquad
  C_2:=|\mathrm{tr}(\mathsf K_{qq})|
      + 3\,C_{\mathcal M}\frac{(L+|\nabla U(0)|)}{c_1},
\end{equation}
where $C_{\mathcal M}$ is the constant in \eqref{eq:C_M-def}.
\end{lemma}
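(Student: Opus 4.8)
The plan is to exploit the decomposition $\cL_\alpha=\cL_0+\alpha\mathcal A'+\alpha\Delta_q$ from \eqref{eq:Lalpha-decomp-appB} applied to $\mathcal V_\alpha=\mathcal V_0+\alpha\mathcal M$, and to collect terms by powers of $\alpha$. Expanding,
\[
  \cL_\alpha\mathcal V_\alpha
  = \cL_0\mathcal V_0
    + \alpha\bigl(\cL_0\mathcal M + \mathcal A'\mathcal V_0 + \Delta_q\mathcal V_0\bigr)
    + \alpha^2\bigl(\mathcal A'\mathcal M + \Delta_q\mathcal M\bigr).
\]
The zeroth-order term is handled directly by the kinetic Langevin drift inequality \eqref{eq:L0-V0-Lyapunov}, which gives $\cL_0\mathcal V_0\le\gamma(d+A)-\gamma\lambda\mathcal V_0$; here I would double-check the placement of the factor $\gamma$ so that it matches the stated bound \eqref{eq:Lalpha-Valpha-bound-fixed} (the statement writes $-\lambda\mathcal V_0$, presumably after absorbing $\gamma$ into the rate convention, so I would be careful to state the inequality in whichever normalization the lemma intends). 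For the first-order term, write $\cL_0\mathcal M=\mathcal A_0\mathcal M+\gamma\Delta_p\mathcal M$; the combination $\mathcal A_0\mathcal M+\mathcal A'\mathcal V_0$ is exactly the quantity controlled by the first-order improvement estimate \eqref{eq:improvement-condition} of Lemma~\ref{lem:first-order-improvement}, yielding $\le C_{\mathrm{imp}}-\underline c_{\mathrm{imp}}\mathcal V_0$. The leftover pieces $\gamma\Delta_p\mathcal M=\gamma\,\mathrm{tr}(\mathsf K_{pp})$ and $\Delta_q\mathcal V_0$ are constants (the latter bounded by $K_\Delta$ using $L$-smoothness of $U$ and the explicit quadratic part of $\mathcal V_0$), so the first-order bracket is $\le C_{\mathrm{imp}}+\gamma\,\mathrm{tr}(\mathsf K_{pp})+K_\Delta-\underline c_{\mathrm{imp}}\mathcal V_0=C_1-\underline c_{\mathrm{imp}}\mathcal V_0$, matching \eqref{eq:C1C2-explicit-fixed}.

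For the $\alpha^2$ term I would bound $\Delta_q\mathcal M=\mathrm{tr}(\mathsf K_{qq})$ (a constant, contributing $|\mathrm{tr}(\mathsf K_{qq})|$) and $\mathcal A'\mathcal M=-\nabla U(q)\cdot\nabla_q\mathcal M$. Since $\mathcal M$ is quadratic, $|\nabla_q\mathcal M(q,p)|\le C_{\mathcal M}(1+|q|+|p|)$ by \eqref{eq:gradM-growth}, and $|\nabla U(q)|\le|\nabla U(0)|+L|q|$ by $L$-smoothness; multiplying out and using Young's inequality gives a bound of the form $\mathrm{const}\cdot(1+|q|^2+|p|^2)$, which by the equivalence \eqref{eq:V0-equivalent} (with constant $1/c_1$) is $\le\frac{3C_{\mathcal M}(L+|\nabla U(0)|)}{c_1}(1+\mathcal V_0)$ after tracking the numerical factors; adding the $\Delta_q\mathcal M$ constant produces $C_2(1+\mathcal V_0)$ with $C_2$ as in \eqref{eq:C1C2-explicit-fixed}. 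Combining the three orders yields \eqref{eq:Lalpha-Valpha-bound-fixed}; the restriction $\alpha\in[0,1]$ is only used so that $\alpha^2\le\alpha$ is not needed here but keeps the expansion in a fixed regime and lets one later absorb $C_2\alpha^2$ against $\alpha\underline c_{\mathrm{imp}}$.

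The routine part is the Young-inequality bookkeeping for $\mathcal A'\mathcal M$; the only genuinely delicate point is making sure every constant is pinned down in terms of quantities already introduced — in particular that the constant produced from $|\nabla U(q)||\nabla_q\mathcal M(q,p)|\lesssim(L+|\nabla U(0)|)C_{\mathcal M}(1+|q|^2+|p|^2)$ is bounded by exactly the stated multiple of $(1+\mathcal V_0)$ via \eqref{eq:V0-equivalent}. I would also verify that $\Delta_q\mathcal V_0$ is genuinely bounded a.e.: $\mathcal V_0=U+$ (explicit quadratic), so $\Delta_q\mathcal V_0=\Delta U+\mathrm{const}$, and $|\Delta U|\le dL$ a.e.\ by $L$-smoothness (Rademacher), giving $K_\Delta=dL+\frac{\gamma^2}{2}d|1-\lambda|$ as claimed. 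No step here is hard; the work is entirely in careful constant-chasing.
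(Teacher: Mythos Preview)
Your proposal is correct and follows essentially the same route as the paper's proof: the same expansion of $\cL_\alpha\mathcal V_\alpha$ in powers of $\alpha$ via \eqref{eq:Lalpha-decomp-appB}, the same use of \eqref{eq:L0-V0-Lyapunov} for the zeroth-order term, the same application of \eqref{eq:improvement-condition} together with $\Delta_p\mathcal M=\mathrm{tr}(\mathsf K_{pp})$ and the $K_\Delta$ bound for the first-order bracket, and the same control of the $\alpha^2$ term by combining $|\nabla U(q)|\le L|q|+|\nabla U(0)|$, the gradient growth bound \eqref{eq:gradM-growth}, the elementary inequality $(1+|q|+|p|)^2\le 3(1+|q|^2+|p|^2)$, and the equivalence \eqref{eq:V0-equivalent}. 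Your flag about the factor $\gamma$ in front of $\lambda\mathcal V_0$ is well placed; the paper simply writes the bound in the same normalization as the lemma statement without comment.
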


\begin{proof}
    We provide the proof in Appendix~\ref{app:Valpha-drift-expansion}.
\end{proof}

Building on this expansion and the equivalence of $\mathcal{V}_{\alpha}$ and $\mathcal{V}_{0}$ established in Lemma~\ref{lem:Valpha-V0-equivalence}, we can now state one of the main results of this section: the improved Lyapunov function $\mathcal V_\alpha$ yields a \textit{strictly improved} drift rate for small $\alpha$.

\begin{proposition}[Enhanced drift rate for HFHR dynamics]
\label{prop:Valpha-drift}
Suppose Assumption~\ref{assump:potential} and Lemma~\ref{lem:first-order-improvement} hold,
and let $\mathcal V_\alpha=\mathcal V_0+\alpha \mathcal M$ be defined as in \eqref{eq:Valpha-def}.
Let $\alpha_0>0$ be the explicit constant in \eqref{eq:alphastar-explicit}
(from Proposition~\ref{prop:V0-drift-HFHR}).
Let $C_{\mathcal M}$ be the constant in \eqref{eq:M-growth} and $c_1$ be the constant in \eqref{eq:V0-equivalent}.
Define
\begin{equation}\label{eq:widetildeCM-def}
  \widetilde C_{\mathcal M}:=\frac{C_{\mathcal M}}{c_1},
  \qquad\text{so that}\qquad
  |\mathcal M|\le \widetilde C_{\mathcal M}\,(1+\mathcal V_0).
\end{equation}
Let $C_1, C_2$ be the explicit constants in Lemma~\ref{lem:Valpha-drift-expansion}.
Define
\begin{align}
  \delta &:= \underline c_{\mathrm{imp}} - \lambda\,\widetilde C_{\mathcal M},
  \label{eq:delta-explicit-again-fixed} \\
  C_\lambda &:= C_2 + \widetilde C_{\mathcal M}\,\underline c_{\mathrm{imp}},
  \label{eq:Clambda-explicit-fixed}
\end{align}
where $\lambda$ is the dissipativity constant in Assumption~\ref{assump:potential}(iii).
Define the effective drift constant $A_\alpha'$ (note: $A_\alpha'$ corresponds to the ``$A$'' in Definition~\ref{def:admissible-lyapunov}) by
\begin{equation}\label{eq:Aalpha-explicit-fixed}
  A_\alpha' := A + \frac{\alpha}{\gamma}\left[\, C_1 + \alpha C_2 + \widetilde C_{\mathcal M}(\lambda + \underline c_{\mathrm{imp}}) \right].
\end{equation}

Then there exists an explicit $\alpha_1\in(0,\alpha_0]$ such that for all $\alpha\in(0,\alpha_1]$,
the function $\mathcal V_\alpha$ is $(\lambda_\alpha, A_\alpha')$-admissible for $\cL_\alpha$ with drift rate
\begin{equation}\label{eq:lambda-alpha-expansion}
  \lambda_\alpha \;\ge\; \lambda + \delta\,\alpha - C_\lambda\,\alpha^2.
\end{equation}
Indeed, $\alpha_{1}$ can be explicitly chosen as
\begin{equation}\label{eq:alpha1-explicit}
  \alpha_1
  := \min\left\{\alpha_0,\; 1,\; \alpha_*,\;
  \alpha_{\mathrm{pos}}\right\},
\end{equation}
where $\alpha_{*}$ is given in \eqref{eq:alpha-star-explicit} and $\alpha_{\mathrm{pos}}>0$ is any constant such that
$\lambda+\delta\alpha-C_\lambda\alpha^2\ge \lambda/2$ 
holds for all $\alpha\in(0,\alpha_{\mathrm{pos}}]$, for example, one may take:
\begin{equation}\label{eq:alpha-pos-explicit}
  \alpha_{\mathrm{pos}}
  :=
  \begin{cases}
  \min\left\{1,\ \frac{\delta+\sqrt{\delta^2+2C_\lambda\lambda}}{2C_\lambda}\right\},
  & C_\lambda>0,\\
  \min\left\{1,\ \frac{\lambda}{2\max\{1,|\delta|\}}\right\},
  & C_\lambda=0.
  \end{cases}
\end{equation}
\end{proposition}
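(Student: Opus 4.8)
The plan is to take the second-order drift expansion
\(\cL_\alpha\mathcal V_\alpha\le\gamma(d+A)-\lambda\mathcal V_0+\alpha\bigl(C_1-\underline c_{\mathrm{imp}}\mathcal V_0\bigr)+C_2\alpha^2(1+\mathcal V_0)\)
from Lemma~\ref{lem:Valpha-drift-expansion} and re-express its dissipative part, which is written against \(\mathcal V_0\), in terms of \(\mathcal V_\alpha=\mathcal V_0+\alpha\mathcal M\) itself. The mechanism is: collecting the coefficient of \(\mathcal V_0\) produces a rate \(\Lambda(\alpha):=\lambda+\underline c_{\mathrm{imp}}\alpha-C_2\alpha^2\); trading \(\mathcal V_0\) for \(\mathcal V_\alpha\) via the growth bound \(|\mathcal M|\le\widetilde C_{\mathcal M}(1+\mathcal V_0)\) from \eqref{eq:widetildeCM-def} costs a multiplicative factor \((1+\alpha\widetilde C_{\mathcal M})^{-1}\) on the rate (this is exactly the \(-\widetilde C_{\mathcal M}\lambda\) shift that appears in \(\delta\)) and pushes a lower-order term into the additive constant.

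Throughout we take \(\alpha\in(0,\alpha_1]\) with \(\alpha_1\) as in \eqref{eq:alpha1-explicit}. The constraint \(\alpha\le1\) makes Lemma~\ref{lem:Valpha-drift-expansion} applicable and allows \(\alpha^k\le\alpha\) for \(k\ge1\); \(\alpha\le\alpha_*\) makes Lemma~\ref{lem:Valpha-V0-equivalence} applicable, so that \(\mathcal V_\alpha\) remains comparable to \(\mathcal V_0\) and in particular inherits coercivity and a lower bound, hence is a legitimate admissible Lyapunov candidate in the sense of Definition~\ref{def:admissible-lyapunov}; \(\alpha\le\alpha_0\) is retained only for compatibility with Proposition~\ref{prop:V0-drift-HFHR}; and \(\alpha\le\alpha_{\mathrm{pos}}\) will force the rate to be strictly positive. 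Since \(\widetilde C_{\mathcal M}\ge0\) and \(\underline c_{\mathrm{imp}}>0\), the definitions \eqref{eq:delta-explicit-again-fixed}--\eqref{eq:Clambda-explicit-fixed} give \(\underline c_{\mathrm{imp}}\ge\delta\) and \(C_2\le C_\lambda\), whence \(\Lambda(\alpha)\ge\lambda+\delta\alpha-C_\lambda\alpha^2\); the choice of \(\alpha_{\mathrm{pos}}\) below will then give \(\Lambda(\alpha)\ge\lambda/2>0\).

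For the main step, collecting terms in Lemma~\ref{lem:Valpha-drift-expansion} yields \(\cL_\alpha\mathcal V_\alpha\le\gamma(d+A)+C_1\alpha+C_2\alpha^2-\Lambda(\alpha)\mathcal V_0\). From \(\mathcal V_\alpha=\mathcal V_0+\alpha\mathcal M\le(1+\alpha\widetilde C_{\mathcal M})\mathcal V_0+\alpha\widetilde C_{\mathcal M}\) we obtain \(\mathcal V_0\ge(\mathcal V_\alpha-\alpha\widetilde C_{\mathcal M})/(1+\alpha\widetilde C_{\mathcal M})\), so, using \(\Lambda(\alpha)>0\),
\[
  -\Lambda(\alpha)\mathcal V_0\le-\lambda_\alpha\mathcal V_\alpha+\lambda_\alpha\alpha\widetilde C_{\mathcal M},
  \qquad
  \lambda_\alpha:=\frac{\Lambda(\alpha)}{1+\alpha\widetilde C_{\mathcal M}}.
\]
Substituting gives \(\cL_\alpha\mathcal V_\alpha\le\gamma(d+A_\alpha'-\lambda_\alpha\mathcal V_\alpha)\) with \(\gamma A_\alpha':=\gamma A+C_1\alpha+C_2\alpha^2+\lambda_\alpha\alpha\widetilde C_{\mathcal M}\), which is the claimed admissibility. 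For the rate, \(\tfrac1{1+x}\ge1-x\) gives \(\lambda_\alpha\ge(\lambda+\underline c_{\mathrm{imp}}\alpha-C_2\alpha^2)(1-\widetilde C_{\mathcal M}\alpha)\ge\lambda+\delta\alpha-C_\lambda\alpha^2\) after discarding the nonnegative \(\alpha^3\) term, which is \eqref{eq:lambda-alpha-expansion}. For the constant, \(\lambda_\alpha\le\Lambda(\alpha)\le\lambda+\underline c_{\mathrm{imp}}\alpha\le\lambda+\underline c_{\mathrm{imp}}\) (here we use \(\alpha\le1\)), so \(\lambda_\alpha\alpha\widetilde C_{\mathcal M}\le\widetilde C_{\mathcal M}(\lambda+\underline c_{\mathrm{imp}})\alpha\); since admissibility is preserved when the additive constant is enlarged, we may replace \(A_\alpha'\) by \(A+\frac{\alpha}{\gamma}\bigl[C_1+C_2\alpha+\widetilde C_{\mathcal M}(\lambda+\underline c_{\mathrm{imp}})\bigr]\), i.e.\ \eqref{eq:Aalpha-explicit-fixed}.

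Finally, positivity: the inequality \(\lambda_\alpha\ge\lambda+\delta\alpha-C_\lambda\alpha^2\ge\lambda/2\) amounts to \(C_\lambda\alpha^2-\delta\alpha-\lambda/2\le0\), which for \(C_\lambda>0\) holds on \([0,(\delta+\sqrt{\delta^2+2C_\lambda\lambda})/(2C_\lambda)]\) and for \(C_\lambda=0\) reduces to \(\delta\alpha\ge-\lambda/2\), valid for \(\alpha\le\lambda/(2\max\{1,|\delta|\})\); intersecting with \(\alpha\le1\) gives \eqref{eq:alpha-pos-explicit}, and combining the four constraints gives \eqref{eq:alpha1-explicit}. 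The one delicate point is the bookkeeping: one must route the corrector term \(\lambda_\alpha\alpha\widetilde C_{\mathcal M}\) into \(A_\alpha'\) rather than letting it degrade the rate, and check that the \(O(\alpha^2)\) remainder in \(\lambda_\alpha\) has the sign that makes the clean lower bound \eqref{eq:lambda-alpha-expansion} valid uniformly on \((0,\alpha_1]\); everything else is a direct substitution into Lemmas~\ref{lem:Valpha-drift-expansion} and~\ref{lem:Valpha-V0-equivalence}.
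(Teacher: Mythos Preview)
Your proposal is correct and follows essentially the same approach as the paper: both start from Lemma~\ref{lem:Valpha-drift-expansion}, collect the $\mathcal V_0$-coefficient into a single rate $\varsigma(\alpha)=\lambda+\underline c_{\mathrm{imp}}\alpha-C_2\alpha^2$ (your $\Lambda(\alpha)$), convert $\mathcal V_0$ to $\mathcal V_\alpha$ via the lower bound $\mathcal V_0\ge(\mathcal V_\alpha-\alpha\widetilde C_{\mathcal M})/(1+\alpha\widetilde C_{\mathcal M})$, define $\lambda_\alpha:=\varsigma(\alpha)/(1+\alpha\widetilde C_{\mathcal M})$, and then expand via $1/(1+x)\ge1-x$ and drop the nonnegative cubic term. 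Your treatment is slightly more explicit in flagging that $\Lambda(\alpha)>0$ is needed for the substitution step to preserve the inequality direction, but otherwise the arguments coincide.
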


\begin{proof}
We provide the proof in Appendix~\ref{app:Valpha-drift}.
\end{proof}

\subsection{Acceleration of the contraction rate}

We now combine Proposition~\ref{prop:Valpha-drift} with the Master
Theorem~\ref{thm:master-contraction} to obtain an accelerated contraction rate for the HFHR dynamics.
First, we recall that Theorem~\ref{thm:master-contraction} gives a contraction rate
\[
  c(\lambda)
  = \frac{\gamma}{384}
    \min\left\{\widetilde\Lambda_{1,\alpha}(\lambda),\,
               \widetilde\Lambda_{2,\alpha}(\lambda),\,
               \widetilde\Lambda_{3,\alpha}(\lambda)\right\}.
\]
The first term $\widetilde\Lambda_{1,\alpha}$ in \eqref{eq:Lambda-branches} corresponds
to the Lyapunov branch, while $\widetilde\Lambda_{2,\alpha}$ and $\widetilde\Lambda_{3,\alpha}$ in \eqref{eq:Lambda-branches} are the metric
branches.
Let
$c_0 := c(\lambda)$
denote the contraction rate for the kinetic Langevin dynamics at
$\alpha=0$, where $\lambda$ is defined in \eqref{eq:L0-V0-Lyapunov}.
From \cite[Eqn.~(2.18)]{Eberle}, we assume that at $\alpha=0$ the Lyapunov branch is active, i.e.
\[
  c_0=\frac{\gamma}{384}\,\widetilde\Lambda_{1,0}(\lambda),
  \qquad
  \widetilde\Lambda_{1,0}(\lambda)\le \widetilde\Lambda_{2,0}(\lambda),\quad
  \widetilde\Lambda_{1,0}(\lambda)\le \widetilde\Lambda_{3,0}(\lambda).
\]
Let $c_\alpha:=c(\lambda_\alpha)$ denote the contraction rate of the
HFHR dynamics when we use the improved Lyapunov function
$\mathcal V_\alpha$, where $\lambda_\alpha$ is defined in
\eqref{eq:lambda-alpha-expansion}.

Since the global contraction rate $c(\lambda)$ is the minimum of the Lyapunov branch
$\widetilde\Lambda_{1,\alpha}$ and the metric branches $\widetilde\Lambda_{2,\alpha},\widetilde\Lambda_{3,\alpha}$,
we analyze the effect of the improved drift $\lambda_\alpha$ on these branches separately.
In particular, the Lyapunov branch improves directly with $\lambda_\alpha$.
For the metric branch, an improvement holds under additional quantitative conditions,
which we verify below for sufficiently small $\alpha$.
%%%%%%%%%%%%%%%%%%%%%%%
\subsubsection{Acceleration on the metric branch}

We now investigate the behavior of the metric branch when the Lyapunov rate is improved to $\lambda_\alpha$.
Recall from \eqref{L:eff} that for HFHR dynamics, the effective Lipschitz constant is
$L_{\mathrm{eff}}(\alpha)=(1+\alpha\gamma)L$.
To match the scaling in \cite{Eberle}, we fix the geometric constant at the unperturbed regime $\alpha=0$.
In particular, since $L_{\mathrm{eff}}(0)=L$, we set $\theta_0:=L\gamma^{-2}$ and define
\begin{equation}\label{eq:J2-Eberle-explicit}
  J_2 \;:=\; \frac{12}{5} \left(1 + 2\theta_0 + 2\theta_0^2\right)
  \frac{d+A}{\gamma^2 (1-2\lambda)}.
\end{equation}

We focus on the dominant term of the metric parameter.
The following theorem shows that if $\delta>\gamma\lambda$,
then the contraction rate on the metric branch is strictly enhanced.

\begin{theorem}[Metric branch acceleration]\label{thm:metric-acceleration}
Assume the conditions of Proposition~\ref{prop:Valpha-drift} hold.
Let $\lambda_\alpha$ be the drift rate given by Proposition~\ref{prop:Valpha-drift}$,$ i.e.
for all $\alpha\in(0,\alpha_1]$,
\[
\lambda_\alpha \;\ge\; \underline\lambda_\alpha
:= \lambda+\delta\,\alpha-C_\lambda\,\alpha^2,
\]
where $\delta$ and $C_\lambda$ are the explicit constants from
\eqref{eq:delta-explicit-again-fixed}--\eqref{eq:Clambda-explicit-fixed}.
Assume in addition that
\[
D:=\delta-\gamma\lambda>0,
\]
where $\lambda$ is the dissipativity constant in Assumption~\ref{assump:potential}(iii).
Define the (dimension-free) metric parameter function for $\alpha\ge0$ by
\begin{equation}\label{eq:Lambda-metric-def}
  \Lambda_\alpha(\lambda) := J_2\,\frac{(1+\alpha\gamma)L}{\lambda}.
\end{equation}
Let $\Lambda_0:=\Lambda_0(\lambda)=J_2L/\lambda$ and assume $\Lambda_0>\frac12$.
Define
\begin{equation}\label{eq:hlambda-def}
h(\Lambda):=\sqrt{\Lambda}\,e^{-\Lambda},
\end{equation}

\begin{equation}\label{eq:Mh-def}
M_h := \sup_{\Lambda \in [\Lambda_0/2, \Lambda_0]}
\left| \frac{\Lambda^2 - \Lambda - 1/4}{\Lambda^{3/2}} e^{-\Lambda} \right|.
\end{equation}

Then there exists an explicit constant
\begin{equation}\label{eq:alpha-metric-acc-final}
    \alpha_{\mathrm{metric,acc}} := \min \left\{
        \alpha_1,\ 1,\
        \frac{D}{4C_\lambda},\
        \sqrt{\frac{\lambda}{2C_\lambda}},\
        \frac{4\lambda}{D},\
        \frac{8 \lambda^2 \sqrt{\Lambda_0} e^{-\Lambda_0} \left(1 - \frac{1}{2\Lambda_0}\right)}{J_2 L\,D\, M_h}
    \right\},
\end{equation}
with the convention that the terms involving $C_\lambda$ are omitted when $C_\lambda=0$,
such that for all $\alpha\in(0,\alpha_{\mathrm{metric,acc}}]$ the following hold:

\begin{enumerate}[label=(\roman*)]
\item \textbf{Metric parameter decreases.}
\[
\Lambda_\alpha(\lambda_\alpha)\le \Lambda_0-c_\Lambda\,\alpha,
\qquad
c_\Lambda:=\frac18\,J_2L\,\frac{D}{\lambda^2}>0.
\]

\item \textbf{Metric branches increase.}
There exist explicit constants $c_2,c_3>0$ such that
\[
\widetilde\Lambda_{2,\alpha}(\lambda_\alpha)\ge \widetilde\Lambda_{2,0}(\lambda)\,(1+c_2\alpha),
\qquad
\widetilde\Lambda_{3,\alpha}(\lambda_\alpha)\ge \widetilde\Lambda_{3,0}(\lambda)\,(1+c_3\alpha),
\]
where
\[
\widetilde\Lambda_{2,\alpha}(\lambda)
:= h(\Lambda_\alpha(\lambda))\,\frac{L_{\mathrm{eff}}(\alpha)}{\gamma^2},
\qquad
\widetilde\Lambda_{3,\alpha}(\lambda)
:= \kappa_{\mathrm{adjust}}\,h(\Lambda_\alpha(\lambda)),
\]
and $L_{\mathrm{eff}}(\alpha)=(1+\alpha\gamma)L$.
More precisely, one may take any $c_3<c_3^*$ with
\[
c_3^*:=\frac12\left(1-\frac{1}{2\Lambda_0}\right)c_\Lambda,
\]
and then set
$c_2 := \gamma + c_3$.
\end{enumerate}
\end{theorem}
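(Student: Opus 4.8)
The plan is to prove the three claims in Theorem~\ref{thm:metric-acceleration} by tracking how the improved drift rate $\lambda_\alpha$ propagates through the dimension-free metric parameter $\Lambda_\alpha(\lambda)=J_2(1+\alpha\gamma)L/\lambda$ and then through the profile $h(\Lambda)=\sqrt{\Lambda}e^{-\Lambda}$. The key quantitative input is that $\Lambda_\alpha(\lambda_\alpha)$ has two competing $\alpha$-dependencies: the explicit prefactor $(1+\alpha\gamma)$ pushes it \emph{up} by a factor $(1+\alpha\gamma)$, while replacing $\lambda$ by $\lambda_\alpha\ge\lambda+\delta\alpha-C_\lambda\alpha^2$ in the denominator pushes it \emph{down}. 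The hypothesis $D=\delta-\gamma\lambda>0$ is exactly what guarantees the downward effect wins at first order.

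\emph{Step 1 (part (i): the metric parameter decreases).} I would write
\[
  \Lambda_\alpha(\lambda_\alpha)
  = J_2 L\,\frac{1+\alpha\gamma}{\lambda_\alpha}
  \le J_2 L\,\frac{1+\alpha\gamma}{\lambda+\delta\alpha - C_\lambda\alpha^2},
\]
using $\lambda_\alpha\ge\underline\lambda_\alpha$ and monotonicity. Expand the right-hand side in $\alpha$: since $\tfrac{1}{\lambda+\delta\alpha-C_\lambda\alpha^2}=\tfrac1\lambda\bigl(1-\tfrac{\delta}{\lambda}\alpha + O(\alpha^2)\bigr)$, multiplying by $(1+\alpha\gamma)$ gives $\tfrac1\lambda\bigl(1+(\gamma-\tfrac\delta\lambda)\alpha + O(\alpha^2)\bigr)$. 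The coefficient $\gamma-\delta/\lambda = -(\delta-\gamma\lambda)/\lambda = -D/\lambda$ is strictly negative. Multiplying by $J_2L$, the linear coefficient is $-J_2L\,D/\lambda^2$, so choosing $\alpha$ small enough that the $O(\alpha^2)$ remainder is dominated by half the linear term gives $\Lambda_\alpha(\lambda_\alpha)\le\Lambda_0 - \tfrac18 J_2L\,\tfrac{D}{\lambda^2}\,\alpha = \Lambda_0 - c_\Lambda\alpha$. The constraints $\alpha\le D/(4C_\lambda)$ and $\alpha\le\sqrt{\lambda/(2C_\lambda)}$ in the definition of $\alpha_{\mathrm{metric,acc}}$ are precisely the smallness conditions needed to absorb the remainder into the factor $\tfrac18$ versus the natural $\tfrac12$; I would make the remainder bookkeeping explicit using that $\lambda+\delta\alpha-C_\lambda\alpha^2\ge\lambda/2$ (from $\alpha\le\alpha_1\le\alpha_{\mathrm{pos}}$) to bound denominators below.

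\emph{Step 2 (part (ii): monotonicity/concavity of $h$ and transferring the decrease of $\Lambda$ to an increase of $h$).} On $[\Lambda_0/2,\Lambda_0]$ with $\Lambda_0>1/2$ (in fact here $\Lambda_0$ is large since $J_2$ carries the $(d+A)/(\gamma^2(1-2\lambda))$ factor), the function $h(\Lambda)=\sqrt\Lambda e^{-\Lambda}$ is decreasing, because $h'(\Lambda)=\tfrac{1}{2\sqrt\Lambda}e^{-\Lambda}(1-2\Lambda)<0$ for $\Lambda>1/2$. Thus $\Lambda_\alpha(\lambda_\alpha)\le\Lambda_0-c_\Lambda\alpha$ implies $h(\Lambda_\alpha(\lambda_\alpha))\ge h(\Lambda_0-c_\Lambda\alpha)$. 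By the mean value theorem, $h(\Lambda_0-c_\Lambda\alpha)-h(\Lambda_0) = -c_\Lambda\alpha\,h'(\zeta)$ for some $\zeta\in[\Lambda_0-c_\Lambda\alpha,\Lambda_0]\subset[\Lambda_0/2,\Lambda_0]$ (valid once $c_\Lambda\alpha\le\Lambda_0/2$, another of the smallness constraints), and $-h'(\zeta)=\tfrac{1}{2\sqrt\zeta}e^{-\zeta}(2\zeta-1)>0$. To get a \emph{quantitative} lower bound I would lower-bound $-h'(\zeta)$ on the interval: $2\zeta-1\ge 2(\Lambda_0-c_\Lambda\alpha)-1\ge 2\Lambda_0(1-\tfrac{1}{2\Lambda_0})\cdot$(something), and use the second-derivative bound $M_h$ (which controls $h''$ on the interval, since $h''(\Lambda)=\tfrac{\Lambda^2-\Lambda-1/4}{\Lambda^{3/2}}e^{-\Lambda}$) to ensure $h'$ does not vary too much across $[\Lambda_0-c_\Lambda\alpha,\Lambda_0]$, i.e. $|h'(\zeta)-h'(\Lambda_0)|\le M_h\,c_\Lambda\alpha$. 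Combining, $h(\Lambda_\alpha(\lambda_\alpha))\ge h(\Lambda_0) + c_\Lambda\alpha\bigl(|h'(\Lambda_0)| - M_h c_\Lambda\alpha\bigr)$; the final constraint $\alpha\le \tfrac{8\lambda^2\sqrt{\Lambda_0}e^{-\Lambda_0}(1-1/(2\Lambda_0))}{J_2 L\,D\,M_h}$ is exactly what makes $M_h c_\Lambda\alpha\le\tfrac12|h'(\Lambda_0)|$, so $h(\Lambda_\alpha(\lambda_\alpha))\ge h(\Lambda_0)(1 + c\alpha)$ for an explicit $c = \tfrac{c_\Lambda |h'(\Lambda_0)|}{2 h(\Lambda_0)}$; using $|h'(\Lambda_0)|/h(\Lambda_0) = (2\Lambda_0-1)/(2\Lambda_0) = 1-1/(2\Lambda_0)$ identifies the multiplicative gain as $c_3^*=\tfrac12(1-\tfrac1{2\Lambda_0})c_\Lambda$, matching the statement. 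For $\widetilde\Lambda_{2,\alpha}$ there is the extra explicit factor $L_{\mathrm{eff}}(\alpha)/\gamma^2 = (1+\alpha\gamma)L/\gamma^2$ versus $L/\gamma^2$ at $\alpha=0$; this multiplies the bound by $(1+\alpha\gamma)\ge 1+\gamma\alpha$, so $\widetilde\Lambda_{2,\alpha}(\lambda_\alpha)\ge\widetilde\Lambda_{2,0}(\lambda)(1+\gamma\alpha)(1+c_3\alpha)\ge\widetilde\Lambda_{2,0}(\lambda)(1+(\gamma+c_3)\alpha)$, giving $c_2=\gamma+c_3$; for $\widetilde\Lambda_{3,\alpha}$ the prefactor $\kappa_{\mathrm{adjust}}$ is $\alpha$-independent so the gain is just $c_3$.

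\emph{Main obstacle.} The delicate part is Step 2: turning the first-order decrease of $\Lambda$ into a \emph{clean, explicitly-constanted} first-order increase of $h\circ\Lambda$, uniformly for all $\alpha$ in the stated range rather than just asymptotically. The issue is that $h$ is not globally concave or convex on $[\Lambda_0/2,\Lambda_0]$ in general (the sign of $h''$ depends on whether $\Lambda^2-\Lambda-1/4$ is positive, i.e. whether $\Lambda \gtrless \tfrac12(1+\sqrt2)$), so I cannot simply invoke concavity; instead I must control the variation of $h'$ via the explicit bound $M_h$ and then force $\alpha$ small enough that the quadratic error term is beaten by half the linear term. Managing all the interlocking smallness constraints (the five terms in the min defining $\alpha_{\mathrm{metric,acc}}$) so that every estimate closes simultaneously — in particular ensuring $\Lambda_\alpha(\lambda_\alpha)$ stays in $[\Lambda_0/2,\Lambda_0]$ where $M_h$ is defined, and that the denominator $\lambda_\alpha$ stays bounded below by $\lambda/2$ — is the bookkeeping-heavy core of the argument. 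Everything else (monotonicity of $h$ for $\Lambda>1/2$, the MVT applications, the $(1+\alpha\gamma)$ factor algebra) is routine.
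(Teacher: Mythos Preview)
Your proposal is correct and follows essentially the same route as the paper's proof: bound $\Lambda_\alpha(\lambda_\alpha)$ above by $J_2L(1+\alpha\gamma)/\underline\lambda_\alpha$, compute the exact difference $g(\alpha)-g(0)=(-D\alpha+C_\lambda\alpha^2)/(\lambda\underline\lambda_\alpha)$ and use the smallness constraints to extract the linear decrease $-c_\Lambda\alpha$, then transfer this through $h$ via a second-order control (the paper uses a Taylor expansion with Lagrange remainder bounded by $M_h$, whereas you use the mean value theorem plus a first-derivative variation bound via $M_h$, which is equivalent). Your identification of $c_3^*=\tfrac12(1-\tfrac1{2\Lambda_0})c_\Lambda$ from $|h'(\Lambda_0)|/h(\Lambda_0)=1-\tfrac1{2\Lambda_0}$ and the extra $(1+\alpha\gamma)$ factor yielding $c_2=\gamma+c_3$ matches the paper exactly.
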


\begin{proof}
    We provide the proof in Appendix~\ref{app:metric-acceleration}.
\end{proof}

\begin{remark}
The abstract condition $\delta>\gamma\lambda$ appearing in
Theorem~\ref{thm:metric-acceleration} is purely quantitative: it compares
the strength of the first-order drift improvement to the baseline
Lyapunov rate, after accounting for the $\mathcal O(\alpha)$ increase of the
effective Lipschitz constant $L_{\mathrm{eff}}(\alpha)=(1+\gamma\alpha)L$
in the metric parameter $\Lambda_\alpha(\lambda)=J_2 L_{\mathrm{eff}}(\alpha)/\lambda$.
In Section~\ref{sec:case-study} we verify this condition explicitly for the
multi-well potential for suitable choices of $\gamma$ and $\lambda$
(Section~\ref{sec:case:multi}), Bayesian linear regression with $L^p$
regularizer (Section~\ref{sec:case:L:p}), and Bayesian binary
classification (Section~\ref{sec:case:classification}). This shows that,
for these examples, HFHR dynamics improves not only the Lyapunov branch
but also the metric branch governing barrier crossing.
\end{remark}

%%%%%%%%%%%%%%%%%%%%%%%
\subsubsection{Acceleration on the Lyapunov branch}

To prove acceleration, we first verify that for small $\alpha$,
the contraction rate $c_\alpha$ remains governed by the Lyapunov branch,
i.e.\ that the minimum in the definition of $c(\lambda_\alpha)$ is still
attained at $\widetilde\Lambda_{1,\alpha}(\lambda_\alpha)$.
In this case, the gain in $\lambda_\alpha$ directly translates into a gain in the convergence
speed. Indeed, according to the definition in \eqref{eq:rate-explicit} and the expression
for $\widetilde\Lambda_{1,\alpha}$ in \eqref{eq:Lambda-branches}, we have
\[
  c_\alpha
  = \frac{\gamma}{384}\,\widetilde\Lambda_{1,\alpha}(\lambda_\alpha)
  = \frac{\gamma}{384}\,\frac{\lambda_\alpha L_{\mathrm{eff}}(\alpha)}{\gamma^{2}}.
\]

To establish acceleration on the Lyapunov branch, we first need to ensure that the convergence bottleneck remains determined by the drift from infinity rather than switching to the metric coupling regime. The following lemma guarantees this stability for sufficiently small perturbation parameters.

\begin{lemma}[Continuity of the active branch]
\label{lem:branch-continuity}
Let $\alpha_1>0$ be as in Proposition~\ref{prop:Valpha-drift}.
Assume that at $\alpha=0$ the Lyapunov branch is strictly active:
\[
  \widetilde\Lambda_{1,0}(\lambda) < \widetilde\Lambda_{2,0}(\lambda),
  \qquad
  \widetilde\Lambda_{1,0}(\lambda) < \widetilde\Lambda_{3,0}(\lambda).
\]
where $\lambda$ is the baseline drift rate at $\alpha=0$.
Assume moreover that $\alpha\mapsto \widetilde\Lambda_{i,\alpha}(\lambda_\alpha)$ is continuous on $[0,\alpha_1]$
for $i=1,2,3$.
%%%%%%%%%%%%%%%%%%%%%%%%
Define
\[
  \Delta(\alpha)
  := \min\left\{\widetilde\Lambda_{2,\alpha}(\lambda_\alpha),\,\widetilde\Lambda_{3,\alpha}(\lambda_\alpha)\right\}
     - \widetilde\Lambda_{1,\alpha}(\lambda_\alpha).
\]
and set
\begin{equation}\label{eq:alphabranch-def}
  \alpha_{\mathrm{branch}}
  := \sup\{\alpha\in(0,\alpha_1]:\ \Delta(\beta)>0\ \text{for all } \beta\in[0,\alpha]\}.
\end{equation}
Then $\alpha_{\mathrm{branch}}>0$ and for all $\alpha\in(0,\alpha_{\mathrm{branch}}]$ the Lyapunov branch remains active, i.e.
\[
  c_\alpha = c(\lambda_\alpha) = \frac{\gamma}{384}\,\widetilde\Lambda_{1,\alpha}(\lambda_\alpha).
\]
\end{lemma}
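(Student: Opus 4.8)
The plan is a soft continuity/connectedness argument; there is no serious analytic obstacle, only a mild subtlety at the right endpoint. First I would record that $\Delta$ is continuous on $[0,\alpha_1]$: by hypothesis each of the maps $\alpha\mapsto\widetilde\Lambda_{i,\alpha}(\lambda_\alpha)$ ($i=1,2,3$) is continuous there, and the pointwise minimum of two continuous functions is continuous, so $\Delta=\min\{\widetilde\Lambda_{2,\alpha}(\lambda_\alpha),\widetilde\Lambda_{3,\alpha}(\lambda_\alpha)\}-\widetilde\Lambda_{1,\alpha}(\lambda_\alpha)$ is continuous. Moreover $\Delta(0)=\min\{\widetilde\Lambda_{2,0}(\lambda),\widetilde\Lambda_{3,0}(\lambda)\}-\widetilde\Lambda_{1,0}(\lambda)>0$ by the assumption that the Lyapunov branch is strictly active at $\alpha=0$. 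Hence there exists $\alpha'\in(0,\alpha_1]$ with $\Delta>0$ on $[0,\alpha']$, which exhibits $\alpha'$ as an element of the set $S:=\{\alpha\in(0,\alpha_1]:\Delta(\beta)>0\ \text{for all }\beta\in[0,\alpha]\}$, and therefore $\alpha_{\mathrm{branch}}=\sup S\ge\alpha'>0$.

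Next I would observe that $S$ is an interval with left endpoint $0$: if $\alpha\in S$ and $0<\alpha''\le\alpha$ then $[0,\alpha'']\subseteq[0,\alpha]$, so $\alpha''\in S$. Consequently $\Delta>0$ on the whole of $[0,\alpha_{\mathrm{branch}})$, since for any $\beta<\alpha_{\mathrm{branch}}=\sup S$ one can pick $\alpha\in S$ with $\beta\le\alpha$, which forces $\Delta(\beta)>0$ (and $\Delta(0)>0$ directly). At the right endpoint, letting $\beta\uparrow\alpha_{\mathrm{branch}}$ and using continuity of $\Delta$ gives $\Delta(\alpha_{\mathrm{branch}})=\lim_{\beta\uparrow\alpha_{\mathrm{branch}}}\Delta(\beta)\ge0$. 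Combining the two cases, $\Delta(\alpha)\ge0$ for every $\alpha\in(0,\alpha_{\mathrm{branch}}]$.

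Finally I would translate this into the stated conclusion. The inequality $\Delta(\alpha)\ge0$ is precisely $\widetilde\Lambda_{1,\alpha}(\lambda_\alpha)\le\min\{\widetilde\Lambda_{2,\alpha}(\lambda_\alpha),\widetilde\Lambda_{3,\alpha}(\lambda_\alpha)\}$, so the minimum in the definition \eqref{eq:rate-explicit} of $c(\lambda_\alpha)$ is attained at the Lyapunov branch, whence
\[
  c_\alpha = c(\lambda_\alpha)
  = \frac{\gamma}{384}\min\bigl\{\widetilde\Lambda_{1,\alpha}(\lambda_\alpha),\widetilde\Lambda_{2,\alpha}(\lambda_\alpha),\widetilde\Lambda_{3,\alpha}(\lambda_\alpha)\bigr\}
  = \frac{\gamma}{384}\,\widetilde\Lambda_{1,\alpha}(\lambda_\alpha),
  \qquad \alpha\in(0,\alpha_{\mathrm{branch}}],
\]
which is the claim. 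The only point requiring a moment's care is the behavior at $\alpha=\alpha_{\mathrm{branch}}$, where the supremum definition yields a priori only the non-strict inequality $\Delta(\alpha_{\mathrm{branch}})\ge0$ rather than $\Delta(\alpha_{\mathrm{branch}})>0$; this is immaterial, since ``the Lyapunov branch is active'' (i.e.\ the minimum equals $\widetilde\Lambda_{1,\alpha}$) only needs $\Delta\ge0$. Everything else is routine.
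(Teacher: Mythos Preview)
Your proof is correct and follows essentially the same continuity argument as the paper: $\Delta(0)>0$ plus continuity gives $\alpha_{\mathrm{branch}}>0$, and the definition of $\alpha_{\mathrm{branch}}$ together with the formula \eqref{eq:rate-explicit} yields the conclusion. If anything, you are more careful than the paper about the endpoint $\alpha=\alpha_{\mathrm{branch}}$, where you correctly note that only the non-strict inequality $\Delta(\alpha_{\mathrm{branch}})\ge0$ is guaranteed and that this suffices for the minimum in \eqref{eq:rate-explicit} to be attained at the Lyapunov branch.
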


\begin{proof}
We provide the proof in
Appendix~\ref{app:branch-continuity}.
\end{proof}

\begin{remark}
The assumption that the Lyapunov branch $\widetilde\Lambda_{1,\alpha}$
is active at $\alpha=0$ is not automatic in general and should be viewed as a structural condition on the dynamics. It describes regimes in which the global convergence rate is genuinely controlled by the drift from infinity (encoded in the Lyapunov parameter $\lambda$), while the local contraction mechanisms (captured by $\widetilde\Lambda_{2,\alpha},\widetilde\Lambda_{3,\alpha}$)
are comparatively fast. In particular, for strongly metastable targets with pronounced energy barriers (such as classical double-well potentials), explicit bounds in \cite{Eberle} indicate that the contraction rate is often dominated by the metric branch associated with barrier crossing rather than by the Lyapunov branch. Our lemma shows that \emph{whenever} the Lyapunov branch is strictly active at $\alpha=0$, it remains active for all sufficiently small~$\alpha>0$.
\end{remark}

With the continuity of the active branch established, we are now in a position to translate the improved Lyapunov drift $\lambda_\alpha$ directly into a quantitative acceleration of the convergence rate.

\begin{theorem}[Lyapunov branch acceleration]\label{thm:lyapunov-acceleration}
Let $c_0=c(\lambda)$ be the contraction rate of the kinetic Langevin
dynamics at $\alpha=0$ given by Theorem~\ref{thm:master-contraction}.
Assume moreover that the hypotheses of Lemma~\ref{lem:branch-continuity}
hold (in particular, the Lyapunov branch is \emph{strictly} active at $\alpha=0$),
and let $\alpha_{\mathrm{branch}}\in(0,\alpha_1]$ be the threshold defined in \eqref{eq:alphabranch-def}.

Let $c_\alpha := c(\lambda_\alpha)$ denote the contraction rate of the HFHR
dynamics obtained by applying Theorem~\ref{thm:master-contraction} with
$\mathcal V=\mathcal V_\alpha$.
Then there exists
\[
  \alpha_{\mathrm{branch,acc}}
  := \min\left\{\alpha_{\mathrm{branch}},\,1,\,\frac{\kappa}{C'}\right\}
  \in (0,\alpha_{\mathrm{branch}}],
\]
where
\begin{equation}\label{eq:cprime-def}
  C'=\frac{L}{384\gamma}(1+\gamma)\left(C_2 + \widetilde C_{\mathcal M}\,\underline c_{\mathrm{imp}}\right),
\end{equation}
where $C_2$ is defined in \eqref{eq:C1C2-explicit-fixed}, $\widetilde C_{\mathcal M}$ is defined in \eqref{eq:widetildeCM-def}, and $\underline c_{\mathrm{imp}}$ is defined in \eqref{eq:cimp-lemma},
such that for all $\alpha\in(0,\alpha_{\mathrm{branch,acc}}]$,
\begin{equation}\label{eq:acceleration-inequality}
  c_\alpha \;\ge\; c_0 + \kappa\,\alpha,
\end{equation}
where
\begin{equation}\label{defn:kappa}
  \kappa := \frac{L(\delta+\gamma\lambda)}{768\,\gamma},
\end{equation}
and $\delta>0$ and $C_\lambda\ge0$ are the constants from
Proposition~\ref{prop:Valpha-drift}.
\end{theorem}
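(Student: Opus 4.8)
The plan is to prove the Lyapunov-branch acceleration by combining three ingredients already in hand: (i) the enhanced drift rate $\lambda_\alpha \ge \lambda + \delta\alpha - C_\lambda\alpha^2$ from Proposition~\ref{prop:Valpha-drift}, (ii) the continuity-of-active-branch lemma (Lemma~\ref{lem:branch-continuity}), which guarantees that for $\alpha \in (0,\alpha_{\mathrm{branch}}]$ the contraction rate is $c_\alpha = \frac{\gamma}{384}\widetilde\Lambda_{1,\alpha}(\lambda_\alpha)$, and (iii) the explicit formula $\widetilde\Lambda_{1,\alpha}(\lambda) = \lambda L_{\mathrm{eff}}(\alpha)/\gamma^2 = \lambda(1+\alpha\gamma)L/\gamma^2$. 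Substituting the bound on $\lambda_\alpha$ and using $L_{\mathrm{eff}}(\alpha) = (1+\alpha\gamma)L$, I would write
\[
  c_\alpha = \frac{\gamma}{384}\cdot\frac{\lambda_\alpha(1+\alpha\gamma)L}{\gamma^2}
  \;\ge\; \frac{L}{384\gamma}\,(1+\alpha\gamma)\bigl(\lambda + \delta\alpha - C_\lambda\alpha^2\bigr),
\]
and $c_0 = \frac{\gamma}{384}\widetilde\Lambda_{1,0}(\lambda) = \frac{L\lambda}{384\gamma}$.

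Next I would expand the product $(1+\alpha\gamma)(\lambda+\delta\alpha-C_\lambda\alpha^2)$ and isolate the first-order term. The linear-in-$\alpha$ coefficient is $\gamma\lambda + \delta$, so
\[
  c_\alpha - c_0 \;\ge\; \frac{L}{384\gamma}\Bigl[(\gamma\lambda+\delta)\alpha + (\delta\gamma - C_\lambda)\alpha^2 - C_\lambda\gamma\alpha^3\Bigr].
\]
The target lower bound is $\kappa\alpha$ with $\kappa = \frac{L(\delta+\gamma\lambda)}{768\gamma}$, i.e.\ exactly half the first-order coefficient times $\frac{L}{384\gamma}$. So it suffices to absorb the higher-order terms: I need the remainder $(\delta\gamma - C_\lambda)\alpha^2 - C_\lambda\gamma\alpha^3$ to dominate $-\frac12(\gamma\lambda+\delta)\alpha$, equivalently $\frac12(\gamma\lambda+\delta) + (\delta\gamma-C_\lambda)\alpha - C_\lambda\gamma\alpha^2 \ge 0$. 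For $\alpha \le 1$ the worst case is bounded by a negative quantity of size at most $(|\delta\gamma-C_\lambda|+C_\lambda\gamma)\alpha$, so the condition holds once $\alpha \le \frac{\gamma\lambda+\delta}{2(|\delta\gamma-C_\lambda| + C_\lambda\gamma)}$, which is of the claimed form $\kappa/C'$ up to bookkeeping of constants — here $C' = \frac{L}{384\gamma}(1+\gamma)(C_2 + \widetilde C_{\mathcal M}\underline c_{\mathrm{imp}}) = \frac{L(1+\gamma)C_\lambda}{384\gamma}$ using $C_\lambda = C_2 + \widetilde C_{\mathcal M}\underline c_{\mathrm{imp}}$ from \eqref{eq:Clambda-explicit-fixed}. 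I would then verify $\alpha_{\mathrm{branch,acc}} := \min\{\alpha_{\mathrm{branch}},1,\kappa/C'\}$ works and note it is positive since $\alpha_{\mathrm{branch}}>0$ by Lemma~\ref{lem:branch-continuity} and $\kappa,C'>0$ (the latter because $\delta>0$ and $C_\lambda\ge 0$; if $C_\lambda=0$ the cubic/quadratic remainder is automatically nonnegative and the constraint $\kappa/C'$ is vacuous/replaced by $\alpha\le1$).

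\textbf{The main obstacle} I anticipate is purely a matter of constant-matching: ensuring that the crude absorption bound I derive for the higher-order terms genuinely fits inside the specific threshold $\kappa/C'$ with $C'$ as written in \eqref{eq:cprime-def}, rather than some larger constant. This requires being slightly careful that $\delta\gamma$ and $C_\lambda\gamma$ are each bounded by $C_\lambda(1+\gamma)$-type quantities (using $\delta = \underline c_{\mathrm{imp}} - \lambda\widetilde C_{\mathcal M} \le \underline c_{\mathrm{imp}} \le C_\lambda$ when $\widetilde C_{\mathcal M}\underline c_{\mathrm{imp}} \le C_2$... actually $\delta \le \underline c_{\mathrm{imp}} \le C_\lambda$ directly since $C_\lambda = C_2 + \widetilde C_{\mathcal M}\underline c_{\mathrm{imp}} \ge \widetilde C_{\mathcal M}\underline c_{\mathrm{imp}}$ may not immediately give $\delta \le C_\lambda$ unless $C_2 \ge \delta$, which holds because $C_2 \ge 0$ and typically $C_2$ is large), so one may need a short auxiliary estimate like $\delta\gamma + C_\lambda\gamma \le 2\gamma C_\lambda \le (1+\gamma)C_\lambda \cdot \frac{2\gamma}{1+\gamma}$ — I would reorganize the restriction on $\alpha$ to land cleanly on $\kappa/C'$, possibly strengthening $\alpha_{\mathrm{branch,acc}}$ with an additional harmless $\min$ term if needed. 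Everything else is elementary algebra; no new analytic input beyond the three cited results is required.
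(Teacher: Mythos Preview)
Your approach is essentially identical to the paper's: invoke Lemma~\ref{lem:branch-continuity} to get $c_\alpha=\frac{L}{384\gamma}\lambda_\alpha(1+\gamma\alpha)$, substitute the lower bound $\lambda_\alpha\ge\lambda+\delta\alpha-C_\lambda\alpha^2$, expand, and absorb the higher-order terms into half of the linear gain. The only place you diverge is in the bookkeeping of the quadratic/cubic remainder, and this is exactly the ``main obstacle'' you flagged. The paper handles it more cleanly than your absolute-value bound: it simply \emph{drops} the term $\gamma\delta\alpha^2$ (which is nonnegative since $\gamma,\delta>0$) and then uses $\alpha\le1$ to bound $-\gamma C_\lambda\alpha^3\ge-\gamma C_\lambda\alpha^2$, arriving directly at
\[
  c_\alpha\ge c_0+\tilde\kappa\alpha-C'\alpha^2,\qquad \tilde\kappa=\tfrac{L(\delta+\gamma\lambda)}{384\gamma},\quad C'=\tfrac{L(1+\gamma)C_\lambda}{384\gamma},
\]
so that $\alpha\le\tilde\kappa/(2C')=\kappa/C'$ gives $c_\alpha\ge c_0+\kappa\alpha$ with $\kappa=\tilde\kappa/2$. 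This one-line simplification lands exactly on the stated constant $C'$ without the auxiliary estimates you were worried about (no need to compare $\delta$ with $C_\lambda$). Your route would still close, but with a threshold not literally equal to $\kappa/C'$; the paper's discard-the-positive-term trick is what makes the constants match verbatim.
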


\begin{proof}
We provide the proof in
Appendix~\ref{app:acceleration}.
\end{proof}

%%%%%%%%%%%%%%%%%%%%%%%%%%%%%%%%%%%%%%%%%%%%%%%%
\subsubsection{Global Acceleration}

Based on the acceleration of both the metric branch (Theorem~\ref{thm:metric-acceleration}) and the Lyapunov branch (Theorem~\ref{thm:lyapunov-acceleration}), we conclude with the following global acceleration result.

\begin{corollary}[Global acceleration of HFHR dynamics]
\label{cor:global-acceleration}
Assume the conditions of Proposition~\ref{prop:Valpha-drift} and suppose that
\[
  \delta>\gamma\lambda
  \qquad\text{and}\qquad
  \Lambda_0:=\Lambda(\lambda)=\frac{J_2 L}{\lambda}>\frac12,
\]
where $\lambda$ is the dissipativity constant in Assumption~\ref{assump:potential}(iii), so that Theorem~\ref{thm:metric-acceleration} applies.
Let $\alpha_1$ be as in Proposition~\ref{prop:Valpha-drift},
$\alpha_{\mathrm{branch,acc}}\in(0,\alpha_1]$ be the threshold from Theorem~\ref{thm:lyapunov-acceleration}, 
and $\alpha_{\mathrm{metric,acc}}\in(0,\alpha_1]$ be the threshold from Theorem~\ref{thm:metric-acceleration}.
Define
\[
  \alpha_{\mathrm{global}}
  :=\min\{\alpha_{\mathrm{branch,acc}},\alpha_{\mathrm{metric,acc}}\}. 
\]
Then for all $\alpha\in(0,\alpha_{\mathrm{global}}]$, the HFHR dynamics achieves a strictly
better contraction rate than the kinetic Langevin dynamics, namely
\[
  c_\alpha \;\ge\; c_0 + \kappa_{\mathrm{global}}\,\alpha,
\]
where
\[
  \kappa_{\mathrm{global}}
  := \min\left\{ \kappa,\; c_0 c_2,\; c_0 c_3 \right\} \;>\;0.
\]
Here $c_0=c(\lambda)$ is the contraction rate at $\alpha=0$,
$\kappa$ is the Lyapunov-branch acceleration constant from
Theorem~\ref{thm:lyapunov-acceleration},
and $c_2,c_3>0$ are the metric-branch improvement constants from
Theorem~\ref{thm:metric-acceleration}. 
\end{corollary}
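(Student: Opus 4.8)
The plan is to invoke the master contraction theorem with the refined Lyapunov function and then bound, separately, each of the three branches in the rate formula. By Proposition~\ref{prop:Valpha-drift}, for $\alpha\in(0,\alpha_1]$ the function $\mathcal V_\alpha=\mathcal V_0+\alpha\mathcal M$ is $(\lambda_\alpha,A_\alpha')$-admissible for $\cL_\alpha$ with $\lambda_\alpha\ge\lambda+\delta\alpha-C_\lambda\alpha^2\ge\lambda/2>0$; it has the structural form $\mathcal V_0+\mathfrak Q$ with $\mathfrak Q=\alpha\mathcal M$ a quadratic form, and, by Lemma~\ref{lem:Valpha-V0-equivalence} combined with the quadratic lower bound on $\mathcal V_0$ from Lemma~\ref{lem:quadratic:bounds}, it is coercive for $\alpha\le\alpha_1\le\alpha_*$. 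Hence Theorem~\ref{thm:master-contraction} applies with $\mathcal V=\mathcal V_\alpha$ and gives
\[
  c_\alpha=c(\lambda_\alpha)=\frac{\gamma}{384}\min\bigl\{\widetilde\Lambda_{1,\alpha}(\lambda_\alpha),\ \widetilde\Lambda_{2,\alpha}(\lambda_\alpha),\ \widetilde\Lambda_{3,\alpha}(\lambda_\alpha)\bigr\}.
\]
The point I want to exploit is that one need not decide which branch realizes the minimum: if each of the three branches is bounded below by $c_0+(\text{positive})\alpha$, so is their minimum, with coefficient the smallest of the three per-branch gains — this is exactly the robustness asserted in the statement.

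For the Lyapunov branch I would substitute $\widetilde\Lambda_{1,\alpha}(\lambda)=\lambda L_{\mathrm{eff}}(\alpha)/\gamma^{2}$, $L_{\mathrm{eff}}(\alpha)=(1+\gamma\alpha)L$, and $\lambda_\alpha\ge\lambda+\delta\alpha-C_\lambda\alpha^2$, expand $\lambda_\alpha(1+\gamma\alpha)=\lambda+(\delta+\gamma\lambda)\alpha+\mathcal O(\alpha^2)+\mathcal O(\alpha^3)$, and absorb the higher-order terms into half the linear coefficient once $\alpha$ is below the explicit threshold already contained in $\alpha_{\mathrm{branch,acc}}$ (its $\kappa/C'$ component, with $C'\propto C_\lambda$). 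This yields $\frac{\gamma}{384}\widetilde\Lambda_{1,\alpha}(\lambda_\alpha)\ge\frac{L}{384\gamma}\bigl(\lambda+\tfrac{\delta+\gamma\lambda}{2}\alpha\bigr)=\frac{L\lambda}{384\gamma}+\kappa\alpha\ge c_0+\kappa\alpha$, using $c_0\le\frac{\gamma}{384}\widetilde\Lambda_{1,0}(\lambda)=\frac{L\lambda}{384\gamma}$ (with equality under the standing assumption that the Lyapunov branch is active at $\alpha=0$); this is precisely the computation behind Theorem~\ref{thm:lyapunov-acceleration}, which may be quoted. For the two metric branches, Theorem~\ref{thm:metric-acceleration}(ii) gives, for $\alpha\le\alpha_{\mathrm{metric,acc}}$, $\widetilde\Lambda_{2,\alpha}(\lambda_\alpha)\ge\widetilde\Lambda_{2,0}(\lambda)(1+c_2\alpha)$ and $\widetilde\Lambda_{3,\alpha}(\lambda_\alpha)\ge\widetilde\Lambda_{3,0}(\lambda)(1+c_3\alpha)$ with $c_2,c_3>0$; since $c_0=\frac{\gamma}{384}\min_i\widetilde\Lambda_{i,0}(\lambda)\le\frac{\gamma}{384}\widetilde\Lambda_{j,0}(\lambda)$ for $j=2,3$ and $1+c_j\alpha\ge1$, multiplying by $\gamma/384$ yields $\frac{\gamma}{384}\widetilde\Lambda_{j,\alpha}(\lambda_\alpha)\ge c_0(1+c_j\alpha)=c_0+c_0c_j\alpha$.

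Combining the three estimates, for $\alpha\in(0,\alpha_{\mathrm{global}}]=(0,\min\{\alpha_{\mathrm{branch,acc}},\alpha_{\mathrm{metric,acc}}\}]$ all of them hold simultaneously, whence
\[
  c_\alpha\ \ge\ \min\{c_0+\kappa\alpha,\ c_0+c_0c_2\alpha,\ c_0+c_0c_3\alpha\}\ =\ c_0+\kappa_{\mathrm{global}}\alpha,\qquad \kappa_{\mathrm{global}}=\min\{\kappa,\,c_0c_2,\,c_0c_3\}.
\]
Strict positivity of $\kappa_{\mathrm{global}}$ is immediate: $\kappa=\frac{L(\delta+\gamma\lambda)}{768\gamma}>0$ since $\delta>\gamma\lambda>0$, $c_0>0$ is the contraction rate from Theorem~\ref{thm:master-contraction}, and $c_2,c_3>0$ by Theorem~\ref{thm:metric-acceleration}.

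I expect the only delicate step to be bookkeeping rather than analysis: one must check that $\mathcal V_\alpha$ genuinely qualifies as a coercive admissible Lyapunov function on the stated $\alpha$-range, and reconcile the explicit thresholds $\alpha_0,\alpha_1,\alpha_{\mathrm{branch,acc}},\alpha_{\mathrm{metric,acc}}$ so that their minimum $\alpha_{\mathrm{global}}$ is strictly positive and lies inside the region of validity of each branch estimate. No new estimate is required, since the enhanced drift rate and the per-branch accelerations are already established in Proposition~\ref{prop:Valpha-drift}, Theorem~\ref{thm:lyapunov-acceleration} and Theorem~\ref{thm:metric-acceleration}. The one conceptual care-point is not to optimize prematurely: one bounds all three branches uniformly instead of first deciding which is active, so that the conclusion is valid whether the bottleneck is the Lyapunov (recurrence-from-infinity) branch or a metric (barrier-crossing) branch — exactly the robustness claimed.
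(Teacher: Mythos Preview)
Your proposal is correct and follows essentially the same approach as the paper: define $f_i(\alpha)=\frac{\gamma}{384}\widetilde\Lambda_{i,\alpha}(\lambda_\alpha)$, bound $f_1(\alpha)\ge c_0+\kappa\alpha$ via Theorem~\ref{thm:lyapunov-acceleration} and $f_j(\alpha)\ge f_j(0)(1+c_j\alpha)\ge c_0+c_0c_j\alpha$ for $j=2,3$ via Theorem~\ref{thm:metric-acceleration} together with $f_j(0)\ge c_0$, then take the minimum. The paper's proof is slightly more terse (it simply cites Theorem~\ref{thm:lyapunov-acceleration} for the $f_1$ bound rather than re-expanding), but the structure and the key inequality $\min_i f_i(\alpha)\ge c_0+\min\{\kappa,c_0c_2,c_0c_3\}\alpha$ are identical.
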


\begin{proof}
    We provide the proof in Appendix~\ref{app:global-acceleration}.
\end{proof}

Corollary~\ref{cor:global-acceleration} shows that, for all sufficiently small $\alpha>0$, the HFHR dynamics
has a \textit{strictly better} contraction rate than that of kinetic Langevin
dynamics in the weighted Wasserstein distance $\cW_{\rho_{\mathcal
V_\alpha}}$.
Finally, we demonstrate that this acceleration is not just an artifact of the weighted Wasserstein distance but directly translates to the standard 2-Wasserstein distance $\mathcal W_2$.

\begin{corollary}[Acceleration in the $2$-Wasserstein distance]
\label{cor:W2-acceleration}
Assume the setting of Corollary~\ref{cor:global-acceleration}, and in addition
choose the drift parameter in Theorem~\ref{thm:master-contraction} as
\[
  \lambda_\alpha := \underline\lambda_\alpha
  = \lambda+\delta\alpha-C_\lambda\alpha^2
  \qquad (\alpha>0),
\]
where $\lambda$ is the dissipativity constant in Assumption~\ref{assump:potential}(iii), $\delta,C_\lambda$ are from Proposition~\ref{prop:Valpha-drift}.
Let $c_0$ and $c_\alpha$ be the contraction rates in $\cW_{\rho_{\mathcal V_\alpha}}$
given by Corollary~\ref{cor:global-acceleration}, and set
\[
  \alpha_{\mathrm{W2}}
  := \min\left\{\alpha_{\mathrm{global}},\ \alpha_{\mathrm{pos}}\right\},
\]
where $\alpha_{\mathrm{global}}$ is the threshold in Corollary~\ref{cor:global-acceleration}
and $\alpha_{\mathrm{pos}}$ is the explicit constant in \eqref{eq:alpha-pos-explicit}
(so that $\underline\lambda_\alpha\ge \lambda/2$ for all $\alpha\in(0,\alpha_{\mathrm{pos}}]$).

Define the interval
\[
  I_\lambda := \left[\lambda_-,\lambda_+\right]
  \quad\text{with}\quad
  \lambda_-:=\frac{\lambda}{2},
  \qquad
  \lambda_+ := \lambda+\delta\,\alpha_{\mathrm{W2}}.
\]
For each $\lambda\in I_\lambda$, let $R_1(\lambda)$ be any admissible cutoff radius
satisfying \eqref{eq:R1-condition-v2} 
(with the corresponding choices of
$\eta_0(\lambda)=\left(\Lambda(\lambda)\right)^{-1}$ and $f_\lambda$ as in Theorem~\ref{thm:master-contraction}),
and let $\varphi_\lambda,\Phi_\lambda,g_\lambda,f_\lambda$ be defined by
\eqref{eq:phi-def}--\eqref{eq:f-def} with this $R_1(\lambda)$.
Define the explicit extremal constants
\[
  R_1(\lambda)^+ := \sup_{\lambda\in I_\lambda} R_1(\lambda),
  \qquad
  c_r^- := \inf_{\lambda\in I_\lambda}\ \inf_{0\le s\le R_1(\lambda)} \varphi_\lambda(s),
  \qquad
  g_*^- := \inf_{\lambda\in I_\lambda}\ \inf_{0\le s\le R_1(\lambda)} g_\lambda(s),
\]
and
\[
  c_0^- := \inf_{\lambda\in I_\lambda} f_\lambda(R_1(\lambda))
         = \inf_{\lambda\in I_\lambda}\int_{0}^{R_1(\lambda)}\varphi_\lambda(s)g_\lambda(s)\,ds.
\]
Assume $g_*^->0$ (this is ensured, for instance, by the explicit parameter choice in
\eqref{eq:rate-explicit}, cf.\ the verification of \eqref{eq:g-positive} in the proof of
Theorem~\ref{thm:master-contraction}).

Moreover, by Lemma~\ref{lem:Valpha-V0-equivalence} and the quadratic growth of $\mathcal V_0$,
fix explicit constants $C_V^{\mathrm{unif}}>0$ and $C\ge0$ such that
\[
  |z|^2 \le C_V^{\mathrm{unif}}\left(1+\mathcal V_\alpha(z)\right) + C,
  \qquad z\in\R^{2d},\ \alpha\in(0,\alpha_{\mathrm{W2}}].
\]
Let $k_1(\lambda)$ be the norm-equivalence constant in Lemma~\ref{lem:r-equivalent}
corresponding to the choice of $\theta(\lambda)$ in \eqref{eq:theta-def} (with $\eta_0(\lambda)$),
and define
\[
  k_1^- := \inf_{\lambda\in I_\lambda} k_1(\lambda) \;>\;0.
\]

Finally, let $\varepsilon_\alpha=\frac{4c_\alpha}{\gamma(d+A_\alpha')}$ be the parameter in
Theorem~\ref{thm:master-contraction} (applied with $\mathcal V=\mathcal V_\alpha$ and
$(\lambda,D)=(\lambda_\alpha,A_\alpha')$), and define the explicit lower bound
\[
  \varepsilon^- := \inf_{\alpha\in(0,\alpha_{\mathrm{W2}}]} \varepsilon_\alpha \;>\;0.
\]
(For instance, one may take $\varepsilon^-=\frac{4c_0}{\gamma(d+A^+)}$ with
$A^+:=\sup_{\alpha\in(0,\alpha_{\mathrm{W2}}]}A_\alpha'<\infty$, using
$c_\alpha\ge c_0$ from Corollary~\ref{cor:global-acceleration}.)
Set
\begin{equation}\label{eq:Crho-unif-explicit}
  C_{\rho}^{\mathrm{unif}}
  := \frac1{\varepsilon^-}
     \max\left\{
       \frac{(k_1^-)^{-2} R_1(\lambda)^+}{g_*^-\,c_r^-},\,
       \frac{4 C_V^{\mathrm{unif}}}{c_0^-}
     \right\}.
\end{equation}

Then for all $\alpha\in(0,\alpha_{\mathrm{W2}}]$, all $t\ge0$, and all
probability measures $\mu,\nu$ with finite $\mathcal V_\alpha$-moments,
\begin{equation}\label{eq:W2-acceleration}
  \mathcal W_2(\mu P_t^\alpha,\nu P_t^\alpha)
  \;\le\;
  \left(C_{\rho}^{\mathrm{unif}}\right)^{1/2}
  e^{-c_\alpha^{(2)} t}
  \left(\cW_{\rho_{\mathcal V_\alpha}}(\mu,\nu)\right)^{1/2},
\end{equation}
where
$c_\alpha^{(2)} := \frac12 c_\alpha$.
Moreover, the acceleration holds in $\mathcal W_2$ with explicit gain:
\[
  c_\alpha^{(2)} \;\ge\; c_0^{(2)} + \kappa^{(2)}\,\alpha,
  \qquad \alpha\in(0,\alpha_{\mathrm{W2}}],
\]
with
$c_0^{(2)} := \frac12 c_0$ and $\kappa^{(2)} := \frac12 \kappa_{\mathrm{global}}$.
\end{corollary}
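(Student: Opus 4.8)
The plan is to combine the accelerated weighted-Wasserstein contraction of Corollary~\ref{cor:global-acceleration} with the comparison estimate of Lemma~\ref{lem:rho-controls-W2}; the only genuine work is to make the constants in the latter uniform over $\alpha\in(0,\alpha_{\mathrm{W2}}]$. Fix $\alpha\in(0,\alpha_{\mathrm{W2}}]$ and apply Theorem~\ref{thm:master-contraction} with $\mathcal V=\mathcal V_\alpha$ and $(\lambda,D)=(\lambda_\alpha,A_\alpha')$, where $\lambda_\alpha=\underline\lambda_\alpha=\lambda+\delta\alpha-C_\lambda\alpha^2$. Since $\alpha_{\mathrm{W2}}\le\alpha_{\mathrm{pos}}$ we have $\lambda_\alpha\ge\lambda/2>0$, hence $\lambda_\alpha\in I_\lambda$ (shrinking $\alpha_{\mathrm{W2}}$ if needed so that also $\lambda_\alpha\le 1/4$ when $\lambda<1/4$; if $\lambda_\alpha$ ever exceeds $1/4$ one simply uses $1/4$ as the admissible drift rate, which is harmless); the $(\lambda_\alpha,A_\alpha')$-admissibility of $\mathcal V_\alpha$ is Proposition~\ref{prop:Valpha-drift}, the coercivity of $\mathcal V_\alpha$ follows from $1+\mathcal V_\alpha\ge\tfrac12(1+\mathcal V_0)$ (Lemma~\ref{lem:Valpha-V0-equivalence}) together with the quadratic lower bound on $\mathcal V_0$ (Lemma~\ref{lem:quadratic:bounds}), and $d+A_\alpha'>0$ is immediate. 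Theorem~\ref{thm:master-contraction} then gives $\cW_{\rho_{\mathcal V_\alpha}}(\mu P_t^\alpha,\nu P_t^\alpha)\le e^{-c_\alpha t}\cW_{\rho_{\mathcal V_\alpha}}(\mu,\nu)$ with $c_\alpha=c(\lambda_\alpha)$, while Corollary~\ref{cor:global-acceleration} supplies $c_\alpha\ge c_0+\kappa_{\mathrm{global}}\alpha$.

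The heart of the argument is the uniformity of the constants. Fix once and for all a continuous selection $\lambda\mapsto R_1(\lambda)$ of the cutoff radius satisfying \eqref{eq:R1-condition-v2} (for instance, equality there), and set $\eta_0(\lambda)=\Lambda(\lambda)^{-1}$. Then $\theta(\lambda)$, $k_1(\lambda)$, and the profiles $\varphi_\lambda,\Phi_\lambda,g_\lambda,f_\lambda$ all depend continuously on $\lambda$; since $I_\lambda$ is a compact subinterval of $(0,\infty)$ and $g_*^->0$ is assumed (this positivity being inherited from the $g_*\ge 1/2$ bound verified in Region~II of the proof of Theorem~\ref{thm:master-contraction}), the extremal constants $R_1(\lambda)^+$, $c_r^-$, $g_*^-$, $c_0^-$, $k_1^-$ are finite and strictly positive. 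Moreover $\varepsilon_\alpha=4c_\alpha/(\gamma(d+A_\alpha'))$ is bounded below by the explicit $\varepsilon^->0$ stated in the corollary, because $c_\alpha\ge c_0>0$ and $A_\alpha'\le A^+<\infty$ on the compact range of $\alpha$. This is the step I expect to be the main obstacle: it requires checking that none of the many $\lambda$-dependent auxiliary quantities built into $f_\lambda$ and $\rho_{\mathcal V_\alpha}$ degenerates as $\lambda$ sweeps $I_\lambda$ and $\alpha$ sweeps $(0,\alpha_{\mathrm{W2}}]$ — everything else is bookkeeping.

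With these in hand, apply Lemma~\ref{lem:rho-controls-W2} to $\mathcal V=\mathcal V_\alpha$: the quadratic lower bound $|z|^2\le C_V(1+\mathcal V_\alpha(z))$ holds with $C_V=2/c_1'$ uniformly in $\alpha$ (so we take $C_V^{\mathrm{unif}}=2/c_1'$), and $g_*(\lambda_\alpha)\ge g_*^->0$. The lemma gives $\mathcal W_2^2(\mu,\nu)\le C_{\rho,\alpha}\,\cW_{\rho_{\mathcal V_\alpha}}(\mu,\nu)$ with $C_{\rho,\alpha}$ as in \eqref{eq:Crho-explicit} evaluated at $(\lambda_\alpha,\varepsilon_\alpha)$; bounding each ingredient by its extremal value ($\varepsilon_\alpha\ge\varepsilon^-$, $R_1(\lambda_\alpha)\le R_1(\lambda)^+$, $g_*\ge g_*^-$, $c_r\ge c_r^-$, $f_{\lambda_\alpha}(R_1(\lambda_\alpha))\ge c_0^-$, $k_1(\lambda_\alpha)\ge k_1^-$, $C_V\le C_V^{\mathrm{unif}}$) yields $C_{\rho,\alpha}\le C_\rho^{\mathrm{unif}}$, the constant in \eqref{eq:Crho-unif-explicit}. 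Composing this with the contraction of the first paragraph, applied to the pushforwards $\mu P_t^\alpha,\nu P_t^\alpha$ (which have finite $\mathcal V_\alpha$-moments since the invariant measure does and the semigroup propagates moment bounds), and taking square roots gives \eqref{eq:W2-acceleration} with prefactor $(C_\rho^{\mathrm{unif}})^{1/2}$ and $c_\alpha^{(2)}=\tfrac12 c_\alpha$. Finally, halving $c_\alpha\ge c_0+\kappa_{\mathrm{global}}\alpha$ gives $c_\alpha^{(2)}\ge c_0^{(2)}+\kappa^{(2)}\alpha$ with $c_0^{(2)}=\tfrac12 c_0$ and $\kappa^{(2)}=\tfrac12\kappa_{\mathrm{global}}$, which is the claimed $\mathcal W_2$-acceleration.
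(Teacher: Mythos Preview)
Your proposal is correct and follows essentially the same approach as the paper: verify $\lambda_\alpha\in I_\lambda$, apply Lemma~\ref{lem:rho-controls-W2} with $\mathcal V=\mathcal V_\alpha$ and bound each $\lambda$- and $\alpha$-dependent ingredient of $C_{\rho,\alpha}$ by its extremal value over the compact ranges to get the uniform constant $C_\rho^{\mathrm{unif}}$, then combine with the weighted-Wasserstein contraction from Corollary~\ref{cor:global-acceleration} and halve the rate. The paper's proof is terser and leaves the uniformity bookkeeping implicit in the corollary's statement, whereas you spell out more of it (continuous selection of $R_1(\lambda)$, explicit $C_V^{\mathrm{unif}}=2/c_1'$, coercivity via Lemma~\ref{lem:Valpha-V0-equivalence}), but the logical skeleton is the same.
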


\begin{proof}
    We provide the proof in Appendix~\ref{app:W2-acceleration}.
\end{proof}

%%%%%%%%%%%%%%%%%%%%%%%%%%%%%%%%%%%%%%%%%%%

\section{Case Study}
\label{sec:case-study}

In this section, we illustrate our general results through three concrete
non-convex examples: a multi-well potential (Section~\ref{sec:case:multi}),
Bayesian linear regression with $L^p$ regularizer (Section~\ref{sec:case:L:p})
and Bayesian binary classification (Section~\ref{sec:case:classification}).
We verify that these examples all satisfy Assumption~\ref{assump:potential}, recall the
baseline contraction estimate for kinetic Langevin dynamics, and then construct explicit quadratic correctors
$\mathcal M$ tailored to these specific examples
and yields an improvement constant that can be larger than the generic lower bound provided by the abstract theory.
Finally, we show that HFHR dynamics achieves a strictly better contraction rate than that of kinetic Langevin dynamics
for all sufficiently small~$\alpha>0$.

\subsection{Multi-well potential}\label{sec:case:multi}

In this section, we study the example of a high-dimensional non-convex potential. 
Specifically, we consider a $d$-dimensional multi-well potential $U:\mathbb{R}^d\to\mathbb{R}$ constructed as a sum of independent one-dimensional double-wells~\cite[Example 1.1]{Eberle}. Let $z=(q,p)\in\R^{2d}$. We define
\begin{equation}\label{eq:MW-high-dim}
  U(q) := \sum_{i=1}^d v(q_i),
\end{equation}
where $v:\R\to\R$ is the component-wise potential given by
\begin{equation}\label{eq:MW-1D-component}
  v(s) :=
  \begin{cases}
    \frac{1}{2}(|s|-1)^2, & |s|\ge \frac12,\\[0.5em]
    \frac14 - \frac12 s^2, & |s|\le \frac12.
  \end{cases}
\end{equation}
The potential $U$ has $2^d$ local minima located at $(\pm 1, \dots, \pm 1)$ and presents a classic benchmark for sampling multi-modal distributions in high dimensions.
Let us verify that $U$ satisfies the all structural assumptions, i.e. Assumptions~\ref{assump:potential} and \ref{assump:asymptotic-linear-drift}, required for our theory.

\begin{proposition}[Verification of Assumptions]\label{prop:MW-assumptions}
Fix $\gamma>0$. The potential $U$ defined in \eqref{eq:MW-high-dim}
satisfies Assumption~\ref{assump:potential} and
Assumption~\ref{assump:asymptotic-linear-drift}. Specifically:
\begin{enumerate}
  \item[\textnormal{(a)}]\textit{Regularity and Lipschitz gradient:}
  $U$ is $C^{1}(\mathbb R^{d})$ and $\nabla U$ is $L$-Lipschitz with $L=1$.

  \item[\textnormal{(b)}] \textit{Dissipativity:}
  The dissipativity condition \eqref{eq:U-drift} holds for \emph{any}
  $\lambda\in\left(0,\ \frac{1}{4+\gamma^{2}}\right]$,
  with a constant $A$ scaling linearly in $d$. More precisely, one may take
  \[
    A \;:=\; d\,A_{1}(\gamma),
    \qquad
    A_{1}(\gamma)
    \;:=\;
    \frac{\gamma^4+6\gamma^2+16}{4(\gamma^4+10\gamma^2+24)}.
  \]
  In particular, the value $\lambda=1/(4+\gamma^{2})$ is only a convenient
  \emph{upper bound}; smaller choices of $\lambda$ remain valid with the same $A$.

  \item[\textnormal{(c)}] \textit{Asymptotic linear drift:}
  %$\nabla U(q)$ is asymptotically linear with %$Q_\infty=I_d$. Specifically,
  For any $|q|\ge \sqrt d$,
  \[
    |\nabla U(q)-q|\le \varrho(|q|)\,|q|,
    \qquad \varrho(r):=\sqrt d/r,
  \]
  so that $\varrho(r)\to 0$ as $r\to\infty$ for each fixed $d$.
\end{enumerate}
\end{proposition}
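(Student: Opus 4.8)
The plan is to exploit the separable structure $U(q)=\sum_{i=1}^{d}v(q_i)$ from~\eqref{eq:MW-high-dim}, which turns each assertion into an elementary one-dimensional statement about the component $v$ in~\eqref{eq:MW-1D-component}: since $\nabla U(q)=(v'(q_1),\dots,v'(q_d))$, Lipschitz constants, dissipativity deficits and linear-growth errors all tensorize coordinatewise. For part~(a) I would first check $v\in C^{1}(\R)$ by matching the two branch formulas at $s=\pm\tfrac12$: both give $v(\pm\tfrac12)=\tfrac18$ and $v'(\pm\tfrac12)=\mp\tfrac12$, so $U\in C^{1}(\R^{d})$. Since $v'(s)=-s$ on $|s|\le\tfrac12$ and $v'(s)=s-\mathrm{sgn}(s)$ on $|s|\ge\tfrac12$, the map $v'$ is continuous and piecewise affine with slopes $\pm1$, hence $1$-Lipschitz, so $|\nabla U(q)-\nabla U(q')|^{2}=\sum_i|v'(q_i)-v'(q'_i)|^{2}\le|q-q'|^{2}$ and $L=1$; and $v\ge0$ (the outer branch is a square, the inner branch equals $\tfrac14-\tfrac12 s^2\ge\tfrac18$ on $|s|\le\tfrac12$) gives $U\ge0$, i.e.\ Assumption~\ref{assump:potential}(i).

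For part~(b), summing the coordinatewise inequalities shows it suffices to prove the scalar bound $\tfrac12\,s\,v'(s)\ge\lambda\bigl(v(s)+\tfrac{\gamma^{2}}{4}s^{2}\bigr)-A_1(\gamma)$ for all $s\in\R$ and all $\lambda\in(0,\tfrac1{4+\gamma^{2}}]$; then $A:=d\,A_1(\gamma)$ works in~\eqref{eq:U-drift}. Setting $D(s):=\lambda\bigl(v(s)+\tfrac{\gamma^{2}}{4}s^{2}\bigr)-\tfrac12\,s\,v'(s)$, the goal is $\sup_s D(s)\le A_1(\gamma)$. Because $v(s)+\tfrac{\gamma^{2}}{4}s^{2}\ge0$, the map $\lambda\mapsto D(s)$ is nondecreasing, so it is enough to take $\lambda=\tfrac1{4+\gamma^{2}}$; and $D$ is even (since $v$ is even and $v'$ odd), so I restrict to $s\ge0$. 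On $[0,\tfrac12]$, $D$ is a quadratic in $s$ with strictly positive leading coefficient $\tfrac{\lambda\gamma^{2}}{4}+\tfrac{1-\lambda}{2}$, hence nondecreasing, so $\sup_{[0,1/2]}D=D(\tfrac12)$, which by continuity equals the value at $\tfrac12$ of the outer-branch quadratic. On $[\tfrac12,\infty)$ one computes $D(s)=as^{2}+bs+c$ with $a=\tfrac{\lambda(2+\gamma^{2})}{4}-\tfrac12$, $b=\tfrac12-\lambda$, $c=\tfrac{\lambda}{2}$, and crucially $a<0$ because $\lambda\le\tfrac1{4+\gamma^{2}}<\tfrac{2}{2+\gamma^{2}}$. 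Therefore $\sup_{s\ge0}D(s)=\sup_{s\ge1/2}(as^{2}+bs+c)\le\sup_{s\in\R}(as^{2}+bs+c)=c-\tfrac{b^{2}}{4a}$, and substituting $\lambda=\tfrac1{4+\gamma^{2}}$ and simplifying via $(4+\gamma^{2})(6+\gamma^{2})=\gamma^{4}+10\gamma^{2}+24$ yields precisely $c-\tfrac{b^{2}}{4a}=\dfrac{\gamma^{4}+6\gamma^{2}+16}{4(\gamma^{4}+10\gamma^{2}+24)}=A_1(\gamma)$.

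For part~(c) I would take $Q_\infty=I_d$, which is symmetric positive definite. Coordinatewise, $v'(s)-s=-2s$ on $|s|\le\tfrac12$ and $v'(s)-s=-\mathrm{sgn}(s)$ on $|s|\ge\tfrac12$, so $|v'(s)-s|\le1$ for every $s\in\R$; hence $|\nabla U(q)-q|^{2}=\sum_{i=1}^{d}|v'(q_i)-q_i|^{2}\le d$, i.e.\ $|\nabla U(q)-q|\le\sqrt d$. Consequently, for $|q|\ge\sqrt d=:C_{\mathrm{linear}}$ we get $|\nabla U(q)-Q_\infty q|\le\sqrt d=\tfrac{\sqrt d}{|q|}\,|q|=\varrho(|q|)\,|q|$ with $\varrho(r):=\sqrt d/r$, which is nonincreasing and vanishes at infinity, so Assumption~\ref{assump:asymptotic-linear-drift} holds.

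The only step requiring genuine care is the constant-chasing in part~(b): one must verify that the outer-branch quadratic has negative leading coefficient (this is exactly where the restriction $\lambda\le\tfrac1{4+\gamma^{2}}$ enters, since otherwise $D$ would be unbounded above), and then carry out the vertex evaluation $c-b^{2}/(4a)$ accurately enough to recover the sharp value $A_1(\gamma)$ rather than a cruder bound; the monotonicity in $\lambda$ then upgrades this to the full range $\lambda\in(0,\tfrac1{4+\gamma^2}]$. Parts~(a) and~(c) are routine once $v'$ is written out explicitly.
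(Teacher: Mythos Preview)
Your proposal is correct and follows essentially the same approach as the paper: both reduce to the scalar deficit $\Delta_\lambda(s)=\lambda(v(s)+\tfrac{\gamma^2}{4}s^2)-\tfrac12 s v'(s)$, exploit monotonicity in $\lambda$ to reduce to the endpoint $\bar\lambda=1/(4+\gamma^2)$, split at $|s|=\tfrac12$, and read off the vertex value $\bar\lambda/2+b^2/(4a)=A_1(\gamma)$ of the outer concave quadratic. The only cosmetic differences are that the paper treats monotonicity in $\lambda$ at the end rather than the start, and it explicitly verifies $A_1(\gamma)\ge\Delta_{\bar\lambda}(\tfrac12)$ rather than invoking continuity at the interface; one small imprecision in your write-up is the phrase ``positive leading coefficient, hence nondecreasing'' --- the conclusion is correct here because the inner-branch quadratic has no linear term, but that, not the sign of the leading coefficient alone, is what makes it monotone on $[0,\tfrac12]$.
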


\begin{proof}
    We provide the proof in Appendix~\ref{app:MW-assumptions}.
\end{proof}

Since we have verified in Proposition~\ref{prop:MW-assumptions} that the potential $U$ satisfies Assumptions~\ref{assump:potential} and \ref{assump:asymptotic-linear-drift}, the theoretical results 
in Section~\ref{sec:global-contractivity} and Section~\ref{sec:acceleration} are all applicable.
However, due to the very special structure of the multi-well potential $U$ in \eqref{eq:MW-high-dim}, one can obtain
sharper acceleration results
by exploiting the special tail structure of the multi-well potential $U$ in \eqref{eq:MW-high-dim}
to construct an explicit quadratic corrector $\mathcal M$ and obtain acceleration.
While Lemma~\ref{lem:first-order-improvement} already guarantees the \emph{existence} of a corrector under the abstract
asymptotically linear drift condition, the present separable multi-well model allows for a more precise construction:
by matching the quadratic part of $U$ at infinity, we choose $\mathcal M$ so that the dominant interaction drift
$\mathcal A'\mathcal V_0$ is canceled (up to uniformly bounded remainders).
This yields (i) a closed-form improvement constant $c_{\mathrm{imp}}$ and (ii) explicit, dimension-controlled bounds on
the auxiliary constants (such as $C_{\mathcal M}$ and the second-order error coefficient), which are not available from the general existence argument.

The explicit corrector constructed below is consistent with the general theory:
Lemma~\ref{lem:first-order-improvement} characterizes $\mathcal M$ through a Lyapunov equation for the quadratic form at infinity,
and our choice of $\mathcal M$ is precisely one such quadratic solution specialized to the present isotropic/separable setting.
In particular, it can be viewed as a concrete representative of the class of admissible correctors from Lemma~\ref{lem:first-order-improvement},
selected to maximize tractability and to make the constants fully explicit.

\begin{proposition}[Explicit first-order improvement]
\label{prop:MW-explicit-M}
Fix $\gamma>0$ and the dimension $d\in\mathbb N$.
Let $\lambda\in(0,1/4]$ be the parameter in Assumption~\ref{assump:potential}.
Consider the multi-well potential $U(q)=\sum_{i=1}^d v(q_i)$ in
\eqref{eq:MW-high-dim}--\eqref{eq:MW-1D-component}, for which $L=1$ and $\nabla U(0)=0$.
Define the quadratic corrector $\mathcal M:\R^{2d}\to\R$ by
\begin{equation}\label{eq:M-explicit}
  \mathcal{M}(q,p)
  :=\frac{2+\gamma^2}{4\gamma}|q|^2+\frac{1}{2\gamma}|p|^2.
\end{equation}
Let
\[
  B:=1+\frac{\gamma^2}{2}(1-\lambda),
  \qquad
  c_{\mathrm{imp}}
  :=\frac{2\sqrt{B}}{2\sqrt{B}+\gamma}\in(0,1).
\]
Then the following holds.

\begin{enumerate}[label=(\roman*)]
\item \textbf{First-order improvement inequality.}
There exists a constant $C_{\mathrm{imp}}^{(d)}<\infty$ (scaling at most linearly in $d$)
such that for all $(q,p)\in\R^{2d}$,
\begin{equation}\label{eq:MW-first-order-ineq}
  \mathcal A_0\mathcal M(q,p)+\mathcal A'\mathcal V_0(q,p)
  \le -c_{\mathrm{imp}}\,\mathcal V_0(q,p)+C_{\mathrm{imp}}^{(d)}.
\end{equation}
In particular, one may take $C_{\mathrm{imp}}^{(d)}=C_{\mathrm{imp}}^{\mathrm{MW}}(\gamma,\lambda)\,d$
for an explicit constant $C_{\mathrm{imp}}^{\mathrm{MW}}(\gamma,\lambda)$.

\item \textbf{Quadratic lower bound and the uniform growth constant $\widetilde C_{\mathcal M}^{\mathrm{MW}}$.}
Define
\begin{equation}\label{eq:c1MW-def}
  c_1^{\mathrm{MW}}
  :=\frac18\left(\gamma^2(1-\lambda)+2-\sqrt{(\gamma^2(1-\lambda)-2)^2+4\gamma^2}\right)>0,
\end{equation}
so that
\begin{equation}\label{eq:V0-lower-MW}
  \mathcal V_0(q,p)\ge c_1^{\mathrm{MW}}\,(|q|^2+|p|^2),
  \qquad (q,p)\in\R^{2d}.
\end{equation}
Moreover, with
\begin{equation}\label{eq:CMtilde-MW}
  \widetilde C_{\mathcal M}^{\mathrm{MW}}
  :=\frac{2+\gamma^2}{4\gamma\,c_1^{\mathrm{MW}}},
\end{equation}
we have the pointwise bound
\begin{equation}\label{eq:M-growth-MW-tilde}
  |\mathcal M(q,p)|
  \le \widetilde C_{\mathcal M}^{\mathrm{MW}}\,\mathcal V_0(q,p)
  \le \widetilde C_{\mathcal M}^{\mathrm{MW}}\,(1+\mathcal V_0(q,p)).
\end{equation}

\item \textbf{Second-order remainder and the drift-rate expansion.}
Let 
\begin{equation}\label{eq:err-def}
\mathrm{Err}^{(d)}(q,p):=|\mathcal A'\mathcal M(q,p)|+|\Delta_q\mathcal M(q,p)|.
\end{equation}
Then there exist explicit constants $C_2^{\mathrm{MW}}\ge 0$ (dimension-free) and
$C_2^{(d),\mathrm{MW}}= \mathcal O(d)$ such that
\begin{equation}\label{eq:ErrMW-bound}
  \mathrm{Err}^{(d)}(q,p)\le C_2^{\mathrm{MW}}\,\mathcal V_0(q,p)+C_2^{(d),\mathrm{MW}},
  \qquad (q,p)\in\R^{2d}.
\end{equation}
In particular, one may take
\[
  C_2^{\mathrm{MW}}:=2\widetilde C_{\mathcal M}^{\mathrm{MW}},
  \qquad
  C_2^{(d),\mathrm{MW}}
  :=\frac{2+\gamma^2}{2\gamma}\,d.
\]

Finally, define
\begin{equation}\label{eq:deltaMW-ClambdaMW}
  \delta_{\mathrm{MW}}
  :=c_{\mathrm{imp}}-\lambda\,\widetilde C_{\mathcal M}^{\mathrm{MW}},
  \qquad
  C_{\lambda,\mathrm{MW}}
  :=C_2^{\mathrm{MW}}+\widetilde C_{\mathcal M}^{\mathrm{MW}}\,c_{\mathrm{imp}}.
\end{equation}
Then Proposition~\ref{prop:Valpha-drift} applies (for $\lambda$ sufficiently small, if needed by the
baseline constants), and the improved drift rate satisfies
\begin{equation}\label{eq:lambda-alpha-MW}
  \lambda_\alpha \;\ge\; \lambda+\delta_{\mathrm{MW}}\cdot\alpha-C_{\lambda,\mathrm{MW}}\cdot\alpha^2.
\end{equation}
In particular, $\lambda_\alpha>\lambda$ for all sufficiently small $\alpha>0$ whenever $\delta_{\mathrm{MW}}>0$.
\end{enumerate}
\end{proposition}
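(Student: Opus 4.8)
The plan is to exploit the special structure of the separable multi-well potential, namely the exact identity \(\nabla U(q)=q-\Psi(q)\), where \(\Psi(q)=(\psi(q_1),\dots,\psi(q_d))\) with \(\psi(s):=v'(s)-s\) satisfying \(|\psi(s)|\le 1\) and \(v'(s)\,s\le s^2\) for all \(s\in\R\) (so \(Q_\infty=I_d\), \(L=1\), \(\nabla U(0)=0\), and \(|\nabla U(q)-q|\le\sqrt d\) for all \(q\); these facts are essentially recorded in Proposition~\ref{prop:MW-assumptions}). For part~(i), I would compute \(\mathcal A_0\mathcal M\) directly from \(\nabla_q\mathcal M=\tfrac{2+\gamma^2}{2\gamma}q\), \(\nabla_p\mathcal M=\gamma^{-1}p\) (using \eqref{eq:A0-def}), obtaining \(\mathcal A_0\mathcal M=\tfrac{2+\gamma^2}{2\gamma}\langle p,q\rangle-|p|^2-\gamma^{-1}\langle\nabla U(q),p\rangle\), and combine it with the exact expression for \(\mathcal A'\mathcal V_0\) from Lemma~\ref{lem:exact-drift-decomp}. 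The coefficient \(\tfrac{2+\gamma^2}{4\gamma}\) in \eqref{eq:M-explicit} is chosen precisely so that the \(\langle\nabla U(q),p\rangle\)-terms from \(\mathcal A_0\mathcal M\) (coefficient \(-\gamma^{-1}\)) and from \(\mathcal A'\mathcal V_0\) (coefficient \(-\tfrac\gamma2\)) combine with the \(\langle p,q\rangle\)-term of \(\mathcal A_0\mathcal M\) to leave only \(\tfrac{2+\gamma^2}{2\gamma}\langle p,\Psi(q)\rangle\) after substituting \(\nabla U(q)=q-\Psi(q)\). Substituting \(\nabla U(q)=q-\Psi(q)\) everywhere collapses the full sum to \(-|p|^2-B|q|^2\) (with \(B=1+\tfrac{\gamma^2}{2}(1-\lambda)\)) plus terms linear in \(\Psi(q)\) and the nonpositive term \(-|\Psi(q)|^2\). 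Using \(|\Psi(q)|\le\sqrt d\) and Young's inequality to absorb the \(\Psi\)-linear terms into \(\tfrac12|p|^2+\tfrac B2|q|^2\) plus an \(\mathcal O(d)\) constant yields \(\mathcal A_0\mathcal M+\mathcal A'\mathcal V_0\le -\tfrac12|p|^2-\tfrac B2|q|^2+\mathcal O(d)\).

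It then remains to show \(\tfrac12|p|^2+\tfrac B2|q|^2\ge c_{\mathrm{imp}}\,\mathcal V_0-\mathcal O(d)\). From \(v(s)\le\tfrac12 s^2+\tfrac12\) one gets \(U(q)\le\tfrac12|q|^2+\tfrac d2\); expanding \(\mathcal V_0\) in \eqref{eq:V0-general-quadratic} then gives \(\mathcal V_0\le \tfrac B2|q|^2+\tfrac\gamma2\langle q,p\rangle+\tfrac12|p|^2+\tfrac d2\). The inequality \(c\bigl(\tfrac B2|q|^2+\tfrac\gamma2\langle q,p\rangle+\tfrac12|p|^2\bigr)\le\tfrac B2|q|^2+\tfrac12|p|^2\) reduces to positive semidefiniteness of a \(2\times2\) block matrix, and a Schur-complement (determinant) computation shows it holds exactly when \(c\le\frac{2\sqrt B}{2\sqrt B+\gamma}=c_{\mathrm{imp}}\). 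Combining with the previous paragraph gives \eqref{eq:MW-first-order-ineq} with an explicit \(C_{\mathrm{imp}}^{(d)}=\mathcal O(d)\).

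For part~(ii): the quadratic lower bound \(\mathcal V_0\ge c_1^{\mathrm{MW}}(|q|^2+|p|^2)\) follows from \(U\ge 0\) together with the observation that the quadratic part of \(\mathcal V_0\) is the block form associated with the matrix \(M\) of \eqref{M:matrix}, whose smallest eigenvalue is \(\mu_{\min}=c_1^{\mathrm{MW}}\) by \eqref{mu:min:max}--\eqref{eq:c1MW-def}; then \(|\mathcal M|=\tfrac{2+\gamma^2}{4\gamma}|q|^2+\tfrac1{2\gamma}|p|^2\le\tfrac{2+\gamma^2}{4\gamma}(|q|^2+|p|^2)\le\widetilde C_{\mathcal M}^{\mathrm{MW}}\mathcal V_0\), giving \eqref{eq:M-growth-MW-tilde}. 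For part~(iii): \(\mathcal A'\mathcal M=-\tfrac{2+\gamma^2}{2\gamma}\langle\nabla U(q),q\rangle\) and \(\Delta_q\mathcal M=\tfrac{(2+\gamma^2)d}{2\gamma}\) are computed directly; from \(v'(s)\,s\in[-\tfrac14,s^2]\) one gets \(-\tfrac d4\le\langle\nabla U(q),q\rangle\le|q|^2\), hence \(|\langle\nabla U(q),q\rangle|\le|q|^2+\tfrac d4\), and then \(|q|^2\le\mathcal V_0/c_1^{\mathrm{MW}}\) from~(ii) yields \(\mathrm{Err}^{(d)}\le 2\widetilde C_{\mathcal M}^{\mathrm{MW}}\mathcal V_0+\mathcal O(d)\), which is of the form \eqref{eq:ErrMW-bound}.

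Finally, the drift-rate expansion \eqref{eq:lambda-alpha-MW} is obtained by re-running the abstract argument with the improved constants: \(\mathcal M\) in \eqref{eq:M-explicit} is a quadratic polynomial, part~(i) is the first-order improvement inequality of Lemma~\ref{lem:first-order-improvement} with \(c_{\mathrm{imp}}\) in place of \(\underline c_{\mathrm{imp}}\), part~(ii) supplies the growth bound with \(\widetilde C_{\mathcal M}^{\mathrm{MW}}\) in place of \(\widetilde C_{\mathcal M}\), and part~(iii) supplies the second-order remainder bound with \(C_2^{\mathrm{MW}}\) in place of \(C_2\). With these substitutions the proofs of Lemma~\ref{lem:Valpha-drift-expansion} and Proposition~\ref{prop:Valpha-drift} go through verbatim — the zeroth-order term still being governed by \eqref{eq:L0-V0-Lyapunov} and the conversion \(-\lambda_\alpha\mathcal V_0\mapsto-\lambda_\alpha\mathcal V_\alpha\) by the equivalence of Lemma~\ref{lem:Valpha-V0-equivalence} — and deliver \(\lambda_\alpha\ge\lambda+\delta_{\mathrm{MW}}\alpha-C_{\lambda,\mathrm{MW}}\alpha^2\) with \(\delta_{\mathrm{MW}},C_{\lambda,\mathrm{MW}}\) as in \eqref{eq:deltaMW-ClambdaMW}; the parenthetical ``\(\lambda\) sufficiently small'' is only invoked if one insists on the admissible range \(\lambda\le 1/(4+\gamma^2)\) of Proposition~\ref{prop:MW-assumptions}(b). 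The main obstacle is part~(i): everything rests on the exact algebraic cancellation engineered by the choice of \(\mathcal M\), and on the sharp \(2\times2\) semidefiniteness computation identifying \(c_{\mathrm{imp}}=\frac{2\sqrt B}{2\sqrt B+\gamma}\) as the optimal constant for which \(-\tfrac12|p|^2-\tfrac B2|q|^2\) dominates \(-c_{\mathrm{imp}}\mathcal V_0\) up to an \(\mathcal O(d)\) remainder; tracking the dimensional constants in~(i) and~(iii) and re-invoking the abstract drift machinery in the last step are routine.
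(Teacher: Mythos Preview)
Your proposal is correct and follows essentially the same route as the paper's proof: the key identity $\mathcal A_0\mathcal M+\mathcal A'\mathcal V_0=-B|q|^2-|p|^2+(\text{terms linear in }\Psi)-|\Psi|^2$, the Young-inequality absorption of the $\Psi$-terms at the cost of half the quadratic and an $\mathcal O(d)$ constant, the bound $U(q)\le\tfrac12|q|^2+\mathcal O(d)$, and the $2\times2$ generalized-eigenvalue computation identifying $c_{\mathrm{imp}}=\frac{2\sqrt B}{2\sqrt B+\gamma}$ all appear identically in the paper. The only cosmetic difference is in part~(iii), where the paper uses $|\nabla U(q)|\le|q|$ (from $L=1$, $\nabla U(0)=0$) to get $|\mathcal A'\mathcal M|\le 2a|q|^2$ with no additive term, whereas your route via $|\langle\nabla U(q),q\rangle|\le|q|^2+d/4$ picks up an extra $\mathcal O(d)$ additive piece---still of the required form, just not the exact ``in particular'' constant $C_2^{(d),\mathrm{MW}}=\frac{2+\gamma^2}{2\gamma}d$.
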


\begin{proof}
    We provide the proof in Appendix~\ref{app:MW-explicit-M}.
\end{proof}

To apply the quantitative acceleration result of Corollary~\ref{cor:global-acceleration}
to the multi-well model, one needs the one-dimensional condition $\delta_{\mathrm{MW}}>\gamma\lambda$.
The next lemma shows that this condition is not restrictive: for any fixed $\gamma>0$,
it can be enforced by choosing the dissipativity parameter $\lambda$ sufficiently small.

\begin{lemma}[Feasibility of the quantitative condition $\delta_{\mathrm{MW}}>\gamma\lambda$]
\label{lem:MW-feasibility}
Fix $\gamma>0$ and consider the one-dimensional double-well potential $v:\R\to\R$
defined in \eqref{eq:MW-1D-component}. Then Assumption~\ref{assump:potential}(iii)
(dissipativity) holds for \emph{any} $\lambda\in(0,1/4]$ (with an additive constant
depending on $\lambda$). Moreover, with $\delta_{\mathrm{MW}}$ defined in
\eqref{eq:deltaMW-ClambdaMW},
there exists $\lambda_\star(\gamma)\in(0,1/4]$ such that for every
$\lambda\in(0,\lambda_\star(\gamma)]$ we have
\[
  \delta_{\mathrm{MW}}>\gamma\lambda .
\]
\end{lemma}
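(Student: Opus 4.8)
The plan is to treat the two assertions of the lemma in turn: first the dissipativity of the one-dimensional double-well $v$ in the stated range of $\lambda$, and then the quantitative comparison $\delta_{\mathrm{MW}}>\gamma\lambda$ via a limiting argument as $\lambda\downarrow 0$. For the dissipativity part I would verify Assumption~\ref{assump:potential}(iii) for $v$ by a direct case split on $\{|s|\le 1/2\}$, $\{s>1/2\}$, $\{s<-1/2\}$. On $\{s>1/2\}$, where $v(s)=\tfrac12(s-1)^2$, one computes the explicit identity
\[
\tfrac12 s v'(s)-\lambda\bigl(v(s)+\tfrac{\gamma^2}{4}s^2\bigr)
=\tfrac14\Bigl([2-\lambda(2+\gamma^2)]\,s^2+(4\lambda-2)\,s-2\lambda\Bigr),
\]
and the analogous formula on $\{s<-1/2\}$ follows from the evenness of $v$. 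Since the leading coefficient $2-\lambda(2+\gamma^2)$ is strictly positive in the relevant range of $\lambda$, this quadratic in $|s|$ is bounded below on $\{|s|\ge 1/2\}$; on the compact set $\{|s|\le 1/2\}$ the continuous function $\tfrac12 s v'(s)-\lambda(v(s)+\tfrac{\gamma^2}{4}s^2)$ is automatically bounded below. Taking $A_1=A_1(\lambda,\gamma)$ to be minus the minimum of these two lower bounds gives $\tfrac12 s v'(s)\ge\lambda(v(s)+\tfrac{\gamma^2}{4}s^2)-A_1$, and summing over the $d$ coordinates yields Assumption~\ref{assump:potential}(iii) for $U$ with $A=dA_1$.

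Next I would record the behavior of the relevant constants as $\lambda\downarrow0$. Writing $B(\lambda)=1+\tfrac{\gamma^2}{2}(1-\lambda)$, we have $c_{\mathrm{imp}}(\lambda)=\tfrac{2\sqrt{B(\lambda)}}{2\sqrt{B(\lambda)}+\gamma}=\bigl(1+\tfrac{\gamma}{2\sqrt{B(\lambda)}}\bigr)^{-1}$; since $B$ is continuous, positive and decreasing on $[0,1/4]$, the map $\lambda\mapsto c_{\mathrm{imp}}(\lambda)$ is continuous, positive and decreasing, with $c_{\mathrm{imp}}(0)=\bigl(1+\gamma/(2\sqrt{1+\gamma^2/2})\bigr)^{-1}>0$ depending only on $\gamma$. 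For $c_1^{\mathrm{MW}}$, setting $u(\lambda)=\gamma^2(1-\lambda)\ge\tfrac34\gamma^2>0$ and using the elementary identity $(u+2)^2-[(u-2)^2+4\gamma^2]=8u-4\gamma^2>0\iff u>\gamma^2/2\iff\lambda<1/2$, one sees that $c_1^{\mathrm{MW}}(\lambda)=\tfrac18\bigl(u(\lambda)+2-\sqrt{(u(\lambda)-2)^2+4\gamma^2}\bigr)$ is continuous and strictly positive on $[0,1/4]$; differentiating in $u$ shows it is increasing in $u$, hence decreasing in $\lambda$. Consequently $\widetilde C_{\mathcal M}^{\mathrm{MW}}(\lambda)=\tfrac{2+\gamma^2}{4\gamma\,c_1^{\mathrm{MW}}(\lambda)}$ is continuous, finite and increasing on $[0,1/4]$, so that $c_{\mathrm{imp}}(\lambda)\ge c_{\mathrm{imp}}(1/4)$ and $\widetilde C_{\mathcal M}^{\mathrm{MW}}(\lambda)\le\widetilde C_{\mathcal M}^{\mathrm{MW}}(1/4)$ for every $\lambda\in(0,1/4]$.

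Combining these, for $\lambda\in(0,1/4]$,
\[
\delta_{\mathrm{MW}}-\gamma\lambda
= c_{\mathrm{imp}}(\lambda)-\lambda\bigl(\widetilde C_{\mathcal M}^{\mathrm{MW}}(\lambda)+\gamma\bigr)
\ge c_{\mathrm{imp}}(1/4)-\lambda\bigl(\widetilde C_{\mathcal M}^{\mathrm{MW}}(1/4)+\gamma\bigr),
\]
and the right-hand side tends to the strictly positive limit $c_{\mathrm{imp}}(1/4)$ as $\lambda\downarrow0$. Hence, with the explicit choice
\[
\lambda_\star(\gamma):=\min\Bigl\{\tfrac14,\ \tfrac{c_{\mathrm{imp}}(1/4)}{2\bigl(\widetilde C_{\mathcal M}^{\mathrm{MW}}(1/4)+\gamma\bigr)}\Bigr\}\in(0,1/4],
\]
we get $\delta_{\mathrm{MW}}-\gamma\lambda\ge\tfrac12 c_{\mathrm{imp}}(1/4)>0$, i.e.\ $\delta_{\mathrm{MW}}>\gamma\lambda$, for all $\lambda\in(0,\lambda_\star(\gamma)]$, which is the claim.

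The argument is essentially elementary and involves no deep obstacle; the two points requiring genuine care are both in the preliminary reductions: (i) checking in Step~1 that the outer-region leading coefficient $2-\lambda(2+\gamma^2)$ stays positive, which is what pins down the admissible range of $\lambda$ for dissipativity of $v$, and (ii) verifying that no degeneracy $c_1^{\mathrm{MW}}(\lambda)=0$ occurs on $[0,1/4]$ so that $\widetilde C_{\mathcal M}^{\mathrm{MW}}(\lambda)$ is well defined and bounded — both of which are handled by the single inequality $\lambda<1/2$. Once these are in place, the separation of $\delta_{\mathrm{MW}}$ from $\gamma\lambda$ follows simply because $c_{\mathrm{imp}}$ has a strictly positive limit at $\lambda=0$ depending only on $\gamma$, while $\gamma\lambda\to0$.
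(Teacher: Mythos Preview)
Your argument for the main (second) part is correct and follows the same route as the paper: both observe that $\delta_{\mathrm{MW}}-\gamma\lambda=c_{\mathrm{imp}}(\lambda)-(\gamma+\widetilde C_{\mathcal M}^{\mathrm{MW}}(\lambda))\lambda$ has a strictly positive limit as $\lambda\downarrow0$ and conclude by continuity. Your version is slightly more explicit, since you establish monotonicity of $c_{\mathrm{imp}}$ and $\widetilde C_{\mathcal M}^{\mathrm{MW}}$ on $[0,1/4]$ and thereby produce a closed-form $\lambda_\star(\gamma)$, whereas the paper simply appeals to continuity of $F(\lambda)$ at $\lambda=0$.

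There is one slip in your summary paragraph: you say both points (i) and (ii) ``are handled by the single inequality $\lambda<1/2$''. This is true for (ii) (positivity of $c_1^{\mathrm{MW}}$), but for (i) the outer-region leading coefficient $2-\lambda(2+\gamma^2)$ is positive iff $\lambda<\tfrac{2}{2+\gamma^2}$, which is strictly smaller than $1/4$ once $\gamma^2>6$. So your dissipativity argument, as written, only covers $\lambda\in(0,\tfrac{2}{2+\gamma^2})$, not all of $(0,1/4]$ for large~$\gamma$. This does not affect the second assertion (you take $\lambda$ small anyway, and you can simply include $\tfrac{2}{2+\gamma^2}$ in the min defining $\lambda_\star$), but the claim that both issues reduce to $\lambda<1/2$ is not right. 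Incidentally, the paper sidesteps this by proving the simpler inequality $s\,v'(s)\ge\lambda s^2-D_0(\lambda)$ in Step~1, rather than the full form of Assumption~\ref{assump:potential}(iii).
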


\begin{proof}
  We provide the proof in Appendix~\ref{app:MW-feasibility}.
\end{proof}

Based on the explicit construction of the quadratic corrector $\mathcal{M}$ in Proposition~\ref{prop:MW-explicit-M}, we state the main acceleration result for the $d$-dimensional multi-well potential.
Before we proceed, let $\rho_{\alpha,1}$ be the one-dimensional cost used in Corollary~\ref{cor:global-acceleration},
and define the tensorized cost on $\R^{2d}$ by
\[
  \rho_{\alpha,d}(z,z'):=\sum_{i=1}^d \rho_{\alpha,1}(z_i,z_i'),
  \qquad z_i=(q_i,p_i)\in\R^2 .
\]
Let $\cW_{\rho_{\alpha,d}}$ be the Wasserstein distance induced by $\rho_{\alpha,d}$.
Then, we have the following result.

\begin{theorem}[HFHR acceleration for a multi-well potential]
\label{thm:HFHR-MW-high-dim}
Consider the separable potential $U(q)=\sum_{i=1}^d v(q_i)$ in \eqref{eq:MW-high-dim},
and let $P_t^{\alpha,(d)}$ be the semigroup of the corresponding $d$-dimensional HFHR dynamics.
%%%%%%%%%%%%%%%%%%%%%%%
Fix $\gamma>0$ and choose $\lambda\in(0,\lambda_\star(\gamma)]$ as in
Lemma~\ref{lem:MW-feasibility}, so that $\delta_{\mathrm{MW}}>\gamma\lambda$.
Assume in addition that the remaining one-dimensional quantitative conditions of
Corollary~\ref{cor:global-acceleration} hold for the multi-well model
(in particular, $\Lambda_0>1/2$).
%%%%%%%%%%%%%%%%%%%%%%%%%%%%%%%%%
Then there exist explicit constants $\alpha_{\mathrm{MW}}>0$ and $\kappa_{\mathrm{MW}}>0$
(independent of $d$), depending only on the one-dimensional double-well model
(and on $\gamma$), such that for every $d\ge 1$ and every $\alpha\in(0,\alpha_{\mathrm{MW}}]$,
\[
  \cW_{\rho_{\alpha,d}}\!\left(\mu P_t^{\alpha,(d)},\nu P_t^{\alpha,(d)}\right)
  \le e^{-(c_0+\kappa_{\mathrm{MW}}\alpha)t}\,\cW_{\rho_{\alpha,d}}(\mu,\nu),
  \qquad t\ge0,
\]
for all probability measures $\mu,\nu$ on $\R^{2d}$,
where $c_0$ denotes the \emph{one-dimensional} kinetic Langevin contraction rate at $\alpha=0$
associated with the cost $\rho_{0,1}$.
Moreover, one may choose explicitly
\[
  \alpha_{\mathrm{MW}}
  := \min\left\{ \alpha_{\mathrm{branch,acc}}^{(1)},\ \alpha_{\mathrm{metric,acc}}^{(1)} \right\},
\]
where $\alpha_{\mathrm{branch,acc}}^{(1)}$ and $\alpha_{\mathrm{metric,acc}}^{(1)}$ are the explicit thresholds from Theorem~\ref{thm:lyapunov-acceleration} and Theorem~\ref{thm:metric-acceleration} respectively, evaluated for the one-dimensional model (using $L=1$, $\delta=\delta_{\mathrm{MW}}$ and $C_\lambda=C_{\lambda,\mathrm{MW}}$).
Similarly, the explicit gain is given by
\[
  \kappa_{\mathrm{MW}}
  := \kappa_{\mathrm{global}}^{(1)}
  = \min\left\{\kappa^{(1)},\ c_0\,c_2^{(1)},\ c_0\,c_3^{(1)}\right\},
\]
where $\kappa^{(1)}$ is the Lyapunov-branch gain from
Theorem~\ref{thm:lyapunov-acceleration} in dimension $1$,
and $c_2^{(1)},c_3^{(1)}$ are the metric-branch improvement constants from
Theorem~\ref{thm:metric-acceleration} in dimension $1$.

In particular, with the explicit corrector from Proposition~\ref{prop:MW-explicit-M},
one can take
\[
  \kappa^{(1)}=\frac{L(\delta_{\mathrm{MW}}+\gamma\lambda)}{768\,\gamma},
\]
(with $L=1$ for the multi-well model), where
\[
  \delta_{\mathrm{MW}}:=c_{\mathrm{imp}}-\lambda\,\widetilde C_{\mathcal M}^{\mathrm{MW}},
  \qquad
  C_{\lambda,\mathrm{MW}}:=C_2^{\mathrm{MW}}+\widetilde C_{\mathcal M}^{\mathrm{MW}}\,c_{\mathrm{imp}}.
\]
\end{theorem}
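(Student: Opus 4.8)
The plan is to exploit the \emph{product structure} of the separable multi-well model. Since $U(q)=\sum_{i=1}^d v(q_i)$, the $i$-th component of every drift term in the HFHR SDE \eqref{eq:HFHR-SDE} depends only on $(q_i,p_i)$ (indeed $(-\alpha\nabla U(q))_i=-\alpha v'(q_i)$ and $(-\nabla U(q))_i=-v'(q_i)$), and the driving noise $(B^q,B^p)$ is coordinate-wise diagonal. Hence the $d$-dimensional HFHR dynamics factorizes: driving coordinate $i$ by an independent pair $(B^{q,i},B^{p,i})$, the process $(q_{i,t},p_{i,t})_{t\ge0}$ is exactly the one-dimensional HFHR dynamics with potential $v$, and the $d$ coordinate processes are conditionally independent given the initial point; equivalently, the law of the $i$-th coordinate of $Z_t$ evolves autonomously under the one-dimensional semigroup $P_t^{\alpha,(1)}$. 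This separability is inherited by the Lyapunov function entering the one-dimensional acceleration: both $\mathcal V_0$ and the explicit corrector $\mathcal M$ of Proposition~\ref{prop:MW-explicit-M} are sums over coordinates, so $\mathcal V_\alpha$ is additive and $\rho_{\alpha,d}=\sum_{i=1}^d\rho_{\alpha,1}$ is precisely the tensorization of the one-dimensional semimetric $\rho_{\mathcal V_\alpha}$ (note that $\rho_{\alpha,d}$ differs from the non-additive $d$-dimensional $\rho_{\mathcal V_\alpha}$, which is why the contraction is phrased in $\cW_{\rho_{\alpha,d}}$).

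Second, I would invoke the one-dimensional theory for the double-well $v$. By Proposition~\ref{prop:MW-assumptions}, $v$ satisfies Assumption~\ref{assump:potential} (with $L=1$, $\nabla U(0)=0$, and dissipativity for any $\lambda\in(0,1/4]$) and Assumption~\ref{assump:asymptotic-linear-drift} (with $Q_\infty=1$). Fixing $\gamma>0$ and $\lambda\in(0,\lambda_\star(\gamma)]$, Lemma~\ref{lem:MW-feasibility} yields $\delta_{\mathrm{MW}}>\gamma\lambda$; together with the standing hypothesis $\Lambda_0>1/2$ and the explicit constants $\delta=\delta_{\mathrm{MW}}$, $C_\lambda=C_{\lambda,\mathrm{MW}}$ from Proposition~\ref{prop:MW-explicit-M}, all hypotheses of Corollary~\ref{cor:global-acceleration} hold for the one-dimensional model. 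This supplies an explicit threshold $\alpha_{\mathrm{MW}}:=\min\{\alpha_{\mathrm{branch,acc}}^{(1)},\alpha_{\mathrm{metric,acc}}^{(1)}\}$, an explicit gain $\kappa_{\mathrm{MW}}:=\kappa_{\mathrm{global}}^{(1)}=\min\{\kappa^{(1)},c_0c_2^{(1)},c_0c_3^{(1)}\}$ (with $\kappa^{(1)}=\frac{L(\delta_{\mathrm{MW}}+\gamma\lambda)}{768\gamma}$, $L=1$, via Theorems~\ref{thm:metric-acceleration}--\ref{thm:lyapunov-acceleration} in dimension one), and --- via Proposition~\ref{prop:regional-contractivity} and the proof of Theorem~\ref{thm:master-contraction} --- a one-dimensional contraction-achieving (sticky reflection--synchronous) coupling for which $\E\!\bigl[e^{c_\alpha^{(1)}t}\rho_{\alpha,1}(z_t,z_t')\bigr]\le\E[\rho_{\alpha,1}(z_0,z_0')]$ for \emph{any} initial coupling of the two starting points, with $c_\alpha^{(1)}=c(\lambda_\alpha)\ge c_0+\kappa_{\mathrm{MW}}\alpha$ for $\alpha\in(0,\alpha_{\mathrm{MW}}]$. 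Crucially, all of $c_0,\delta_{\mathrm{MW}},C_{\lambda,\mathrm{MW}},\widetilde C_{\mathcal M}^{\mathrm{MW}},\Lambda_0$, and hence the two thresholds and $\kappa_{\mathrm{MW}}$, depend only on $(\gamma,\lambda)$; the only $d$-dependent constants in the one-dimensional estimates are the additive terms $A_1(\gamma)d$, $C_{\mathrm{imp}}^{(d)}$ and $C_2^{(d),\mathrm{MW}}$, which do not enter the rate.

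Third, I would lift the one-dimensional contraction to $\R^{2d}$ by a product coupling. Let $\mu,\nu$ be probability measures on $\R^{2d}$ and let $\Gamma\in\Pi(\mu,\nu)$ be an (almost) optimal coupling for $\cW_{\rho_{\alpha,d}}(\mu,\nu)$, with $i$-th marginal $\Gamma_i\in\Pi(\mu_i,\nu_i)$. Starting from $(Z_0,Z_0')\sim\Gamma$, run, conditionally independently across $i$ given $(Z_0,Z_0')$, the one-dimensional coupling of Step~2 on coordinate $i$, driven by the independent noise $B^i$. By the factorization of Step~1 the resulting joint process $(Z_t,Z_t')$ is a coupling of $\mu P_t^{\alpha,(d)}$ and $\nu P_t^{\alpha,(d)}$, and additivity of the cost together with the one-dimensional bound of Step~2 gives
\begin{align*}
  \cW_{\rho_{\alpha,d}}\!\bigl(\mu P_t^{\alpha,(d)},\nu P_t^{\alpha,(d)}\bigr)
  &\le \E\!\Bigl[\rho_{\alpha,d}(Z_t,Z_t')\Bigr]
  = \sum_{i=1}^d \E\!\bigl[\rho_{\alpha,1}(Z_{i,t},Z_{i,t}')\bigr]\\
  &\le e^{-c_\alpha^{(1)}t}\sum_{i=1}^d \E_{\Gamma_i}\!\bigl[\rho_{\alpha,1}\bigr]
  = e^{-c_\alpha^{(1)}t}\,\E_\Gamma\!\bigl[\rho_{\alpha,d}\bigr].
\end{align*}
Letting $\Gamma$ tend to the optimum yields $\cW_{\rho_{\alpha,d}}(\mu P_t^{\alpha,(d)},\nu P_t^{\alpha,(d)})\le e^{-c_\alpha^{(1)}t}\,\cW_{\rho_{\alpha,d}}(\mu,\nu)$ with $c_\alpha^{(1)}\ge c_0+\kappa_{\mathrm{MW}}\alpha$, which is the claim; the explicit $\alpha_{\mathrm{MW}}$, $\kappa_{\mathrm{MW}}$ and $\kappa^{(1)}$ are read off from Step~2.

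The main obstacle is making the lifting in Step~3 fully rigorous: one must show that the \emph{limiting sticky} one-dimensional coupling (obtained as $\xi\downarrow0$ in the cutoff family of Proposition~\ref{prop:regional-contractivity}) exists, can be realized jointly and measurably as a Markovian coupling driven by the coordinate noises $B^i$ from an arbitrary initial coupling $\Gamma_i$, and still satisfies the expectation-form contraction $\E[e^{c_\alpha^{(1)}t}\rho_{\alpha,1}(z_t,z_t')]\le\E[\rho_{\alpha,1}(z_0,z_0')]$ --- so that the per-coordinate estimates sum cleanly with no $\xi$-dependent error surviving the limit. The remaining care is bookkeeping: confirming that the one-dimensional thresholds and the gain are genuinely independent of $d$, which reduces to the dimension-freeness of $\delta_{\mathrm{MW}}$, $C_{\lambda,\mathrm{MW}}$, $c_{\mathrm{imp}}$ and $\Lambda_0$ recorded in Proposition~\ref{prop:MW-explicit-M}.
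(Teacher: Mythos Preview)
Your proposal is correct and follows essentially the same approach as the paper: apply Corollary~\ref{cor:global-acceleration} in dimension one (using the explicit constants from Propositions~\ref{prop:MW-assumptions} and~\ref{prop:MW-explicit-M} and the feasibility from Lemma~\ref{lem:MW-feasibility}), then lift to $\R^{2d}$ by tensorizing the one-dimensional coupling coordinate-wise and summing the additive cost $\rho_{\alpha,d}=\sum_i\rho_{\alpha,1}$. The paper's proof sidesteps your stated obstacle about the sticky limit by invoking ``nearly optimal'' one-dimensional couplings (which exist for any pair of initial laws by definition of the Wasserstein infimum) rather than insisting on realizing the exact sticky coupling, so the $\xi\downarrow0$ issue does not need to be resolved explicitly.
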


\begin{proof}
    We provide the proof in Appendix~\ref{app:HFHR-MW-high-dim}.
\end{proof}

%Since we have verified in Proposition~\ref{prop:MW-assumptions} that the potential $U$ satisfies Assumptions~\ref{assump:potential} and \ref{assump:asymptotic-linear-drift}, the theoretical results 
%in Section~\ref{sec:global-contractivity} and Section~\ref{sec:acceleration} are all applicable to this example.
%Indeed, due to the very special structure of the multi-well potential $U$ in \eqref{eq:MW-high-dim}, one can obtain
%sharper acceleration results.
%See Appendix~\ref{sec:additional:multi} for more details.

%%%%%%%%%%%%%%%%%%%%%%%%%%%%%%%%%%%%%%%%%%%

\subsection{Bayesian linear regression}\label{sec:case:L:p}

In this section, we study the example of
a Bayesian linear regression problem. Given the input data $X \in \R^{n \times d}$, and the output data $y \in \R^n$, we consider the following objective function $U: \mathbb{R}^d \rightarrow \mathbb{R}$ with a regularizer function $g: \mathbb{R}^d \rightarrow \mathbb{R}$ in the Bayesian linear regression task \cite{hoff2009first}:
\begin{align}
U(q) = \frac{1}{2\sigma^2} | y - Xq |^2 + g(q),
\label{eq:linear}
\end{align}
such that
$\nabla U(q) = -\frac{X^\top(y - Xq)}{\sigma^2} + \nabla g(q)$,
where parameters $\sigma > 0$.
In particular, we consider $L^p$ regularization. Our use of smoothed $L^p$ regularization can be interpreted from a Bayesian perspective as imposing a prior on the regression coefficients. Such priors interpolate between Gaussian and Laplace distributions when $p<2$;
see \cite{polson2010shrink,gong2013general} for a comprehensive overview. Moreover, the Bayesian Lasso in~\cite{park2008bayesian,polson2014bayesian} arises as a special case corresponding to $p\rightarrow 1$.
We take the regularizer function $g$ as the following $L^p$ function:
\begin{equation}
\label{eq:lp}
g(q) := \iota \sum_{i=1}^d (q_i^2 + \varepsilon^2)^{p/2}, \qquad 1 < p < 2,
\end{equation}
where $\iota > 0$ is the regularization parameter and $\varepsilon^2 > 0$ is a self-tuning parameter. Since $1<p<2$, the regularizer $g(q)$ and hence the potential $U(q)$ is non-convex in general.
We make the following assumption.

\begin{assumption}\label{assump:linear}
Assume that $X^{\top}X \succ m I_d$ for some $m > 0$.
\end{assumption}

Note that Assumption~\ref{assump:linear} is mild and often imposed in the literature; see for example Assumption~9 in~\cite{mei2018landscape}.
Next, we show that under Assumption~\ref{assump:linear}, the Bayesian linear regression problem \eqref{eq:linear} with $L^{p}$ regularizer \eqref{eq:lp} satisfies
both Assumptions~\ref{assump:potential} and \ref{assump:asymptotic-linear-drift} required for our theory.

\begin{proposition}[Bayesian linear regression with smoothed $L^p$ regularizer satisfies the standing assumptions]
\label{prop:linear-lp-assumptions}
Fix $1<p<2$, $\sigma>0$, $\iota>0$, and $\varepsilon>0$.
Let $X\in\mathbb R^{n\times d}$, $y\in\mathbb R^n$ and define $U$ as in \eqref{eq:linear}
with the $L^p$ regularizer $g(q)$ in \eqref{eq:lp}.
Assume that Assumption~\ref{assump:linear} holds.
Denote $M:=\|X^\top X\|_{\mathrm{op}}$.
Then:

\begin{enumerate}
\item[\textnormal{(a)}] $U\in C^\infty(\mathbb R^d)$ and $U\geq 0$. Moreover, $\nabla U$ is $L$-Lipschitz with
$L:=\frac{M}{\sigma^2}+\iota\,p\,\varepsilon^{p-2}$.

\item[\textnormal{(b)}] $U$ is dissipative in the sense of Assumption~\ref{assump:potential}-(iii). In particular,
\[
\langle \nabla U(q),q\rangle
\ge \frac{m}{2\sigma^2}|q|^2-\frac{|X^\top y|^2}{2m\sigma^2}
\quad\text{for all }q\in\mathbb R^d.
\]

\item[\textnormal{(c)}] $U$ satisfies Assumption~\ref{assump:asymptotic-linear-drift} with
$Q_\infty:=\frac{1}{\sigma^2}X^\top X$,
and the function
\[
\varrho(r):=\frac{c^{\mathrm{LR}}_0}{r}+c^{\mathrm{LR}}_1\, r^{p-2},
\qquad r\ge C_{\mathrm{linear}},
\]
where one may take e.g. $C_{\mathrm{linear}}:=1$ and
\begin{equation}\label{eq:c0lrc1lr-def}
    c^{\mathrm{LR}}_0:=\frac{|X^\top y|}{\sigma^2}+\iota p\sqrt d\,\varepsilon^{p-1}, \qquad
    c^{\mathrm{LR}}_1:=\iota p\, d^{\frac{2-p}{2}}.
\end{equation}
%In particular, for all $q$ with $|q|\ge C_{\mathrm{linear}}$,
%\[
%|\nabla U(q)-Q_\infty q|
%\le \rho(|q|)\,|q|.
%\]
%Moreover, since $p-2<0$, the map $\rho$ is nonincreasing on $[C_{\mathrm{linear}},\infty)$ and satisfies $\rho(r)\to 0$ as $r\to\infty$.
\end{enumerate}
\end{proposition}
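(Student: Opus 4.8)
The plan is to verify items (a), (b), (c) of Proposition~\ref{prop:linear-lp-assumptions} separately, treating the quadratic data-fidelity term $\frac{1}{2\sigma^2}|y-Xq|^2$ and the smoothed regularizer $g(q)=\iota\sum_i\phi(q_i)$ through the scalar profile $\phi(s):=(s^2+\varepsilon^2)^{p/2}$, which is smooth on all of $\R$ precisely because $s^2+\varepsilon^2\ge\varepsilon^2>0$. For part~(a), smoothness of $U$ is immediate (a quadratic polynomial plus $\iota\sum_i\phi(q_i)$ with $\phi\in C^\infty$), and $U\ge0$ since both summands are nonnegative. For the $L$-Lipschitz gradient I would differentiate twice: $\nabla^2 U(q)=\frac{1}{\sigma^2}X^\top X+\iota\,\mathrm{diag}(\phi''(q_1),\dots,\phi''(q_d))$, and a short computation gives $\phi''(s)=p(s^2+\varepsilon^2)^{p/2-2}\big((p-1)s^2+\varepsilon^2\big)$, which is nonnegative (so $g$ is actually convex) and, using $(p-1)s^2+\varepsilon^2\le s^2+\varepsilon^2$ together with $p/2-1<0$ and $s^2+\varepsilon^2\ge\varepsilon^2$, is bounded above by $p(s^2+\varepsilon^2)^{p/2-1}\le p\,\varepsilon^{p-2}$. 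Hence $\|\nabla^2 U(q)\|_{\mathrm{op}}\le\frac{M}{\sigma^2}+\iota p\,\varepsilon^{p-2}=L$ uniformly in $q$, which yields the claimed Lipschitz bound via the mean value theorem.

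For part~(b), from $\nabla U(q)=\frac{1}{\sigma^2}X^\top Xq-\frac{1}{\sigma^2}X^\top y+\nabla g(q)$ I would expand $\langle\nabla U(q),q\rangle$, use $\langle X^\top Xq,q\rangle=|Xq|^2\ge m|q|^2$ from Assumption~\ref{assump:linear}, observe $\langle\nabla g(q),q\rangle=\iota p\sum_i q_i^2(q_i^2+\varepsilon^2)^{p/2-1}\ge0$, and control the cross term by Young's inequality $\frac{1}{\sigma^2}|X^\top y|\,|q|\le\frac{m}{2\sigma^2}|q|^2+\frac{|X^\top y|^2}{2m\sigma^2}$; this produces exactly the stated bound $\langle\nabla U(q),q\rangle\ge\frac{m}{2\sigma^2}|q|^2-\frac{|X^\top y|^2}{2m\sigma^2}$. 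To upgrade this to the dissipativity inequality \eqref{eq:U-drift} I would also record an at-most-quadratic upper bound on $U$: from $\frac{1}{2\sigma^2}|y-Xq|^2\le\frac{|y|^2}{\sigma^2}+\frac{M}{\sigma^2}|q|^2$ and the elementary inequality $t^{p/2}\le\frac p2 t+(1-\frac p2)\le 1+t$ for $t\ge0$ (weighted AM--GM, valid since $1<p<2$), one gets $g(q)\le\iota d(1+\varepsilon^2)+\iota|q|^2$, hence $U(q)+\frac{\gamma^2}{4}|q|^2\le C_U+\big(D_U+\frac{\gamma^2}{4}\big)|q|^2$ with explicit $C_U,D_U$. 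Choosing $\lambda:=\min\{\tfrac14,\ \tfrac{m}{4\sigma^2}(D_U+\tfrac{\gamma^2}{4})^{-1}\}\in(0,1/4]$ makes the coefficient of $|q|^2$ in $\tfrac12\langle\nabla U(q),q\rangle-\lambda\big(U(q)+\tfrac{\gamma^2}{4}|q|^2\big)$ nonnegative, so that $A:=\frac{|X^\top y|^2}{4m\sigma^2}+\lambda C_U$ absorbs all the constants and \eqref{eq:U-drift} holds.

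For part~(c), since $\nabla U(q)=Q_\infty q-\frac{1}{\sigma^2}X^\top y+\nabla g(q)$ with $Q_\infty:=\frac{1}{\sigma^2}X^\top X\succ\frac{m}{\sigma^2}I_d\succ0$ symmetric, we have $\nabla U(q)-Q_\infty q=-\frac{1}{\sigma^2}X^\top y+\nabla g(q)$, and it only remains to bound $|\nabla g(q)|$ sublinearly. Componentwise, $|\partial_i g(q)|=\iota p|q_i|(q_i^2+\varepsilon^2)^{p/2-1}$; using $(q_i^2+\varepsilon^2)^{1/2}\ge\max(|q_i|,\varepsilon)$ and $p-2<0$ gives $|\partial_i g(q)|\le\iota p|q_i|\,[\max(|q_i|,\varepsilon)]^{p-2}\le\iota p\big(|q_i|^{p-1}+\varepsilon^{p-1}\big)$. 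Summing in $\ell^2$, applying Minkowski's inequality, and using the concavity (Jensen) bound $\sum_i|q_i|^{2(p-1)}\le d^{2-p}|q|^{2(p-1)}$ for the concave map $t\mapsto t^{p-1}$ applied to $t=|q_i|^2$, yields $|\nabla g(q)|\le\iota p\,d^{(2-p)/2}|q|^{p-1}+\iota p\sqrt d\,\varepsilon^{p-1}$. Therefore $|\nabla U(q)-Q_\infty q|\le c_0^{\mathrm{LR}}+c_1^{\mathrm{LR}}|q|^{p-1}$ with $c_0^{\mathrm{LR}},c_1^{\mathrm{LR}}$ as in \eqref{eq:c0lrc1lr-def}; dividing by $|q|$ for $|q|\ge C_{\mathrm{linear}}=1$ gives $|\nabla U(q)-Q_\infty q|\le\varrho(|q|)|q|$ with $\varrho(r)=c_0^{\mathrm{LR}}/r+c_1^{\mathrm{LR}}r^{p-2}$, which is nonincreasing on $[1,\infty)$ (both terms decreasing, since $p-2<0$) and tends to $0$ as $r\to\infty$, verifying Assumption~\ref{assump:asymptotic-linear-drift}.

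All the estimates are elementary; the only step requiring genuine care is part~(b), where one must combine a \emph{lower} bound on $\langle\nabla U(q),q\rangle$ with an \emph{upper} bound on $U$ itself and then check that the resulting $\lambda$ can indeed be taken in $(0,1/4]$ — this is where the auxiliary inequality $t^{p/2}\le1+t$ and the self-tuning parameter $\varepsilon>0$ (which is what keeps $L$, and hence $D_U$, finite) enter, and where the explicit additive constant $A$ is pinned down. The remaining care points are routine: confirming $\phi''\ge0$ so that $\|\nabla^2 g\|_{\mathrm{op}}$ is a genuine maximum over diagonal entries in part~(a), and invoking concavity of $t\mapsto t^{p-1}$ to collapse $\sum_i|q_i|^{2(p-1)}$ into a power of $|q|^2$ in part~(c).
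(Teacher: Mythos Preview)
Your proof is correct and follows essentially the same route as the paper: identical Hessian computation and bound $\phi''(s)\le p\,\varepsilon^{p-2}$ for~(a), the same use of $\langle\nabla g(q),q\rangle\ge0$ plus Young/square-completion for~(b), and the same coordinate-wise bound $|\partial_i g(q)|\le\iota p(|q_i|^{p-1}+\varepsilon^{p-1})$ combined with the $\ell^{2(p-1)}\!\to\!\ell^2$ norm inequality for~(c) (your Jensen argument for $\sum_i|q_i|^{2(p-1)}\le d^{2-p}|q|^{2(p-1)}$ is equivalent to the paper's use of $\|q\|_r\le d^{1/r-1/2}\|q\|_2$). The only difference is that in~(b) you go further and explicitly construct admissible $\lambda\in(0,1/4]$ and $A$ for the full drift inequality~\eqref{eq:U-drift}, whereas the paper's proof stops at the inner-product bound and defers that step; your extra detail is a welcome completion rather than a different approach.
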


\begin{proof}
    We provide the proof in Appendix~\ref{app:linear-lp-assumptions}.
\end{proof}

In contrast to the separable multi-well model (where $\nabla U(q)-Q_\infty q$ is uniformly bounded),
the smoothed $L^p$ regularizer yields a \emph{sublinear but unbounded} remainder
$|\nabla U(q)-Q_\infty q|=\mathcal O(|q|^{p-1})$ as $|q|\to\infty$.
Therefore, we obtain explicit acceleration constants by invoking the general first-order improvement
Lemma~\ref{lem:first-order-improvement} (Lyapunov-equation corrector), together with explicit bounds on
$\rho_\nabla(\cdot)$ and $\delta_U(\cdot)$ specialized to \eqref{eq:linear}--\eqref{eq:lp}.

\begin{proposition}[Explicit constants for Lemma~\ref{lem:first-order-improvement} in Bayesian linear regression]
\label{prop:linear-lp-first-order}
Assume the setting of Proposition~\ref{prop:linear-lp-assumptions} and let $\lambda$ be the
dissipativity parameter in Assumption~\ref{assump:potential}.
Let
$Q_\infty:=\frac{1}{\sigma^2}X^\top X$,
$b:=\frac{1}{\sigma^2}X^\top y$,
and $M:=\|X^\top X\|_{\mathrm{op}}$.
Then the following hold.

\begin{enumerate}[label=(\roman*)]
\item \textbf{Spectral bounds for $Q_\infty$.}
\[
  \lambda_{\min}(Q_\infty)=\frac{1}{\sigma^2}\lambda_{\min}\left(X^\top X\right)\ge \frac{m}{\sigma^2},
  \qquad
  \lambda_{\max}(Q_\infty)=\frac{1}{\sigma^2}\lambda_{\max}\left(X^\top X\right)=\frac{M}{\sigma^2}.
\]

\item \textbf{Explicit tail moduli $\rho_\nabla$ and $\delta_U$.}
With $c^{\mathrm{LR}}_0,c^{\mathrm{LR}}_1$ as in \eqref{eq:c0lrc1lr-def}, we have for any $R\ge 1$,
\begin{equation}\label{eq:LR-rhonabla-bound}
  \rho_\nabla(R):=\sup_{|q|\ge R}\frac{|\nabla U(q)-Q_\infty q|}{|q|}\le \frac{c^{\mathrm{LR}}_0}{R}+c^{\mathrm{LR}}_1\,R^{p-2},
\end{equation}
%Moreover, using the identity
%\[
%  U(q)=\frac12\langle Q_\infty q,q\rangle-\langle b,q\rangle+g(q)+\frac{1}{2\sigma^2}\|y\|^2,
%\]
%and the bound $\sum_{j=1}^d |q_j|^p=\|q\|_p^p\le d^{1-\frac p2}\|q\|^p$,
%we obtain
and for any $R\ge 1$,
\begin{equation}\label{eq:LR-deltaU-bound}
  \delta_U(R)
  :=\sup_{|q|\ge R}\frac{\left|U(q)-\frac12\langle Q_\infty q,q\rangle\right|}{1+|q|^2}
  \le
  \frac{|b|}{R}
  + \iota\,d^{1-\frac p2}\,R^{p-2}
  + \frac{\iota\,d\,\varepsilon^p+\frac{1}{2\sigma^2}|y|^2}{R^2}.
\end{equation}

\item \textbf{An explicit admissible cutoff radius $R_0$.}
Let $\mathsf K$ be the Lyapunov-equation matrix from Lemma~\ref{lem:first-order-improvement}
(defined below), and let $\rho_\star$ be as in \eqref{eq:R0-def-lemma}.
Since $c^{\mathrm{LR}}_0,c^{\mathrm{LR}}_1\ge 0$ and $p-2<0$, the right-hand side of \eqref{eq:LR-rhonabla-bound}
is decreasing in $R$. Therefore the choice
\begin{equation}\label{eq:LR-R0-choice}
  R_0:=\max\left\{1,\ C_{\mathrm{linear}},\ \frac{c^{\mathrm{LR}}_0}{\rho_\star},\
  \left(\frac{c^{\mathrm{LR}}_1}{\rho_\star}\right)^{\!\frac{1}{2-p}}\right\}
\end{equation}
ensures $\rho_\nabla(R_0)\le \rho_\star$, hence \eqref{eq:R0-choice-lemma} holds.

\item \textbf{Quadratic corrector via the same Lyapunov equation as the general theory.}
Let
\[
  B:=\begin{pmatrix}0&I_d\\-Q_\infty&-\gamma I_d\end{pmatrix}
\]
be the linearized kinetic Langevin drift matrix at infinity, and let $C_{B_1}$ be the explicit symmetric
matrix $\nabla^2 B_1$ from Lemma~\ref{lem:first-order-improvement}. Then the corrector can be chosen as
\[
  \mathcal M(z)=\frac12 z^\top \mathsf K z,
  \qquad z=(q,p)\in\R^{2d},
\]
where $\mathsf K$ is the (unique) symmetric solution to
\[
  B^\top \mathsf K+\mathsf K B=C_{B_1},
\]
equivalently given by the integral representation
\begin{equation}\label{eq:LR-K-integral}
  \mathsf K=\int_0^\infty e^{tB^\top}\,C_{B_1}\,e^{tB}\,dt .
\end{equation}

\item \textbf{First-order improvement constant $\underline c_{\mathrm{imp}}$ and an explicit upper bound for $C_{\mathrm{imp}}$.}
Let
\[
  a_{\min}:=\lambda_{\min}(Q_\infty)+\frac{\gamma^2}{2}(1-\lambda),
  \qquad
  a_{\max}:=\lambda_{\max}(Q_\infty)+\frac{\gamma^2}{2}(1-\lambda),
\]
so that by (i),
\[
  a_{\min}\ge \frac{m}{\sigma^2}+\frac{\gamma^2}{2}(1-\lambda),
  \qquad
  a_{\max}\le \frac{M}{\sigma^2}+\frac{\gamma^2}{2}(1-\lambda).
\]
Then Lemma~\ref{lem:first-order-improvement} yields the improvement inequality
\eqref{eq:improvement-condition} with $\underline c_{\mathrm{imp}}$ chosen as in \eqref{eq:cimp-lemma},
where $\delta_U(R_0)$ can be bounded explicitly by \eqref{eq:LR-deltaU-bound} (with $R=R_0$ from
\eqref{eq:LR-R0-choice}). In particular, one obtains a fully explicit positive lower bound
$\underline c_{\mathrm{imp}}>0$ in terms of $(m,M,\sigma,\lambda,p,\varepsilon,|y|,|X^\top y|,\gamma)$.
Moreover, the corresponding constant $C_{\mathrm{imp}}$ from Lemma~\ref{lem:first-order-improvement} is finite.

\item \textbf{A convenient explicit remainder bound for drift-rate expansion.}
Write $\mathsf K$ in block form $\mathsf K=\left(\begin{smallmatrix}\mathsf K_{qq}&\mathsf K_{qp}\\
\mathsf K_{pq}&\mathsf K_{pp}\end{smallmatrix}\right)$ and set
\[
  k_q:=\|\mathsf K_{qq}\|_{\mathrm{op}}+\|\mathsf K_{qp}\|_{\mathrm{op}},
  \qquad
  b_0:=|\nabla U(0)|=|b|.
\]
Using $|\nabla U(q)|\le L|q|+b_0$ (with $L$ from Proposition~\ref{prop:linear-lp-assumptions}(a)),
one obtains for all $(q,p)\in\R^{2d}$,
\[
  |\mathcal A'\mathcal M(q,p)|
  \le k_q\left(\frac{3L+1}{2}|q|^2+\frac{L+1}{2}|p|^2+b_0^2\right),
  \qquad
  |\Delta_q\mathcal M(q,p)|=\mathrm{tr}(\mathsf K_{qq})\le d\,\|\mathsf K_{qq}\|_{\mathrm{op}}.
\]
Let $c_1>0$ be the (explicit) quadratic lower bound constant such that
$\mathcal V_0(q,p)\ge c_1(|q|^2+|p|^2)$ after shifting $U$ by an additive constant if needed.
Then the ``error term'' $\mathrm{Err}^{(d)}(q,p):=|\mathcal A'\mathcal M(q,p)|+|\Delta_q\mathcal M(q,p)|$
satisfies
\begin{equation}\label{eq:LR-Err-bound}
  \mathrm{Err}^{(d)}(q,p)
  \le C_2^{\mathrm{LR}}\ \mathcal V_0(q,p) + C_2^{(d),\mathrm{LR}},
\end{equation}
with the explicit choices
\[
  C_2^{\mathrm{LR}}
  :=\frac{k_q}{c_1}\max\left\{\frac{3L+1}{2},\frac{L+1}{2}\right\},
  \qquad
  C_2^{(d),\mathrm{LR}}
  :=k_q b_0^2 + d\,\|\mathsf K_{qq}\|_{\mathrm{op}}.
\]

Finally, define the (explicit) growth constant
\[
  \widetilde C_{\mathcal M}^{\mathrm{LR}}:=\frac{\|\mathsf K\|_{\mathrm{op}}}{2c_1}
\]
(cf.\ \eqref{eq:widetildeCM-def}), and set
\begin{equation}\label{eq:LR-delta-and-Clambda}
  \delta_{\mathrm{LR}}
  :=\underline c_{\mathrm{imp}}-\lambda\,\widetilde C_{\mathcal M}^{\mathrm{LR}},
  \qquad
  C_{\lambda,\mathrm{LR}}
  :=C_2^{\mathrm{LR}}+\widetilde C_{\mathcal M}^{\mathrm{LR}}\,\underline c_{\mathrm{imp}}.
\end{equation}
Then Proposition~\ref{prop:Valpha-drift} applies (for $\alpha$ sufficiently small) and yields the drift-rate expansion
\[
  \lambda_\alpha \;\ge\; \lambda+\delta_{\mathrm{LR}}\cdot\alpha-C_{\lambda,\mathrm{LR}}\cdot\alpha^2.
\]
In particular, $\lambda_\alpha>\lambda$ for all sufficiently small $\alpha>0$ whenever $\delta_{\mathrm{LR}}>0$.
\end{enumerate}
\end{proposition}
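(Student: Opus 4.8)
The plan is to establish the six items in the order they are stated, since the Lyapunov matrix $\mathsf K$ and the cutoff radius $R_0$ fixed in items (iii)--(iv) are the inputs to items (v)--(vi), and then to invoke Lemma~\ref{lem:first-order-improvement}, Lemma~\ref{lem:Valpha-drift-expansion} and Proposition~\ref{prop:Valpha-drift} as black boxes once their hypotheses have been specialized. Item (i) is immediate: $Q_\infty=\sigma^{-2}X^\top X$, so its spectrum is $\sigma^{-2}$ times that of $X^\top X$, and Assumption~\ref{assump:linear} together with $M=\|X^\top X\|_{\mathrm{op}}$ gives the stated bounds. For item (ii) I would first compute $\nabla U(q)-Q_\infty q=-b+\nabla g(q)$ with $b=\sigma^{-2}X^\top y$; the regularizer gradient is $\partial_i g(q)=\iota p\,q_i(q_i^2+\varepsilon^2)^{(p-2)/2}$, and splitting on $|q_i|\le\varepsilon$ versus $|q_i|>\varepsilon$ and using the negative exponent $(p-2)/2<0$ yields the componentwise bound $|\partial_i g(q)|\le \iota p(\varepsilon^{p-1}+|q_i|^{p-1})$; summing in $\ell^2$ and applying the concavity (power--mean) estimate $\sum_i|q_i|^{2(p-1)}\le d^{2-p}|q|^{2(p-1)}$ gives $|\nabla g(q)|\le \iota p\sqrt d\,\varepsilon^{p-1}+\iota p\,d^{(2-p)/2}|q|^{p-1}$, and dividing $|\nabla U(q)-Q_\infty q|$ by $|q|\ge R$ produces the claimed bound on $\rho_\nabla$. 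For $\delta_U$, expand $U(q)-\tfrac12\langle Q_\infty q,q\rangle=\tfrac1{2\sigma^2}|y|^2-\langle b,q\rangle+g(q)$, bound $g(q)\le \iota d\varepsilon^p+\iota d^{1-p/2}|q|^p$ using subadditivity of $t\mapsto t^{p/2}$ and the power--mean inequality, and divide by $1+|q|^2\ge|q|^2$ with $|q|\ge R$, $p-2<0$. Item (iii) then follows because $R\mapsto c_0^{\mathrm{LR}}/R+c_1^{\mathrm{LR}}R^{p-2}$ is decreasing (nonnegative coefficients, $p-2<0$), so picking $R_0$ at least as large as the two thresholds in \eqref{eq:LR-R0-choice} and at least $\max\{1,C_{\mathrm{linear}}\}$ forces $\rho_\nabla(R_0)\le\rho_\star$, which is \eqref{eq:R0-choice-lemma}.

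For item (iv), observe that since $\nabla U(q)\to Q_\infty q$ at infinity the kinetic Langevin drift $(p,-\gamma p-\nabla U(q))$ linearizes to $Bz$ with $B=\begin{pmatrix}0&I_d\\-Q_\infty&-\gamma I_d\end{pmatrix}$, which is precisely the matrix entering the corrector construction of Lemma~\ref{lem:first-order-improvement}. One checks $B$ is Hurwitz: for each eigenvalue $\mu>0$ of $Q_\infty$ the associated two-dimensional invariant block of $B$ has characteristic polynomial $\zeta^2+\gamma\zeta+\mu$, whose roots $\tfrac12(-\gamma\pm\sqrt{\gamma^2-4\mu})$ have strictly negative real part; hence $B^\top\mathsf K+\mathsf K B=C_{B_1}$ has the unique symmetric solution $\mathsf K=\int_0^\infty e^{tB^\top}C_{B_1}e^{tB}\,dt$, and $\mathcal M(z)=\tfrac12 z^\top\mathsf K z$ is the corrector of Lemma~\ref{lem:first-order-improvement} specialized to this setting. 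Item (v) is then a direct invocation of Lemma~\ref{lem:first-order-improvement}, whose hypotheses (Assumptions~\ref{assump:potential} and \ref{assump:asymptotic-linear-drift}) hold by Proposition~\ref{prop:linear-lp-assumptions}: substituting the spectral bounds of (i) into \eqref{eq:amin-amax-lemma} gives $a_{\min}\ge m\sigma^{-2}+\tfrac{\gamma^2}{2}(1-\lambda)$ and $a_{\max}\le M\sigma^{-2}+\tfrac{\gamma^2}{2}(1-\lambda)$, and $\delta_U(R_0)$ is bounded by \eqref{eq:LR-deltaU-bound} at $R=R_0$ from \eqref{eq:LR-R0-choice}; feeding these into the (monotone) expression \eqref{eq:cimp-lemma} yields an explicit $\underline c_{\mathrm{imp}}$, positive because $\lambda\le\tfrac14$ forces $a_{\min}\ge\tfrac{\gamma^2}{2}(1-\lambda)\ge\tfrac{3\gamma^2}{8}>\tfrac{\gamma^2}{4}$ (equivalently, the numerator in \eqref{eq:cimp-lemma} is positive), while $C_{\mathrm{imp}}<\infty$ is part of the conclusion of Lemma~\ref{lem:first-order-improvement}.

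For item (vi), differentiate the quadratic corrector: $\nabla_q\mathcal M(q,p)=\mathsf K_{qq}q+\mathsf K_{qp}p$, so $\mathcal A'\mathcal M(q,p)=-\nabla U(q)\cdot(\mathsf K_{qq}q+\mathsf K_{qp}p)$. Using the linear-growth bound $|\nabla U(q)|\le L|q|+b_0$ with $b_0=|b|$, the inequalities $\|\mathsf K_{qq}\|_{\mathrm{op}},\|\mathsf K_{qp}\|_{\mathrm{op}}\le k_q$, and Young's inequality on the cross terms gives $|\mathcal A'\mathcal M(q,p)|\le k_q(\tfrac{3L+1}{2}|q|^2+\tfrac{L+1}{2}|p|^2+b_0^2)$; since $\Delta_q\mathcal M=\mathrm{tr}(\mathsf K_{qq})$, we also have $|\Delta_q\mathcal M|\le d\|\mathsf K_{qq}\|_{\mathrm{op}}$. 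Converting the quadratic in $(q,p)$ via $\mathcal V_0\ge c_1(|q|^2+|p|^2)$ (from \eqref{eq:V0-equivalent} together with $U\ge0$) gives \eqref{eq:LR-Err-bound} with the stated $C_2^{\mathrm{LR}},C_2^{(d),\mathrm{LR}}$, and $|\mathcal M|\le\tfrac12\|\mathsf K\|_{\mathrm{op}}|z|^2\le\widetilde C_{\mathcal M}^{\mathrm{LR}}(1+\mathcal V_0)$ identifies the growth constant. These are exactly the quantities (playing the roles of $C_2$, $\widetilde C_{\mathcal M}$ and $\underline c_{\mathrm{imp}}$) required to run Lemma~\ref{lem:Valpha-drift-expansion} and Proposition~\ref{prop:Valpha-drift}, which then output $\lambda_\alpha\ge\lambda+\delta_{\mathrm{LR}}\alpha-C_{\lambda,\mathrm{LR}}\alpha^2$ for all sufficiently small $\alpha>0$, with $\delta_{\mathrm{LR}},C_{\lambda,\mathrm{LR}}$ as in \eqref{eq:LR-delta-and-Clambda}; in particular $\lambda_\alpha>\lambda$ once $\delta_{\mathrm{LR}}>0$.

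The argument is conceptually routine: every step is either an elementary estimate or a citation of Lemma~\ref{lem:first-order-improvement}/Proposition~\ref{prop:Valpha-drift}, so the main obstacle is bookkeeping, namely making everything \emph{explicit}. Concretely one must (a) carry out the componentwise smoothed-$L^p$ estimates in item (ii) with the correct power--mean exponents so that the constants reproduce $c_0^{\mathrm{LR}},c_1^{\mathrm{LR}}$ and the bound on $\delta_U$ exactly, and (b) track the dependency chain $\mathsf K\rightsquigarrow\rho_\star\rightsquigarrow R_0\rightsquigarrow\delta_U(R_0)\rightsquigarrow\underline c_{\mathrm{imp}}$ so that the final improvement constant is a genuine closed-form function of $(m,M,\sigma,\lambda,p,\varepsilon,\iota,d,|y|,|X^\top y|,\gamma)$. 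The one point that genuinely differs from the multi-well case is that the remainder $\nabla U-Q_\infty q$ is only \emph{sublinear} ($\mathcal O(|q|^{p-1})$) rather than bounded, which is why items (ii) and (vi) keep the $R^{p-2}$ and $|q|^{p-1}$-type terms instead of uniform constants; but since $\nabla U$ itself still has linear growth ($L$-Lipschitz), $\mathcal A'\mathcal M$ remains $\mathcal O(\mathcal V_0)$ and the general drift-expansion machinery of Section~\ref{sec:acceleration} applies unchanged.
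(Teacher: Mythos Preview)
Your proposal is correct and follows exactly the paper's approach: specialize Lemma~\ref{lem:first-order-improvement} and Proposition~\ref{prop:Valpha-drift} to the linear regression model using the explicit bounds established in Proposition~\ref{prop:linear-lp-assumptions}. The paper's own proof is in fact a one-line citation (``the results follow directly by applying Lemma~\ref{lem:first-order-improvement}\ldots''), so your detailed walkthrough of items (i)--(vi) is a strict expansion of, not a departure from, the intended argument.
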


\begin{proof}
    We provide the proof in Appendix~\ref{app:linear-lp-first-order}.
\end{proof}

Finally, we confirm that the quantitative condition $\delta_{\mathrm{LR}}>\gamma\lambda$ required for acceleration can be satisfied by choosing the dissipativity parameter $\lambda$ small enough.

\begin{lemma}[Feasibility of $\delta_{\mathrm{LR}}>\gamma\lambda$ with an explicit $\lambda_\star(\gamma)$]
\label{lem:LR-feasibility-explicit-full}
Fix $\gamma>0$ and consider the Bayesian linear regression potential \eqref{eq:linear}--\eqref{eq:lp}
under Assumption~\ref{assump:linear}. Let
\[
  Q_\infty:=\frac{1}{\sigma^2}X^\top X,
  \quad
  m_\infty:=\lambda_{\min}(Q_\infty)=\frac{\lambda_{\min}(X^\top X)}{\sigma^2}\ge \frac{m}{\sigma^2},
  \quad
  M_\infty:=\lambda_{\max}(Q_\infty)=\frac{\lambda_{\max}(X^\top X)}{\sigma^2}.
\]
Set
\[
  \bar\lambda:=\min\left\{\frac14,\ \frac{m}{2\sigma^2}\right\},
  \qquad
  R:=\max\{1,C_{\mathrm{linear}}\}.
\]
Define the explicit tail bound
\begin{equation}\label{eq:LR-deltaU-plus-lemma}
  \delta_U^{+}
  :=
  \frac{|X^\top y|}{\sigma^2\,R}
  + \iota\,d^{1-\frac p2}\,R^{p-2}
  + \frac{\iota\,d\,\varepsilon^p+\frac{1}{2\sigma^2}|y|^2}{R^2},
\end{equation}
and the spectral proxies
\begin{equation}\label{eq:LR-amin-amax-proxy-lemma}
  a_{\min}^{-}:=m_\infty+\frac{\gamma^2}{2}(1-\bar\lambda),
  \qquad
  a_{\max}^{+}:=M_\infty+\frac{\gamma^2}{2}.
\end{equation}
Let $c_1=c_1(\gamma,\bar\lambda)>0$ be the explicit quadratic lower bound constant of the baseline Lyapunov function
$\mathcal V_0$ (up to an additive constant), namely
\begin{equation}\label{eq:LR-c1-def-lemma}
  c_1
  :=
  \frac18\left(\gamma^2(1-\bar\lambda)+2-\sqrt{(\gamma^2(1-\bar\lambda)-2)^2+4\gamma^2}\right).
\end{equation}

Let $B:=\left(\begin{smallmatrix}0&I_d\\-Q_\infty&-\gamma I_d\end{smallmatrix}\right)$ be the linear drift matrix at infinity.
Define
\begin{equation}\label{eq:LR-eta-def-lemma}
  \eta:=\frac{\gamma-\sqrt{(\gamma^2-4m_\infty)_+}}{2}>0,
  \qquad
  C_B:=1+\frac{\gamma}{2\sqrt{m_\infty}}+\sqrt{M_\infty}+\frac{1}{\sqrt{m_\infty}}.
\end{equation}
Finally, define
\begin{equation}\label{eq:LR-CB1-plus-lemma}
  C_{B_1}^{+}
  :=2\left(1+\gamma+M_\infty+\frac{\gamma^2}{2}\right),
  \qquad
  \widetilde C_{\mathcal M}^{+}
  :=
  \frac{1}{2c_1}\cdot \frac{C_B^2}{2\eta}\,C_{B_1}^{+},
\end{equation}
and the explicit lower bound
\begin{equation}\label{eq:LR-cimp-lower-explicit-lemma}
  \underline c_{\mathrm{imp}}^{-}
  :=
  \frac{3}{8}\cdot
  \frac{
    a_{\min}^{-}+1-\sqrt{(a_{\min}^{-}-1)^2+\gamma^2}
  }{
    a_{\max}^{+}+1+\sqrt{(a_{\max}^{+}-1)^2+\gamma^2}
    \;+\;
    8\,\delta_U^{+}
  }.
\end{equation}
Then the explicit choice
\begin{equation}\label{eq:LR-lambda-star-explicit-lemma}
  \lambda_\star(\gamma)
  :=
  \min\left\{
    \bar\lambda,\ 
    \frac{\underline c_{\mathrm{imp}}^{-}}{\gamma+\widetilde C_{\mathcal M}^{+}}
  \right\}
\end{equation}
leads to the following properties.

\begin{enumerate}[label=(\roman*)]
\item For any $\lambda\in(0,\bar\lambda]$, the dissipativity inequality in
Proposition~\ref{prop:linear-lp-assumptions}(b) implies Assumption~\ref{assump:potential}(iii) with this $\lambda$
(up to an additive constant depending on $\lambda$).

\item For every $\lambda\in(0,\lambda_\star(\gamma)]$, the quantitative condition
\[
  \delta_{\mathrm{LR}}>\gamma\lambda
\]
holds, where
\[
  \delta_{\mathrm{LR}}
  :=
  \underline c_{\mathrm{imp}}(\lambda)-\lambda\,\widetilde C_{\mathcal M}^{\mathrm{LR}}(\lambda),
\]
and $\underline c_{\mathrm{imp}}(\lambda)$, $\widetilde C_{\mathcal M}^{\mathrm{LR}}(\lambda)$ are the constants
appearing in Lemma~\ref{lem:first-order-improvement} specialized to the present model.
\end{enumerate}
\end{lemma}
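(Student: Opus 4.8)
The plan is to treat (i) as a quick consequence of earlier estimates and to devote the substance of the argument to (ii), which reduces to the scalar inequality $\delta_{\mathrm{LR}}(\lambda)>\gamma\lambda$ on $(0,\lambda_\star(\gamma)]$. For (i): Proposition~\ref{prop:linear-lp-assumptions}(b) gives $\langle\nabla U(q),q\rangle\ge\frac{m}{2\sigma^2}|q|^2-\frac{|X^\top y|^2}{2m\sigma^2}$, while Proposition~\ref{prop:linear-lp-assumptions}(a) together with $U\ge0$ and the explicit forms of the data-fidelity term and of $g$ in \eqref{eq:lp} yield a quadratic upper bound $U(q)\le C_U^{\mathrm{slope}}|q|^2+C_U^{\mathrm{const}}$ with explicit constants. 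Substituting both into the target inequality \eqref{eq:U-drift}, the net coefficient of $|q|^2$ on the right-hand side becomes $\lambda\bigl(C_U^{\mathrm{slope}}+\gamma^2/4\bigr)-\tfrac{m}{4\sigma^2}$; for $\lambda$ below the stated threshold $\bar\lambda$ this is nonpositive, and the remaining bounded terms are absorbed into a finite $A=A(\lambda)$. (This is the computation already carried out in the proof of Proposition~\ref{prop:linear-lp-assumptions}(b), which goes through verbatim for any admissible $\lambda\le\bar\lambda$.)

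For (ii) I would establish two bounds that are \emph{uniform} over $\lambda\in(0,\bar\lambda]$. \textbf{Uniform lower bound on $\underline c_{\mathrm{imp}}$.} Use formula \eqref{eq:cimp-lemma}. The map $t\mapsto t+1-\sqrt{(t-1)^2+\gamma^2}$ is strictly increasing and positive for $t>\gamma^2/4$; since $\lambda\le1/4$ we have $a_{\min}(\lambda)=\lambda_{\min}(Q_\infty)+\frac{\gamma^2}{2}(1-\lambda)\ge m_\infty+\frac{3\gamma^2}{8}>\gamma^2/4$, so the numerator is $\ge a_{\min}^-+1-\sqrt{(a_{\min}^--1)^2+\gamma^2}>0$ with $a_{\min}^-$ as in \eqref{eq:LR-amin-amax-proxy-lemma} (using that $a_{\min}(\lambda)$ is decreasing in $\lambda$, hence $\ge a_{\min}(\bar\lambda)=a_{\min}^-$). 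Likewise $t\mapsto t+1+\sqrt{(t-1)^2+\gamma^2}$ is strictly increasing and $a_{\max}(\lambda)=\lambda_{\max}(Q_\infty)+\frac{\gamma^2}{2}(1-\lambda)<M_\infty+\frac{\gamma^2}{2}=a_{\max}^+$ for every $\lambda>0$. The only remaining $\lambda$-dependence in the denominator is through $\delta_U(R_0(\lambda))$; the crucial point is that, by its very definition \eqref{eq:R0-choice-lemma}, $R_0(\lambda)\ge\max\{1,C_{\mathrm{linear}}\}=:R$ unconditionally, and $\delta_U$ is nonincreasing, so $\delta_U(R_0(\lambda))\le\delta_U(R)\le\delta_U^+$ by the explicit bound \eqref{eq:LR-deltaU-bound} evaluated at $R$. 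Hence $\underline c_{\mathrm{imp}}(\lambda)\ge\underline c_{\mathrm{imp}}^-$ as in \eqref{eq:LR-cimp-lower-explicit-lemma}, and in fact \emph{strictly}, since $a_{\max}(\lambda)<a_{\max}^+$ makes the denominator of $\underline c_{\mathrm{imp}}(\lambda)$ strictly smaller than that of $\underline c_{\mathrm{imp}}^-$ while the numerator stays $\ge a_{\min}^-+1-\sqrt{(a_{\min}^--1)^2+\gamma^2}>0$.

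\textbf{Uniform upper bound on $\widetilde C_{\mathcal M}^{\mathrm{LR}}$.} Write $\widetilde C_{\mathcal M}^{\mathrm{LR}}(\lambda)=\|\mathsf K(\lambda)\|_{\mathrm{op}}/(2c_1(\lambda))$. For the denominator: $c_1(\lambda)$ is nonincreasing in $\lambda$ (being built from $\mu_{\min}(\lambda)$, which is increasing in $\gamma^2(1-\lambda)$), so $c_1(\lambda)\ge c_1^-$, the explicit constant \eqref{eq:LR-c1-def-lemma}. For the numerator: the matrix $B=\left(\begin{smallmatrix}0&I_d\\-Q_\infty&-\gamma I_d\end{smallmatrix}\right)$ is \emph{independent of $\lambda$} and is Hurwitz because $Q_\infty\succ0$ and $\gamma>0$; its spectrum lies in $\{\Re s\le-\eta\}$ with $\eta$ as in \eqref{eq:LR-eta-def-lemma}, and an explicit diagonalization/energy estimate (block by block over the eigenvalues of $Q_\infty$) gives $\|e^{tB}\|_{\mathrm{op}}\le C_B e^{-\eta t}$ with $C_B$ as in \eqref{eq:LR-eta-def-lemma}. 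Then the integral representation \eqref{eq:LR-K-integral} yields $\|\mathsf K(\lambda)\|_{\mathrm{op}}\le\|C_{B_1}(\lambda)\|_{\mathrm{op}}\int_0^\infty\|e^{tB}\|_{\mathrm{op}}^2\,dt\le\frac{C_B^2}{2\eta}\|C_{B_1}(\lambda)\|_{\mathrm{op}}$, and since $C_{B_1}=\nabla^2B_1$ has entries bounded by a polynomial in $\gamma,\lambda_{\max}(Q_\infty)$ and $|1-\lambda|\le1$, one gets $\|C_{B_1}(\lambda)\|_{\mathrm{op}}\le C_{B_1}^+$ uniformly in $\lambda\le\bar\lambda$ (with $C_{B_1}^+$ as in \eqref{eq:LR-CB1-plus-lemma}). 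Combining, $\widetilde C_{\mathcal M}^{\mathrm{LR}}(\lambda)\le\widetilde C_{\mathcal M}^+$. Putting the two bounds together, for $\lambda\in(0,\lambda_\star(\gamma)]$ with $\lambda_\star(\gamma)=\min\{\bar\lambda,\ \underline c_{\mathrm{imp}}^-/(\gamma+\widetilde C_{\mathcal M}^+)\}$,
\[
\delta_{\mathrm{LR}}(\lambda)=\underline c_{\mathrm{imp}}(\lambda)-\lambda\,\widetilde C_{\mathcal M}^{\mathrm{LR}}(\lambda)>\underline c_{\mathrm{imp}}^- -\lambda\,\widetilde C_{\mathcal M}^+ =\bigl(\underline c_{\mathrm{imp}}^- -\lambda(\gamma+\widetilde C_{\mathcal M}^+)\bigr)+\gamma\lambda\ \ge\ \gamma\lambda,
\]
using the strict bound on $\underline c_{\mathrm{imp}}$ for the first inequality and $\lambda\le\lambda_\star(\gamma)\le\underline c_{\mathrm{imp}}^-/(\gamma+\widetilde C_{\mathcal M}^+)$ for the last, which is exactly the assertion of (ii).

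The main obstacle is twofold, and both parts are of the ``track the constants'' type rather than conceptual. First, one must produce the explicit exponential-decay estimate $\|e^{tB}\|_{\mathrm{op}}\le C_Be^{-\eta t}$ for the non-normal Hurwitz matrix $B$ with the precise constants in \eqref{eq:LR-eta-def-lemma}; this is a one-time computation (since $B$ does not depend on $\lambda$), but getting a usable $C_B$ for a $2d\times2d$ non-symmetric drift requires care. Second, one must verify that \emph{every} $\lambda$-dependent quantity entering $\underline c_{\mathrm{imp}}(\lambda)$ and $\widetilde C_{\mathcal M}^{\mathrm{LR}}(\lambda)$ — in particular the implicitly defined cutoff radius $R_0(\lambda)$ from Lemma~\ref{lem:first-order-improvement} — remains in a controlled range as $\lambda\downarrow0$; the decisive simplification is the observation that $R_0(\lambda)\ge\max\{1,C_{\mathrm{linear}}\}$ always, so that $\delta_U(R_0(\lambda))$ is dominated by the $\lambda$-free quantity $\delta_U^+$ and $R_0$ never needs to be computed. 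Everything else ($a_{\min},a_{\max}$ affine in $\lambda$; $c_1,\mu_{\min}$ monotone in $\lambda$; $\|C_{B_1}(\lambda)\|$ bounded via $|1-\lambda|\le1$) is immediate.
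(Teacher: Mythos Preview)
Your proposal is correct and follows essentially the same four-step route as the paper: (i) weaken the quadratic coefficient in the dissipativity bound; (ii) bound $\underline c_{\mathrm{imp}}(\lambda)$ from below uniformly via $a_{\min}(\lambda)\ge a_{\min}^-$, $a_{\max}(\lambda)\le a_{\max}^+$, and the key observation $R_0(\lambda)\ge R$ so $\delta_U(R_0(\lambda))\le\delta_U^+$; (iii) bound $\widetilde C_{\mathcal M}^{\mathrm{LR}}(\lambda)$ from above via the integral representation of $\mathsf K$, the exponential bound $\|e^{tB}\|_{\mathrm{op}}\le C_Be^{-\eta t}$ obtained by block-diagonalizing over the eigenvalues of $Q_\infty$, and the monotonicity of $c_1(\lambda)$; (iv) combine into $\delta_{\mathrm{LR}}-\gamma\lambda\ge\underline c_{\mathrm{imp}}^--(\gamma+\widetilde C_{\mathcal M}^+)\lambda$. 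For part (i) you are in fact slightly more careful than the paper, which simply asserts that $\langle\nabla U(q),q\rangle\ge\lambda|q|^2-C$ ``is exactly Assumption~\ref{assump:potential}(iii) up to an additive constant'' without explicitly invoking a quadratic upper bound on $U$; your observation that one needs $U(q)\le C_U^{\mathrm{slope}}|q|^2+C_U^{\mathrm{const}}$ is the missing ingredient, though the precise threshold you derive does not exactly coincide with the stated $\bar\lambda$ (both treatments are informal on this point, and the lemma only claims the inequality holds with \emph{some} additive constant).
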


\begin{proof}
    We provide the proof in Appendix~\ref{app:LR-feasibility-explicit-full}.
\end{proof}

Combining Proposition~\ref{prop:linear-lp-first-order} with Corollary~\ref{cor:global-acceleration}
(in dimension $1$, followed by tensorization if desired) yields explicit constants
$\alpha_{\mathrm{LR}}>0$ and $\kappa_{\mathrm{LR}}>0$ such that the HFHR contraction rate satisfies
$c_\alpha\ge c_0^{\mathrm{LR}}+\kappa_{\mathrm{LR}}\alpha$ for all $\alpha\in(0,\alpha_{\mathrm{LR}}]$,
whenever $\lambda\in(0,\lambda_\star(\gamma)]$.

\begin{theorem}[HFHR acceleration for Bayesian linear regression]
\label{thm:HFHR-LR}
Consider the Bayesian linear regression problem defined by \eqref{eq:linear}--\eqref{eq:lp} under Assumption~\ref{assump:linear}.
Let $P_t^{\alpha}$ be the semigroup of the corresponding HFHR dynamics.
Let $\rho_{\mathcal V_\alpha}$ be the Lyapunov-weighted semimetric used in Corollary~\ref{cor:global-acceleration} (constructed using the global Lipschitz constant $L$ and the Lyapunov function $\mathcal V_\alpha$), and let $\cW_{\rho_{\mathcal V_\alpha}}$ be the associated Wasserstein distance.

Fix $\gamma>0$ and choose the dissipativity parameter $\lambda\in(0,\lambda_\star(\gamma)]$ as in Lemma~\ref{lem:LR-feasibility-explicit-full}, so that $\delta_{\mathrm{LR}}>\gamma\lambda$.
Assume in addition that the quantitative conditions of Corollary~\ref{cor:global-acceleration} hold (in particular, $\Lambda_0>1/2$).
%%%%%%%%%%%%%%%%%%%%%
Then there exist explicit constants $\alpha_{\mathrm{LR}}>0$ and $\kappa_{\mathrm{LR}}>0$, depending on the model parameters ($X, y, \sigma, \iota, p, \varepsilon$) and $\gamma$, such that for every $\alpha\in(0,\alpha_{\mathrm{LR}}]$,
\[
  \cW_{\rho_{\mathcal V_\alpha}}\!\left(\mu P_t^{\alpha}, \nu P_t^{\alpha}\right)
  \;\le\; e^{-(c_0^\mathrm{LR} + \kappa_{\mathrm{LR}}\alpha)t}\,\cW_{\rho_{\mathcal V_\alpha}}(\mu, \nu),
  \qquad t \ge 0,
\]
for all probability measures $\mu, \nu$ with finite Lyapunov moments, where $c_0^\mathrm{LR}$ denotes the contraction rate of the kinetic Langevin dynamics at $\alpha=0$.
Moreover, one may choose explicitly
\[
  \alpha_{\mathrm{LR}}
  := \min\left\{ \alpha_{\mathrm{branch,acc}}^{\mathrm{LR}},\ \alpha_{\mathrm{metric,acc}}^{\mathrm{LR}} \right\},
\]
where $\alpha_{\mathrm{branch,acc}}^{\mathrm{LR}}$ and $\alpha_{\mathrm{metric,acc}}^{\mathrm{LR}}$ are the explicit thresholds from Theorem~\ref{thm:lyapunov-acceleration} and Theorem~\ref{thm:metric-acceleration} respectively, evaluated using the global constants from Proposition~\ref{prop:linear-lp-assumptions} and Proposition~\ref{prop:linear-lp-first-order}.
Similarly, the explicit gain is given by
\[
  \kappa_{\mathrm{global}}^{\mathrm{LR}}
  := \min\left\{ \kappa_{\mathrm{LR}},\ c_0^{\mathrm{LR}}\,c_2^{\mathrm{LR}},\ c_0^{\mathrm{LR}}\,c_3^{\mathrm{LR}} \right\},
\]
where $\kappa_{\mathrm{LR}}$ is the Lyapunov-branch gain from Theorem~\ref{thm:lyapunov-acceleration}, and $c_2^{\mathrm{LR}}, c_3^{\mathrm{LR}}$ are the metric-branch improvement constants from Theorem~\ref{thm:metric-acceleration}.
%%%%%%%%%%%%%%%%%%%%%%%%%%%%%
In particular, utilizing the explicit constants from Proposition~\ref{prop:linear-lp-first-order},
\[
  \kappa_{\mathrm{LR}} = \frac{L(\delta_{\mathrm{LR}} + \gamma\lambda)}{768\,\gamma},
\]
where
\[
  \delta_{\mathrm{LR}} := \underline c_{\mathrm{imp}} - \lambda\,\widetilde C_{\mathcal M}^{\mathrm{LR}},
  \qquad
  C_{\lambda,\mathrm{LR}} := C_2^{\mathrm{LR}} + \widetilde C_{\mathcal M}^{\mathrm{LR}}\,\underline c_{\mathrm{imp}},
\]
and $L = \frac{\|X^\top X\|_{\mathrm{op}}}{\sigma^2} + \iota p \varepsilon^{p-2}$ is the global Lipschitz constant from Proposition~\ref{prop:linear-lp-assumptions}.
\end{theorem}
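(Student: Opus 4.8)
The statement is an application of the abstract acceleration machinery of Sections~\ref{sec:global-contractivity}--\ref{sec:acceleration} to the potential \eqref{eq:linear}--\eqref{eq:lp}, so the plan is to verify, one by one, the hypotheses of Corollary~\ref{cor:global-acceleration} (and of Theorems~\ref{thm:metric-acceleration}--\ref{thm:lyapunov-acceleration} on which it rests) for this model, and then simply read off the explicit constants. First I would invoke Proposition~\ref{prop:linear-lp-assumptions} to record that, under Assumption~\ref{assump:linear}, the potential $U$ satisfies Assumption~\ref{assump:potential} with the explicit Lipschitz constant $L=\|X^\top X\|_{\mathrm{op}}/\sigma^2+\iota p\,\varepsilon^{p-2}$, the lower bound $U\ge 0$, and the stated dissipativity estimate, and also satisfies Assumption~\ref{assump:asymptotic-linear-drift} with $Q_\infty=\sigma^{-2}X^\top X$ and the explicit tail modulus $\varrho$. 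This makes Lemma~\ref{lem:first-order-improvement} and Proposition~\ref{prop:Valpha-drift} applicable: the corrector $\mathcal M(z)=\tfrac12 z^\top\mathsf K z$ from Proposition~\ref{prop:linear-lp-first-order}(iv), defined through the Lyapunov equation for the linearised drift at infinity, is a quadratic form and hence automatically $C^2$ with linear gradient and constant Hessian, so the structural hypothesis \eqref{eq:V-structure-assump} used in Lemma~\ref{lem:drift-decomp} holds with $\mathfrak Q=\alpha\mathcal M$, and the refined Lyapunov function $\mathcal V_\alpha=\mathcal V_0+\alpha\mathcal M$ is well defined.

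Next I would fix $\gamma>0$ and choose the dissipativity parameter $\lambda\in(0,\lambda_\star(\gamma)]$ as provided by Lemma~\ref{lem:LR-feasibility-explicit-full}. That lemma simultaneously guarantees that such a $\lambda$ is admissible in Assumption~\ref{assump:potential}(iii) (with an $\lambda$-dependent additive constant) and that the quantitative gap condition $\delta_{\mathrm{LR}}>\gamma\lambda$ holds, where $\delta_{\mathrm{LR}}=\underline c_{\mathrm{imp}}-\lambda\,\widetilde C_{\mathcal M}^{\mathrm{LR}}$ and $\underline c_{\mathrm{imp}},\widetilde C_{\mathcal M}^{\mathrm{LR}}$ are the explicit constants of Proposition~\ref{prop:linear-lp-first-order}(v)--(vi). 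Together with the standing hypothesis $\Lambda_0=J_2 L/\lambda>1/2$ and the strict activity of the Lyapunov branch at $\alpha=0$, this places us exactly in the hypotheses of Theorem~\ref{thm:metric-acceleration}, Lemma~\ref{lem:branch-continuity} and Theorem~\ref{thm:lyapunov-acceleration}.

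Then I would apply Proposition~\ref{prop:Valpha-drift} (with $C_{\lambda,\mathrm{LR}}=C_2^{\mathrm{LR}}+\widetilde C_{\mathcal M}^{\mathrm{LR}}\underline c_{\mathrm{imp}}$ as in \eqref{eq:LR-Err-bound}--\eqref{eq:LR-delta-and-Clambda}) to obtain the drift-rate expansion $\lambda_\alpha\ge\lambda+\delta_{\mathrm{LR}}\alpha-C_{\lambda,\mathrm{LR}}\alpha^2$ for $\alpha\in(0,\alpha_1]$, and then invoke Corollary~\ref{cor:global-acceleration}: on $(0,\alpha_{\mathrm{global}}]$ with $\alpha_{\mathrm{global}}=\min\{\alpha_{\mathrm{branch,acc}}^{\mathrm{LR}},\alpha_{\mathrm{metric,acc}}^{\mathrm{LR}}\}$ one has $c_\alpha\ge c_0^{\mathrm{LR}}+\kappa_{\mathrm{global}}^{\mathrm{LR}}\alpha$, while on the sub-interval $(0,\alpha_{\mathrm{branch,acc}}^{\mathrm{LR}}]$ the Lyapunov branch stays active (Lemma~\ref{lem:branch-continuity}), so Theorem~\ref{thm:lyapunov-acceleration} gives the sharper bound $c_\alpha\ge c_0^{\mathrm{LR}}+\kappa_{\mathrm{LR}}\alpha$ with $\kappa_{\mathrm{LR}}=L(\delta_{\mathrm{LR}}+\gamma\lambda)/(768\gamma)$. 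The contraction statement itself is then just Theorem~\ref{thm:master-contraction} applied with $\mathcal V=\mathcal V_\alpha$, $(\lambda,D)=(\lambda_\alpha,A'_\alpha)$ and the admissible rate $c_\alpha$: combining $\cW_{\rho_{\mathcal V_\alpha}}(\mu P_t^\alpha,\nu P_t^\alpha)\le e^{-c_\alpha t}\cW_{\rho_{\mathcal V_\alpha}}(\mu,\nu)$ with $c_\alpha\ge c_0^{\mathrm{LR}}+\kappa_{\mathrm{LR}}\alpha$ yields the claimed estimate, with $\alpha_{\mathrm{LR}}:=\min\{\alpha_{\mathrm{branch,acc}}^{\mathrm{LR}},\alpha_{\mathrm{metric,acc}}^{\mathrm{LR}}\}$. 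All intermediate smallness thresholds on $\alpha$ — $\alpha_0$ from Proposition~\ref{prop:V0-drift-HFHR}, $\alpha_*$ from Lemma~\ref{lem:Valpha-V0-equivalence}, $\alpha_{\mathrm{pos}}$, the constraint $\lambda_\alpha\le 1/4$, and the smallness condition \eqref{eq:alpha-small-drift} — are explicit strictly positive functions of the model data $(m,\|X^\top X\|_{\mathrm{op}},\sigma,\iota,p,\varepsilon,|y|,|X^\top y|,\gamma,d)$, so their minimum $\alpha_{\mathrm{LR}}$ is positive and everything is effective.

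The only genuinely non-mechanical ingredient is the feasibility of the gap $\delta_{\mathrm{LR}}>\gamma\lambda$, which is the purpose of Lemma~\ref{lem:LR-feasibility-explicit-full}: since both $\underline c_{\mathrm{imp}}$ and $\widetilde C_{\mathcal M}^{\mathrm{LR}}$ depend on $\lambda$ — through the baseline quadratic constant $c_1$, the spectral proxies $a^{-}_{\min},a^{+}_{\max}$, and the solution $\mathsf K$ of the Lyapunov equation — one must track these dependences carefully and exploit that, as $\lambda\downarrow 0$, $\delta_{\mathrm{LR}}$ remains bounded below by a positive constant while $\gamma\lambda\to 0$; the explicit $\lambda_\star(\gamma)$ in \eqref{eq:LR-lambda-star-explicit-lemma} quantifies precisely how small $\lambda$ must be. I expect this verification — rather than the invocation of the abstract acceleration theorems — to be the main obstacle, and it is fully handled by the feasibility lemma together with the explicit tail bounds of Proposition~\ref{prop:linear-lp-first-order}.
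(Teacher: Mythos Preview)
Your proposal is correct and follows essentially the same route as the paper: verify Assumptions~\ref{assump:potential} and~\ref{assump:asymptotic-linear-drift} via Proposition~\ref{prop:linear-lp-assumptions}, obtain the explicit corrector and constants $\delta_{\mathrm{LR}},C_{\lambda,\mathrm{LR}}$ from Proposition~\ref{prop:linear-lp-first-order}, secure $\delta_{\mathrm{LR}}>\gamma\lambda$ via Lemma~\ref{lem:LR-feasibility-explicit-full}, and then invoke Corollary~\ref{cor:global-acceleration}. The paper's own proof is a terse three-line version of exactly this chain of references; your write-up simply spells out more of the intermediate hypotheses and thresholds.
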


\begin{proof}
We provide the proof in Appendix~\ref{proof:thm:HFHR-LR}.    
\end{proof}

\subsection{Bayesian binary classification}\label{sec:case:classification}

In this section, we consider a Bayesian formulation of a binary classification task with data $\left\{\left(x_i, y_i\right)\right\}_{i=1}^n$, where $x_i \in \mathbb{R}^d$ are feature vectors with $\vert x_i \vert < \infty$ and $y_i \in\{0,1\}$ are labels. In a classification task, our aim is to learn a predictive model of the form
$\mathbb{P}(y_i = 1 \mid x_i,q) = h(\langle q, x_i \rangle).$
Let $h: \mathbb{R} \rightarrow \mathbb{R}$ be a prediction function and $\varphi: \mathbb{R} \rightarrow \mathbb{R}_{+}$be a loss function. 
In this setting, we can write the potential function $U: \mathbb{R}^d \rightarrow \mathbb{R}$ as
\begin{equation}
\label{eq:classification}
U(q)=\frac{1}{n} \sum_{i=1}^n \varphi\!\left(y_i-h(\langle q,x_i\rangle)\right)+\frac{\iota}{2}\vert q \vert^2,
\end{equation}
and the associated sampling target is the Gibbs posterior
$\pi(q) \propto \exp (-U(q))$. After iterating $K$ steps for our samples, the classifier can be formulated as 
$
\hat{y} = \mathbf{1}_{\{h(\langle \bar{q}_K,\hat{x}\rangle) \geq 1/2\}} \in \{0,1\}
$, where $\bar{q}_K$ is the average over $M$ chains at $K$-th iterate, $\hat{x} \in \mathbb{R}^d$ is the given new feature for predicting $\hat{y} \in \{0,1\}$. 
We make the following assumptions, also see Assumption 12 in~\cite{GGZ}, Assumption 2 in~\cite{foster2018uniform} and Assumption 9 in~\cite{mei2018landscape}. 

\begin{assumption}\label{assump:classification}
Assume that the following conditions hold.
\begin{itemize}
\item $B_x:=\max_{1\le i\le n}|x_i|<\infty$.
\item $h \in C^2$ such that $H_{1}:=\sup_x\vert h'(x) \vert < \infty$ and $H_{2}:=\sup_x\vert h''(x) \vert < \infty$.
\item $\varphi \geq 0$ and $\varphi \in C^2$  such that $\Phi_{1}:=\sup_x\vert \varphi'(x) \vert < \infty$ and $\Phi_{2}:=\sup_x\vert \varphi''(x) \vert < \infty$.
\end{itemize}    
\end{assumption}

Assumption~\ref{assump:classification} is mild and can be satisfied for many choices of $\varphi,h$. For example, by following \cite{mei2018landscape,foster2018uniform,GGZ}, we consider $h(z) := z$ with Tukey's bisquare loss: 
\begin{equation}
\label{eq:tukey}
\varphi_{\text {Tukey }}(t):=
\begin{cases}
1-\left(1-\left(t / t_0\right)^2\right)^3 & \text {for $|t| \leq t_0$}, \\
1 & \text {for $|t|>t_0$}.
\end{cases}
\end{equation}
Then Assumption~\ref{assump:classification} is satisfied and the potential $U(q)$ is non-convex in general.
The non-convex examples of $\varphi$
that are either bounded or slowly growing near infinity have also been considered in~\cite{foster2018uniform,mei2018landscape}.
%%%%%%%%%%%%%%%%%%%%%%%%%%%%%
Next, we show that under Assumption~\ref{assump:classification}, the Bayesian binary classification problem \eqref{eq:classification} satisfies
both Assumptions~\ref{assump:potential} and \ref{assump:asymptotic-linear-drift} required for our theory.

\begin{proposition}[Bayesian binary classification potentials satisfy the standing assumptions]
\label{prop:classification-assumptions}
Consider the potential $U$ in \eqref{eq:classification}.
%where $\iota>0$, $x_i\in\mathbb R^d$, and $y_i\in\{0,1\}$.
Assume Assumption~\ref{assump:classification} holds.
Then:

\begin{enumerate}
\item[\textnormal{(a)}] $U\in C^2(\mathbb R^d)$ and $U\ge 0$.
Moreover, $\nabla U$ is $L$-Lipschitz with
$L:=\iota + \left(\Phi_2 H_1^2+\Phi_1 H_2\right) B_x^2$.

\item[\textnormal{(b)}] $U$ is dissipative in the sense of Assumption~\ref{assump:potential}-(iii). In particular,
\[
\langle \nabla U(q),q\rangle
\ge \frac{\iota}{2}|q|^2 - \frac{C_0^2}{2\iota}
\quad\text{for all }q\in\mathbb R^d,
\]
where one may take
$C_0:=\Phi_1 H_1 B_x$.

\item[\textnormal{(c)}] $U$ satisfies Assumption~\ref{assump:asymptotic-linear-drift} with
\[
Q_\infty:=\iota I_d,
\qquad
\varrho(r):=\frac{C_0}{r},\quad r\ge 1.
\]
%i.e.\ for all $q$ with $\|q\|\ge 1$,
%\[
%|\nabla U(q)-Q_\infty q|
%\le \rho(|q|)\,|q|.
%\]
%In particular, $\rho$ is nonincreasing and $\rho(r)\to 0$ as $r\to\infty$.
\end{enumerate}
\end{proposition}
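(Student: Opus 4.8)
The plan is to verify claims (a)--(c) by directly computing $\nabla U$ and $\nabla^2 U$ via the chain rule and bounding every resulting term using the uniform constants $B_x,H_1,H_2,\Phi_1,\Phi_2$ from Assumption~\ref{assump:classification}. Write $s_i(q):=y_i-h(\langle q,x_i\rangle)$. Since $h,\varphi\in C^2$ and $q\mapsto\langle q,x_i\rangle$ is linear, each summand $\varphi(s_i(q))$ is $C^2$, so $U\in C^2(\R^d)$; and $U\ge 0$ follows at once from $\varphi\ge 0$ and $\frac{\iota}{2}|q|^2\ge 0$. This disposes of the first half of (a).

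For the Lipschitz bound I would use
\[
  \nabla U(q)=\iota q-\frac1n\sum_{i=1}^n\varphi'(s_i)\,h'(\langle q,x_i\rangle)\,x_i,
\]
\[
  \nabla^2 U(q)=\iota I_d+\frac1n\sum_{i=1}^n\bigl[\varphi''(s_i)\,h'(\langle q,x_i\rangle)^2-\varphi'(s_i)\,h''(\langle q,x_i\rangle)\bigr]x_i x_i^\top,
\]
and then bound $\|x_i x_i^\top\|_{\mathrm{op}}=|x_i|^2\le B_x^2$ together with $|\varphi''(s_i)h'(\langle q,x_i\rangle)^2|\le\Phi_2 H_1^2$ and $|\varphi'(s_i)h''(\langle q,x_i\rangle)|\le\Phi_1 H_2$, giving $\|\nabla^2 U(q)\|_{\mathrm{op}}\le\iota+(\Phi_2 H_1^2+\Phi_1 H_2)B_x^2=L$ uniformly in $q$; hence $\nabla U$ is $L$-Lipschitz. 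Part (c) is read off from the same gradient formula: $\nabla U(q)-\iota q=-\frac1n\sum_i\varphi'(s_i)h'(\langle q,x_i\rangle)x_i$ has Euclidean norm at most $\Phi_1 H_1 B_x=C_0$ for every $q$, so taking $Q_\infty:=\iota I_d$ (symmetric positive definite since $\iota>0$) and $\varrho(r):=C_0/r$ yields $|\nabla U(q)-Q_\infty q|\le C_0=\varrho(|q|)|q|$ for $|q|\ge 1$, with $\varrho$ nonincreasing and $\varrho(r)\to 0$; thus Assumption~\ref{assump:asymptotic-linear-drift} holds with $C_{\mathrm{linear}}=1$.

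The main work is part (b). Pairing the gradient with $q$ gives $\langle\nabla U(q),q\rangle=\iota|q|^2-\frac1n\sum_i\varphi'(s_i)h'(\langle q,x_i\rangle)\langle x_i,q\rangle$, and bounding the cross-term by $\Phi_1 H_1 B_x|q|=C_0|q|$ followed by Young's inequality $C_0|q|\le\frac{\iota}{2}|q|^2+\frac{C_0^2}{2\iota}$ yields the stated estimate $\langle\nabla U(q),q\rangle\ge\frac{\iota}{2}|q|^2-\frac{C_0^2}{2\iota}$. To upgrade this to the drift condition \eqref{eq:U-drift} I would additionally record the crude upper bound $U(q)\le\frac{\iota}{2}|q|^2+a_1|q|+a_0$, which follows from $\varphi(t)\le\varphi(0)+\Phi_1|t|$ and $|s_i|\le 1+|h(0)|+H_1 B_x|q|$; then fix any $\lambda\in(0,1/4]$ small enough that $\lambda\bigl(\tfrac{\iota}{2}+\tfrac{\gamma^2}{4}\bigr)\le\tfrac{\iota}{8}$, absorb the leftover $\lambda a_1|q|$ term by another Young inequality, and collect the residual constants into $A$. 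I do not anticipate a genuine obstacle: every estimate is uniform in $q$ because $h',h'',\varphi',\varphi''$ are globally bounded; the only point needing care is the bookkeeping in (b), namely checking that a single admissible $\lambda$ can simultaneously satisfy $\lambda\le 1/4$ and the smallness condition used to absorb the quadratic growth of $U$, which is automatic once $\iota$ and $\gamma$ are fixed.
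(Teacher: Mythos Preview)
Your proposal is correct and follows essentially the same approach as the paper: compute $\nabla U$ and $\nabla^2 U$ via the chain rule, bound the Hessian operator norm using $\|x_ix_i^\top\|_{\mathrm{op}}\le B_x^2$ and the uniform bounds on $\varphi',\varphi'',h',h''$, and for (b) use the uniform bound $|\nabla U(q)-\iota q|\le C_0$ followed by completing the square (equivalently, Young's inequality). Your additional paragraph spelling out the upgrade from $\langle\nabla U(q),q\rangle\ge\frac{\iota}{2}|q|^2-\frac{C_0^2}{2\iota}$ to the full drift form \eqref{eq:U-drift} (via an upper bound on $U$ and a choice of small $\lambda$) is actually more explicit than the paper, which leaves that step implicit.
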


\begin{proof}
    We provide the proof in Appendix~\ref{app:classification-assumptions}.
\end{proof}

In contrast to Bayesian linear regression with smoothed $L^p$ regularization (where
$|\nabla U(q)-Q_\infty q|=\mathcal O(|q|^{p-1})$ is unbounded), the present classification potentials satisfy a
\emph{uniformly bounded} remainder:
\[
  \nabla U(q)=\iota q+r(q),
  \qquad |r(q)|\le C_0:=\Phi_1H_1B_x .
\]
As a consequence, the tail moduli $\rho_\nabla(\cdot)$ and $\delta_U(\cdot)$ in
Lemma~\ref{lem:first-order-improvement} admit particularly simple explicit bounds, and one can obtain an explicit
range of $\lambda$ ensuring the quantitative metric-branch condition $\delta_{\mathrm{BC}}>\gamma\lambda$.

\begin{proposition}[Explicit constants for Lemma~\ref{lem:first-order-improvement} in Bayesian binary classification]
\label{prop:classification-first-order}
Assume the setting of Proposition~\ref{prop:classification-assumptions}, and let $\lambda\in(0,1/4]$ denote the
dissipativity parameter appearing in Assumption~\ref{assump:potential} (not the ridge coefficient $\iota$ in
\eqref{eq:classification}). Let
$Q_\infty:=\iota I_d$
and  
$C_0:=\Phi_1H_1B_x$.
Then the following hold.

\begin{enumerate}[label=(\roman*)]
\item \textbf{Spectral bounds for $Q_\infty$:}
$\lambda_{\min}(Q_\infty)=\lambda_{\max}(Q_\infty)=\iota$.

\item \textbf{Explicit tail modulus $\rho_\nabla$.}
For any $R\geq 1$,
\begin{equation}\label{eq:BC-rhonabla}
  \rho_\nabla(R):=\sup_{|q|\ge R}\frac{|\nabla U(q)-Q_\infty q|}{|q|}\le \frac{C_0}{R}.
\end{equation}

\item \textbf{Explicit tail modulus $\delta_U$.}
For any $R\ge 1$,
\begin{equation}\label{eq:BC-deltaU}
  \delta_U(R):=\sup_{|q|\ge R}\frac{\left|U(q)-\frac12\langle Q_\infty q,q\rangle\right|}{1+|q|^2}
  \le \frac{C_0}{R}+\frac{A_\varphi}{R^2},
\end{equation}
where $A_\varphi := |\varphi(0)|+\Phi_1(1+|h(0)|)$.

\item \textbf{An explicit admissible cutoff radius $R_0$.}
Let $\rho_\star$ be defined as in \eqref{eq:R0-def-lemma}. Since $R\mapsto C_0/R$ is decreasing,
the choice
\begin{equation}\label{eq:BC-R0}
  R_0:=\max\left\{1,\ C_{\mathrm{linear}},\ \frac{C_0}{\rho_\star}\right\}
\end{equation}
ensures $\rho_\nabla(R_0)\le \rho_\star$, and hence \eqref{eq:R0-choice-lemma} holds.

\item \textbf{Quadratic corrector via the same Lyapunov equation as the general theory.}
Let
\[
  B:=\begin{pmatrix}0&I_d\\-Q_\infty&-\gamma I_d\end{pmatrix}
  =\begin{pmatrix}0&I_d\\-\iota I_d&-\gamma I_d\end{pmatrix}
\]
and let $C_{B_1}$ be the explicit symmetric matrix $\nabla^2 B_1$ from Lemma~\ref{lem:first-order-improvement}.
Then one may take
$\mathcal M(z)=\frac12 z^\top \mathsf K z$, for any $z=(q,p)\in\R^{2d}$,
where $\mathsf K$ is the (unique) symmetric solution to
$
  B^\top\mathsf K+\mathsf K B=C_{B_1},
$
equivalently given by
\begin{equation}\label{eq:BC-K-integral}
  \mathsf K=\int_0^\infty e^{tB^\top}\,C_{B_1}\,e^{tB}\,dt.
\end{equation}

\item \textbf{First-order improvement constant $\underline c_{\mathrm{imp}}$ (explicit lower bound).}
With
\[
  a_{\min}=a_{\max}=:a(\lambda)
  =\iota+\frac{\gamma^2}{2}(1-\lambda),
\]
Lemma~\ref{lem:first-order-improvement} yields \eqref{eq:improvement-condition} with
\begin{equation}\label{eq:BC-cimp}
  \underline c_{\mathrm{imp}}
  :=
  \frac{3}{8}\cdot
  \frac{
    a(\lambda)+1-\sqrt{(a(\lambda)-1)^2+\gamma^2}
  }{
    a(\lambda)+1+\sqrt{(a(\lambda)-1)^2+\gamma^2}
    \;+\;
    8\,\delta_U(R_0)
  } \;>\;0,
\end{equation}
where $\delta_U(R_0)$ can be bounded explicitly using \eqref{eq:BC-deltaU} and $R_0$ from \eqref{eq:BC-R0}.
The corresponding $C_{\mathrm{imp}}$ in Lemma~\ref{lem:first-order-improvement} is finite.

\item \textbf{A drift-rate expansion bound for Proposition~\ref{prop:Valpha-drift}.}
Write $\mathsf K$ in block form
$\mathsf K=\left(\begin{smallmatrix}\mathsf K_{qq}&\mathsf K_{qp}\\
\mathsf K_{pq}&\mathsf K_{pp}\end{smallmatrix}\right)$ and set
\[
  k_q:=\|\mathsf K_{qq}\|_{\mathrm{op}}+\|\mathsf K_{qp}\|_{\mathrm{op}},
  \qquad
  b_0:=|\nabla U(0)|.
\]
Under Proposition~\ref{prop:classification-assumptions}, we have
$\nabla U(q)=\iota q+r(q)$ with $|r(q)|\le C_0$ for all $q$, hence $b_0\le C_0$ and
\[
  |\nabla U(q)|\le \iota |q|+C_0 \le L|q|+b_0,
\]
where $L=\iota + (\Phi_2H_1^2+\Phi_1H_2)B_x^2$ is the global Lipschitz constant from
Proposition~\ref{prop:classification-assumptions}(a).

Since $\mathcal M(z)=\frac12 z^\top \mathsf K z$, we have
$\nabla_q\mathcal M(q,p)=\mathsf K_{qq}q+\mathsf K_{qp}p$, hence
$|\nabla_q\mathcal M(q,p)|\le k_q(|q|+|p|)$.
Therefore
\[
  |\mathcal A'\mathcal M(q,p)|
  =|\langle \nabla U(q),\nabla_q\mathcal M(q,p)\rangle|
  \le k_q(L|q|+b_0)(|q|+|p|)
  \le k_q\!\left(\frac{3L+1}{2}|q|^2+\frac{L+1}{2}|p|^2+b_0^2\right),
\]
and moreover
\[
  |\Delta_q\mathcal M(q,p)|
  =\mathrm{tr}(\mathsf K_{qq})
  \le d\,\|\mathsf K_{qq}\|_{\mathrm{op}}.
\]
Let $c_1>0$ be a quadratic lower bound constant such that, up to an additive constant shift of $U$,
\begin{equation}\label{eq:BC-V0-lower}
  \mathcal V_0(q,p)\ge c_1\left(|q|^2+|p|^2\right),
  \qquad (q,p)\in\mathbb R^{2d}.
\end{equation}
Then the ``error term''
\[
  \mathrm{Err}^{(d)}(q,p)
  :=|\mathcal A'\mathcal M(q,p)|+|\Delta_q\mathcal M(q,p)|
\]
satisfies, for all $(q,p)\in\mathbb R^{2d}$,
\begin{equation}\label{eq:BC-Err-bound}
  \mathrm{Err}^{(d)}(q,p)
  \le C_2^{\mathrm{BC}}\ \mathcal V_0(q,p) + C_2^{(d),\mathrm{BC}},
\end{equation}
with the explicit choices
\[
  C_2^{\mathrm{BC}}
  :=\frac{k_q}{c_1}\max\left\{\frac{3L+1}{2},\frac{L+1}{2}\right\},
  \qquad
  C_2^{(d),\mathrm{BC}}
  :=k_q b_0^2 + d\,\|\mathsf K_{qq}\|_{\mathrm{op}}
  \ \le\ k_q C_0^2 + d\,\|\mathsf K_{qq}\|_{\mathrm{op}}.
\]

Finally define the (explicit) growth constant
\[
  \widetilde C_{\mathcal M}^{\mathrm{BC}}:=\frac{\|\mathsf K\|_{\mathrm{op}}}{2c_1}
\]
(cf.\ \eqref{eq:widetildeCM-def}), and set
\begin{equation}\label{eq:BC-delta-and-Clambda}
  \delta_{\mathrm{BC}}
  :=\underline c_{\mathrm{imp}}-\lambda\,\widetilde C_{\mathcal M}^{\mathrm{BC}},
  \qquad
  C_{\lambda,\mathrm{BC}}
  :=C_2^{\mathrm{BC}}+\widetilde C_{\mathcal M}^{\mathrm{BC}}\,\underline c_{\mathrm{imp}}.
\end{equation}
Then Proposition~\ref{prop:Valpha-drift} applies (for $\alpha$ sufficiently small) and yields the drift-rate expansion
\[
  \lambda_\alpha \;\ge\; \lambda+\delta_{\mathrm{BC}}\cdot\alpha-C_{\lambda,\mathrm{BC}}\cdot\alpha^2.
\]
In particular, $\lambda_\alpha>\lambda$ for all sufficiently small $\alpha>0$ whenever $\delta_{\mathrm{BC}}>0$.
\end{enumerate}
\end{proposition}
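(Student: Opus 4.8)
The proof is a sequence of direct verifications; no structural input beyond Assumption~\ref{assump:classification} is needed, and the hypotheses of Lemma~\ref{lem:first-order-improvement}, namely Assumptions~\ref{assump:potential} and~\ref{assump:asymptotic-linear-drift}, are already established in Proposition~\ref{prop:classification-assumptions}. Part~(i) is immediate from $Q_\infty=\iota I_d$. For parts~(ii)--(iii), I would write $\nabla U(q)=\iota q+r(q)$ with $r(q):=-\frac{1}{n}\sum_{i=1}^n\varphi'\!\left(y_i-h(\langle q,x_i\rangle)\right)h'(\langle q,x_i\rangle)\,x_i$, so that $|r(q)|\le\Phi_1 H_1 B_x=C_0$ by the uniform bounds in Assumption~\ref{assump:classification}; dividing by $|q|$ and using $|q|\ge R$ gives \eqref{eq:BC-rhonabla}. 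For \eqref{eq:BC-deltaU} note that $U(q)-\frac{1}{2}\langle Q_\infty q,q\rangle=\frac{1}{n}\sum_i\varphi\!\left(y_i-h(\langle q,x_i\rangle)\right)$; the $\Phi_1$-Lipschitz bound $|\varphi(t)|\le|\varphi(0)|+\Phi_1|t|$ together with $|y_i-h(\langle q,x_i\rangle)|\le 1+|h(0)|+H_1 B_x|q|$ yields $\bigl|U(q)-\frac{\iota}{2}|q|^2\bigr|\le A_\varphi+C_0|q|$ with $A_\varphi=|\varphi(0)|+\Phi_1(1+|h(0)|)$, and dividing by $1+|q|^2$ using $|q|/(1+|q|^2)\le R^{-1}$ and $(1+|q|^2)^{-1}\le R^{-2}$ for $|q|\ge R$ gives the claim. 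Part~(iv) is immediate from the monotonicity of $R\mapsto C_0/R$.

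For part~(v), observe that $Q_\infty=\iota I_d\succ0$ makes $B$ Hurwitz: each $2\times2$ block $\left(\begin{smallmatrix}0&1\\-\iota&-\gamma\end{smallmatrix}\right)$ has characteristic polynomial $\mu^2+\gamma\mu+\iota$, whose roots have strictly negative real part; hence the Lyapunov equation $B^\top\mathsf K+\mathsf K B=C_{B_1}$ has the unique symmetric solution $\mathsf K=\int_0^\infty e^{tB^\top}C_{B_1}e^{tB}\,dt$, and the corrector of Lemma~\ref{lem:first-order-improvement} applies verbatim. For part~(vi) I would specialize \eqref{eq:cimp-lemma} with $a_{\min}=a_{\max}=a(\lambda)=\iota+\frac{\gamma^2}{2}(1-\lambda)$; positivity of $\underline c_{\mathrm{imp}}$ reduces to $a(\lambda)+1>\sqrt{(a(\lambda)-1)^2+\gamma^2}$, equivalently $4a(\lambda)>\gamma^2$, which holds because $\lambda\le 1/4$ forces $a(\lambda)\ge\iota+\frac{3\gamma^2}{8}>\frac{\gamma^2}{4}$, while $\delta_U(R_0)$ is bounded via \eqref{eq:BC-deltaU} with the finite $R_0$ from part~(iv), so $C_{\mathrm{imp}}<\infty$ exactly as in Lemma~\ref{lem:first-order-improvement}.

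For part~(vii) I would use $\mathcal M(z)=\frac{1}{2}z^\top\mathsf K z$, so that $\nabla_q\mathcal M(q,p)=\mathsf K_{qq}q+\mathsf K_{qp}p$ with $|\nabla_q\mathcal M(q,p)|\le k_q(|q|+|p|)$, and $\Delta_q\mathcal M=\mathrm{tr}(\mathsf K_{qq})$ is constant. Combining with the linear growth $|\nabla U(q)|\le L|q|+b_0$ (valid with $b_0=|\nabla U(0)|\le C_0$ from the bound on $r(q)$) gives $|\mathcal A'\mathcal M(q,p)|=|\langle\nabla U(q),\nabla_q\mathcal M(q,p)\rangle|\le k_q(L|q|+b_0)(|q|+|p|)$, and a Young-inequality split of the three cross terms produces the coefficients $\frac{3L+1}{2}|q|^2+\frac{L+1}{2}|p|^2+b_0^2$. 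Inserting the quadratic lower bound $\mathcal V_0(q,p)\ge c_1(|q|^2+|p|^2)$ from \eqref{eq:BC-V0-lower} converts these into $\mathrm{Err}^{(d)}(q,p)\le C_2^{\mathrm{BC}}\mathcal V_0(q,p)+C_2^{(d),\mathrm{BC}}$ with the displayed constants, and Proposition~\ref{prop:Valpha-drift}, applied with the present $\underline c_{\mathrm{imp}}$ from part~(vi), $\widetilde C_{\mathcal M}^{\mathrm{BC}}=\|\mathsf K\|_{\mathrm{op}}/(2c_1)$ (cf.~\eqref{eq:widetildeCM-def}) and $C_2=C_2^{\mathrm{BC}}$, yields the drift-rate expansion $\lambda_\alpha\ge\lambda+\delta_{\mathrm{BC}}\alpha-C_{\lambda,\mathrm{BC}}\alpha^2$. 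The only mildly delicate point is the bookkeeping in part~(vii) — keeping track of which constants are dimension-free versus $\mathcal O(d)$ and checking that the Young split matches the stated coefficients — but there is no genuine obstacle: everything reduces to substitution into results already proved.
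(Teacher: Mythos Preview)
Your proposal is correct and follows the same approach as the paper: both reduce the proposition to a direct substitution of the model-specific quantities (namely $Q_\infty=\iota I_d$, the uniform bound $|r(q)|\le C_0$, and the Lipschitz constant $L$) into the general framework of Lemma~\ref{lem:first-order-improvement} and Proposition~\ref{prop:Valpha-drift}. The paper's own proof is in fact a one-sentence pointer (``apply Lemma~\ref{lem:first-order-improvement} with the properties from Proposition~\ref{prop:classification-assumptions}''), whereas you actually spell out the verifications for each item, including the Young-inequality bookkeeping in~(vii) and the positivity check $4a(\lambda)>\gamma^2$ in~(vi); all of these check out.
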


\begin{proof}
    We provide the proof in Appendix~\ref{app:classification-first-order}.
\end{proof}

Finally, we confirm that the quantitative condition $\delta_{\mathrm{BC}}>\gamma\lambda$ required for acceleration can be satisfied by choosing the dissipativity parameter $\lambda$ small enough relative to the ridge coefficient $\iota$.

\begin{lemma}[Feasibility of $\delta_{\mathrm{BC}}>\gamma\lambda$ with an explicit $\lambda_\star(\gamma)$]
\label{lem:BC-feasibility-explicit-full}
Assume the setting of Proposition~\ref{prop:classification-assumptions} and fix $\gamma>0$.
Set
\[
  \bar\lambda:=\min\left\{\frac14,\ \frac{\iota}{2}\right\},
  \qquad
  R:=\max\{1,C_{\mathrm{linear}}\}.
\]
Define
\begin{equation}\label{eq:BC-deltaU-plus}
  \delta_U^{+}
  :=
  \frac{C_0}{R}+\frac{A_\varphi}{R^2},
  \qquad
  C_0:=\Phi_1H_1B_x,
  \qquad
  A_\varphi:=|\varphi(0)|+\Phi_1(1+|h(0)|).
\end{equation}
Let
\[
  a^{-}:=\iota+\frac{\gamma^2}{2}(1-\bar\lambda).
\]
Let $c_1=c_1(\gamma,\bar\lambda)>0$ be the baseline quadratic lower bound constant for $\mathcal V_0$ (up to an additive
constant), i.e.
\begin{equation}\label{eq:BC-c1-def}
  c_1
  :=
  \frac18\left(\gamma^2(1-\bar\lambda)+2-\sqrt{(\gamma^2(1-\bar\lambda)-2)^2+4\gamma^2}\right).
\end{equation}

Let $B=\left(\begin{smallmatrix}0&I_d\\-\iota I_d&-\gamma I_d\end{smallmatrix}\right)$ and define the explicit decay proxies
\[
  \eta:=\frac{\gamma-\sqrt{(\gamma^2-4\iota)_+}}{2}>0,
  \qquad
  C_B:=1+\frac{\gamma}{2\sqrt{\iota}}+\sqrt{\iota}+\frac{1}{\sqrt{\iota}}.
\]
Define also
\[
  C_{B_1}^{+}:=2\left(1+\gamma+\iota+\frac{\gamma^2}{2}\right),
  \qquad
  \widetilde C_{\mathcal M}^{+}
  :=
  \frac{1}{2c_1}\cdot\frac{C_B^2}{2\eta}\,C_{B_1}^{+}.
\]
Finally define the explicit lower bound
\begin{equation}\label{eq:BC-cimp-minus}
  \underline c_{\mathrm{imp}}^{-}
  :=
  \frac{3}{8}\cdot
  \frac{
    a^{-}+1-\sqrt{(a^{-}-1)^2+\gamma^2}
  }{
    a^{-}+1+\sqrt{(a^{-}-1)^2+\gamma^2}
    \;+\;
    8\,\delta_U^{+}
  }.
\end{equation}
Then the explicit choice
\begin{equation}\label{eq:BC-lambda-star}
  \lambda_\star(\gamma)
  :=
  \min\left\{
    \bar\lambda,\ 
    \frac{\underline c_{\mathrm{imp}}^{-}}{\gamma+\widetilde C_{\mathcal M}^{+}}
  \right\}
\end{equation}
suffices to guarantee that for every $\lambda\in(0,\lambda_\star(\gamma)]$ we have
\[
  \delta_{\mathrm{BC}}>\gamma\lambda,
  \qquad
  \delta_{\mathrm{BC}}
  :=
  \underline c_{\mathrm{imp}}(\lambda)-\lambda\,\widetilde C_{\mathcal M}^{\mathrm{BC}}(\lambda),
\]
where $\underline c_{\mathrm{imp}}(\lambda)$ and $\widetilde C_{\mathcal M}^{\mathrm{BC}}(\lambda)$ are the constants from
Lemma~\ref{lem:first-order-improvement} applied to \eqref{eq:classification}.
\end{lemma}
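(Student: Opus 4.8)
The plan is to reduce the inequality $\delta_{\mathrm{BC}}>\gamma\lambda$ to two $\lambda$-uniform estimates over the range $\lambda\in(0,\bar\lambda]$ and then close the argument by elementary algebra. Since $\delta_{\mathrm{BC}}=\underline c_{\mathrm{imp}}(\lambda)-\lambda\,\widetilde C_{\mathcal M}^{\mathrm{BC}}(\lambda)$ by \eqref{eq:BC-delta-and-Clambda}, the target is equivalent to $\underline c_{\mathrm{imp}}(\lambda)>\lambda\bigl(\gamma+\widetilde C_{\mathcal M}^{\mathrm{BC}}(\lambda)\bigr)$. I will establish (a) a uniform lower bound $\underline c_{\mathrm{imp}}(\lambda)\ge\underline c_{\mathrm{imp}}^-$ and (b) a uniform upper bound $\widetilde C_{\mathcal M}^{\mathrm{BC}}(\lambda)\le\widetilde C_{\mathcal M}^+$, both valid for every $\lambda\in(0,\bar\lambda]$. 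Granting (a) and (b), the desired inequality holds as soon as $\underline c_{\mathrm{imp}}^->\lambda(\gamma+\widetilde C_{\mathcal M}^+)$, i.e.\ for $\lambda\le\underline c_{\mathrm{imp}}^-/(\gamma+\widetilde C_{\mathcal M}^+)$, and intersecting with $\lambda\le\bar\lambda$ gives exactly $\lambda_\star(\gamma)$ in \eqref{eq:BC-lambda-star}; this proves (ii). Part (i) needs no new input: the dissipativity bound $\langle\nabla U(q),q\rangle\ge\frac{\iota}{2}|q|^2-\frac{C_0^2}{2\iota}$ from Proposition~\ref{prop:classification-assumptions}(b), together with the fact that $U$ has at most quadratic growth with leading term $\frac{\iota}{2}|q|^2$, yields Assumption~\ref{assump:potential}(iii) for every $\lambda\in(0,\bar\lambda]$ with a $\lambda$-dependent additive constant.

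For (a), I start from the explicit formula \eqref{eq:BC-cimp}, whose only $\lambda$-dependence is through $a(\lambda)=\iota+\frac{\gamma^2}{2}(1-\lambda)$ and through $\delta_U(R_0)$. The tail term is handled uniformly: $R_0\ge R=\max\{1,C_{\mathrm{linear}}\}$ by \eqref{eq:BC-R0}, the map $R\mapsto\frac{C_0}{R}+\frac{A_\varphi}{R^2}$ is decreasing, so by \eqref{eq:BC-deltaU} we get $\delta_U(R_0)\le\delta_U^+$. It then suffices to minimise over the (short) interval $a\in[a^-,\iota+\frac{\gamma^2}{2}]$ — traced out as $\lambda$ runs through $(0,\bar\lambda]$ — the scalar ratio whose numerator is $a+1-\sqrt{(a-1)^2+\gamma^2}$ and whose denominator is $a+1+\sqrt{(a-1)^2+\gamma^2}+8\delta_U^+$. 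Using that the numerator is strictly increasing in $a$ (its derivative equals $1-\frac{a-1}{\sqrt{(a-1)^2+\gamma^2}}>0$) and the algebraic identity $(a+1-s)(a+1+s)=4a-\gamma^2$ with $s:=\sqrt{(a-1)^2+\gamma^2}$, a short monotonicity analysis of this ratio locates its minimum over the interval and delivers the $\lambda$-uniform bound $\underline c_{\mathrm{imp}}(\lambda)\ge\underline c_{\mathrm{imp}}^->0$, with $\underline c_{\mathrm{imp}}^-$ as in \eqref{eq:BC-cimp-minus}.

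For (b), I use the integral representation $\mathsf K(\lambda)=\int_0^\infty e^{tB^\top}C_{B_1}\,e^{tB}\,dt$ from Proposition~\ref{prop:classification-first-order}(v), with $B=\bigl(\begin{smallmatrix}0&I_d\\-\iota I_d&-\gamma I_d\end{smallmatrix}\bigr)$. The eigenvalues of $B$ solve $\mu^2+\gamma\mu+\iota=0$ and have real part $\le-\eta$ with $\eta=\frac{\gamma-\sqrt{(\gamma^2-4\iota)_+}}{2}>0$; a standard quantitative estimate for the matrix exponential of this stable matrix gives $\|e^{tB}\|_{\mathrm{op}}\le C_B\,e^{-\eta t}$ for all $t\ge0$, with $C_B$ the explicit constant in the statement (absorbing the transient and, in the critically damped case $\gamma^2=4\iota$, the linear-in-$t$ Jordan factor, which is still $e^{-\eta t}$-dominated). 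Hence $\|\mathsf K(\lambda)\|_{\mathrm{op}}\le\frac{C_B^2}{2\eta}\|C_{B_1}\|_{\mathrm{op}}$; since the $\lambda$-dependence of $C_{B_1}=\nabla^2 B_1$ enters only through the bounded quantity $\frac{\gamma^2}{2}(1-\lambda)\in[\frac{3\gamma^2}{8},\frac{\gamma^2}{2})$ one has $\|C_{B_1}\|_{\mathrm{op}}\le C_{B_1}^+$ uniformly in $\lambda\in(0,1/4]$, and since the quadratic lower-bound constant $c_1(\lambda)$ of $\mathcal V_0$ (the smallest eigenvalue of $M$ in \eqref{M:matrix}) is nonincreasing in $\lambda$, we have $c_1(\lambda)\ge c_1(\bar\lambda)=c_1$ from \eqref{eq:BC-c1-def}. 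Combining these, $\widetilde C_{\mathcal M}^{\mathrm{BC}}(\lambda)=\|\mathsf K(\lambda)\|_{\mathrm{op}}/(2c_1(\lambda))\le\widetilde C_{\mathcal M}^+$ for all $\lambda\in(0,\bar\lambda]$, and substituting (a) and (b) into the rearrangement of the first paragraph completes (ii).

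The main obstacle is the quantitative decay bound $\|e^{tB}\|_{\mathrm{op}}\le C_B e^{-\eta t}$ with the claimed explicit $C_B$: one must treat the underdamped ($\gamma^2<4\iota$), critically damped ($\gamma^2=4\iota$), and overdamped ($\gamma^2>4\iota$) regimes separately — for instance via an explicit fundamental solution of $\dot z=Bz$ or a suitably weighted quadratic Lyapunov function for it — and verify that the prefactor does not exceed $1+\frac{\gamma}{2\sqrt{\iota}}+\sqrt{\iota}+\frac{1}{\sqrt{\iota}}$. The remaining pieces (the monotonicity analysis in (a), the spectral bookkeeping for $C_{B_1}$ and $c_1$, and the final one-line algebra) are routine.
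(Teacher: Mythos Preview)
Your proposal matches the paper's proof essentially step for step: both establish (i) dissipativity for all $\lambda\le\bar\lambda$ from Proposition~\ref{prop:classification-assumptions}(b), (ii) the uniform lower bound $\underline c_{\mathrm{imp}}(\lambda)\ge\underline c_{\mathrm{imp}}^-$ via $a(\lambda)\ge a^-$ and $\delta_U(R_0)\le\delta_U(R)\le\delta_U^+$, (iii) the uniform upper bound $\widetilde C_{\mathcal M}^{\mathrm{BC}}(\lambda)\le\widetilde C_{\mathcal M}^+$ via the integral representation of $\mathsf K$, the decay estimate $\|e^{tB}\|_{\mathrm{op}}\le C_Be^{-\eta t}$, the crude bound $\|C_{B_1}(\lambda)\|_{\mathrm{op}}\le C_{B_1}^+$, and the monotonicity $c_1(\gamma,\lambda)\ge c_1(\gamma,\bar\lambda)$, and then (iv) combine algebraically to obtain $\lambda_\star(\gamma)$. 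The paper is equally terse on the ratio monotonicity in step (ii) and on the damped-oscillator bound in step (iii), so the extra detail you sketch there (the derivative of the numerator, the algebraic identity $(a+1-s)(a+1+s)=4a-\gamma^2$, and the three damping regimes) is a bonus rather than a departure.
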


\begin{proof}
We provide the proof in Appendix~\ref{app:BC-feasibility-explicit-full}.
\end{proof}

Combining the explicit first-order improvement established in Proposition~\ref{prop:classification-first-order} with the parameter selection strategy from Lemma~\ref{lem:BC-feasibility-explicit-full}, we obtain the following quantitative acceleration result for the HFHR dynamics in the context of Bayesian binary classification.

\begin{theorem}[HFHR acceleration for Bayesian binary classification]
\label{thm:HFHR-BC}
Consider the Bayesian binary classification problem defined by the potential \eqref{eq:classification} under Assumption~\ref{assump:classification}.
Let $P_t^{\alpha}$ be the semigroup of the corresponding HFHR dynamics.
Let $\rho_{\mathcal V_\alpha}$ be the Lyapunov-weighted semimetric used in Corollary~\ref{cor:global-acceleration} (constructed using the global Lipschitz constant $L$ and the Lyapunov function $\mathcal V_\alpha$), and let $\cW_{\rho_{\mathcal V_\alpha}}$ be the associated Wasserstein distance.

Fix $\gamma>0$ and choose the dissipativity parameter $\lambda\in(0,\lambda_\star(\gamma)]$ as in Lemma~\ref{lem:BC-feasibility-explicit-full}, so that the quantitative condition $\delta_{\mathrm{BC}}>\gamma\lambda$ holds.
Assume in addition that the quantitative conditions of Corollary~\ref{cor:global-acceleration} hold (in particular, $\Lambda_0>1/2$).
%%%%%%%%%%%%%%%%%%%%%%%%%%%
Then there exist explicit constants $\alpha_{\mathrm{BC}}>0$ and $\kappa_{\mathrm{BC}}>0$, depending on the model parameters ($B_x, H_1, H_2, \Phi_1, \Phi_2, \iota$) and $\gamma$, such that for every $\alpha\in(0,\alpha_{\mathrm{BC}}]$,
\[
  \cW_{\rho_{\mathcal V_\alpha}}\!\left(\mu P_t^{\alpha}, \nu P_t^{\alpha}\right)
  \;\le\; e^{-(c_0^{\mathrm{BC}} + \kappa_{\mathrm{BC}}\alpha)t}\,\cW_{\rho_{\mathcal V_\alpha}}(\mu, \nu),
  \qquad t \ge 0,
\]
for all probability measures $\mu, \nu$ with finite Lyapunov moments, where $c_0^{\mathrm{BC}}$ denotes the contraction rate of the kinetic Langevin dynamics at $\alpha=0$.
Moreover, one may choose explicitly
\[
  \alpha_{\mathrm{BC}}
  := \min\left\{ \alpha_{\mathrm{branch,acc}}^\mathrm{BC},\ \alpha_{\mathrm{metric,acc}}^\mathrm{BC} \right\},
\]
where $\alpha_{\mathrm{branch,acc}}^\mathrm{BC}$ and $\alpha_{\mathrm{metric,acc}}^\mathrm{BC}$ are the explicit thresholds from Theorem~\ref{thm:lyapunov-acceleration} and Theorem~\ref{thm:metric-acceleration} respectively, evaluated using the global constants from Proposition~\ref{prop:classification-assumptions} and Proposition~\ref{prop:classification-first-order}.
Similarly, the explicit gain is given by
\[
  \kappa_{\mathrm{global}}^{\mathrm{BC}}
  := \min\left\{ \kappa_{\mathrm{BC}},\ c_0^\mathrm{BC}\,c_2^\mathrm{BC},\ c_0^\mathrm{BC}\,c_3^\mathrm{BC} \right\},
\]
where $\kappa_{\mathrm{BC}}$ is the Lyapunov-branch gain from Theorem~\ref{thm:lyapunov-acceleration}, and $c_2^\mathrm{BC}, c_3^\mathrm{BC}$ are the metric-branch improvement constants from Theorem~\ref{thm:metric-acceleration}.
%%%%%%%%%%%%%%%%%%%%%%%%%
In particular, utilizing the explicit constants from Proposition~\ref{prop:classification-first-order}, \
one can take
\[
  \kappa_{\mathrm{BC}} = \frac{L(\delta_{\mathrm{BC}} + \gamma\lambda)}{768\,\gamma},
\]
where $L = \iota + (\Phi_2 H_1^2+\Phi_1 H_2) B_x^2$ is the global Lipschitz constant from Proposition~\ref{prop:classification-assumptions}
and
\[
  \delta_{\mathrm{BC}} := \underline c_{\mathrm{imp}} - \lambda\,\widetilde C_{\mathcal M}^{\mathrm{BC}},
  \qquad
  C_{\lambda,\mathrm{BC}} := C_2^{\mathrm{BC}} + \widetilde C_{\mathcal M}^{\mathrm{BC}}\,\underline c_{\mathrm{imp}}.
\]
\end{theorem}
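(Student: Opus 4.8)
The plan is to obtain Theorem~\ref{thm:HFHR-BC} as a direct specialization of the abstract acceleration result Corollary~\ref{cor:global-acceleration} to the concrete potential \eqref{eq:classification}; the work consists entirely of checking that the hypotheses of that corollary hold with the explicit constants furnished by the preceding propositions and lemma. First I would invoke Proposition~\ref{prop:classification-assumptions} to record that, under Assumption~\ref{assump:classification}, the potential $U$ satisfies Assumption~\ref{assump:potential} (with $L=\iota+(\Phi_2H_1^2+\Phi_1H_2)B_x^2$ and the dissipativity constant from part (b)) as well as Assumption~\ref{assump:asymptotic-linear-drift} (with $Q_\infty=\iota I_d$ and $\varrho(r)=C_0/r$, $C_0=\Phi_1H_1B_x$). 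This activates the entire framework of Sections~\ref{sec:global-contractivity}--\ref{sec:acceleration}: $\mathcal V_0$ is a valid baseline Lyapunov function, Lemma~\ref{lem:first-order-improvement} applies, and the refined function $\mathcal V_\alpha=\mathcal V_0+\alpha\mathcal M$ is well-defined.

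Next I would import from Proposition~\ref{prop:classification-first-order} the explicit quadratic corrector $\mathcal M(z)=\frac12 z^\top\mathsf K z$, where $\mathsf K$ is the unique symmetric solution of the Lyapunov equation $B^\top\mathsf K+\mathsf K B=C_{B_1}$ (well-defined since $B$ is Hurwitz because $\iota>0$, with the integral representation \eqref{eq:BC-K-integral}), together with the strictly positive improvement constant $\underline c_{\mathrm{imp}}$ in \eqref{eq:BC-cimp}, the second-order error bound \eqref{eq:BC-Err-bound} with its explicit constants, and the growth constant $\widetilde C_{\mathcal M}^{\mathrm{BC}}$. Since $\mathcal M$ has at most quadratic growth, Lemma~\ref{lem:Valpha-V0-equivalence} gives that $\mathcal V_\alpha$ is equivalent to $\mathcal V_0$ and hence coercive, so the hypotheses of Proposition~\ref{prop:Valpha-drift} are met; feeding the above constants into it yields that $\mathcal V_\alpha$ is $(\lambda_\alpha,A_\alpha')$-admissible for $\cL_\alpha$ for all $\alpha\in(0,\alpha_1]$ with $\lambda_\alpha\ge\lambda+\delta_{\mathrm{BC}}\alpha-C_{\lambda,\mathrm{BC}}\alpha^2$, $\delta_{\mathrm{BC}},C_{\lambda,\mathrm{BC}}$ as in \eqref{eq:BC-delta-and-Clambda}.

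Then I would fix $\gamma>0$ and choose $\lambda\in(0,\lambda_\star(\gamma)]$ as in Lemma~\ref{lem:BC-feasibility-explicit-full}, which is precisely what delivers the quantitative metric-branch condition $\delta_{\mathrm{BC}}>\gamma\lambda$ needed to run Theorem~\ref{thm:metric-acceleration}; combined with the standing hypothesis $\Lambda_0>1/2$ and the strict activeness of the Lyapunov branch at $\alpha=0$ (required by Lemma~\ref{lem:branch-continuity} and Theorem~\ref{thm:lyapunov-acceleration}), all hypotheses of Corollary~\ref{cor:global-acceleration} are verified with the specialized constants. Applying that corollary gives $c_\alpha\ge c_0^{\mathrm{BC}}+\kappa_{\mathrm{global}}^{\mathrm{BC}}\alpha$ for all $\alpha\in(0,\alpha_{\mathrm{BC}}]$ with $\alpha_{\mathrm{BC}}=\min\{\alpha_{\mathrm{branch,acc}}^{\mathrm{BC}},\alpha_{\mathrm{metric,acc}}^{\mathrm{BC}}\}$ and $\kappa_{\mathrm{global}}^{\mathrm{BC}}=\min\{\kappa_{\mathrm{BC}},c_0^{\mathrm{BC}}c_2^{\mathrm{BC}},c_0^{\mathrm{BC}}c_3^{\mathrm{BC}}\}$, where $\kappa_{\mathrm{BC}}=L(\delta_{\mathrm{BC}}+\gamma\lambda)/(768\gamma)$ is the Lyapunov-branch gain; the stated weighted Wasserstein contraction then follows from Theorem~\ref{thm:master-contraction} applied with $\mathcal V=\mathcal V_\alpha$ and $(\lambda,D)=(\lambda_\alpha,A_\alpha')$.

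The only genuinely model-specific input, and hence the part needing a little care, is the finiteness and positivity bookkeeping for this potential: that $C_0$ and $A_\varphi$ are finite and $\nabla U(q)-\iota q$ is uniformly bounded (immediate from the boundedness of $h',h'',\varphi',\varphi''$ and $\max_i|x_i|$ in Assumption~\ref{assump:classification}, which in particular covers the Tukey-type loss \eqref{eq:tukey}), that the cutoff radius $R_0$ in \eqref{eq:BC-R0} is finite, that $C_{\mathrm{imp}}<\infty$, and that $\underline c_{\mathrm{imp}}>0$. All of these are already supplied by Proposition~\ref{prop:classification-first-order}, so no new estimates are required beyond assembling them; I therefore do not anticipate a substantive obstacle here — this case study is a verification that the general theory fires rather than a new argument.
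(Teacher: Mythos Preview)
Your proposal is correct and follows essentially the same approach as the paper: the paper's proof also consists entirely of invoking Corollary~\ref{cor:global-acceleration} after verifying its hypotheses via Proposition~\ref{prop:classification-assumptions} (Assumptions~\ref{assump:potential} and \ref{assump:asymptotic-linear-drift}), Proposition~\ref{prop:classification-first-order} (the corrector $\mathcal M$ and constants $\delta_{\mathrm{BC}},C_{\lambda,\mathrm{BC}}$), and Lemma~\ref{lem:BC-feasibility-explicit-full} (the feasibility condition $\delta_{\mathrm{BC}}>\gamma\lambda$). Your write-up is in fact more detailed than the paper's, which compresses this into a single paragraph.
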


\begin{proof}
We provide the proof in Appendix~\ref{proof:thm:HFHR-BC}.
\end{proof}

\section{Numerical Experiments}

In this section, we conduct numerical experiments of \textit{Hessian-free high-resolution Monte Carlo} (HFHRMC), which is based on the Euler-Maruyama discretization of HFHR dynamics in~\eqref{eq:HFHR-SDE}.
We introduce the iterates of HFHRMC as follows:
\begin{align}  
\label{eq:hfhrmc}
q_{k+1} & =q_k +\left(p_k - \alpha \nabla U\left(q_k\right) \right) \eta+\sqrt{2 \alpha \eta} \xi^{q}_{k+1},
\nonumber
\\ 
p_{k+1} & =p_k + \left(-\gamma p_k - \nabla U\left(q_k\right) \right) \eta + \sqrt{2 \gamma \eta} \xi^{p}_{k+1},
\end{align}
where $\xi_{k}^{q},\xi_{k}^{p}$
are i.i.d. Gaussian random vectors
$\mathcal{N}(0,I_{d})$, and $\xi_{k}^{q},\xi_{k}^{p}$ are independent of each other.
We also perform our experiments using \textit{kinetic Langevin Monte Carlo} (KLMC), which is based on the Euler-Maruyama discretization of kinetic Langevin dynamics \eqref{eqn:underdamped}
whose iterates are given by:
\begin{align} 
\label{eq:klmc}
x_{k+1} & = x_k + v_{k} \eta,
\\
v_{k+1} & = v_k + \left(-\gamma v_k - \nabla U\left(x_{k}\right)\right) \eta+\sqrt{2 \gamma \eta} \xi_{k+1}, 
\end{align}
where $\xi_{k}$
are i.i.d. Gaussian random vectors
$\mathcal{N}(0,I_{d})$.

In the following sections, we will conduct numerical experiments using HFHRMC and KLMC. First, we will conduct numerical experiments for a toy example, the multi-well potential case in \eqref{eq:MW-high-dim} (Section~\ref{sec:num:multi}). 
Next, we will conduct Bayesian linear regression with $L^p$ regularizer 
(Section~\ref{sec:num:linear}). 
We will also apply the algorithms to Bayesian binary classification 
(Section~\ref{sec:num:classification}). In all these examples, the potential function $U$ is non-convex and satisfies both Assumption~\ref{assump:potential}
and Assumption~\ref{assump:asymptotic-linear-drift}. Finally, we will study another numerical example, Bayesian logistic regression with ridge regularizer, where the potential function $U$ is non-convex that may not satisfy Assumptions~\ref{assump:potential}
and \ref{assump:asymptotic-linear-drift} (Section~\ref{sec:num:neural}). 

%%%%%%%%%%%%%%%%%%%%%%%%%%%%%%%%
\subsection{Multi-well potential}\label{sec:num:multi}

In this section, we conduct numerical experiments based on a toy example, the multi-well potential that is considered
in Section~\ref{sec:case:multi}, which satisfies both Assumption~\ref{assump:potential} and Assumption~\ref{assump:asymptotic-linear-drift}. We consider the multi-well potential in dimension $d = 8$, and choose different values of $\alpha$: $0.01$, $0.05$, $0.1$, $0.2$, $0.5$, $0.8$, $1.0$ for HFHRMC~\eqref{eq:hfhrmc}, and choose $\gamma = 2.0$ for both HFHRMC in~\eqref{eq:hfhrmc} and KLMC in~\eqref{eq:klmc}. We iterate both algorithms $10000$ steps with step size $\eta = 10^{-3}$ and compute over $M = 2000$ chains. We obtain the plot in Figure~\ref{fig:multi-well} where the $x$-axis represents the iteration $k$ and the $y$-axis represents the logarithm of the Wasserstein distance between the empirical distribution driven by the algorithm and the Gibbs distribution.

\begin{figure}[htpb]
    \centering
    \includegraphics[width=0.8\linewidth]{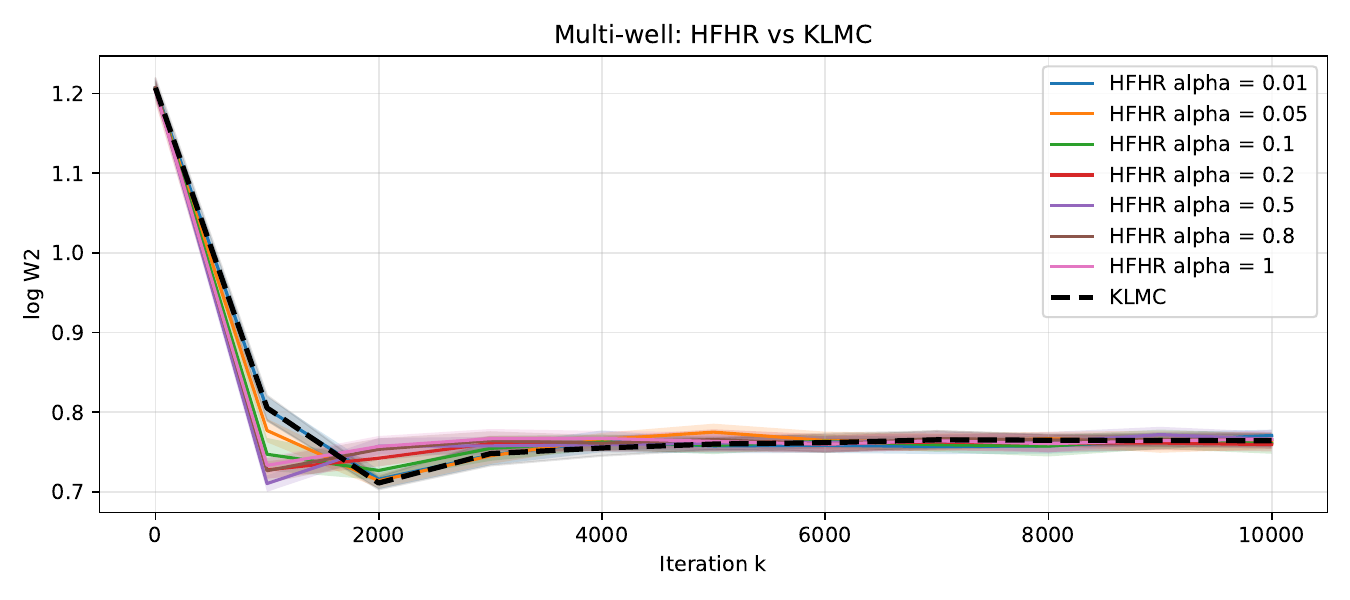}
    \caption{Multi-well potential in dimension $d = 8$.}
    \label{fig:multi-well}
\end{figure}

We can observe from Figure~\ref{fig:multi-well} that HFHRMC achieves better performance compared to KLMC in this multi-well potential example. We find that for $\alpha = 0.01$, HFHRMC and KLMC achieve comparable convergence performance. However, HFHRMC performs better for larger values of $\alpha$. In particular, we observe that increasing $\alpha$ accelerates the convergence of HFHRMC, which is consistent with our theory in Section~\ref{sec:acceleration}. As $\alpha$ approaches $1$, the convergence of HFHRMC slows, which corresponds to a smaller contraction rate as shown in Corollary~\ref{cor:W2-convergence-baseline}.

%%%%%%%%%%%%%%%%%%%%%%%%%%%%
\subsection{Bayesian linear regression with synthetic data}\label{sec:num:linear}

In this section, we consider the Bayesian linear regression model as follows:
\begin{equation}
y_j=x_j^{\top}\beta_{*}+\delta_j,\quad \delta_j \sim \mathcal{N}(0,\sigma^2), \quad x_j \sim \mathcal{N}\left(0, 0.5^2 I_d\right), \quad j = 1,\ldots,n,
\end{equation}
where 
$\beta_*=[1.0, -0.5, 0.7, 1.2, -3.0,   5.4]^{\top}$
is a fixed ground-truth coefficient vector. Our goal is to sample the posterior distribution given by
$\pi(q) \propto \exp \left\{-U(q)\right\}$,
where $U(q)$ is the negative log-posterior i.e. the squared loss with a regularizer that we will choose. In order to present the performance of convergence of the algorithms, we compute the MSE at the $k$-th iterate defined by the following formula:
$\mathrm{MSE}_k:=\frac{1}{n} \sum_{j=1}^n\left(y_j-\left(x_j\right)^{\top} q_k\right)^2$, and the mean of the paramater after $K$ iterates over $M$ chains is given as
$
\bar{q}_K = \frac{1}{M}\sum_{m=1}^Mq_k^{(m)}
$.

We follow the Bayesian linear regression with $L^p$ regularizer introduced in Section~\ref{sec:case:L:p} and consider the the objective function of as in~\eqref{eq:linear}. As discussed in Section~\ref{sec:case:L:p}, the corresponding objective function $U$ satisfies our Assumption~\ref{assump:potential} and Assumption~\ref{assump:asymptotic-linear-drift}. By choosing the parameters in HFHRMC~\eqref{eq:hfhrmc} and KLMC~\eqref{eq:klmc} such that $\alpha =0.1, \gamma_{\mathrm{HFHRMC}}=1.0, \gamma_{\mathrm{KLMC}}=10.0$, and the parameters in linear regression~\eqref{eq:linear} such that $\sigma = 0.4, \lambda = 0.1, \ep = 0.001, p = 1.2$, we take $n = 1000$ samples, $M = 10$ chains, choose $\eta = 10^{-4}$ with $10000$ steps and we obtain the following plot in Figure~\ref{fig:linear}. As shown in the figure, HFHRMC with $\gamma_{\mathrm{HFHRMC}} = 1.0$ converges significantly faster and achieves a lower MSE compared to KLMC with $\gamma_{\mathrm{KLMC}} = 10.0$. The latter exhibits oscillatory behavior and only begins to converge after approximately $6000$ steps. It is also worth noting that when $\gamma_{\mathrm{KLMC}}$ is set to $1.0$, KLMC fails to converge.

\begin{figure}[htpb]
    \centering
    \includegraphics[width=0.8\linewidth]{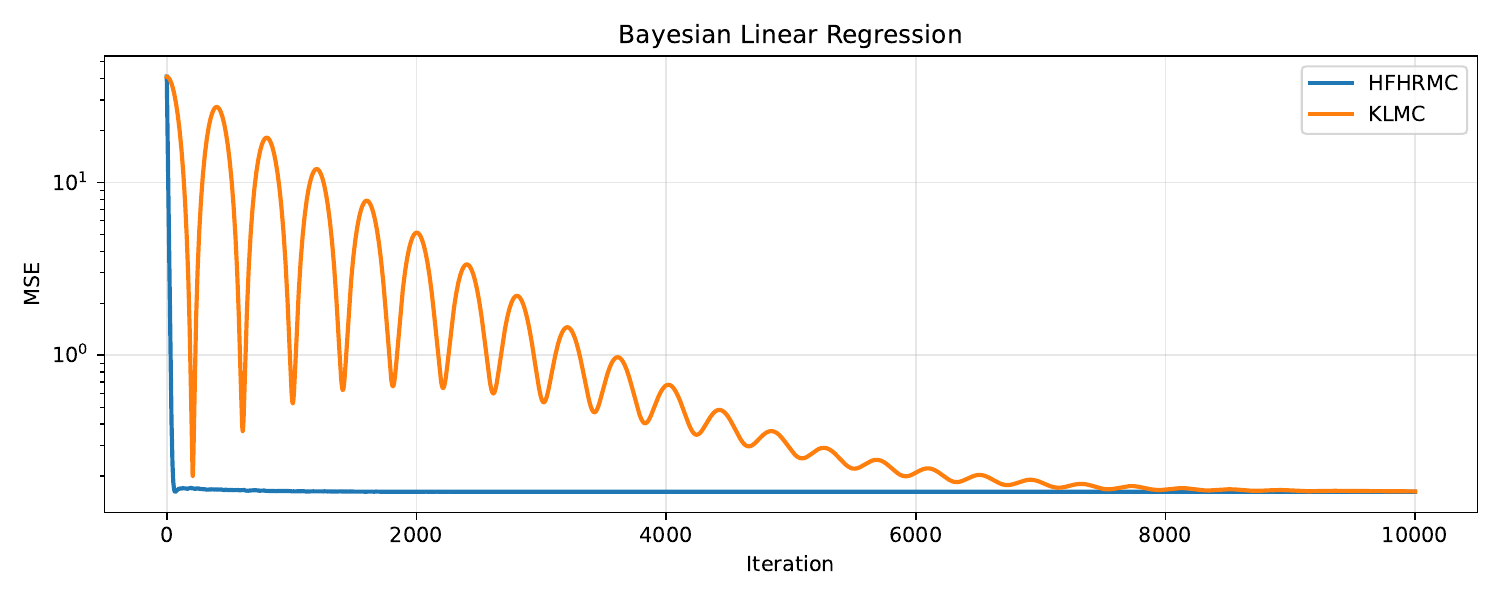}
    \caption{Bayesian linear regression.}
        \label{fig:linear}
\end{figure}

%%%%%%%%%%%%%%%%%%%%%%%%%%%%%%%%
\subsection{Bayesian binary classification with real data}\label{sec:num:classification}

In this section, we test the performance of our algorithms in Bayesian binary classification problems in~\eqref{eq:classification} with Tukey bisquare loss in~\eqref{eq:tukey} that are introduced in Section~\ref{sec:case:classification}. As discussed in Section~\ref{sec:case:classification}, the corresponding objective function $U$ satisfies our Assumptions~\ref{assump:potential} and ~\ref{assump:asymptotic-linear-drift}. We apply HFHRMC and KLMC algorithms to this Bayesian binary classification task with real data (Breast Cancer\footnote{Breast Cancer - UCI Machine Learning Repository, \url{ https://archive.ics.uci.edu/dataset/14/breast+cancer}}). The Breast Cancer Wisconsin (Diagnostic) dataset, consisting of $n=569$ samples and $d=30$ real-valued features. The binary response indicates whether the tumor is malignant (labled as $1$) or benign (labled as $0$). We split the dataset into training and test subsets (70/30). The goal is achieve binary classification such that given $x\in\mathbb{R}^{30}$, we are able to predict $y\in\{0,1\}$.

In order to present the performance of convergence of the algorithms, we first compute the mean of the paramater after $K$ iterates over $M$ chains. Then, for a test feature $\hat{x}$, we compute the predicted label
$\hat{y} = \mathbf{1}_{\{h(\langle \bar{q}_K,\hat{x}\rangle) \geq 1/2\}} \in \{0,1\}$,
where $h$ is the predictive function defined in Section~\ref{sec:case:classification}.
The classification performance is evaluated using the test accuracy of the form: 
\begin{equation}\label{eq:acc}
\mathrm{Acc} := \frac{1}{n_{\mathrm{test}}}\sum_{i=1}^{n_{\mathrm{test}}}\mathbf{1}_{\hat{y}_i = y_i}.
\end{equation}
To process our experiment, we use the objective function $U$ of the Bayesian binary classification problem in~\eqref{eq:classification} with Tukey's bisquare loss in~\eqref{eq:tukey}, and choose the parameters in HFHRMC~\eqref{eq:hfhrmc} and KLMC~\eqref{eq:klmc} such that $\alpha =0.05, \gamma=1.0$, and the parameters in binary classification with Tukey's bisquare loss such that $\iota = 0.05$ and $t_0 = 2.0$. Moreover, we take $M = 50$ chains, and choose $\eta = 10^{-4}$ with $20000$ steps. As a result, we obtain Figure~\ref{fig:classification}.

\begin{figure}[htpb]
    \centering
    \includegraphics[width=0.8\linewidth]{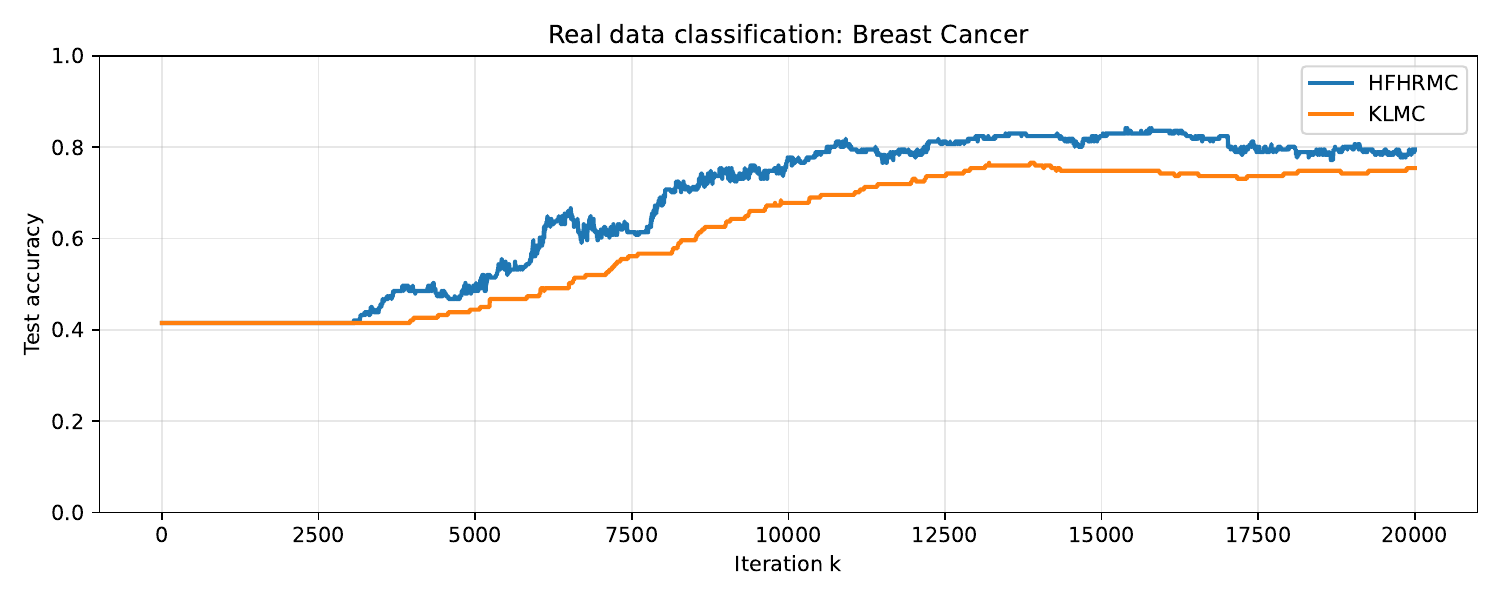}
    \caption{Bayesian binary classification.}
    \label{fig:classification}
\end{figure}

We can observe from Figure~\ref{fig:classification} that HFHRMC produces a higher test accuracy around $80\%$ than the one produced by KLMC; moreover, we observe that the convergence of HFHRMC is slightly faster than KLMC in the task of Bayesian binary classification.

%%%%%%%%%%%%%%%%%%%%%%%%%%%%%%%%
\subsection{Bayesian logistic regression with real data}\label{sec:num:neural}

In this section, we consider Bayesian logistic regression with real data (Iris\footnote{Iris - UCI Machine Learning Repository, \url{ https://archive.ics.uci.edu/dataset/53/iris}}) processed by the neural networks. Iris dataset consists $n=150$ samples, each with $d=4$ real-valued features. To fit the Bayesian binary logistic regression framework, we select two classes, versicolor and virginica, from data set and relabel the observations as $y_i \in \{0,1\}$ and $x_i \in \mathbb{R}^4$. We split the dataset into training and test subsets (80/20) and our goal is to model the conditional distribution of the label given the features and parameter vector $q\in\mathbb{R}^d$ as $\mathbb{P}(y_i=1 \vert x_i,q)
= \sigma\left(q^\top x_i\right)$ with sigmoid function $\sigma(z)=\frac{1}{1+e^{-z}}$. We impose a Gaussian prior on the regression coefficients, $q \sim \mathcal{N}(0,\iota^{-1} I_d)$, such that it gives Gibbs potential $\pi(q) \propto e^{-U(q)}$ with
$
U(q)
=
\frac{1}{n}\sum_{i=1}^n
\left(
\log\left(1+e^{q^\top x_i}\right)
-
y_i\,q^\top x_i\right) +
\frac{\iota}{2}|q|^2,
$ where the first term is the negative log-entropy loss and the second term is the ridge regularizer.

For a new feature vector $\hat{x}$, the posterior predictive probability is $\bar{p}(\hat{x})=\mathbb{E}_{q \sim \pi}\left[\sigma\left(q^{\top} \hat{x}\right)\right]$ which can be approximated over $M$ chains in the form of $\frac{1}{M} \sum_{m=1}^M \sigma\left((q^{(m)})^{\top} \hat{x}\right)$ and the predicted label is $\hat{y} = \mathbf{1}_{\{\bar{p}(\hat{x}) \geq 1 / 2\}}$. We process a feedforward neural network and use HFHRMC and KLMC samples from the Gibbs posterior to compute the predictive quantities.

The study of Bayesian logistic regression with real data processed by neural networks has also appeared in \cite{blundell2015weight,osawa2019practical,gurbuzbalaban2025anchored}. Even though in the presence of neural networks, it does not seem easy to verify Assumptions~\ref{assump:potential} and \ref{assump:asymptotic-linear-drift}, we will nevertheless show the efficiency of our proposed algorithm. In particular, we consider a fully connected feedforward neural network with $L = 3$ hidden layers, and each hidden layers have same number of neurons in $N_{\mathrm{neurons}} = 32$, the neural network is parameterized by
$\theta = (W_1,\dots,W_L)\in\mathbb{R}^D$,
with $W_\ell\in\mathbb{R}^{m_{\ell-1}\times m_\ell}$,
where $m_0=d$ and $m_L=1$. To ensure smoothness of the potential, we employ a Gaussian-smoothed ReLU activation such that 
$\phi_\nu(z) = \mathbb{E}_{\xi\sim\mathcal{N}(0,1)}\left[(z+\nu\xi)^{+}\right] $. We can check that its derivative is bounded and Lipschitz continuous. The network forward map is defined recursively as
$$
h_0(x) = x, \quad h_\ell(x;\theta) = \phi_\nu\left(h_{\ell-1}(x;\theta) W_\ell\right), \quad \ell=1,\dots,L-1,
$$
The predicted probability is now given by the sigmoid function parameterized by $\theta$ such that 
$p_\theta(x)=\sigma(z_\theta(x))
=
\frac{1}{1+e^{-z_\theta(x)}}.
$
As a result, we define the potential function as
$$
U(\theta)
=
\frac{1}{n}\sum_{i=1}^n
\mathcal{L}\left(y_i,p_\theta(x_i)\right)
+
\frac{\iota}{2}|\theta|^2,
$$
where the loss function is and the loss function is 
$
\mathcal{L}(y, p_{\theta}(x)) = -y\log p_{\theta}(x)-(1-y)\log(1-p_{\theta}(x))
$,
and the quadratic term $\frac{\iota}{2}|\theta|^2$ corresponds to a Gaussian prior $\theta \sim \mathcal{N}(0,\iota^{-1}I_d)$.

To proceed with the binary logistic regression task, we choose Gaussian smoothing factor $\nu = 32$, ridge strength $\iota = 10^{-3}$, and parameters for HFHRMC and KLMC with $\alpha = 1.0$ and $\gamma = 2.0$, then we implement algorithms $M = 50$ chains, $5000$ iterates with the step size $\eta = 0.5 \times 10^{-4}$, we get Figure~\ref{fig:nn:logistic}.

\begin{figure}[htbp]
    \centering
    \begin{minipage}[b]{0.48\textwidth}
        \centering
        \includegraphics[width=\linewidth]{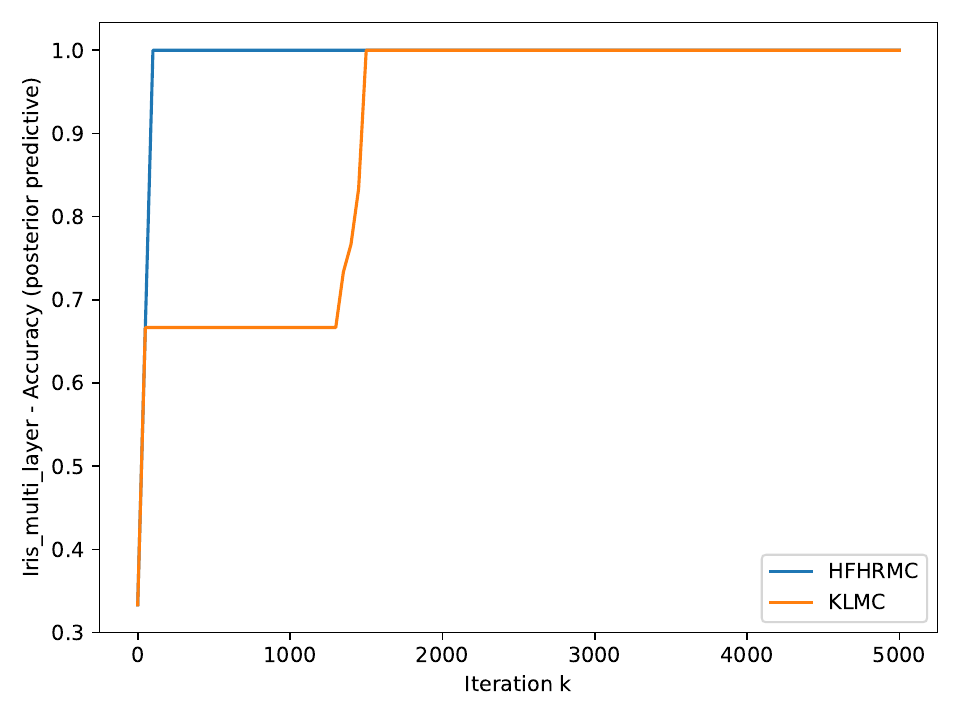}
    \end{minipage}
    \hfill
    \begin{minipage}[b]{0.48\textwidth}
        \centering
        \includegraphics[width=\linewidth]{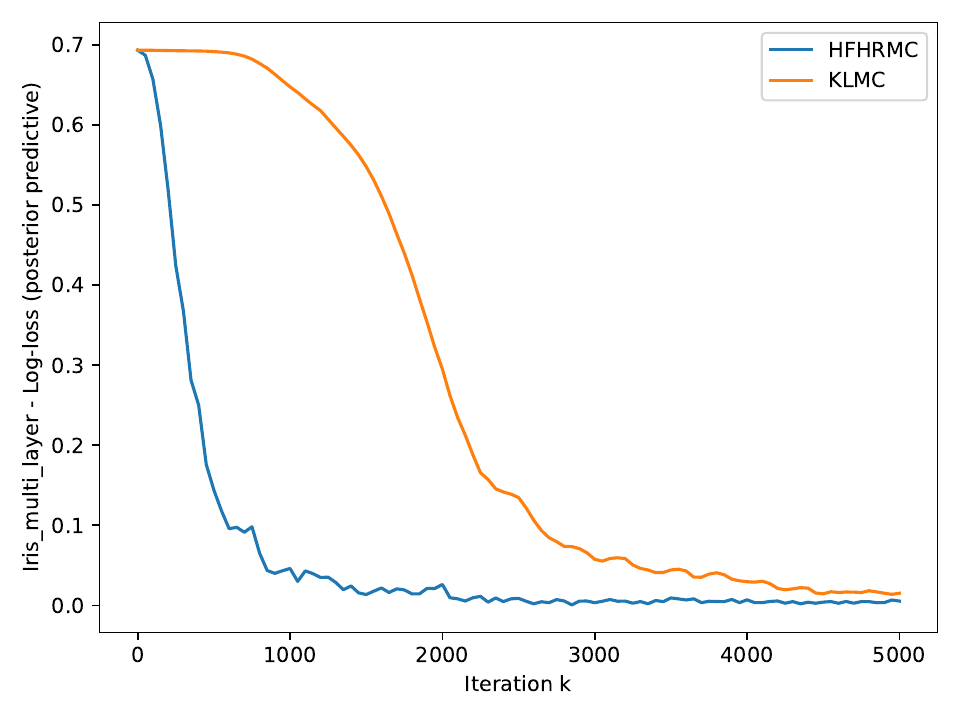}
    \end{minipage}
    \caption{Bayesian logistic regression processed by feedforward neural network with $L = 3$ layers.}
    \label{fig:nn:logistic}
\end{figure}

The left plot in Figure~\ref{fig:nn:logistic} is the test accuracy computed by~\eqref{eq:acc} and the right plot in Figure~\ref{fig:nn:logistic} is the log-loss of the predictive posterior. We can observe from the plots that both HFHRMC and KLMC achieve a high accuracy of prediction, and moreover HFHRMC achieves acceleration and has a superior performance where the log-loss decreases faster.

%%%%%%%%%%%%%%%%%%%%%%%%%%%%%%%%
\section{Conclusion}

In this paper, we provided a theoretical analysis of the Hessian-free high-resolution (HFHR) dynamics for sampling from target distributions $\pi(q)\propto e^{-U(q)}$ with non-convex potentials. While HFHR dynamics has demonstrated empirical success in various settings, existing theory was largely restricted to strongly-convex cases. Our work bridges this gap between theory and practice by establishing convergence guarantees in the non-convex regime.
By adopting the reflection/synchronous coupling framework and constructing appropriate Lyapunov functions, under smoothness and dissipativity assumptions, we proved that the HFHR semigroup is exponentially contractive in a Lyapunov-weighted Wasserstein distance for all sufficiently small resolution parameters $\alpha>0$. 
Crucially, we went beyond basic convergence to demonstrate quantitative acceleration. Under an additional assumption that asymptotically $\nabla U$ has linear growth at infinity, we showed that HFHR dynamics achieves a strictly better contraction rate than kinetic Langevin dynamics. We established an explicit linear-in-$\alpha$ gain that applies not only when the convergence is limited by the Lyapunov drift (recurrence from infinity) but also when it is dominated by the metric coupling (barrier crossing). 
We illustrated these theoretical results 
through three concrete examples: a multi-well potential, Bayesian linear regression with $L^p$ regularizer and Bayesian binary classification.
We conducted numerical experiments based
on these examples, as well as an additional example of Bayesian logistic regression with real data processed by the neural networks. Our numerical experiments corroborated the theory
and illustrated the efficiency of the algorithms based on HFHR dynamics. 
Our numerical results showed the acceleration and superior performance compared to kinetic Langevin dynamics.

\section*{Acknowledgments}

Xiaoyu Wang is supported by the Guangzhou-HKUST(GZ) Joint Funding Program (No.2024A03J0630), Guangzhou Municipal Key Laboratory of Financial Technology Cutting-Edge Research.
Lingjiong Zhu is partially supported by the grants NSF DMS-2053454 and DMS-2208303. 

%%%%%%%%%%%%%%%%%%%%%%%%%%%%%%%%

\bibliographystyle{alpha}
\bibliography{bibtex}

@book{horn2012matrix,
  title={Matrix Analysis},
  author={Horn, Roger A and Johnson, Charles R},
  year={2012},
  publisher={Cambridge University Press}
}

@article{zuo2025gradient,
  title={Gradient-Adjusted Underdamped {L}angevin Dynamics for Sampling},
  author={Zuo, Xinzhe and Osher, Stanley and Li, Wuchen},
  journal={SIAM/ASA Journal on Uncertainty Quantification},
  volume={13},
  number={4},
  pages={1735--1765},
  year={2025}
}

@article{eberle2016reflection,
  title={Reflection couplings and contraction rates for diffusions},
  author={Eberle, Andreas},
  journal={Probability Theory and Related Fields},
  volume={166},
  number={3},
  pages={851--886},
  year={2016},
  publisher={Springer}
}

@inproceedings{li2022hessian,
  title={Hessian-{F}ree {H}igh-{R}esolution {N}esterov acceleration for sampling},
  author={Li, Ruilin and Zha, Hongyuan and Tao, Molei},
  booktitle={International Conference on Machine Learning},
  pages={13125--13162},
  year={2022},
  volume={162},
  organization={PMLR}
}

@article{Barkhagen2021,
  title={On stochastic gradient {L}angevin dynamics with dependent data streams in the logconcave case},
  author={Barkhagen, Mathias and Chau, Ngoc Huy and Moulines, \'{E}ric and R\'{a}sonyi, Mikl\'{o}s and Sabanis, Sotirios and Zhang, Ying},
  journal={Bernoulli},
  year={2021},
  pages={1-33},
  volume={27},
  number={1}
}

@article{Zhang2019,
  title={Nonasymptotic estimates for {S}tochastic {G}radient {L}angevin {D}ynamics under local conditions in nonconvex optimization},
  author={Zhang, Ying and Akyildiz, \"{O}mer Deniz and Damoulas, Theodoros and Sabanis, Sotirios},
  journal={Applied Mathematics \& Optimization},
  volume={87},
  pages={25},
  year={2023}
}

@article{Chau2019,
  title={On stochastic gradient {L}angevin dynamics with dependent data streams: the fully non-convex case},
  author={Chau, Ngoc Huy and Moulines, \'{E}ric and R\'{a}sonyi, Miklos and Sabanis, Sotirios and Zhang, Ying},
  journal={SIAM Journal of Mathematics of Data Science},
  volume={3},
  number={3},
  pages={959-986},
  year={2021}
}

@inproceedings{EHZ2022,
  title={Convergence of {L}angevin {M}onte {C}arlo in chi-squared and {R}\'{e}nyi divergence},
  author={Erdogdu, Murat A and Hosseinzadeh, Rasa and Zhang, Matthew S.},
  booktitle={Proceedings of the 25th International Conference on Artificial Intelligence and Statistics},
  volume={151},
  pages={8151-8175},
  organization={PMLR},
  year={2022}
}

@book{gelman1995bayesian,
  title={Bayesian Data Analysis},
  author={Gelman, Andrew and Carlin, John B and Stern, Hal S and Rubin, Donald B},
  year={1995},
  publisher={Chapman \& Hall/CRC Press}
}

@article{stuart2010inverse,
  title={Inverse problems: A {B}ayesian perspective},
  author={Stuart, Andrew M},
  journal={Acta Numerica},
  volume={19},
  pages={451--559},
  year={2010},
  publisher={Cambridge University Press}
}

@article{andrieu2003introduction,
  title={An introduction to {MCMC} for machine learning},
  author={Andrieu, Christophe and De Freitas, Nando and Doucet, Arnaud and Jordan, Michael I},
  journal={Machine Learning},
  volume={50},
  number={1},
  pages={5--43},
  year={2003},
  publisher={Springer}
}

@article{Eberle,
  title={Couplings and quantitative contraction rates for {L}angevin dynamics},
  author={Eberle, Andreas and Guillin, Arnaud and Zimmer, Raphael},
  journal={Annals of Probability},
  volume={47},
  number={4},
  pages={1982-2010},
  year={2019}
}

@article{GGZ,
  title={Global Convergence of {S}tochastic {G}radient {H}amiltonian {M}onte {C}arlo for Non-Convex Stochastic Optimization: Non-Asymptotic Performance Bounds and Momentum-Based Acceleration},
  author={Gao, Xuefeng and G\"{u}rb\"{u}zbalaban, Mert and Zhu, Lingjiong},
  journal={Operations Research},
  volume={70},
  number={5},
  pages={2931-2947},
  year={2022}
}

@inproceedings{GGZ2,
	Author = {Gao, Xuefeng and G\"{u}rb\"{u}zbalaban, Mert and Zhu, Lingjiong},
	Booktitle={Advances in Neural Information Processing Systems (NeurIPS)},
	Volume={33},
	Title = {Breaking Reversibility Accelerates {L}angevin Dynamics for Global Non-Convex Optimization},
     pages = {17850--17862},
     publisher = {Curran Associates, Inc.},
	Year = {2020}}

@article{dalalyan2018kinetic,
	Author = {Dalalyan, Arnak S and Riou-Durand, Lionel},
	Journal = {Bernoulli},
	Volume = {26},
	Number = {3},
	Pages = {1956-1988},
	Title = {On sampling from a log-concave density using kinetic {L}angevin diffusions},
	Year = {2020}}

@article{Ma2019,
	Author = {Ma, Yi-An and Chatterji, Niladri S. and Cheng, Xiang and Flammarion, Nicolas and Bartlett, Peter L. and Jordan, Michael I.},
	Journal = {Bernoulli},
	Volume={27},
	Number={3},
	Pages={1942-1992},
	Title = {Is there an analog of {N}esterov acceleration for gradient-based {MCMC}?},
	Year = {2021}}

@article{Dalalyan,
  title={Theoretical guarantees for approximate sampling from smooth and log-concave densities},
  author={Dalalyan, Arnak S},
  journal={Journal of the Royal Statistical Society: Series B (Statistical Methodology)},
  volume={79},
  number={3},
  pages={651--676},
  year={2017},
  publisher={Wiley Online Library}
}

@article{DM2017,
  title={Non-asymptotic convergence analysis for the {U}nadjusted {L}angevin {A}lgorithm},
  author={Durmus, Alain and Moulines, Eric},
  journal={Annals of Applied Probability},
  volume={27},
  number={3},
  pages={1551-1587},
  year={2017}
}

@article{DM2016,
  title={High-dimensional {B}ayesian inference via the {U}nadjusted {L}angevin {A}lgorithm},
  author={Durmus, Alain and Moulines, Eric},
  journal={Bernoulli},
  volume={25},
  number={4A},
  pages={2854-2882},
  year={2019}
}

@article{DK2017,
  title={User-friendly guarantees for the {L}angevin {M}onte {C}arlo with inaccurate gradient},
  author={Dalalyan, Arnak S. and Karagulyan, Avetik G.},
  journal={Stochastic Processes and their Applications},
  volume={129},
  number={12},
  pages={5278-5311},
  year={2019}
}

@inproceedings{cheng2018underdamped,
  title={Underdamped {L}angevin {MCMC}: A non-asymptotic analysis},
  author={Cheng, Xiang and Chatterji, Niladri S and Bartlett, Peter L and Jordan, Michael I},
  booktitle={Conference on Learning Theory},
  pages={300--323},
  year={2018},
  volume={75},
  organization={PMLR}
}

@article{Villani2009,
	Author = {Villani, C\'{e}dric},
	Journal = {Memoirs of the American Mathematical Society},
	Title = {Hypocoercivity},
	Volume={202},
	Number={950},
	Pages={iv+141},
	Year = {2009}}

@article{CLW2020,
	Author = {Cao, Yu and Lu, Jianfeng and Wang, Lihan},
	Journal = {Communications in Mathematical Sciences},
    volume={19},
    number={7},
    pages={1827-1853},
	Title = {Complexity of randomized algorithms for underdamped {L}angevin dynamics},
	Year = {2021}}

@article{JianfengLu,
	Author = {Cao, Yu and Lu, Jianfeng and Wang, Lihan},
	Journal = {Archive for Rational Mechanics and Analysis},
    volume={247},
    number={90},
    pages={1-34},
	Title = {On explicit {$L^{2}$}-convergence rate estimate for underdamped {L}angevin dynamics},
	Year = {2023}}

@article{teh2016consistency,
  title={Consistency and fluctuations for stochastic gradient {L}angevin dynamics},
  author={Teh, Yee Whye and Thiery, Alexandre H and Vollmer, Sebastian J},
  journal={Journal of Machine Learning Research},
  volume={17},
  number={1},
  pages={193--225},
  year={2016},
  publisher={JMLR. org}
}

@article{Shi2022,
  title={Understanding the acceleration phenomenon via high-resolution differential equations},
  author={Shi, Bin and Du, Simon S. and Jordan, Michael I. and Su, Weijie J.},
  journal={Mathematical Programming},
  pages={79-148},
  volume={195},
  year={2022}
}

@article{Nesterov1983,
  title={A method for solving the convex programming problem with convergence rate $O(1/k^2)$},
  author={Nesterov, Yurii},
  journal={Dokl. Akad Nauk. SSSR},
  pages={543-547},
  volume={269},
  year={1983}
}

@book{Nesterov2013,
  title={Introductory Lectures on Convex Optimization: A Basic Course},
  author={Nesterov, Yurii},
  publisher={Springer Science \& Business Media},
  volume={87},
  year={2013}
}

@article{DistMCMC19,
	Author = {G\"urb\"uzbalaban, Mert and Gao, Xuefeng and Hu, Yunhan and Zhu, Lingjiong},
	Journal = {Journal of Machine Learning Research},
	Volume={22},
	Number={239},
	Pages={1-69},
	Title = {Decentralized stochastic gradient {L}angevin dynamics and {H}amiltonian {M}onte {C}arlo},
	Year = 2021}

@article{GIWZ2024,
	Author = {G\"{u}rb\"{u}zbalaban, Mert and Islam, Mohammad Rafiqul and Wang, Xiaoyu and Zhu, Lingjiong},
	Journal = {arXiv preprint arXiv:2412.01993},
	Title = {Generalized {EXTRA} stochastic gradient {L}angevin dynamics},
	Year = {2024}}

@inproceedings{Raginsky,
  title={Non-convex learning via Stochastic Gradient {L}angevin Dynamics: a nonasymptotic analysis},
  author={Raginsky, Maxim and Rakhlin, Alexander and Telgarsky, Matus},
  booktitle={Proceedings of the 2017 Conference on Learning Theory},
  volume={65},
  organization={PMLR},
  pages={1674--1703},
  year={2017}
}

@article{chiang1987diffusion,
  title={Diffusion for global optimization in $\mathbb{R}^n$},
  author={Chiang, Tzuu-Shuh and Hwang, Chii-Ruey and Sheu, Shuenn Jyi},
  journal={SIAM Journal on Control and Optimization},
  volume={25},
  number={3},
  pages={737--753},
  year={1987},
  publisher={SIAM}
}

@ARTICLE{cheng-nonconvex,
   author = {{Cheng}, Xiang and {Chatterji}, Niladri~S. and {Abbasi-Yadkori}, Yasin and
	{Bartlett}, Peter~L. and {Jordan}, Michael~I.},
    title = "{Sharp Convergence Rates for {L}angevin Dynamics in the Nonconvex Setting}",
  journal = {arXiv:1805.01648},
    year = 2018
}

@article{stroock-langevin-spectrum,
  title={Asymptotics of the spectral gap with applications to the theory of simulated annealing},
  author={Holley, Richard A and Kusuoka, Shigeo and Stroock, Daniel W},
  journal={Journal of Functional Analysis},
  volume={83},
  number={2},
  pages={333--347},
  year={1989},
  publisher={Academic Press}
}

@inproceedings{CB2018,
  title={Convergence of {L}angevin {MCMC} in {KL}-divergence},
  author={Cheng, Xiang and Bartlett, Peter L.},
  booktitle={Proceedings of the 29th International Conference on Algorithmic Learning Theory (ALT)},
  pages={186-211},
  volume={83},
  organization={PMLR},
  year={2018}
}

@article{mattingly2002ergodicity,
  title={Ergodicity for {SDE}s and approximations: locally {L}ipschitz vector fields and degenerate noise},
  author={Mattingly, Jonathan C and Stuart, Andrew M and Higham, Desmond J},
  journal={Stochastic Processes and their Applications},
  volume={101},
  number={2},
  pages={185--232},
  year={2002},
  publisher={Elsevier}
}

@article{DIGing2025,
author = {Bajwa, Waheed U. and G\"{u}rb\"{u}zbalaban, Mert and Kutbay, Mustafa Ali and Zhu, Lingjiong and Zulqarnain, Muhammad},
title = {{DIG}ing-{SGLD}: Decentralized and Scalable {L}angevin Sampling over Time-Varying Networks},
journal = {arXiv:2511.12836},
year = {2025}
}

@article{mei2018landscape,
  title={The landscape of empirical risk for nonconvex losses},
  author={Mei, Song and Bai, Yu and Montanari, Andrea},
  journal={The Annals of Statistics},
  volume={46},
  number={6A},
  pages={2747--2774},
  year={2018},
  publisher={JSTOR}
}

@inproceedings{foster2018uniform,
  title={Uniform convergence of gradients for non-convex learning and optimization},
  author={Foster, Dylan J and Sekhari, Ayush and Sridharan, Karthik},
  booktitle={Advances in Neural Information Processing Systems},
  volume={31},
  publisher = {Curran Associates, Inc.},
  year={2018}
}

@article{polson2010shrink,
  title={Shrink globally, act locally: Sparse {B}ayesian regularization and prediction},
  author={Polson, Nicholas G and Scott, James G},
  journal={Bayesian Statistics},
  volume={9},
  number={501-538},
  pages={105},
  year={2010},
  publisher={Oxford}
}

@article{park2008bayesian,
  title={The {B}ayesian {L}asso},
  author={Park, Trevor and Casella, George},
  journal={Journal of the American Statistical Association},
  volume={103},
  number={482},
  pages={681--686},
  year={2008},
  publisher={Taylor \& Francis}
}

@article{polson2014bayesian,
  title={The {B}ayesian bridge},
  author={Polson, Nicholas G and Scott, James G and Windle, Jesse},
  journal={Journal of the Royal Statistical Society Series B: Statistical Methodology},
  volume={76},
  number={4},
  pages={713--733},
  year={2014},
  publisher={Oxford University Press}
}

@inproceedings{gong2013general,
  title={A general iterative shrinkage and thresholding algorithm for non-convex regularized optimization problems},
  author={Gong, Pinghua and Zhang, Changshui and Lu, Zhaosong and Huang, Jianhua and Ye, Jieping},
  booktitle={International Conference on Machine Learning},
  pages={37--45},
  year={2013},
  organization={PMLR}
}

@book{hoff2009first,
	Author = {Hoff, Peter D},
	Publisher = {Springer},
	Title = {A First Course in Bayesian Statistical Methods},
	Volume = {580},
	Year = {2009}}

@article{gurbuzbalaban2025anchored,
  title={Anchored {L}angevin Algorithms},
  author={Gurbuzbalaban, Mert and Nguyen, Hoang M and Zhang, Xicheng and Zhu, Lingjiong},
  journal={arXiv preprint arXiv:2509.19455},
  year={2025}
}

@inproceedings{blundell2015weight,
  title={Weight uncertainty in neural network},
  author={Blundell, Charles and Cornebise, Julien and Kavukcuoglu, Koray and Wierstra, Daan},
  booktitle={International Conference on Machine Learning},
  pages={1613--1622},
  year={2015},
  volume={37},
  organization={PMLR}
}

@inproceedings{osawa2019practical,
  title={Practical deep learning with Bayesian principles},
  author={Osawa, Kazuki and Swaroop, Siddharth and Khan, Mohammad Emtiyaz and Jain, Anirudh and Eschenhagen, Runa and Turner, Richard E and Yokota, Rio},
  booktitle={Advances in Neural Information Processing Systems},
  publisher = {Curran Associates, Inc.},
  volume={32},
  year={2019}
}

\appendix

%%%%%%%%%%%%%%%%%%%%%
\section{Proofs for the Results in Section~\ref{sec:setup}}

\subsection{Proof of Proposition~\ref{prop:V0-drift-HFHR}}\label{app:V0-drift-HFHR}
\begin{proof}
Using the decomposition \eqref{eq:Lalpha-decomp}, we have
\begin{align}\label{L:alpha:V:0}
  \cL_\alpha \mathcal V_0
  = \cL_0 \mathcal V_0
    + \alpha\,\mathcal A'\mathcal V_0
    + \alpha\,\Delta_q \mathcal V_0.
\end{align}
By \eqref{eq:L0-V0-Lyapunov}, we have
\begin{align}\label{L:0:V:0:ineq}
  \cL_0 \mathcal V_0(q,p)
  \;\le\; \gamma\left(d + A - \lambda\,\mathcal V_0(q,p)\right).
\end{align}
It remains to control the perturbation terms $\mathcal A'\mathcal V_0$ and $\Delta_q \mathcal V_0$ in \eqref{L:alpha:V:0}.

\medskip

First, we aim to obtain an explicit bound on $\mathcal A'\mathcal V_0$.
From the definition \eqref{eq:V0-general-quadratic} we obtain
\[
  \nabla_q\mathcal V_0(q,p)
  = \nabla U(q)
    + \frac{\gamma^2}{2}\left(q+\gamma^{-1}p\right)
    - \frac{\gamma^2\lambda}{2}\,q
  = \nabla U(q)
    + \frac{\gamma^2}{2}\left((1-\lambda)q+\gamma^{-1}p\right).
\]
Hence
\begin{align*}
  \mathcal A'\mathcal V_0(q,p)
  = -\nabla U(q)\cdot\nabla_q\mathcal V_0(q,p) = -|\nabla U(q)|^2
     - \frac{\gamma^2}{2}\,\nabla U(q)\cdot
      \left((1-\lambda)q+\gamma^{-1}p\right).
\end{align*}
Using Cauchy--Schwarz and Young's inequalities with
\[
  a := |\nabla U(q)|,
  \qquad
  b := \frac{\gamma^2}{2}\left|(1-\lambda)q+\gamma^{-1}p\right|,
\]
we get
\[
  \frac{\gamma^2}{2}\,|\nabla U(q)|\,
   \left|(1-\lambda)q+\gamma^{-1}p\right|
  \;\le\; \frac12 a^2 + \frac12 b^2
  = \frac12|\nabla U(q)|^2
      + \frac{\gamma^4}{8}\left|(1-\lambda)q+\gamma^{-1}p\right|^2.
\]
Therefore
\[
  \mathcal A'\mathcal V_0(q,p)
  \;\le\;
  -|\nabla U(q)|^2
  + \frac12|\nabla U(q)|^2
  + \frac{\gamma^4}{8}\left|(1-\lambda)q+\gamma^{-1}p\right|^2
  \;\le\;
  \frac{\gamma^4}{8}\left|(1-\lambda)q+\gamma^{-1}p\right|^2.
\]
Next, using
\[
  \left|(1-\lambda)q+\gamma^{-1}p\right|^2
  \le 2(1-\lambda)^2|q|^2 + 2\gamma^{-2}|p|^2,
\]
we obtain
\[
  \mathcal A'\mathcal V_0(q,p)
  \;\le\;
  \frac{\gamma^4}{4}(1-\lambda)^2|q|^2
  + \frac{\gamma^2}{4}|p|^2.
\]
From \eqref{eq:V0-equivalent} and $U\ge0$, we have, for some $c_1>0$,
\[
  c_1\left(1+|q|^2+|p|^2\right)
  \;\le\; 1+\mathcal V_0(q,p),
\]
which implies
\[
  |q|^2+|p|^2
  \;\le\; \frac{1}{c_1}\left(1+\mathcal V_0(q,p)\right).
\]
Therefore
\begin{align}
  \mathcal A'\mathcal V_0(q,p)
  &\le
  \left[\frac{\gamma^4}{4}(1-\lambda)^2+\frac{\gamma^2}{4}\right]
     \left(|q|^2+|p|^2\right)
  \le
  \frac{1}{c_1}\left[\frac{\gamma^4}{4}(1-\lambda)^2+\frac{\gamma^2}{4}\right]
  \left(1+\mathcal V_0(q,p)\right).\label{eq:AprimeV0-explicit}
\end{align}
By introducing
\begin{equation}\label{eq:KA-def}
  K_A
  := \frac{1}{c_1}\left[\frac{\gamma^4}{4}(1-\lambda)^2+\frac{\gamma^2}{4}\right],
\end{equation}
we conclude that
\begin{equation}\label{eq:AprimeV0-bound-explicit}
  \mathcal A'\mathcal V_0(q,p)
  \;\le\; K_A\left(1+\mathcal V_0(q,p)\right).
\end{equation}

\medskip

Next, we derive an explicit bound on $\Delta_q\mathcal V_0$.
Again from \eqref{eq:V0-general-quadratic},
\[
  \Delta_q\mathcal V_0(q,p)
  = \Delta U(q) + \frac{\gamma^2}{2}d(1-\lambda).
\]
Since $\nabla U$ is globally Lipschitz with constant $L$, the operator norm of the Hessian of $U$
is bounded by~$L$ almost everywhere, and hence
$|\Delta U(q)|\le Ld$. Therefore
\begin{equation}\label{eq:DqV0-explicit}
  \left|\Delta_q\mathcal V_0(q,p)\right|
  \;\le\; Ld + \frac{\gamma^2}{2}d(1-\lambda)
  =: K_\Delta.
\end{equation}
Using $1+\mathcal V_0\ge1$, it follows from \eqref{eq:DqV0-explicit} that
\begin{equation}\label{eq:DqV0-bound-explicit}
  \left|\Delta_q\mathcal V_0(q,p)\right|
  \;\le\; K_\Delta\left(1+\mathcal V_0(q,p)\right),
  \qquad
  K_\Delta := Ld + \frac{\gamma^2}{2}d(1-\lambda).
\end{equation}

\medskip

Combining \eqref{eq:AprimeV0-bound-explicit} and \eqref{eq:DqV0-bound-explicit}, we obtain
\begin{align}\label{pert:ineq}
  \mathcal A'\mathcal V_0(q,p) + \Delta_q\mathcal V_0(q,p)
  \;\le\; J_1\left(1+\mathcal V_0(q,p)\right),
\end{align}
where $J_1$ is defined in \eqref{eq:K-explicit}.

Therefore, combining the bounds for $\cL_0$ in \eqref{L:0:V:0:ineq} and the perturbation terms in \eqref{pert:ineq}:
\begin{align}
  \cL_\alpha\mathcal V_0(q,p)
  &\le
  \gamma\left(d+A-\lambda\mathcal V_0(q,p)\right)
  + \alpha J_1\left(1+\mathcal V_0(q,p)\right)\nonumber\\
  &= \gamma(d+A) - \gamma\lambda\mathcal V_0(q,p)
     + \alpha J_1 + \alpha J_1\mathcal V_0(q,p). \label{eq:drift-expansion-step}
\end{align}
To cast this into the standard drift form $\gamma(d+A_\alpha - \hat\lambda_\alpha \mathcal V_0)$, we group the constant terms and the $\mathcal V_0$ terms. We factor out $\gamma$ from the entire expression to obtain:
\begin{align*}
  \cL_\alpha\mathcal V_0(q,p)
  &\le \left[\gamma(d+A) + \alpha J_1\right] - \left[\gamma\lambda - \alpha J_1\right]\mathcal V_0(q,p) \\
  &= \gamma \left( d + A + \frac{J_1}{\gamma}\alpha \right) 
     - \gamma \left( \lambda - \frac{J_1}{\gamma}\alpha \right) \mathcal V_0(q,p).
\end{align*}
This matches the desired inequality \eqref{eq:Lalpha-V0-Lyapunov} with
$A_\alpha := A + \frac{J_1}{\gamma}\alpha$
and
$\hat\lambda_\alpha := \lambda - \frac{J_1}{\gamma}\alpha$.
The explicit expansion of $\hat\lambda_\alpha$ and the choice of $\alpha_0$ then follow directly from substituting the expression for $J_1$. Specifically, to ensure $\hat\lambda_\alpha \ge \lambda/2$, we require
$\frac{J_1}{\gamma}\alpha \le \frac{\lambda}{2}$ which is equivalent to $\alpha \le \frac{\gamma\lambda}{2J_1}$,
which corresponds to the definition of $\alpha_0$ in \eqref{eq:alphastar-explicit}. The proof is complete.
\end{proof}

%%%%%%%%%%%%%%%%%%%%%%%%%%%%

%%%%%%%%%%%%%%%%%%%%
\section{Proofs for the Results in Section~\ref{sec:global-contractivity}}
\label{app:global-contractivity}

%%%%%%%%%%%%%%%%%%%%%%%%%%%%%%%%%%%%%%%%%%%%%%%%%%%%%%%%%%%%%%%
\subsection{Proof of Lemma~\ref{lem:r-equivalent}}\label{app:r-equivalent}

\begin{proof}
Let $\Delta z := z-z' = (\Delta q, \Delta p)$. We denote the standard Euclidean norm on $\R^{2d}$ by $|\cdot|$. Note that $|\Delta z| \le |\Delta q| + |\Delta p|$ and $|\Delta z|^2 = |\Delta q|^2 + |\Delta p|^2$.

\medskip\noindent\emph{Upper bound ($k_2$):}
By the triangle inequality, and the definition of $r(z,z')$, we have
\begin{align*}
  r(z,z')
  \le \theta |\Delta q| + |\Delta q| + \gamma^{-1} |\Delta p|
  = (\theta+1) |\Delta q| + \gamma^{-1} |\Delta p|.
\end{align*}
Applying the Cauchy--Schwarz inequality to the vectors $((\theta+1), \gamma^{-1})$ and $(|\Delta q|, |\Delta p|)$, we obtain
\[
  r(z,z')
  \le \sqrt{(\theta+1)^2 + \gamma^{-2}} \sqrt{|\Delta q|^2 + |\Delta p|^2}
  = k_2\,|\Delta z|.
\]

\medskip\noindent\emph{Lower bound ($k_1$):}
From the definition of $r(z,z')$, we immediately have explicit control on $\Delta q$:
\begin{equation}\label{eq:bound-dq}
  |\Delta q| \le \frac{1}{\theta} r(z,z').
\end{equation}
To control $\Delta p$, we rewrite it as $\Delta p = \gamma \left( (\Delta q + \gamma^{-1}\Delta p) - \Delta q \right)$. Using the triangle inequality:
\[
  |\Delta p|
  \le \gamma \left( \left|\Delta q + \gamma^{-1}\Delta p\right| + |\Delta q| \right).
\]
Since $\left|\Delta q + \gamma^{-1}\Delta p\right| \le r(z,z')$ (by dropping the first nonnegative term in the definition of $r$) and using \eqref{eq:bound-dq}, we get
\begin{equation}\label{eq:bound-dp}
  |\Delta p|
  \le \gamma \left( r(z,z') + \frac{1}{\theta}r(z,z') \right)
  = \frac{\gamma(1+\theta)}{\theta} r(z,z').
\end{equation}
Finally, using the basic inequality $|\Delta z| \le |\Delta q| + |\Delta p|$, we sum \eqref{eq:bound-dq} and \eqref{eq:bound-dp}:
\[
  |\Delta z|
  \le \left( \frac{1}{\theta} + \frac{\gamma(1+\theta)}{\theta} \right) r(z,z')
  = \frac{1 + \gamma(1+\theta)}{\theta} r(z,z').
\]
Rearranging this yields
\[
  r(z,z') \ge \frac{\theta}{1 + \gamma(1+\theta)} |\Delta z| = k_1\,|\Delta z|.
\]
This completes the proof.
\end{proof}

%%%%%%%%%%%%%%%%%%%%%%%%%%%%%%%%%%%%%%%%%%%%%%%%%%%%%%%

\subsection{Proof of Lemma~\ref{lem:drift-decomp}}\label{app:drift-decomp}

\begin{proof}
Fix $\varepsilon>0$ and $c\in\R$. Recall the coupled HFHR dynamics
\eqref{eq:HFHR-SDE} for $z_t=(q_t,p_t)$ and $z'_t=(q'_t,p'_t)$ and recall from \eqref{defn:r:t} that
$Z_t:=q_t-q'_t$, $W_t:=p_t-p'_t$ and $\mathbf R_t:=Z_t+\gamma^{-1}W_t$.
Let $e_t:=\mathbf R_t/|\mathbf R_t|$ if $\mathbf R_t\neq0$ and an arbitrary
unit vector otherwise, and let $\mathcal P_t:=e_t e_t^\top$.
We recall from \eqref{eq:coupling-noise} that the coupling is defined by
\[
  dB_t^{q'}=dB_t^q,\qquad dB_t^{p'}=(I_d-2\chi(t)\mathcal P_t)\,dB_t^p,
\]
with a control process $\chi(t)\in\{0,1\}$.

\medskip\noindent
\emph{Step 1: Difference dynamics and the noise of $\mathbf R_t$.}
From \eqref{eq:HFHR-SDE},
\[
  dZ_t = \left(W_t-\alpha(\nabla U(q_t)-\nabla U(q'_t))\right)\,dt,
\]
and
\[
  dW_t = \left(-\gamma W_t-(\nabla U(q_t)-\nabla U(q'_t))\right)\,dt
         +\sqrt{2\gamma}\,d\widetilde B_t,
\qquad
  d\widetilde B_t:=dB_t^p-dB_t^{p'}=2\chi(t)\mathcal P_t\,dB_t^p.
\]
Hence
\[
  d\mathbf R_t
  = -\frac{1+\alpha\gamma}{\gamma}\,(\nabla U(q_t)-\nabla U(q'_t))\,dt
    + 2\sqrt{2}\,\gamma^{-1/2}\chi(t)\mathcal P_t\,dB_t^p .
\]
Since $\mathcal P_t$ projects onto $\mathrm{span}\{e_t\}$, the noise acts in
direction $e_t$, and therefore the It\^o correction term in $d|\mathbf R_t|$
vanishes. In particular,
\[
  \left.d|\mathbf R_t|\right|_{\rm noise}
  = \langle e_t,\,d\mathbf R_t\rangle_{\rm noise}
  = 2\sqrt{2}\,\gamma^{-1/2}\chi(t)\,\langle e_t,dB_t^p\rangle,
\qquad
  d\langle |\mathbf R|\rangle_t = 8\,\gamma^{-1}\left(\chi(t)\right)^2\,dt.
\]

\medskip\noindent
\emph{Step 2: Drift bound for $r_t$.}
Recall from \eqref{defn:r:t} that
\[
  r_t:=\theta|Z_t|+|\mathbf R_t|,\qquad
  \theta:=(1+\eta_0)L_{\rm eff}(\alpha)\gamma^{-2}.
\]
Recall the definition of $\delta_\alpha$ from \eqref{eq:delta-alpha-def} such that
$\delta_\alpha:=\frac{\eta_0}{1+\eta_0}-\frac{\alpha L}{\gamma}$.
Throughout this proof, we assume $\delta_\alpha>0$ (equivalently,
$\alpha<\frac{\eta_0}{1+\eta_0}\frac{\gamma}{L}$).

Using $dZ_t=(W_t-\alpha(\nabla U(q_t)-\nabla U(q_t')))\,dt$ and the one-sided Lipschitz bound
\[
  \left\langle \frac{Z_t}{|Z_t|},\,\nabla U(q_t)-\nabla U(q_t')\right\rangle \le L|Z_t|,
\]
together with the standard kinetic estimate for the $W_t$--contribution (with the choice
$\theta=(1+\eta_0)L_{\rm eff}(\alpha)\gamma^{-2}$), we obtain the finite-variation inequality
\begin{equation}\label{eq:drift-r-form}
  dr_t
  \;\le\;
  \gamma\left(\theta|\mathbf R_t|-\delta_\alpha\,\theta|Z_t|\right)\,dt
  + dM_t^{(r)},
\end{equation}
where the continuous local martingale $M^{(r)}$ is given by
\[
  M_t^{(r)}:= 2\sqrt{2}\,\gamma^{-1/2}\int_0^t \chi(s)\,\langle e_s,dB_s^p\rangle.
\]
Moreover,
\begin{equation}\label{eq:qr-r}
  d\langle r\rangle_t = d\langle |\mathbf R|\rangle_t
  = 8\,\gamma^{-1}\left(\chi(t)\right)^2\,dt.
\end{equation}
(Identity \eqref{eq:qr-r} follows from the fact that $Z_t$ has no noise
under our coupling.)

\medskip\noindent
\emph{Step 3: Meyer--It\^o for $f_\lambda(r_t)$.}
Let $f_\lambda$ be the concave profile from
\eqref{eq:phi-def}--\eqref{eq:f-def}.
Using the Meyer--It\^o formula, we obtain
\begin{align}\label{f:lambda:r:t}
  df_\lambda(r_t)
  = f'_{\lambda,-}(r_t)\,dr_t + \frac12 f_\lambda''(r_t)\,d\langle r\rangle_t
    + dM_t^{(f)},
\end{align}
with a continuous local martingale
\[
  M_t^{(f)}:= 2\sqrt{2}\,\gamma^{-1/2}\int_0^t f'_{\lambda,-}(r_s)\chi(s)\,
  \langle e_s,dB_s^p\rangle.
\]
Inserting \eqref{eq:drift-r-form}--\eqref{eq:qr-r} into \eqref{f:lambda:r:t} gives
\begin{equation}\label{eq:df-form}
  df_\lambda(r_t)
  \;\le\;
  \gamma\left[
    4\gamma^{-2}\left(\chi(t)\right)^2 f_\lambda''(r_t)
    +\left(\theta|\mathbf R_t|-\delta_\alpha\,\theta|Z_t|\right)
      f'_{\lambda,-}(r_t)
  \right]\,dt
  + dM_t^{(f)}.
\end{equation}

\medskip\noindent
\emph{Step 4: Dynamics of $G_t$ and the product rule.}
Let $\mathcal V$ be $(\lambda,D)$-admissible and recall from \eqref{defn:G:t} that
$G_t:=1+\varepsilon\mathcal V(z_t)+\varepsilon\mathcal V(z_t')$
and
$\rho_t:=f_\lambda(r_t)G_t$.
By It\^o's formula,
\begin{align}\label{d:G:t}
  dG_t
  = \varepsilon\left(\cL_\alpha\mathcal V(z_t)+\cL_\alpha\mathcal V(z_t')\right)\,dt
    + dM_t^{(G)},
\end{align}
where $M^{(G)}$ is the following continuous local martingale:
\begin{align}\label{eq:MG-explicit}
  M_t^{(G)}
  :=\;& \varepsilon\sqrt{2\alpha}\int_0^t
        \left\langle \nabla_q\mathcal V(z_s)+\nabla_q\mathcal V(z_s'),\,dB_s^q\right\rangle \nonumber\\
      &\;+\varepsilon\sqrt{2\gamma}\int_0^t
        \left\langle \nabla_p\mathcal V(z_s),\,dB_s^p\right\rangle
      +\varepsilon\sqrt{2\gamma}\int_0^t
        \left\langle \nabla_p\mathcal V(z_s'),\,dB_s^{p'}\right\rangle .
\end{align}
Using the coupling relation $dB_t^{p'}=(I_d-2\chi(t)\mathcal P_t)\,dB_t^p$,
the $p$-noise part can equivalently be written as
\begin{equation}\label{eq:MGp-explicit}
  \varepsilon\sqrt{2\gamma}\int_0^t
  \left\langle \nabla_p\mathcal V(z_s) + (I_d-2\chi(s)\mathcal P_s)^\top \nabla_p\mathcal V(z_s'),\,
  dB_s^p \right\rangle .
\end{equation}

Applying It\^o's product rule to $e^{ct}\rho_t=e^{ct}f_\lambda(r_t)G_t$ yields
\begin{align}\label{eq:ito-product}
  d(e^{ct}\rho_t)
  &= c e^{ct}\rho_t\,dt
     + e^{ct}G_t\,df_\lambda(r_t)
     + e^{ct}f_\lambda(r_t)\,dG_t
     + e^{ct}\,d\langle f_\lambda(r),G\rangle_t .
\end{align}
Substituting \eqref{eq:df-form} and the expression for $dG_t$ \eqref{d:G:t} into \eqref{eq:ito-product}, we obtain
\begin{align}
  d(e^{ct}\rho_t)
  \le\;& e^{ct}\gamma\Bigg[
      4\gamma^{-2}\left(\chi(t)\right)^2 f_\lambda''(r_t)G_t
      +\left(\theta|\mathbf R_t|-\delta_\alpha\,\theta|Z_t|\right)
        f'_{\lambda,-}(r_t)G_t\nonumber\\
      &\qquad\qquad
      +\gamma^{-1}\varepsilon f_\lambda(r_t)\left(\cL_\alpha\mathcal V(z_t)+\cL_\alpha\mathcal V(z_t')\right)
      +\gamma^{-1}c f_\lambda(r_t)G_t
    \Bigg]dt\nonumber\\
  &\quad + e^{ct}\,d\langle f_\lambda(r),G\rangle_t
    + dM_t,\label{eq:rho-ito-pre}
\end{align}
where $M_t$ is the continuous local martingale
\begin{equation}\label{eq:Mt-explicit}
  M_t
  := \int_0^t e^{cs} G_s\, dM_s^{(f)}
     + \int_0^t e^{cs} f_\lambda(r_s)\, dM_s^{(G)}.
\end{equation}

\medskip\noindent
\emph{Step 5: Bounding the cross-variation term.}
Only the noise in the $p$-component contributes to the cross-variation $d\langle f_\lambda(r),G\rangle_t$.
Using the coupling relation $dB_t^{p'}=(I_d-2\chi(t)\mathcal P_t)\,dB_t^p$ and the explicit expression for the martingale parts, a direct computation gives:
\begin{equation}\label{cross:variation:term}
d\langle f_\lambda(r),G\rangle_t = 4\varepsilon (\chi(t))^2 f'_{\lambda,-}(r_t) \langle e_t, \nabla_p\mathcal V(z_t) - \nabla_p\mathcal V(z'_t) \rangle \, dt.
\end{equation}
We estimate the gradient difference by exploiting the structure $\mathcal{V} = \mathcal{V}_0 + \mathfrak{Q}$.
First, consider the baseline function $\mathcal{V}_0$. From \eqref{eq:V0-general-quadratic}, the gradient of $\mathcal{V}_{0}$ with respect to $p$ is linear:
\[
\nabla_p \mathcal{V}_0(q,p) = p + \frac{\gamma}{2}q.
\]
Thus, the difference is
\begin{equation}\label{take:norm:1}
\nabla_p \mathcal{V}_0(z_t) - \nabla_p \mathcal{V}_0(z'_t) = \Delta p_t + \frac{\gamma}{2}\Delta q_t.
\end{equation}
Recall that $\mathbf R_t = \Delta q_t + \gamma^{-1}\Delta p_t$, which implies $\Delta p_t = \gamma(\mathbf R_t - \Delta q_t)$. Substituting this back:
\begin{equation}\label{take:norm:2}
\Delta p_t + \frac{\gamma}{2}\Delta q_t = \gamma(\mathbf R_t - \Delta q_t) + \frac{\gamma}{2}\Delta q_t = \gamma \mathbf R_t - \frac{\gamma}{2}\Delta q_t.
\end{equation}
Taking the norms in \eqref{take:norm:1}-\eqref{take:norm:2} and comparing with the distance $r_t = \theta|\Delta q_t| + |\mathbf R_t|$:
\begin{equation}\label{combine:estimate:1}
|\nabla_p \mathcal{V}_0(z_t) - \nabla_p \mathcal{V}_0(z'_t)| 
\le \gamma |\mathbf R_t| + \frac{\gamma}{2}|\Delta q_t|
\le \gamma \max\left\{1, \frac{1}{2\theta}\right\} \left(|\mathbf R_t| + \theta|\Delta q_t|\right)
= \gamma \max\left\{1, (2\theta)^{-1}\right\} r_t.
\end{equation}
Next, for the perturbation term $\mathfrak{Q}(z)=\frac12 z^\top \mathsf A z$,
we compute the $p$-gradient explicitly. Writing $\mathsf A$ in block form
with respect to $z=(q,p)$,
\[
\mathsf A=
\begin{pmatrix}
\mathsf A_{qq} & \mathsf A_{qp}\\
\mathsf A_{pq} & \mathsf A_{pp}
\end{pmatrix},
\qquad \mathsf A_{qp}=\mathsf A_{pq}^\top,
\]
we have
\[
\nabla_p \mathfrak Q(q,p)=\mathsf A_{pq}\,q+\mathsf A_{pp}\,p.
\]
Hence
\[
\left|\nabla_p \mathfrak Q(z_t)-\nabla_p \mathfrak Q(z'_t)\right|
\le \|\mathsf A_{pq}\|_{\mathrm{op}}\,|\Delta q_t|+\|\mathsf A_{pp}\|_{\mathrm{op}}\,|\Delta p_t|
\le \left(\|\mathsf A_{pq}\|_{\mathrm{op}}+\|\mathsf A_{pp}\|_{\mathrm{op}}\right)\,|z_t-z'_t|.
\]
Using the norm equivalence $r_t\ge k_1|z_t-z'_t|$ (Lemma~\ref{lem:r-equivalent}),
we obtain
\begin{equation}\label{combine:estimate:2}
|\nabla_p \mathfrak Q(z_t)-\nabla_p \mathfrak Q(z'_t)|
\le \frac{C_{\mathfrak Q}}{k_1}\,r_t,
\qquad
C_{\mathfrak Q}:=\|\mathsf A_{pp}\|_{\mathrm{op}}+\|\mathsf A_{pq}\|_{\mathrm{op}}.
\end{equation}
Combining the estimates \eqref{combine:estimate:1} and \eqref{combine:estimate:2} yields
\[
|\nabla_p \mathcal{V}(z_t) - \nabla_p \mathcal{V}(z'_t)|
\le \left(\gamma \max\left\{1,(2\theta)^{-1}\right\} + \frac{C_{\mathfrak{Q}}}{k_1}\right) r_t
= \gamma \bar{C}_{\mathcal V}\, r_t.
\]
Substituting this bound into the cross-variation term \eqref{cross:variation:term} gives
\begin{equation*}
  \left|d\langle f_\lambda(r),G\rangle_t\right|
  \le
  4\gamma\,\varepsilon \bar{C}_{\mathcal{V}}
  \left(\chi(t)\right)^2\, r_t\, f'_{\lambda,-}(r_t)\,dt.
\end{equation*}
This matches the form stated in Lemma~\ref{lem:drift-decomp}. 

Absorbing this contribution into the drift term in \eqref{eq:rho-ito-pre},
we conclude that
\[
  e^{ct}\rho_t
  \le \rho_0+\gamma\int_0^t e^{cs}K_s\,ds + M_t,
\]
where $M_t$ is the continuous local martingale defined in
\eqref{eq:Mt-explicit}, and
$K_t$ satisfies \eqref{K:t:ineq}.
\end{proof}

%%%%%%%%%%%%%%%%%%%%%%%%%%
\subsection{Proof of Proposition~\ref{prop:regional-contractivity}}\label{app:regional-contractivity}
\begin{proof}
Fix $\xi>0$ and abbreviate $(z_t,z_t')=\left(z_t^\xi,z_t^{\prime,\xi}\right)$,
$r_t=r(z_t,z_t')$, $G_t=1+\varepsilon\mathcal V(z_t)+\varepsilon\mathcal V(z_t')$,
$\rho_t=f_\lambda(r_t)G_t$, and $\chi(t)=\chi_\xi(t)$.
Assume throughout that \eqref{eq:alpha-small-drift-prop} holds, so that
$\delta_\alpha\ge \kappa_{\mathrm{adjust}}\frac{\eta_0}{1+\eta_0}>0$.
By Lemma~\ref{lem:drift-decomp}, for any $c\in\R$,
\begin{equation}\label{eq:reg-semimart}
  e^{ct}\rho_t \le \rho_0+\gamma\int_0^t e^{cs}K_s\,ds + M_t,
\end{equation}
where $M_t$ is a continuous local martingale and $K_t$ is bounded from above by the
right-hand side in Lemma~\ref{lem:drift-decomp}.
We bound $K_t$ on the two regions $r_t\le R_1(\lambda)$ and $r_t>R_1(\lambda)$.

\medskip
\noindent\textbf{1) Region $r_t\le R_1(\lambda)$.}
Split further into the events $\{|\mathbf R_t|\ge\xi\}$ (reflection active) and
$\{|\mathbf R_t|<\xi\}$ (reflection inactive).

\smallskip
\noindent\emph{(i) If $r_t\le R_1(\lambda)$ and $|\mathbf R_t|\ge\xi$, then $\chi(t)=1$.}
On $(0,R_1(\lambda))$, $f_\lambda$ is $C^2$. Moreover, the construction of $f_\lambda$
(with the choice of $\varphi_\lambda$ in \eqref{eq:phi-def}) ensures that the
combination of the $f_\lambda''$-term, the ``bad'' linear drift term, and the
cross-variation contribution is strictly negative. More precisely, there exists
$C_{\mathrm{conc}}>0$ (depending only on the profile construction) such that
for a.e.\ $t$ on this event,
\begin{align*}
4\gamma^{-2} f_\lambda''(r_t)\,G_t
  +\left(\theta|\mathbf R_t|-\delta_\alpha\,\theta|Z_t|\right)
     f'_{\lambda,-}(r_t)\,G_t
  + 4\varepsilon \bar C_{\mathcal V}\, r_t f'_{\lambda,-}(r_t)
  \le - C_{\mathrm{conc}}\, f_\lambda(r_t)\,G_t .
\end{align*}
Using $(\lambda,D)$-admissibility and choosing $0<c\le c_0$ and
$0<\varepsilon\le\varepsilon_0$ small enough (so that the remaining $\varepsilon$-
and $c$-terms are dominated), we obtain $K_t\le0$ here. In particular
$K_t\le C_{\mathrm{reg}}\xi G_t$ holds.

\smallskip
\noindent\emph{(ii) If $r_t\le R_1(\lambda)$ and $|\mathbf R_t|<\xi$, then $\chi(t)=0$.}
In this case the $f_\lambda''$-term and the $\left(\chi(t)\right)^2 r_t f'_{\lambda,-}(r_t)$-term
vanish. Moreover, since $|\mathbf R_t|<\xi$ and $r_t\le R_1(\lambda)$, we have the crude bound
\[
  \left(\theta|\mathbf R_t|-\delta_\alpha\,\theta|Z_t|\right)
  f'_{\lambda,-}(r_t)\,G_t
  \le \theta\,|\mathbf R_t|\,\sup_{[0,R_1(\lambda)]} f'_{\lambda,-}\; G_t
  \le C\,\xi\,G_t .
\]
with $C$ independent of $\xi$. The remaining Lyapunov and $c$-terms are bounded by
a constant multiple of $G_t$ (since $f_\lambda$ is bounded on $[0,R_1(\lambda)]$), and hence
can be absorbed into $C_{\mathrm{reg}}\xi G_t$ after enlarging $C_{\mathrm{reg}}$.
Therefore, $K_t\le C_{\mathrm{reg}}\xi G_t$ also holds on this event.

\medskip
\noindent\textbf{2) Region $r_t>R_1(\lambda)$.}
By construction, $f_\lambda$ is constant on $[R_1(\lambda),\infty)$, so
$f'_{\lambda,-}(r_t)=0$ and $f_\lambda''(r_t)=0$ a.e.\ on $\{r_t>R_1(\lambda)\}$.
Hence Lemma~\ref{lem:drift-decomp} reduces to
\[
  K_t
  \le \gamma^{-1}\varepsilon f_\lambda(r_t)\left[\cL_\alpha\mathcal V(z_t)+\cL_\alpha\mathcal V(z_t')\right]
     +\gamma^{-1}c f_\lambda(r_t)G_t .
\]
Using $(\lambda,D)$-admissibility,
\[
  \cL_\alpha\mathcal V(z_t)+\cL_\alpha\mathcal V(z_t')
  \le 2\gamma(d+D)-\gamma\lambda\left(\mathcal V(z_t)+\mathcal V(z_t')\right).
\]
Thus
\[
  K_t \le f_\lambda(r_t)\left[2\varepsilon(d+D)+\gamma^{-1}c
     +\varepsilon(\gamma^{-1}c-\lambda)\left(\mathcal V(z_t)+\mathcal V(z_t')\right)\right].
\]
Choose $c_0<\gamma\lambda$ so that $\gamma^{-1}c-\lambda<0$.
Since $r_t>R_1(\lambda)$ implies $|z_t-z_t'|\gtrsim r_t$ by Lemma~\ref{lem:r-equivalent}, at
least one of $|z_t|,|z_t'|$ is $\gtrsim r_t$, and coercivity \eqref{eq:V-coercive}
yields $\mathcal V(z_t)+\mathcal V(z_t')\gtrsim r_t^2$.
Taking $R_1(\lambda)$ (already a free cutoff in the construction) large enough, the negative
term dominates and we get $K_t\le0$ on $\{r_t>R_1(\lambda)\}$, and hence again
$K_t\le C_{\mathrm{reg}}\xi G_t$.

\medskip
Combining the two regions gives \eqref{eq:Kt-xi-bound}. Taking expectations in
\eqref{eq:reg-semimart} (with localization to remove $M_t$) yields
\eqref{eq:rho-exp-bound}. Finally, for each fixed $t$, $\sup_{s\le t}\E[G_s^\xi]<\infty$
and does not blow up as $\xi\downarrow0$ (the marginals are the same HFHR dynamics).
Hence letting $\xi\downarrow0$ gives
$\limsup_{\xi\downarrow0}\E[e^{ct}\rho_t^\xi]\le \E[\rho_0]$. This completes the proof.
\end{proof}
%%%%%%%%%%%%%%%%%%%%%%%%%%

%%%%%%%%%%%%%%%%%%
\subsection{Proof of Theorem~\ref{thm:master-contraction}}\label{app:master-contraction}
\begin{proof}
Let $(Z_t,Z_t')$ be the coupling used in Lemma~\ref{lem:drift-decomp} (reflection/synchronous switching),
and set $\rho_t:=\rho_{\mathcal V}(Z_t,Z_t')$, $r_t:=r(Z_t,Z_t')$.
By Lemma~\ref{lem:drift-decomp}, for any $c>0$ the process $e^{ct}\rho_t$ is a supermartingale as long as
the drift term $K_t$ in $d(e^{ct}\rho_t)=e^{ct}K_t\,dt+dM_t$ satisfies $K_t\le 0$ a.s.
Let us choose
\begin{equation}\label{choice:varepsilon}
\varepsilon := \frac{4c}{\gamma(d+D)}.
\end{equation}
We verify $K_t\le 0$ in three regions.

\paragraph{Region I: $r_t\ge R_1(\lambda)$ (large distance).}
Since $f_\lambda$ is constant on $[R_1(\lambda),\infty)$, we have $f'_\lambda=f''_\lambda=0$.
Using $(\lambda,D)$-admissibility,
\[
\cL_\alpha \mathcal V \le \gamma(d+D)-\gamma\lambda \mathcal V.
\]
The choice of $R_1(\lambda)$ in \eqref{eq:R1-condition-v2} 
implies that whenever $r_t\ge R_1(\lambda)$,
\begin{equation}\label{eq:V-large-v2}
\mathcal V(Z_t)+\mathcal V(Z_t') \;\ge\; \frac{12}{5}\frac{d+D}{\lambda},
\end{equation}
and therefore
\begin{equation}\label{eq:LV-neg-v2}
\cL_\alpha\mathcal V(Z_t)+\cL_\alpha\mathcal V(Z_t')
\;\le\; -\frac{1}{6}\gamma\lambda\left(\mathcal V(Z_t)+\mathcal V(Z_t')\right).
\end{equation}
With \eqref{eq:LV-neg-v2} and $\varepsilon=4c/(\gamma(d+D))$ as in \eqref{choice:varepsilon}, we obtain
$K_t\le 0$ in this region provided
\[
c \le \frac{\gamma}{16}\lambda.
\]

\paragraph{Region II: $r_t<R_1(\lambda)$ and reflection is active.}
On this event $\chi(t)=1$. In Lemma~\ref{lem:drift-decomp}, the term
\[
-\delta_\alpha\,\theta|Z_t|\,f'_{\lambda,-}(r_t)\,G_t
\]
is \emph{non-positive} and can be dropped. 
Construction of $\varphi_\lambda$ ensures the following cancellation condition holds for all $r \in (0, R_1(\lambda))$:
\begin{equation}\label{eq:cancellation-condition}
  4\gamma^{-2}\varphi_\lambda'(r)
  + \left(\theta + 4\varepsilon\bar C_{\mathcal V}\right) r\, \varphi_\lambda(r)
  \le 0.
\end{equation}
Note that this condition is defined using the distance $r$ to cover the worst-case drift since $|\mathbf R_t| \le r_t$.

Recall the bound for $K_t$ from Equation~\eqref{K:t:ineq} in Lemma~\ref{lem:drift-decomp}. 
Since $G_t \ge 1$ and $f'_{\lambda,-}(r_t) = \varphi_\lambda(r_t)g_\lambda(r_t) \ge 0$, we can upper bound the cross-variation term by multiplying it by $G_t$:
\[
4\varepsilon \bar C_{\mathcal V} r_t f'_{\lambda,-}(r_t)
\le 
4\varepsilon \bar C_{\mathcal V} r_t f'_{\lambda,-}(r_t) G_t.
\]
Since $r_t<R_1(\lambda)$, we have for a.e.\ $r\in(0,R_1(\lambda))$ that
$f'_{\lambda,-}(r)=\varphi_\lambda(r)g_\lambda(r)$ and
$f_\lambda''(r)=\varphi_\lambda'(r)g_\lambda(r)+\varphi_\lambda(r)g_\lambda'(r)$.
Substituting these identities into \eqref{K:t:ineq} yields
\begin{align*}
K_t
&\le
\left[
  4\gamma^{-2}\varphi_\lambda'(r_t)
  + \theta |\mathbf R_t|\,\varphi_\lambda(r_t)
  + 4\varepsilon\bar C_{\mathcal V}\,r_t\,\varphi_\lambda(r_t)
\right] g_\lambda(r_t)\, G_t  \\
&\quad + 4\gamma^{-2}\varphi_\lambda(r_t)g_\lambda'(r_t)G_t
+ \gamma^{-1}\varepsilon f_\lambda(r_t)\left[\cL_\alpha\mathcal V(z_t)+\cL_\alpha\mathcal V(z'_t)\right]
+\gamma^{-1}c f_\lambda(r_t)G_t.
\end{align*}
Using $|\mathbf R_t|\le r_t$, the bracketed term is bounded above by
\[
\left[4\gamma^{-2}\varphi_\lambda'(r_t)+(\theta+4\varepsilon\bar C_{\mathcal V})\,r_t\,\varphi_\lambda(r_t)\right]g_\lambda(r_t),
\]
which is non-positive by \eqref{eq:cancellation-condition}, and hence can be dropped.
For the remaining terms, by $(\lambda,D)$-admissibility \eqref{eq:generic-drift-again},
\[
\cL_\alpha \mathcal V(z) \le \gamma(d+D - \lambda \mathcal V(z)) \le \gamma(d+D),
\]
so that $\cL_\alpha\mathcal V(z_t)+\cL_\alpha\mathcal V(z'_t)\le 2\gamma(d+D)$.
With $\varepsilon = \frac{4c}{\gamma(d+D)}$, we obtain
\[
\gamma^{-1}\varepsilon f_\lambda(r_t)\cdot 2\gamma(d+D)
= 8\gamma^{-1}c\,f_\lambda(r_t)
\le 8\gamma^{-1}c\,f_\lambda(r_t)\,G_t,
\]
and therefore
\[
K_t \le 4\gamma^{-2}\varphi_\lambda(r_t)\,g'_\lambda(r_t)\,G_t
+ 9\gamma^{-1}c\,f_\lambda(r_t)\,G_t.
\]
By the definition of $g_\lambda$ in \eqref{eq:gint-def},
\[
4\gamma^{-2}\varphi_\lambda(r)\,g'_\lambda(r) = -9\gamma^{-1}c\,\Phi_\lambda(r),
\]
and since $f_\lambda(r)\le \Phi_\lambda(r)$ for $r\in[0,R_1(\lambda)]$ we conclude $K_t\le 0$
as long as $g_\lambda(r)\ge 1/2$ on $[0,R_1(\lambda)]$, i.e.
\[
\frac{9}{4}\,c\,\gamma\int_0^{R_1(\lambda)}\frac{\Phi_\lambda(s)}{\varphi_\lambda(s)}\,ds \;\le\; \frac12.
\]
As in \cite[Theorem~2.3]{Eberle}, the above holds whenever
\[
c \;\le\; \frac{\gamma}{384}\min\left\{
\sqrt{\Lambda_\alpha(\lambda)}\,e^{-\Lambda_\alpha(\lambda)}\frac{L_{\mathrm{eff}}(\alpha)}{\gamma^2},\;
\sqrt{\Lambda_\alpha(\lambda)}\,e^{-\Lambda_\alpha(\lambda)}
\right\}.
\]

\paragraph{Region III: $r_t<R_1(\lambda)$ and synchronous coupling is active.}
In this regime $\chi(t)=0$ and the reflection-noise terms vanish.
The drift bound from Lemma~\ref{lem:drift-decomp} contains the dissipative part
\[
  -\delta_\alpha\,\theta|Z_t|\,f'_\lambda(r_t)\,G_t,
\]
which yields the constraint
\[
c \;\le\; \frac{\gamma}{18}\,\delta_\alpha\,\inf_{s\in(0,R_1(\lambda)]}\frac{s\,\varphi_\lambda(s)}{\Phi_\lambda(s)}.
\]
Using $\delta_\alpha\ge \kappa_{\mathrm{adjust}}\frac{\eta_0}{1+\eta_0}$ and choosing
$\eta_0=\left(\Lambda_0(\lambda)\right)^{-1}$, we estimate the Gaussian ratio as follows.
Since $s\mapsto s\varphi_\lambda(s)/\Phi_\lambda(s)$ is decreasing on $(0,R_1(\lambda)]$,
\[
  \inf_{s\in(0,R_1(\lambda)]}\frac{s\,\varphi_\lambda(s)}{\Phi_\lambda(s)}
  =\frac{R_1(\lambda)\,\varphi_\lambda(R_1(\lambda))}{\Phi_\lambda(R_1(\lambda))}.
\]
Moreover, $\Phi_\lambda(R_1(\lambda))\le \int_0^\infty \varphi_\lambda(s)\,ds
= \frac{\sqrt\pi}{2}\left(\frac{8}{L_{\mathrm{eff}}(\alpha)\,R_1^2(\lambda)}\right)^{1/2}$,
and hence
\[
  \inf_{s\in(0,R_1(\lambda)]}\frac{s\,\varphi_\lambda(s)}{\Phi_\lambda(s)}
  \;\ge\; \frac{2}{\sqrt\pi}\,\sqrt{\Lambda_\alpha(\lambda)}\,e^{-\Lambda_\alpha(\lambda)}.
\]
Therefore, it suffices to impose
\[
c \;\le\; \frac{\gamma}{18}\,\delta_\alpha\,\frac{2}{\sqrt\pi}\,
\sqrt{\Lambda_\alpha(\lambda)}\,e^{-\Lambda_\alpha(\lambda)}.
\]

Taking the minimum of the admissible bounds from the three regions yields
\eqref{eq:rate-explicit} and hence the contraction estimate.
\end{proof}

%%%%%%%%%%%%%%%%%%%%%%%%%%%%%%%%%%%%%%%%%%%%%%%%%%%%%%%%%%%%%%%%%%

\subsection{Proof of Corollary~\ref{cor:convergence-V0}}\label{app:convergence-V0}
\begin{proof}
By Proposition~\ref{prop:V0-drift-HFHR}, for $\alpha\in[0,\alpha_0]$ we have 
\[
  \cL_\alpha \mathcal V_0 \le \gamma\left(d + A_\alpha - \lambda_\alpha \mathcal V_0\right).
\]
Hence $\mathcal V_0$ is $(\lambda_\alpha,A_\alpha)$-admissible in the sense of
Definition~\ref{def:admissible-lyapunov}.
Fix $\kappa_{\mathrm{adjust}}\in(0,1)$ and assume \eqref{eq:alpha-small-drift} holds
(with $\eta_0=\left(\Lambda_0(\lambda)\right)^{-1}$ as chosen in Theorem~\ref{thm:master-contraction}).
Therefore, Theorem~\ref{thm:master-contraction} applies with $\mathcal V=\mathcal V_0$
and yields, for the corresponding semimetric $\rho_{\mathcal V_0,\alpha}$,
the contraction estimate
\[
  \cW_{\rho_{\mathcal V_0,\alpha}}(\mu P_t^\alpha,\nu P_t^\alpha)
  \le e^{-c_\alpha t}\,\cW_{\rho_{\mathcal V_0,\alpha}}(\mu,\nu),
  \qquad t\ge0.
\]

We next deduce existence and uniqueness of an invariant measure and exponential
convergence to it. Let
\[
  \mathcal P_{\mathcal V_0}(\R^{2d})
  :=\left\{\mu\ \text{probability measure on }\R^{2d}:\ \int_{\R^{2d}}\mathcal V_0\,d\mu<\infty\right\},
\]
equipped with $\cW_{\rho_{\mathcal V_0,\alpha}}$. As in
\cite[Corollary~2.6]{Eberle}, $\left(\mathcal P_{\mathcal V_0}(\R^{2d}),
\cW_{\rho_{\mathcal V_0,\alpha}}\right)$ is complete, and the Lyapunov drift
implies moment control along the semigroup: for $\mu\in\mathcal P_{\mathcal V_0}$,
\[
  \sup_{t\ge0}\int_{\mathbb{R}^{2d}} \mathcal V_0\,d(\mu P_t^\alpha)
  \le \max\left\{\int_{\mathbb{R}^{2d}} \mathcal V_0\,d\mu,\ \frac{d+A_\alpha}{\lambda_\alpha}\right\}
  <\infty.
\]

Fix $\mu_0\in\mathcal P_{\mathcal V_0}(\R^{2d})$ and set $\mu_t:=\mu_0P_t^\alpha$.
For $s>t$, by the semigroup property and the contraction,
\[
  \cW_{\rho_{\mathcal V_0,\alpha}}(\mu_s,\mu_t)
  =\cW_{\rho_{\mathcal V_0,\alpha}}\!\left((\mu_0P_{s-t}^\alpha)P_t^\alpha,\mu_0P_t^\alpha\right)
  \le e^{-c_\alpha t}\,
     \cW_{\rho_{\mathcal V_0,\alpha}}(\mu_0P_{s-t}^\alpha,\mu_0).
\]
The uniform moment bound above and the structure of $\rho_{\mathcal V_0,\alpha}$
imply $\sup_{u\ge0}\cW_{\rho_{\mathcal V_0,\alpha}}(\mu_0P_u^\alpha,\mu_0)<\infty$
(see \cite[Corollary~2.6]{Eberle}). Hence $(\mu_t)_{t\ge0}$ is a Cauchy family 
with respect to the metric $\cW_{\rho_{\mathcal V_0,\alpha}}$, and by the completeness 
of the space $\left(\mathcal P_{\mathcal V_0}(\R^{2d}), \cW_{\rho_{\mathcal V_0,\alpha}}\right)$, it converges to some $\pi_\alpha\in\mathcal P_{\mathcal V_0}(\R^{2d})$.

The limit $\pi_\alpha$ is invariant: for any $t\ge0$,
\[
  \pi_\alpha P_t^\alpha
  = \lim_{s\to\infty} \mu_s P_t^\alpha
  = \lim_{s\to\infty} \mu_{s+t}
  = \pi_\alpha.
\]
Uniqueness follows from contraction: if $\pi_\alpha'$ is another invariant
measure in $\mathcal P_{\mathcal V_0}$, then
\[
  \cW_{\rho_{\mathcal V_0,\alpha}}(\pi_\alpha,\pi_\alpha')
  = \cW_{\rho_{\mathcal V_0,\alpha}}(\pi_\alpha P_t^\alpha,\pi_\alpha' P_t^\alpha)
  \le e^{-c_\alpha t}\cW_{\rho_{\mathcal V_0,\alpha}}(\pi_\alpha,\pi_\alpha'),
\]
and letting $t\to\infty$ yields $\pi_\alpha=\pi_\alpha'$.
Taking $\nu=\pi_\alpha$ gives the stated convergence to equilibrium.
\end{proof}

%%%%%%%%%%%%%%%%%%%%%%
\subsection{Proof of Lemma~\ref{lem:quadratic:bounds}}\label{proof:lem:quadratic:bounds}
\begin{proof}
Under Assumption~\ref{assump:potential}(ii), $\nabla U$ is $L$-Lipschitz, which implies the quadratic growth bound 
\begin{equation}\label{U:quadratic:bound}
U(q) \le U(0) + |\nabla U(0)||q| + \frac{L}{2}|q|^2. 
\end{equation}
Combining \eqref{U:quadratic:bound} with the explicit quadratic form of $\mathcal V_0$, we have
\begin{align}
c_1'\left(1+|q|^2+|p|^2\right)
  &\le 1+\mathcal V_0(q,p)\nonumber
  \\
  &\leq\left(\max(1, \mu_{\max}) + \sup_{q\in\mathbb{R}^{d}} \frac{U(q)}{1+|q|^2}\right)\left(1+|q|^2+|p|^2\right)
  \nonumber
  \\
  &\leq\left(\max(1, \mu_{\max}) + \sup_{q\in\mathbb{R}^{d}} \frac{U(0) + |\nabla U(0)||q| + \frac{L}{2}|q|^2}{1+|q|^2}\right)\left(1+|q|^2+|p|^2\right)
  \nonumber
  \\
  &\leq c_2'\left(1+|q|^2+|p|^2\right),
\end{align}
where 
$c_1' = \min(1, \mu_{\min})$ 
and $c_2' = \max(1, \mu_{\max}) + U(0)+\frac{L}{2}+\frac{1}{2}|\nabla U(0)|$,
where $\mu_{\min}$ and $\mu_{\max}$ are the smallest and largest eigenvalues of the symmetric matrix $M$ defined in \eqref{M:matrix}, i.e., the matrix associated with the quadratic form in $(q,p)$ appearing in \eqref{eq:V0-general-quadratic}; see \eqref{mu:min:max} for explicit formulas. 
The proof is complete.
\end{proof}

%%%%%%%%%%%%%%%%%%%%%%

\subsection{Proof of Lemma~\ref{lem:rho-controls-W2}}\label{app:rho-controls-W2}
\begin{proof}
Let $\Gamma$ be any coupling of $(\mu,\nu)$. By Lemma~\ref{lem:r-equivalent},
\[
  |z-z'|^2 \le k_1^{-2}\, \left(r(z,z')\right)^2 .
\]
Set $r:=r(z,z')$.  Since $\varphi_\lambda$ is positive and nonincreasing, and
$g_\lambda(s)\ge g_*$ on $[0,R_1(\lambda)]$, for $0\le r\le R_1(\lambda)$ we have
\[
  f_\lambda(r)=\int_0^{r}\varphi_\lambda(s)g_\lambda(s)\,ds
  \ge g_*\int_0^r \varphi_\lambda(s)\,ds
  \ge g_*\,c_r\, r.
\]
Also, by definition $f_\lambda(r)=f_\lambda(R_1(\lambda))=c_0$ for all $r\ge R_1(\lambda)$.

\medskip\noindent\emph{Case 1: $r\le R_1(\lambda)$.}
Then $r^2\le R_1(\lambda) r$ and therefore
\[
  |z-z'|^2 \le k_1^{-2} r^2 \le k_1^{-2}R_1(\lambda) r
  \le \frac{k_1^{-2}R_1(\lambda)}{g_*c_r}\, f_\lambda(r).
\]
Since $\mathcal V\ge 1$, we have $1\le 1+\mathcal V(z)+\mathcal V(z')$, hence
\begin{align}\label{z:z:prime:case:1}
  |z-z'|^2
  \le \frac{k_1^{-2}R_1(\lambda)}{g_*c_r}\,
      f_\lambda(r)\left(1+\mathcal V(z)+\mathcal V(z')\right).
\end{align}

\medskip\noindent\emph{Case 2: $r>R_1(\lambda)$.}
Using \eqref{eq:V-quadratic-lower-new},
\[
  |z-z'|^2 \le 2|z|^2+2|z'|^2
  \le 4C_V\left(1+\mathcal V(z)+\mathcal V(z')\right).
\]
Since $f_\lambda(r)\ge c_0$ on $\{r>R_1(\lambda)\}$, we get
\begin{align}\label{z:z:prime:case:2}
  |z-z'|^2
  \le \frac{4C_V}{c_0}\,
      f_\lambda(r)\left(1+\mathcal V(z)+\mathcal V(z')\right).
\end{align}

\medskip
Combining both cases \eqref{z:z:prime:case:1}-\eqref{z:z:prime:case:2} yields
\[
  |z-z'|^2 \le C\, f_\lambda(r)\left(1+\mathcal V(z)+\mathcal V(z')\right),
  \quad
  C:=\max\left\{\frac{k_1^{-2}R_1(\lambda)}{g_*c_r},\,\frac{4C_V}{c_0}\right\}.
\]
Finally, since $\varepsilon\le 1$ and $\mathcal V\ge 0$,
\[
  1+\mathcal V(z)+\mathcal V(z')
  \le \frac1\varepsilon\left(1+\varepsilon\mathcal V(z)+\varepsilon\mathcal V(z')\right),
\]
which, together with the definition of $\rho_{\mathcal{V}}(z,z')$ in \eqref{eq:rho-V-def-again}, implies
\[
  |z-z'|^2 \le \frac{C}{\varepsilon}\,\rho_{\mathcal V}(z,z').
\]
Integrate w.r.t.\ $\Gamma$ and take the infimum over all couplings to obtain
$\mathcal W_2^2(\mu,\nu)\le C_\rho\,\mathcal W_{\rho_{\mathcal V}}(\mu,\nu)$
with $C_\rho:=C/\varepsilon$. The proof is complete.
\end{proof}

%%%%%%%%%%%%%%%%%%%%%%%%%%%%
\subsection{Proof of Corollary~\ref{cor:W2-contraction}}\label{app:W2-contraction}
\begin{proof}
Let $c>0$ and $\varepsilon=\frac{4c}{\gamma(d+D)}$ be as in
\eqref{eq:epsilon-def}, and let $C_\rho$ be the constant in
Lemma~\ref{lem:rho-controls-W2} computed with this $\varepsilon$.
Applying Lemma~\ref{lem:rho-controls-W2} to the pair
$(\mu P_t^\alpha,\nu P_t^\alpha)$ gives
\[
  \mathcal W_2^2(\mu P_t^\alpha,\nu P_t^\alpha)
  \le C_\rho\,
      \cW_{\rho_{\mathcal V}}(\mu P_t^\alpha,\nu P_t^\alpha).
\]
Using the contraction property \eqref{eq:W-rho-contraction},
\[
  \cW_{\rho_{\mathcal V}}(\mu P_t^\alpha,\nu P_t^\alpha)
  \le e^{-ct}\,\cW_{\rho_{\mathcal V}}(\mu,\nu),
\]
we obtain
\[
  \mathcal W_2^2(\mu P_t^\alpha,\nu P_t^\alpha)
  \le C_\rho\,e^{-ct}\,\cW_{\rho_{\mathcal V}}(\mu,\nu).
\]
Taking square roots yields the claimed bound. The final statement follows by
choosing $\nu=\pi_\alpha$ whenever $\pi_\alpha$ exists and satisfies
$\int_{\mathbb R^{2d}} \mathcal V\,d\pi_\alpha<\infty$. This completes the proof.
\end{proof}

%%%%%%%%%%%%%%%%%%%%%%%%%%%%
\subsection{Proof of Corollary~\ref{cor:W2-convergence-baseline}}\label{proof:cor:W2-convergence-baseline}
\begin{proof}
By Proposition~\ref{prop:V0-drift-HFHR}, $\mathcal V_0$ is $(\hat\lambda_\alpha,A_\alpha)$-admissible.
Applying Theorem~\ref{thm:master-contraction} with $\mathcal V=\mathcal V_0$ and
$(\lambda,D)=(\hat\lambda_\alpha,A_\alpha)$ yields a contraction rate $c_\alpha>0$
and the associated choice
\[
  \varepsilon_\alpha=\frac{4c_\alpha}{\gamma(d+A_\alpha)}.
\]
Let $\rho_{\mathcal V_0,\alpha}$ be the corresponding weighted semimetric, and let
$C_{\rho,\alpha}$ denote the constant from Lemma~\ref{lem:rho-controls-W2} associated
with $\mathcal V_0$ and computed with $\varepsilon=\varepsilon_\alpha$.
Then Corollary~\ref{cor:W2-contraction} yields, for any $\nu$ with finite $\mathcal V_0$-moment,
\[
  \mathcal W_2(\mu P_t^\alpha,\nu P_t^\alpha)
  \le C_{\rho,\alpha}^{1/2} e^{-\frac12 c_\alpha t}
     \left(\cW_{\rho_{\mathcal V_0,\alpha}}(\mu,\nu)\right)^{1/2}.
\]
By Corollary~\ref{cor:convergence-V0}, the invariant measure $\pi_\alpha$ exists, is unique, and satisfies
$\int_{\mathbb R^{2d}} \mathcal V_0\,d\pi_\alpha<\infty$. Taking $\nu=\pi_\alpha$ and using
$\pi_\alpha P_t^\alpha=\pi_\alpha$ completes the proof.
\end{proof}

%%%%%%%%%%%%%%%%%%%%%%%%%%%%%%%%%%%%%%%%%%%%%%%%%%%%%%%%%%%%%%%%
\section{Proofs for the Results in Section~\ref{sec:acceleration}}
\label{app:acceleration-proofs}

\subsection{Proof of Lemma~\ref{lem:exact-drift-decomp}}\label{app:exact-drift-decomp}
\begin{proof}
Recall from \eqref{eq:Aprime-def} that the interaction operator is given by
$\mathcal A' = -\nabla U(q)\cdot\nabla_q$,
so that for any smooth test function $f$,
\[
  \mathcal A' f(q,p)
  = - \nabla U(q)\cdot\nabla_q f(q,p).
\]
By appling this to the Lyapunov function $\mathcal V_0$ defined in
\eqref{eq:V0-general-quadratic}, we get:
\[
  \mathcal V_0(q,p)
  = U(q)
    + \frac{\gamma^2}{4}
      \left(
        |q+\gamma^{-1}p|^2
        + |\gamma^{-1}p|^2
        - \lambda |q|^2
      \right).
\]

\smallskip\noindent
\emph{Step 1: Compute the $q$–gradient of $\mathcal V_0$.}
We first differentiate $\mathcal V_0$ with respect to $q$:
\[
  \nabla_q \mathcal V_0(q,p)
  = \nabla U(q)
    + \frac{\gamma^2}{4}\left(
        2(q+\gamma^{-1}p)
        - 2\lambda q
      \right),
\]
since $|\gamma^{-1}p|^2$ does not depend on $q$. Hence,
\[
  \nabla_q \mathcal V_0(q,p)
  = \nabla U(q)
    + \frac{\gamma^2}{2}
      \left(
        q + \gamma^{-1}p - \lambda q
      \right)
  = \nabla U(q)
    + \frac{\gamma^2}{2}
      \left(
        (1-\lambda) q + \gamma^{-1}p
      \right).
\]

\smallskip\noindent
\emph{Step 2: Apply $\mathcal A'$ to $\mathcal V_0$.}
By the definition of $\mathcal A'$, we obtain
\begin{align*}
  \mathcal A'\mathcal V_0(q,p)
  &= - \nabla U(q)\cdot\nabla_q \mathcal V_0(q,p)
  \\
  &= - \nabla U(q)\cdot
     \left[
       \nabla U(q)
       + \frac{\gamma^2}{2}
         \left(
           (1-\lambda) q + \gamma^{-1}p
         \right)
     \right]
  \\
  &= -|\nabla U(q)|^2
     - \frac{\gamma^2}{2}(1-\lambda)\,\nabla U(q)\cdot q
     - \frac{\gamma}{2}\,\nabla U(q)\cdot p.
\end{align*}
This is exactly the claimed identity \eqref{eq:exact-drift-identity}.
\end{proof}

%%%%%%%%%%%%%%%%

%%%%%%%%%%%%%%%%%%%%%%%%%%%%%%%%%%%%%%%%%%%%%%%%%%%%

\subsection{Proof of Lemma~\ref{lem:first-order-improvement}}\label{app:first-order-improvement}
\begin{proof}
Throughout the proof we write $z=(q,p)\in\R^{2d}$ and use the notation
$\langle x,y\rangle = x^\top y$ for the Euclidean inner product.

\medskip\noindent
\emph{Step 1: A limiting Ornstein--Uhlenbeck operator and a quadratic control of $U$ at infinity.}
By Assumption~\ref{assump:asymptotic-linear-drift}, there exist a symmetric positive definite matrix
$Q_\infty\in\R^{d\times d}$ and a nonincreasing function $\varrho:[0,\infty)\to[0,\infty)$ with $\varrho(r)\to 0$
as $r\to\infty$ such that
\begin{equation}\label{eq:gradU-asymptotic-proof}
\left|\nabla U(q)-Q_\infty q\right|
\le \varrho(|q|)\,|q|,
\qquad |q|\ge C_{\mathrm{linear}}.
\end{equation}
Define $r(q):=\nabla U(q)-Q_\infty q$ and, for $R\ge 1$,
recall from \eqref{defn:tail:modulus} the definition of the tail modulus:
\[
\rho_\nabla(R):=\sup_{|q|\ge R}\frac{|r(q)|}{|q|}.
\]
Then $\rho_\nabla(R)<\infty$ for $R\ge \max\{1,C_{\mathrm{linear}}\}$ and, since $\rho_\nabla(R)\le \varrho(R)$ for
$R\ge C_{\mathrm{linear}}$, we have $\rho_\nabla(R)\to 0$ as $R\to\infty$. Moreover, $\rho_\nabla(\cdot)$ is nonincreasing.

We now derive a quadratic control of $U(q)-\frac12\langle Q_\infty q,q\rangle$ at infinity.
Since $U$ satisfies Assumption~\ref{assump:potential}, $\nabla U$ is continuous. Hence
\[
B_{\mathrm{lin}}:=\sup_{|x|\le C_{\mathrm{linear}}}|r(x)|<\infty.
\]
Fix any $q\in\R^d$ with $q\neq 0$ and write $\theta:=q/|q|\in\mathbb S^{d-1}$. Define
\[
g_\theta(s):=U(s\theta)-\frac12\langle Q_\infty(s\theta),s\theta\rangle,\qquad s\ge 0.
\]
By the fundamental theorem of calculus,
\[
g_\theta(|q|)-g_\theta(0)=\int_0^{|q|}\langle r(s\theta),\theta\rangle\,ds.
\]
By splitting at $C_{\mathrm{linear}}$, we get:
\[
\left|g_\theta(|q|)-g_\theta(0)\right|
\le \int_0^{C_{\mathrm{linear}}}|r(s\theta)|\,ds+\int_{C_{\mathrm{linear}}}^{|q|}|r(s\theta)|\,ds
\le C_{\mathrm{linear}}B_{\mathrm{lin}}+\int_{C_{\mathrm{linear}}}^{|q|}\rho_\nabla(s)\,s\,ds,
\]
where we used $|r(s\theta)|\le \rho_\nabla(s)\,|s\theta|=\rho_\nabla(s)\,s$ for $s\ge C_{\mathrm{linear}}$.

Therefore, for all $|q|\ge C_{\mathrm{linear}}$,
\begin{equation}\label{eq:U-quadratic-control-tail}
\left|U(q)-U(0)-\frac12\langle Q_\infty q,q\rangle\right|
\le C_{\mathrm{linear}}B_{\mathrm{lin}}+\int_{C_{\mathrm{linear}}}^{|q|}\rho_\nabla(s)\,s\,ds .
\end{equation}

Consequently, for every $R\ge \max\{1,C_{\mathrm{linear}}\}$ and every $|q|\ge R$,
\begin{align*}
\int_{C_{\mathrm{linear}}}^{|q|}\rho_\nabla(s)s\,ds
=&\int_{C_{\mathrm{linear}}}^{R}\rho_\nabla(s)s\,ds+\int_{R}^{|q|}\rho_\nabla(s)s\,ds\\
\le &\int_{C_{\mathrm{linear}}}^{R}\rho_\nabla(s)s\,ds+\rho_\nabla(R)\int_{R}^{|q|}s\,ds
\le \int_{C_{\mathrm{linear}}}^{R}\rho_\nabla(s)s\,ds+\frac12\rho_\nabla(R)|q|^2,
\end{align*}
using that $\rho_\nabla$ is nonincreasing. Dividing by $1+|q|^2$ and taking the supremum over $|q|\ge R$ yields
\begin{equation}\label{eq:deltaU-bound-from-rho}
\delta_U(R):=\sup_{|q|\ge R}\frac{\left|U(q)-\frac12\langle Q_\infty q,q\rangle\right|}{1+|q|^2}
\le \frac{|U(0)|+C_{\mathrm{linear}}B_{\mathrm{lin}}+\int_{C_{\mathrm{linear}}}^{R}\rho_\nabla(s)s\,ds}{1+R^2}
+\frac12\,\rho_\nabla(R).
\end{equation}
To conclude, it remains to show that
\begin{align}\label{claim:limit:zero}
\frac{1}{R^2}\int_{C_{\mathrm{linear}}}^{R}\rho_\nabla(s)\,s\,ds \xrightarrow[R\to\infty]{} 0.
\end{align}
Fix any $\varepsilon>0$ and choose $S\ge C_{\mathrm{linear}}$ such that $\rho_\nabla(S)\le \varepsilon$ (possible since $\rho_\nabla(R)\to0$).
Then for all $R\ge S$,
\begin{align*}
\frac{1}{R^2}\int_{C_{\mathrm{linear}}}^{R}\rho_\nabla(s)\,s\,ds
\le &\frac{1}{R^2}\int_{C_{\mathrm{linear}}}^{S}\rho_\nabla(s)\,s\,ds
     +\frac{1}{R^2}\int_{S}^{R}\rho_\nabla(s)\,s\,ds\\
\le &\frac{1}{R^2}\int_{C_{\mathrm{linear}}}^{S}\rho_\nabla(s)\,s\,ds+\varepsilon\cdot\frac{R^2-S^2}{2R^2}
\le \frac{1}{R^2}\int_{C_{\mathrm{linear}}}^{S}\rho_\nabla(s)\,s\,ds+\frac{\varepsilon}{2}.
\end{align*}
Letting $R\to\infty$ gives $\limsup_{R\to\infty}\frac{1}{R^2}\int_{C_{\mathrm{linear}}}^{R}\rho_\nabla(s)\,s\,ds\le \varepsilon/2$,
and since $\varepsilon>0$ is arbitrary, the limit is $0$ and the claim \eqref{claim:limit:zero} is proved.

Next, introduce the ``limiting'' kinetic Ornstein--Uhlenbeck drift operator
\[
\mathcal A_\infty f(q,p)
:= \left\langle p,\nabla_q f(q,p)\right\rangle
- \left\langle \gamma p + Q_\infty q,\nabla_p f(q,p)\right\rangle,
\qquad (q,p)\in\R^{2d},
\]
and write
\[
\mathcal A_0 = \mathcal A_\infty + \mathcal A_{\mathrm{pert}},
\qquad
\mathcal A_{\mathrm{pert}} f(q,p)
:= -\langle r(q),\nabla_p f(q,p)\rangle .
\]

\medskip\noindent
\emph{Step 2: $\mathcal A_\infty$ is invertible on quadratic polynomials.}
Let $\mathsf{Q}_2$ denote the vector space of quadratic polynomials on $\R^{2d}$.
For $\mathcal M(z)=\frac12 z^\top \mathsf K z$ with $\mathsf K=\mathsf K^\top$, one has
\[
  (\mathcal A_\infty\mathcal M)(z)
  = \frac12\,z^\top\left(B^\top \mathsf K + \mathsf K B\right)z,
  \qquad
  B :=
  \begin{pmatrix}
    0       & I_d\\
    -Q_\infty & -\gamma I_d
  \end{pmatrix}.
\]
Since $Q_\infty$ is positive definite and $\gamma>0$, $B$ is Hurwitz.
Hence the Lyapunov equation $B^\top \mathsf K + \mathsf K B = C$ has a unique symmetric
solution for any symmetric $C$ (see, e.g., \cite{horn2012matrix}).
Therefore, the linear map $\mathsf Q_2\ni \mathcal M\mapsto \mathcal A_\infty \mathcal M\in \mathsf Q_2$
is an isomorphism.

\medskip\noindent
\emph{Step 3: An explicit expansion for $\mathcal A'\mathcal V_0$ and an explicit upper bound.}
Recall from \eqref{eq:V0-general-quadratic} that 
\[
  \mathcal V_0(q,p)
  = U(q)
    + \frac{\gamma^2}{4}\left(
         |q+\gamma^{-1}p|^2
         + |\gamma^{-1}p|^2
         - \lambda|q|^2
      \right).
\]
A direct computation yields the explicit $q$--gradient
\begin{equation}\label{eq:gradqV0-explicit}
  \nabla_q \mathcal V_0(q,p)
  = \nabla U(q) + \frac{\gamma^2}{2}(1-\lambda)\,q + \frac{\gamma}{2}\,p .
\end{equation}
Recall from Lemma~\ref{lem:exact-drift-decomp}, insert $\nabla U(q)=Q_\infty q+r(q)$ into \eqref{eq:exact-drift-identity}.
Define the quadratic form
\begin{equation}\label{eq:Q-explicit}
  Q(q,p)
  := -|Q_\infty q|^2
     -\frac{\gamma^2}{2}(1-\lambda)\,\langle Q_\infty q,q\rangle
     -\frac{\gamma}{2}\,\langle Q_\infty q,p\rangle,
\end{equation}
and the remainder:
\begin{align}
  \mathcal R(q,p)
  :=\;& -2\langle Q_\infty q,r(q)\rangle - |r(q)|^2
       -\frac{\gamma^2}{2}(1-\lambda)\,\langle r(q),q\rangle
       -\frac{\gamma}{2}\,\langle r(q),p\rangle .\label{eq:R1-explicit}
\end{align}
Then
\begin{equation}\label{eq:AprimeV0-split-explicit}
  \mathcal A'\mathcal V_0(q,p) = Q(q,p) + \mathcal R(q,p).
\end{equation}

We now provide an upper bound on $|\mathcal R|$ in the tail.
Using $|Q_\infty q|\le \lambda_{\max}(Q_\infty)|q|$, the elementary bounds
$|q|\le |z|$, $|q||p|\le \frac12(|q|^2+|p|^2)\le |z|^2$,
and the tail estimate $|r(q)|\le \rho_\nabla(|q|)\,|q|$ valid for $|q|\ge C_{\mathrm{linear}}$,
for all $|q|\ge \max\{1,C_{\mathrm{linear}}\}$ and all $p\in\R^d$,
\begin{equation}\label{eq:R1-tail}
  |\mathcal R(q,p)|\le \rho_1(|q|)\,(1+|z|^2),
\end{equation}
where $\rho_1$ is \emph{explicitly defined} by
\begin{equation}\label{eq:rho1-def}
  \rho_1(r)
:= \left(4\lambda_{\max}(Q_\infty)+\gamma^2|1-\lambda|+\gamma\right)\rho_\nabla(r)+(\rho_\nabla(r))^2,
  \qquad r\ge 0.
\end{equation}
In particular, $\rho_1(r)\to 0$ as $r\to\infty$.

\medskip\noindent
\emph{Step 4: Construct $\mathcal M$ and obtain an explicit lower bound $\underline c_{\mathrm{imp}}$.}
Define a quadratic form
\begin{equation}\label{eq:B1-def}
  B_1(q,p) := -Q(q,p) - \left(
      \frac12|p|^2 + \frac{\gamma}{2}\langle q,p\rangle
      + \frac12\left\langle\left(Q_\infty+\frac{\gamma^2}{2}(1-\lambda)I_d\right)q,q\right\rangle
    \right).
\end{equation}
Let $C_{B_1}:=\nabla^2 B_1$ so that $B_1(z)=\frac12 z^\top C_{B_1} z$.
By Step 2, there exists a unique quadratic polynomial $\mathcal M\in \mathsf Q_2$
such that
\begin{equation}\label{eq:AinfM-B1}
  \mathcal A_\infty\mathcal M(z) = B_1(z),
  \qquad z\in\R^{2d}.
\end{equation}
Equivalently, writing $\mathcal M(z)=\frac12 z^\top \mathsf K z$ with $\mathsf K=\mathsf K^\top$,
the matrix $\mathsf K$ is the unique symmetric solution of the Lyapunov equation
\begin{equation}\label{eq:K-def}
  B^\top \mathsf K + \mathsf K B = C_{B_1},
  \qquad
  B :=
  \begin{pmatrix}
    0       & I_d\\
    -Q_\infty & -\gamma I_d
  \end{pmatrix}.
\end{equation}

Define
\begin{equation}\label{eq:CM-CDelta-explicit-viaK}
  C_{\mathcal M}
  := \frac{\|\mathsf K\|_{\mathrm{op}}}{2},
  \qquad
  C_\Delta
  := 2d\,\|\mathsf K\|_{\mathrm{op}}.
\end{equation}
Then \eqref{eq:M-growth}--\eqref{eq:lapM-growth} hold.

Writing $\mathsf K=\begin{pmatrix}\mathsf K_{qq}&\mathsf K_{qp}\\ \mathsf K_{pq}&\mathsf K_{pp}\end{pmatrix}$, we have
$\nabla_p\mathcal M(q,p)=\mathsf K_{pq}q+\mathsf K_{pp}p$, and hence
\begin{equation}\label{eq:gradpM-explicit}
  |\nabla_p\mathcal M(q,p)|
  \le \left(\|\mathsf K_{pq}\|_{\mathrm{op}}+\|\mathsf K_{pp}\|_{\mathrm{op}}\right)\,(|q|+|p|)
  \le \left(\|\mathsf K_{pq}\|_{\mathrm{op}}+\|\mathsf K_{pp}\|_{\mathrm{op}}\right)\,(1+|z|).
\end{equation}

Next, using $\mathcal A_0=\mathcal A_\infty+\mathcal A_{\mathrm{pert}}$ and
$\mathcal A'\mathcal V_0 = Q+\mathcal R$, we can compute that
\begin{equation}\label{eq:A0M-plus-AprimeV0-split}
    \begin{aligned}
      \mathcal A_0\mathcal M(z)+\mathcal A'\mathcal V_0(z)
      &= \mathcal A_\infty\mathcal M(z) + \mathcal A_{\mathrm{pert}}\mathcal M(z) + Q(z) + \mathcal R(z) \\
      &= B_1(z) + \mathcal A_{\mathrm{pert}}\mathcal M(z) + Q(z) + \mathcal R(z)
         \qquad\text{by \eqref{eq:AinfM-B1}}\\
      &= -\Xi(z) + \mathcal A_{\mathrm{pert}}\mathcal M(z) + \mathcal R(z),
    \end{aligned}
\end{equation}
where
\[
  \Xi(z)=\Xi(q,p)
  :=\frac12|p|^2 + \frac{\gamma}{2}\langle q,p\rangle
      + \frac12\left\langle\left(Q_\infty+\frac{\gamma^2}{2}(1-\lambda)I_d\right)q,q\right\rangle .
\]

For $|q|\ge C_{\mathrm{linear}}$, using \eqref{eq:gradpM-explicit} and
$|r(q)|\le \rho_\nabla(|q|)\,|q|$, we can compute that
\begin{align}\label{eq:ApertM-bound}
  |\mathcal A_{\mathrm{pert}}\mathcal M(z)|
  &= |\langle r(q),\nabla_p\mathcal M(z)\rangle| \nonumber\\
  &\le \rho_\nabla(|q|)\,|q|\cdot \left(\|\mathsf K_{pq}\|_{\mathrm{op}}+\|\mathsf K_{pp}\|_{\mathrm{op}}\right)(1+|z|)\nonumber\\
  &\le 2\left(\|\mathsf K_{pq}\|_{\mathrm{op}}+\|\mathsf K_{pp}\|_{\mathrm{op}}\right)\rho_\nabla(|q|)\,(1+|z|^2).
\end{align}
Combining \eqref{eq:R1-tail} and \eqref{eq:ApertM-bound} yields: for all $|q|\ge \max\{1,C_{\mathrm{linear}}\}$,
\begin{equation}\label{eq:A0M-plus-AprimeV0-pre}
  \mathcal A_0\mathcal M(z)+\mathcal A'\mathcal V_0(z)
  \le -\Xi(z) + \left[2(\|\mathsf K_{pq}\|_{\rm op}+\|\mathsf K_{pp}\|_{\rm op})\rho_\nabla(|q|)
  +\rho_1(|q|)\right](1+|z|^2).
\end{equation}

We now derive coercivity bounds on $\Xi$.
The lower bound will be used to absorb the tail perturbation in \eqref{eq:A0M-plus-AprimeV0-pre},
while the upper bound will be used later to relate $\Xi$ to $\mathcal V_0$ with explicit constants.
Set
\[
  a_{\min}:=\lambda_{\min}(Q_\infty)+\frac{\gamma^2}{2}(1-\lambda),\qquad
  a_{\max}:=\lambda_{\max}(Q_\infty)+\frac{\gamma^2}{2}(1-\lambda).
\]
Hence, we obtain the global bounds
\begin{equation}\label{eq:Xi-coercive}
  \Xi(z)\ge \underline{a}\,|z|^2,
  \qquad
  \Xi(z)\le \overline{a}\,|z|^2,
\end{equation}
with
\begin{equation}\label{eq:alpha-bounds}
  \underline{a}
  :=\frac14\left(a_{\min}+1-\sqrt{(a_{\min}-1)^2+\gamma^2}\right),\qquad
  \overline{a}
  :=\frac14\left(a_{\max}+1+\sqrt{(a_{\max}-1)^2+\gamma^2}\right).
\end{equation}

\medskip\noindent
\emph{Step 4.5: A closed-form cutoff ensuring absorption.}
To obtain a \emph{computable} cutoff, we recall the tail modulus defined in \eqref{defn:tail:modulus}:
$\rho_\nabla(R) := \sup_{|q|\ge R}\frac{|r(q)|}{|q|}$.
By definition, for any $|q|\ge R$, we have $|r(q)| \le \rho_\nabla(|q|)|q|$.
Using the expression for $\rho_1(r)$ in \eqref{eq:rho1-def}, the condition for the perturbation term to be absorbed is
\begin{equation}\label{term:in:bracket:unified}
  \sup_{r\ge R}\left[
    \left(2\left(\|\mathsf K_{pq}\|_{\mathrm{op}}+\|\mathsf K_{pp}\|_{\mathrm{op}}\right) + 4\lambda_{\max}(Q_\infty)+\gamma^2|1-\lambda|+\gamma\right)\rho_\nabla(r) + (\rho_\nabla(r))^2
  \right]
  \le \frac{5}{16}\underline{a}.
\end{equation}
Let $A$ be the coefficient of the linear term:
\[
  A:= 2\left(\|\mathsf K_{pq}\|_{\mathrm{op}}+\|\mathsf K_{pp}\|_{\mathrm{op}}\right)+4\lambda_{\max}(Q_\infty)+\gamma^2|1-\lambda|+\gamma.
\]
A sufficient condition for \eqref{term:in:bracket:unified} to hold is $A\rho_\nabla(R)+(\rho_\nabla(R))^2\le \frac{5}{16}\underline{a}$.
Consider the quadratic equation $x^2+Ax-\frac{5}{16}\underline{a} = 0$. The positive root is
\[
  \rho_\star:=\frac{-A+\sqrt{A^2+\frac54\,\underline{a}}}{2}>0.
\]
We now set
\begin{equation}\label{eq:R0-choice}
  R_0:=\inf\{R\ge \max\{1,C_{\mathrm{linear}}\}:\ \rho_\nabla(R)\le \rho_\star\}.
\end{equation}
Then $R_0<\infty$ (since $\rho_\nabla(R)\to 0$). By our choice of $R_0$, for all $z=(q,p)$ with $|q|\ge R_0$, the bracketed term
in \eqref{term:in:bracket:unified} is bounded by $\frac{5}{16}\underline{a}$. Using $\Xi(z)\ge \underline{a} |z|^2$ and $1+|z|^2 \le 2|z|^2$ (since $R_0\ge 1$), we explicitly obtain:
\begin{equation}\label{eq:absorb-tail}
  \left[2\left(\|\mathsf K_{pq}\|_{\mathrm{op}}+\|\mathsf K_{pp}\|_{\mathrm{op}}\right)\rho_\nabla(|q|)+\rho_1(|q|)\right](1+|z|^2)
  \le \frac{5}{8}\,\Xi(z).
\end{equation}

Plugging \eqref{eq:absorb-tail} into \eqref{eq:A0M-plus-AprimeV0-pre} yields
for $|q|\ge R_0$:
\begin{equation}\label{eq:outside-ball-Xi}
  \mathcal A_0\mathcal M(z)+\mathcal A'\mathcal V_0(z)\le -\frac{3}{8}\Xi(z).
\end{equation}

We now convert \eqref{eq:outside-ball-Xi} into a drift improvement of the form $-\;\underline c_{\mathrm{imp}}\mathcal V_0 + C$ with the explicit rate claimed in \eqref{eq:cimp-lemma}.
Observe that by the definitions of $\mathcal V_0$ and $\Xi$, we have the exact identity:
\[
  \mathcal V_0(z) - \Xi(z) = U(q) - \frac12\langle Q_\infty q,q\rangle.
\]
For $|q|\ge R_0 \ge 1$, the definition of $\delta_U(R_0)$ and the fact
$1+|q|^2 \le 2|q|^2 \le 2|z|^2$ imply
\[
  U(q) - \frac12\langle Q_\infty q,q\rangle
  \le \delta_U(R_0)(1+|q|^2)
  \le 2\delta_U(R_0)|z|^2.
\]
Using the upper bound $\Xi(z)\le \overline{a}|z|^2$ from \eqref{eq:Xi-coercive}, we obtain
\[
  \mathcal V_0(z)
  \le \Xi(z) + 2\delta_U(R_0)|z|^2
  \le \left(\overline{a} + 2\delta_U(R_0)\right)|z|^2.
\]
Finally, using the lower bound $|z|^2 \le \frac{1}{\underline{a}}\Xi(z)$ from \eqref{eq:Xi-coercive}, we arrive at the explicit control
\begin{equation}\label{eq:V0-controlled-by-Xi}
  \mathcal V_0(z)
  \le \frac{\overline{a} + 2\delta_U(R_0)}{\underline{a}}\,\Xi(z),
  \qquad |q|\ge R_0.
\end{equation}

Combining \eqref{eq:outside-ball-Xi} and \eqref{eq:V0-controlled-by-Xi} yields
\[
  \mathcal A_0\mathcal M(z)+\mathcal A'\mathcal V_0(z)
  \le -\frac{3}{8}\cdot \frac{\underline{a}}{\overline{a}+2\delta_U(R_0)}\,\mathcal V_0(z),
  \qquad |q|\ge R_0.
\]
Recall from \eqref{eq:alpha-bounds} that $4\underline{a} = a_{\min}+1-\sqrt{(a_{\min}-1)^2+\gamma^2}$ and $4\overline{a} = a_{\max}+1+\sqrt{(a_{\max}-1)^2+\gamma^2}$. Multiplying the numerator and denominator of the coefficient by $4$, we recover exactly the constant $\underline c_{\mathrm{imp}}$ defined in \eqref{eq:cimp-lemma}:
\[
  \frac{3}{8}\cdot \frac{4\underline{a}}{4\overline{a}+8\delta_U(R_0)} = \underline c_{\mathrm{imp}}.
\]

\medskip\noindent
\emph{Step 5: Control on the region $|q|\le R_0$ and global extension.}

Fix $R_0$ as in \eqref{eq:R0-choice}. Since $\nabla U$ is continuous, the function
$r(q)=\nabla U(q)-Q_\infty q$ is continuous. Hence
\[
  B_0:=\sup_{|q|\le R_0}|r(q)|<\infty.
\]
For $|q|\le R_0$, we bound the perturbation term using \eqref{eq:gradpM-explicit}:
\[
  |\mathcal A_{\mathrm{pert}}\mathcal M(q,p)|
  =|\langle r(q),\mathsf K_{pq}q+\mathsf K_{pp}p\rangle|
  \le B_0\|\mathsf K_{pq}\|_{\mathrm{op}}R_0 + B_0\|\mathsf K_{pp}\|_{\mathrm{op}}|p|.
\]
Moreover, since $|q|\le R_0$ and $|r(q)|\le B_0$, the remainder term \eqref{eq:R1-explicit} satisfies
\[
  |\mathcal R(q,p)|
  \le 2|Q_\infty q|\,|r(q)| + |r(q)|^2
      +\frac{\gamma^2}{2}|1-\lambda|\,|r(q)|\,|q|
      +\frac{\gamma}{2}|r(q)|\,|p|
  \le C_{R_0}^{(1)} + C_{R_0}^{(2)}|p|,
\]
where we can take
\[
  C_{R_0}^{(1)}
  := 2\,\lambda_{\max}(Q_\infty)\,R_0\,B_0 + B_0^2
     + \frac{\gamma^2}{2}|1-\lambda|\,R_0\,B_0,
  \qquad
  C_{R_0}^{(2)}:=\frac{\gamma}{2}B_0.
\]

Combining this with
\[
  |\mathcal A_{\mathrm{pert}}\mathcal M(q,p)|
  \le B_0\|\mathsf K_{pq}\|_{\mathrm{op}}R_0 + B_0\|\mathsf K_{pp}\|_{\mathrm{op}}|p|,
\]
we obtain, for $|q|\le R_0$,
\[
  |\mathcal A_{\mathrm{pert}}\mathcal M(q,p)|+|\mathcal R(q,p)|
  \le A_0 + L_0\,|p|,
\]
with
\[
  A_0:=B_0\|\mathsf K_{pq}\|_{\mathrm{op}}R_0 + C_{R_0}^{(1)},
  \qquad
  L_0:=B_0\|\mathsf K_{pp}\|_{\mathrm{op}} + C_{R_0}^{(2)}.
\]
Using Young's inequality $L_0|p|\le \eta |p|^2+\frac{L_0^2}{4\eta}$, for any $\eta\in(0,1)$ we get
\[
  |\mathcal A_{\mathrm{pert}}\mathcal M(q,p)|+|\mathcal R(q,p)|
  \le \eta |p|^2 + C_{R_0,\eta},
  \qquad |q|\le R_0,
\]
where an explicit choice of $C_{R_0,\eta}$ is given by
\[
  C_{R_0,\eta}:=A_0+\frac{L_0^2}{4\eta}.
\]

Recalling \eqref{eq:A0M-plus-AprimeV0-split},
we deduce that for $|q|\le R_0$,
\[
  \mathcal A_0\mathcal M(z)+\mathcal A'\mathcal V_0(z)
  \le -\Xi(z) + \eta |p|^2 + C_{R_0,\eta}.
\]
Using $\Xi(z)\ge \underline{a}(|q|^2+|p|^2)$ from \eqref{eq:Xi-coercive} and choosing
$\eta:=\underline{a}/2$, we get
\[
  \mathcal A_0\mathcal M(z)+\mathcal A'\mathcal V_0(z)
  \le -\frac12\,\Xi(z) + C_{R_0},
  \qquad |q|\le R_0,
\]
with $C_{R_0}:=C_{R_0,\underline{a}/2}$.

Together with \eqref{eq:outside-ball-Xi} (valid on $|q|\ge R_0$), we have the global bound
\[
  \mathcal A_0\mathcal M(z)+\mathcal A'\mathcal V_0(z)
  \le -\frac{3}{8}\Xi(z) + C_{R_0},
  \qquad z\in\mathbb R^{2d}.
\]

Finally, we convert $\Xi$ into $\mathcal V_0$ as in \eqref{eq:V0-controlled-by-Xi} on $|q|\ge R_0$,
while on $|q|\le R_0$ we use the identity $\mathcal V_0-\Xi = U(q)-\frac12\langle Q_\infty q,q\rangle$
and the bound $|U(q)-\frac12\langle Q_\infty q,q\rangle|\le \delta_U(1)(1+R_0^2)$ (cf. \eqref{eq:deltaU-bound-from-rho}) to conclude that
$\mathcal V_0(z)\le \Xi(z)+C_{R_0}'$ on $|q|\le R_0$.
This yields \eqref{eq:improvement-condition} for all $z$, with the same $\underline c_{\mathrm{imp}}$
as in \eqref{eq:cimp-lemma} and with $C_{\mathrm{imp}}$ defined as in the lemma statement. The finiteness of $C_{\mathrm{imp}}$ holds since for each fixed $q$, the map 
\[
    p\mapsto
    \mathcal A_0\mathcal M(q,p) + \mathcal A'\mathcal V_0(q,p)
    + \underline c_{\mathrm{imp}}\mathcal V_0(q,p)
\]
is a concave quadratic polynomial in $p$. Indeed, for fixed $q$, the $p$-quadratic coefficient matrix of
$\mathcal A_0\mathcal M + \mathcal A'\mathcal V_0 + \underline c_{\rm imp}\mathcal V_0$
is negative definite (uniformly in $|q|\le R_0$), and hence the supremum over $p$ is finite. This completes the proof.
\end{proof}

%%%%%%%%%%%%%%%%%%%%%%%%%%%%%%%%%%%%%%%%%%%%%%%%%%%%%%%

%%%%%%%%%%%%%%%%%%%%%%%%%%%%%%%%%%%%%%%%%%%%%%%%%%%%%%

%%%%%%%%%%%%%%%%%%%%%%%%%%%%%%%%%%%%%%%%%%%%%%%%%%%%%%%%%%%%
\subsection{Proof of Lemma~\ref{lem:Valpha-V0-equivalence}}\label{app:Valpha-V0-equivalence}
\begin{proof}
Recall from \eqref{eq:Valpha-def} that $\mathcal V_\alpha=\mathcal V_0+\alpha\mathcal M$.
From \eqref{eq:V0-equivalent} and $U\ge0$, we have
\[
  1+\mathcal V_0(q,p)
  \;\ge\;
  c_1\left(1+U(q)+|q|^2+|p|^2\right)
  \;\ge\;
  c_1\left(1+|q|^2+|p|^2\right),
\]
and hence
\begin{equation}\label{eq:qp2-controlled-by-V0}
  1+|q|^2+|p|^2
  \;\le\;
  \frac{1}{c_1}\,\left(1+\mathcal V_0(q,p)\right).
\end{equation}

By Lemma~\ref{lem:first-order-improvement} (growth bound \eqref{eq:M-growth}),
\[
  |\mathcal M(q,p)|
  \;\le\;
  C_{\mathcal M}\left(1+|q|^2+|p|^2\right).
\]
Combining with \eqref{eq:qp2-controlled-by-V0} yields
\begin{equation}\label{eq:M-controlled-by-V0}
  |\mathcal M(q,p)|
  \;\le\;
  \frac{C_{\mathcal M}}{c_1}\,\left(1+\mathcal V_0(q,p)\right).
\end{equation}

Therefore, for any $\alpha\ge0$,
\[
  |\alpha\mathcal M(q,p)|
  \;\le\;
  \alpha\,\frac{C_{\mathcal M}}{c_1}\,\left(1+\mathcal V_0(q,p)\right).
\]
Let
$\alpha_*:=\frac{c_1}{2C_{\mathcal M}}$.
Then for all $\alpha\in[0,\alpha_*]$ we have
$\alpha\,\frac{C_{\mathcal M}}{c_1}\le \frac12$, and thus
\begin{align*}
  1+\mathcal V_\alpha(q,p)
  &= 1+\mathcal V_0(q,p)+\alpha\mathcal M(q,p) \\
  &\ge 1+\mathcal V_0(q,p)-|\alpha\mathcal M(q,p)| \\
  &\ge \left(1-\alpha\frac{C_{\mathcal M}}{c_1}\right)\left(1+\mathcal V_0(q,p)\right)
  \;\ge\;
  \frac12\,\left(1+\mathcal V_0(q,p)\right),
\end{align*}
and similarly,
\begin{align*}
  1+\mathcal V_\alpha(q,p)
  &= 1+\mathcal V_0(q,p)+\alpha\mathcal M(q,p) \\
  &\le 1+\mathcal V_0(q,p)+|\alpha\mathcal M(q,p)| \\
  &\le \left(1+\alpha\frac{C_{\mathcal M}}{c_1}\right)\left(1+\mathcal V_0(q,p)\right)
  \;\le\;
  \frac32\,\left(1+\mathcal V_0(q,p)\right).
\end{align*}
This proves \eqref{eq:Valpha-V0-equivalence}.
\end{proof}

%%%%%%%%%%%%%%%%%%%%%%%%%%%%%%%%%%%%%%%%%%%%%%%%%%%%%%%%%%%%

\subsection{Proof of Lemma~\ref{lem:Valpha-drift-expansion}}\label{app:Valpha-drift-expansion}

\begin{proof}
We use $\cL_\alpha=\mathcal A_0+\alpha\mathcal A'+\alpha\Delta_q+\gamma\Delta_p$ from \eqref{eq:Lalpha-decomp}
and $\mathcal V_\alpha=\mathcal V_0+\alpha\mathcal M$ from \eqref{eq:Valpha-def} to write
\begin{equation}\label{eq:Valpha-expansion-fixed}
  \cL_\alpha\mathcal V_\alpha
  = \cL_0\mathcal V_0
    + \alpha\left(\cL_0\mathcal M+\mathcal A'\mathcal V_0+\Delta_q\mathcal V_0\right)
    + \alpha^2\left(\mathcal A'\mathcal M+\Delta_q\mathcal M\right),
\end{equation}
where $\cL_0=\mathcal A_0+\gamma\Delta_p$.
From \eqref{eq:L0-V0-Lyapunov}, we get
\begin{align}\label{L:0:V:0:bound}
  \cL_0\mathcal V_0 \le \gamma(d+A)-\lambda\mathcal V_0.
\end{align}

\medskip
\noindent\emph{The $\alpha$-term in \eqref{eq:Valpha-expansion-fixed}.}
Since $\cL_0\mathcal M=\mathcal A_0\mathcal M+\gamma\Delta_p\mathcal M$, we have
\[
  \cL_0\mathcal M+\mathcal A'\mathcal V_0
  = \mathcal A_0\mathcal M+\mathcal A'\mathcal V_0+\gamma\Delta_p\mathcal M
  \le C_{\mathrm{imp}}-\underline c_{\mathrm{imp}}\mathcal V_0+\gamma\Delta_p\mathcal M
\]
by \eqref{eq:improvement-condition}. Since $\mathcal M(z)=\frac12 z^\top \mathsf K z$ is quadratic, we have
$\nabla_{pp}^2\mathcal M=\mathsf K_{pp}$ and hence
\[
  \Delta_p\mathcal M=\mathrm{tr}(\mathsf K_{pp}).
\]
In particular, $\Delta_p\mathcal M$ is independent of $(q,p)$, so the term $\gamma\Delta_p\mathcal M$
can be absorbed into the constant. Therefore,
\[
  \cL_0\mathcal M+\mathcal A'\mathcal V_0
  \le \left(C_{\mathrm{imp}}+\gamma\,\mathrm{tr}(\mathsf K_{pp})\right)
      -\underline c_{\mathrm{imp}}\mathcal V_0 .
\]
Moreover, by Assumption~\ref{assump:potential}, $\nabla U$ is Lipschitz, so that $\nabla^2U$ exists a.e.
and $\|\nabla^2U(q)\|_{\mathrm{op}}\le L$ a.e. Hence, $|\Delta U(q)|\le dL$ a.e. Therefore,
\[
  |\Delta_q\mathcal V_0(q,p)|
  =\left|\Delta U(q)+\frac{\gamma^2}{2}d(1-\lambda)\right|
  \le dL+\frac{\gamma^2}{2}d|1-\lambda|
  =K_\Delta
  \quad\text{a.e.}
\]
Combining these bounds yields
\begin{align}\label{alpha:term:bound}
  \cL_0\mathcal M+\mathcal A'\mathcal V_0+\Delta_q\mathcal V_0
  \le \left(C_{\mathrm{imp}}+\gamma\,\mathrm{tr}(\mathsf K_{pp})+K_\Delta\right)
      -\underline c_{\mathrm{imp}}\mathcal V_0.
\end{align}

\medskip
\noindent\emph{The $\alpha^2$-term in \eqref{eq:Valpha-expansion-fixed}.}
Since $\mathcal A'=-\nabla U(q)\cdot\nabla_q$, using $|\nabla U(q)|\le L|q|+|\nabla U(0)|$
and \eqref{eq:gradM-growth}, we obtain
\[
  |\mathcal A'\mathcal M|
  \le |\nabla U(q)|\,|\nabla_q\mathcal M(q,p)|
  \le (L|q|+|\nabla U(0)|)\,C_{\mathcal M}(1+|q|+|p|).
\]
Using $(1+|q|+|p|)^2\le 3(1+|q|^2+|p|^2)$ and
$L|q|+|\nabla U(0)|\le (L+|\nabla U(0)|)(1+|q|)$ gives
\[
  |\mathcal A'\mathcal M|
  \le 3\,C_{\mathcal M}(L+|\nabla U(0)|)\,(1+|q|^2+|p|^2).
\]
By \eqref{eq:V0-equivalent} and $U\ge0$, $1+|q|^2+|p|^2\le c_1^{-1}(1+\mathcal V_0)$. Hence
\[
  |\mathcal A'\mathcal M|
  \le 3\,C_{\mathcal M}\frac{(L+|\nabla U(0)|)}{c_1}\,(1+\mathcal V_0).
\]
Finally, since $\Delta_q\mathcal M=\mathrm{tr}(\mathsf K_{qq})$ is a constant and $1+\mathcal V_0\ge1$,
\[
  |\Delta_q\mathcal M|
  =|\mathrm{tr}(\mathsf K_{qq})|
  \le |\mathrm{tr}(\mathsf K_{qq})|\,(1+\mathcal V_0).
\]
Therefore,
\begin{align}\label{alpha:2:term:bound}
  \mathcal A'\mathcal M+\Delta_q\mathcal M
  \le \left(|\mathrm{tr}(\mathsf K_{qq})|+3\,C_{\mathcal M}\frac{(L+|\nabla U(0)|)}{c_1}\right)(1+\mathcal V_0).
\end{align}

\medskip
Putting the $\cL_0\mathcal V_0$ bound \eqref{L:0:V:0:bound}, the $\alpha$-term bound \eqref{alpha:term:bound}, and the $\alpha^2$-term bound \eqref{alpha:2:term:bound}
into \eqref{eq:Valpha-expansion-fixed} yields \eqref{eq:Lalpha-Valpha-bound-fixed}
with $C_1,C_2$ given by \eqref{eq:C1C2-explicit-fixed}.
\end{proof}
%%%%%%%%%%%%%%%%%%

%%%%%%%%%%%%%%%%%%%%%%%%%%%%%%%%

\subsection{Proof of Proposition~\ref{prop:Valpha-drift}}\label{app:Valpha-drift}

\begin{proof}
We start from Lemma~\ref{lem:Valpha-drift-expansion}: for all $\alpha\in(0,1]$ and all $z=(q,p)\in\R^{2d}$,
\begin{equation}\label{eq:drift-inequality}
  \begin{aligned}
  \cL_\alpha\mathcal V_\alpha(z)
  &\le \gamma(d+A) - \lambda\mathcal V_0(z)
      + \alpha\left( C_1 - \underline c_{\mathrm{imp}}\mathcal V_0(z)\right)
      + C_2\alpha^2(1+\mathcal V_0(z)) \\
  &= \left[\gamma(d+A) + \alpha C_1 + C_2\alpha^2\right]
     - \left[\lambda+\alpha\underline c_{\mathrm{imp}}-C_2\alpha^2\right]\mathcal V_0(z).
  \end{aligned}
\end{equation}
Set
\[
  \varsigma(\alpha):=\lambda+\alpha\underline c_{\mathrm{imp}}-C_2\alpha^2.
\]

\medskip\noindent
\emph{Step 1: Compare $\mathcal V_0$ and $\mathcal V_\alpha$.}
By \eqref{eq:M-growth}, $|\mathcal M|\le C_{\mathcal M}(1+|q|^2+|p|^2)$.
By \eqref{eq:V0-equivalent}, $1+\mathcal V_0\ge c_1(1+|q|^2+|p|^2)$, and hence
$1+|q|^2+|p|^2\le c_1^{-1}(1+\mathcal V_0)$. Therefore
\[
  |\mathcal M|\le \widetilde C_{\mathcal M}\,(1+\mathcal V_0),
  \qquad \widetilde C_{\mathcal M}:=\frac{C_{\mathcal M}}{c_1}.
\]
Consequently, for $\alpha\in(0,1]$,
\[
  \mathcal V_\alpha
  = \mathcal V_0+\alpha\mathcal M
  \le \mathcal V_0+\alpha|\mathcal M|
  \le \mathcal V_0+\alpha \widetilde C_{\mathcal M}(1+\mathcal V_0)
  = (1+\alpha \widetilde C_{\mathcal M})\mathcal V_0+\alpha \widetilde C_{\mathcal M}.
\]
Rearranging gives the lower bound
\begin{equation}\label{eq:V0-lower-bound-refined}
  \mathcal V_0 \ge \frac{\mathcal V_\alpha - \alpha \widetilde C_{\mathcal M}}{1+\alpha \widetilde C_{\mathcal M}}.
\end{equation}

\medskip\noindent
\emph{Step 2: Drift bound in terms of $\mathcal V_\alpha$.}
Plugging \eqref{eq:V0-lower-bound-refined} into \eqref{eq:drift-inequality} yields
\begin{align*}
  \cL_\alpha\mathcal V_\alpha
  &\le \left[\gamma(d+A) + \alpha C_1 + C_2\alpha^2\right]
      - \varsigma(\alpha)\left(\frac{\mathcal V_\alpha - \alpha \widetilde C_{\mathcal M}}{1+\alpha \widetilde C_{\mathcal M}}\right)\\
  &= \left[\gamma(d+A) + \alpha C_1 + C_2\alpha^2
      + \frac{\alpha \widetilde C_{\mathcal M}\varsigma(\alpha)}{1+\alpha \widetilde C_{\mathcal M}}\right]
     - \frac{\varsigma(\alpha)}{1+\alpha \widetilde C_{\mathcal M}}\,\mathcal V_\alpha.
\end{align*}
Define
\[
  \lambda_\alpha := \frac{\varsigma(\alpha)}{1+\alpha \widetilde C_{\mathcal M}}.
\]

\smallskip\noindent
\emph{Constant term and admissibility.}
Using $\alpha\le 1$, $C_2\alpha^2\le \alpha(\alpha C_2)$, and
\[
  \frac{\alpha \widetilde C_{\mathcal M}\varsigma(\alpha)}{1+\alpha \widetilde C_{\mathcal M}}
  \le \alpha \widetilde C_{\mathcal M}\varsigma(\alpha)
  \le \alpha \widetilde C_{\mathcal M}(\lambda+\underline c_{\mathrm{imp}}),
\]
we obtain
\begin{align*}
  &\gamma(d+A) + \alpha C_1 + C_2\alpha^2
      + \frac{\alpha \widetilde C_{\mathcal M}\varsigma(\alpha)}{1+\alpha \widetilde C_{\mathcal M}}\\
  &\le \gamma(d+A)
     + \alpha\left[C_1+\alpha C_2 + \widetilde C_{\mathcal M}(\lambda+\underline c_{\mathrm{imp}})\right]
  = \gamma\left(d + A_\alpha'\right),
\end{align*}
where $A_\alpha'$ is given by \eqref{eq:Aalpha-explicit-fixed}. Hence
\[
  \cL_\alpha\mathcal V_\alpha \le \gamma\left(d + A_\alpha' - \lambda_\alpha \mathcal V_\alpha\right),
\]
i.e.\ $\mathcal V_\alpha$ is $(\lambda_\alpha,A_\alpha')$-admissible for $\cL_\alpha$.

\smallskip\noindent
\emph{Rate expansion.}
Since $\frac{1}{1+x}\ge 1-x$ for any $x\ge 0$,
\[
  \lambda_\alpha=\frac{\varsigma(\alpha)}{1+\alpha \widetilde C_{\mathcal M}}
  \ge \varsigma(\alpha)\,\left(1-\alpha \widetilde C_{\mathcal M}\right)
  = \left(\lambda+\alpha\underline c_{\mathrm{imp}}-C_2\alpha^2\right)\left(1-\alpha \widetilde C_{\mathcal M}\right).
\]
Expanding the product gives
\[
  \lambda_\alpha
  \ge \lambda
     + \alpha\left(\underline c_{\mathrm{imp}}-\lambda \widetilde C_{\mathcal M}\right)
     - \alpha^2\left(C_2+\underline c_{\mathrm{imp}}\widetilde C_{\mathcal M}\right)
     + \alpha^3 C_2\widetilde C_{\mathcal M}.
\]
Dropping the nonnegative cubic term yields
\[
  \lambda_\alpha
  \ge \lambda
     + \alpha\left(\underline c_{\mathrm{imp}}-\lambda \widetilde C_{\mathcal M}\right)
     - \alpha^2\left(C_2+\underline c_{\mathrm{imp}}\widetilde C_{\mathcal M}\right)
  = \lambda + \delta\,\alpha - C_\lambda\,\alpha^2,
\]
with $\delta$ and $C_\lambda$ defined in \eqref{eq:delta-explicit-again-fixed}--\eqref{eq:Clambda-explicit-fixed}.
The choice of $\alpha_1$ in \eqref{eq:alpha1-explicit} ensures the auxiliary constraints
(from Proposition~\ref{prop:V0-drift-HFHR} and the positivity requirement on $\lambda_\alpha$)
hold simultaneously, and hence the claim follows.
\end{proof}
%%%%%%%%%%%%%%%%%%%%%%%%%%%%%%%%%%%%%%%%%%%%%%%%%%%%%%%%%%%%%%%%%%

%%%%%%%%%%%%%%%%%%%%%%%%%%%%%%%%%%%%%%%%%%%%%%%%%%

\subsection{Proof of Theorem~\ref{thm:metric-acceleration}}
\label{app:metric-acceleration}

\begin{proof}
Fix $\alpha\in(0,\alpha_1]$ and recall from \eqref{L:eff} that $L_{\mathrm{eff}}(\alpha)=(1+\alpha\gamma)L$.
We use the metric parameter $\Lambda_\alpha(\lambda)$ defined in \eqref{eq:Lambda-metric-def}, namely
\[
  \Lambda_\alpha(\lambda)=J_2\,\frac{(1+\alpha\gamma)L}{\lambda},
  \qquad
  \Lambda_0:=\Lambda_0(\lambda)=J_2\frac{L}{\lambda}.
\]
Also set $\mathcal S_h:=1-\frac{1}{2\Lambda_0}>0$ and
\[
  \Delta_\Lambda:=J_2L\frac{\delta-\gamma\lambda}{\lambda^2}>0.
\]

\medskip
By Proposition~\ref{prop:Valpha-drift}, there exist $\alpha_1>0$ and $C_\lambda\ge 0$ such that
for all $\alpha\in(0,\alpha_1]$ one can choose $\lambda_\alpha>0$ satisfying
\[
  \lambda_\alpha \;\ge\; \underline\lambda_\alpha
  := \lambda+\delta\alpha-C_\lambda\alpha^2.
\]
Define the proxy
\[
  \bar\Lambda_\alpha
  := J_2\,\frac{(1+\alpha\gamma)L}{\underline\lambda_\alpha}.
\]
Since $\lambda_\alpha\ge\underline\lambda_\alpha$ and $\Lambda_\alpha(\lambda)$ is decreasing in $\lambda$,
\begin{equation}\label{eq:Lambda-upper-by-barLambda-repl}
  \Lambda_\alpha(\lambda_\alpha)
  = J_2\,\frac{(1+\alpha\gamma)L}{\lambda_\alpha}
  \le
  J_2\,\frac{(1+\alpha\gamma)L}{\underline\lambda_\alpha}
  = \bar\Lambda_\alpha.
\end{equation}

\medskip
Let $D:=\delta-\gamma\lambda>0$ and define
\[
  g(\alpha):=\frac{1+\alpha\gamma}{\underline\lambda_\alpha}
  =\frac{1+\alpha\gamma}{\lambda+\delta\alpha-C_\lambda\alpha^2}.
\]
A direct computation gives
\begin{align}\label{follow:one}
  g(\alpha)-g(0)
  =\frac{1+\alpha\gamma}{\underline\lambda_\alpha}-\frac{1}{\lambda}
  =\frac{-D\alpha+C_\lambda\alpha^2}{\lambda\,\underline\lambda_\alpha}.
\end{align}
Choose
\begin{equation}\label{eq:alpha3a-explicit-repl}
  \alpha_{3,a}:=\min\left\{\alpha_1,\ 1,\ \frac{D}{4C_\lambda}\ \text{(if $C_\lambda>0$)},\ \sqrt{\frac{\lambda}{2C_\lambda}}\ \text{(if $C_\lambda>0$)}\right\},
\end{equation}
with the convention that the terms involving $C_\lambda$ are removed when $C_\lambda=0$.
Then for all $\alpha\in(0,\alpha_{3,a}]$ we have
\begin{align}\label{follow:two}
  -D\alpha+C_\lambda\alpha^2 \le -\frac{3D}{4}\alpha,
  \qquad
  \underline\lambda_\alpha\ge \frac{\lambda}{2},
\end{align}
and therefore it follows from \eqref{follow:one} and \eqref{follow:two} that
\[
  g(\alpha)-g(0)
  \le
  -\frac{3D}{4}\alpha\cdot\frac{2}{\lambda^2}
  \le
  -\frac{D}{8\lambda^2}\alpha.
\]
Multiplying by $J_2L$ and using $g(0)=1/\lambda$ yields
\[
  \bar\Lambda_\alpha
  =J_2L\,g(\alpha)
  \le
  J_2L\left(\frac{1}{\lambda}-\frac{D}{8\lambda^2}\alpha\right)
  =\Lambda_0-\frac{\Delta_\Lambda}{8}\alpha.
\]
Combining with \eqref{eq:Lambda-upper-by-barLambda-repl} proves (i) with the explicit choice
$c_\Lambda:=\Delta_\Lambda/8$.

\medskip
Recall \eqref{eq:hlambda-def}.
Since $\Lambda_0>\frac12$, we have $h'(\Lambda_0)<0$ and
\[
  h'(\Lambda_0)=-h(\Lambda_0)\left(1-\frac{1}{2\Lambda_0}\right)=-h(\Lambda_0)\mathcal S_h.
\]
Moreover
\[
  h''(\Lambda)=\frac{\Lambda^2-\Lambda-\frac14}{\Lambda^{3/2}}\,e^{-\Lambda}.
\]
Define the (finite) constant
\[
  M_h:=\sup_{\Lambda\in[\Lambda_0/2,\Lambda_0]}|h''(\Lambda)|.
\]
Let $t:=c_\Lambda\alpha$.
To ensure $\Lambda_0-t\in[\Lambda_0/2,\Lambda_0]$ and that the quadratic remainder is dominated by the linear term,
set
\begin{equation}\label{eq:alpha3b-explicit-repl}
  \alpha_{3,b}:=\min\left\{\frac{\Lambda_0}{2c_\Lambda},\ \frac{h(\Lambda_0)\mathcal S_h}{M_h\,c_\Lambda}\right\}.
\end{equation}
Finally define
\begin{equation}\label{eq:alpha3-explicit}
  \alpha_{\mathrm{metric,acc}}:=\min\{\alpha_{3,a},\alpha_{3,b}\}.
\end{equation}
For $\alpha\in(0,\alpha_{\mathrm{metric,acc}}]$, part (i) implies
$\Lambda_\alpha(\lambda_\alpha)\le \Lambda_0-t$ with $t=c_\Lambda\alpha$.
Since $h$ is decreasing on $[\Lambda_0/2,\Lambda_0]$, this yields
\[
  h(\Lambda_\alpha(\lambda_\alpha))\ge h(\Lambda_0-t).
\]
A second-order Taylor expansion of $h$ at $\Lambda_0$ with Lagrange remainder yields
\[
  h(\Lambda_0-t)
  \ge
  h(\Lambda_0)+|h'(\Lambda_0)|\,t-\frac12 M_h t^2
  \ge
  h(\Lambda_0)+\frac12 h(\Lambda_0)\mathcal S_h\,t,
\]
where the last inequality uses $t\le h(\Lambda_0)\mathcal S_h/M_h$
(by the definition of $\alpha_{3,b}$). Hence
\[
  h(\Lambda_\alpha(\lambda_\alpha))
  \ge
  h(\Lambda_0)\left(1+\frac{\mathcal S_h}{2}\,c_\Lambda\,\alpha\right).
\]
Therefore, for any $c_3<c_3^*:=\frac{\mathcal S_h}{2}c_\Lambda$, we have
\[
  \widetilde\Lambda_{3,\alpha}(\lambda_\alpha)
  =\kappa_{\mathrm{adjust}}\,h(\Lambda_\alpha(\lambda_\alpha))
  \ge
  \kappa_{\mathrm{adjust}}\,h(\Lambda_0)\,(1+c_3\alpha)
  =\widetilde\Lambda_{3,0}(\lambda)\,(1+c_3\alpha).
\]

Finally,
\begin{align*}
  \widetilde\Lambda_{2,\alpha}(\lambda_\alpha)
  = h(\Lambda_\alpha(\lambda_\alpha))\,\frac{L_{\mathrm{eff}}(\alpha)}{\gamma^2}
  &= \frac{L}{\gamma^2}(1+\alpha\gamma)\,h(\Lambda_\alpha(\lambda_\alpha))
  \\
  &\ge
  \widetilde\Lambda_{2,0}(\lambda)\,(1+\alpha\gamma)(1+c_3\alpha)
  \ge
  \widetilde\Lambda_{2,0}(\lambda)\left(1+(\gamma+c_3)\alpha\right),
\end{align*}
so that the bound holds with $c_2:=\gamma+c_3$.
This completes the proof.
\end{proof}
%%%%%%%%%%%%%%%%%%%%%%%%%%%%%%%%%%%%%%%%%%%%%%%%%%%%

\subsection{Proof of Lemma~\ref{lem:branch-continuity}}\label{app:branch-continuity}
\begin{proof}
By the strict activity at $\alpha=0$ we have $\Delta(0)>0$.
By continuity of $\Delta$, there exists $\varepsilon>0$ such that $\Delta(\alpha)>0$ for all $\alpha\in[0,\varepsilon]$.
Hence $\alpha_{\mathrm{branch}}\ge\varepsilon>0$ and, by definition of $\alpha_{\mathrm{branch}}$,
for all $\alpha\in(0,\alpha_{\mathrm{branch}}]$ we have
\[
  \widetilde\Lambda_{1,\alpha}(\lambda_\alpha)\le
  \min\left\{\widetilde\Lambda_{2,\alpha}(\lambda_\alpha),\,\widetilde\Lambda_{3,\alpha}(\lambda_\alpha)\right\}.
\]
Recalling from Theorem~\ref{thm:master-contraction} (see \eqref{eq:rate-explicit}) that
\[
c(\lambda_\alpha)=\frac{\gamma}{384}\min\left\{\widetilde\Lambda_{1,\alpha}(\lambda_\alpha),\widetilde\Lambda_{2,\alpha}(\lambda_\alpha),\widetilde\Lambda_{3,\alpha}(\lambda_\alpha)\right\},
\]
the above inequality implies that the minimum is attained at $\widetilde\Lambda_{1,\alpha}(\lambda_\alpha)$, and thus
\[
c_\alpha=\frac{\gamma}{384}\widetilde\Lambda_{1,\alpha}(\lambda_\alpha)
\qquad\text{for all }\alpha\in(0,\alpha_{\mathrm{branch}}].
\]
\end{proof}

%%%%%%%%%%%%%%%%%%%%%%%%%%%%%%%%%%%%%%%%%%%%%%%%%%%%%%%%%%%%%%%%%%%%%%%%%%%%%%%%%%%%%%%%

\subsection{Proof of Theorem~\ref{thm:lyapunov-acceleration}}\label{app:acceleration}

\begin{proof}
By Lemma~\ref{lem:branch-continuity}, for all $\alpha\in(0,\alpha_{\mathrm{branch}}]$,
the Lyapunov branch remains active. Hence
\[
  c_\alpha
  = \frac{\gamma}{384}\,\widetilde\Lambda_{1,\alpha}(\lambda_\alpha)
  = \frac{\gamma}{384}\,\frac{\lambda_\alpha L_{\mathrm{eff}}(\alpha)}{\gamma^{2}},
\]
where $L_{\mathrm{eff}}(\alpha)=(1+\gamma\alpha)L$. At $\alpha=0$,
\[
  c_0
  = \frac{\gamma}{384}\,\widetilde\Lambda_{1,0}(\lambda)
  = \frac{\gamma}{384}\,\frac{\lambda L}{\gamma^{2}}
  = \frac{L}{384\,\gamma}\,\lambda.
\]

By Proposition~\ref{prop:Valpha-drift}, for all $\alpha\in(0,\alpha_1]$,
\[
  \lambda_\alpha \;\ge\; \lambda + \delta\alpha - C_\lambda\alpha^2.
\]
Hence for $\alpha\in(0,\alpha_{\mathrm{branch}}]$,
\begin{align*}
  c_\alpha
  = \frac{L}{384\,\gamma}\,\lambda_\alpha(1+\gamma\alpha)
  &\ge \frac{L}{384\,\gamma}\,(\lambda + \delta\alpha - C_\lambda\alpha^2)(1+\gamma\alpha) \\
  &= \frac{L}{384\,\gamma}\left[
      \lambda + (\delta+\gamma\lambda)\alpha
      + (\gamma\delta-C_\lambda)\alpha^2
      - \gamma C_\lambda\alpha^3
     \right].
\end{align*}
Dropping the possibly positive term $(\gamma\delta)\alpha^2$ and using $\alpha\le 1$ to bound
$-\gamma C_\lambda\alpha^3\ge -\gamma C_\lambda\alpha^2$, we obtain
\[
  c_\alpha
  \ge \frac{L}{384\,\gamma}\left[
      \lambda + (\delta+\gamma\lambda)\alpha
      - (1+\gamma)C_\lambda\alpha^2
     \right]
  = c_0 + \tilde\kappa\,\alpha - C'\alpha^2,
\]
with $\tilde\kappa=\frac{L(\delta+\gamma\lambda)}{384\gamma}$ and
\[
    C'=\frac{L}{384\gamma}(1+\gamma)C_\lambda=\frac{L}{384\gamma}(1+\gamma)\left(C_2 + \widetilde C_{\mathcal M}\,\underline c_{\mathrm{imp}}\right).
\]
Choose $\alpha_{\mathrm{branch,acc}}:=\min\{\alpha_{\mathrm{branch}},1,\tilde\kappa/(2C')\}$.
Then for all $\alpha\in(0,\alpha_{\mathrm{branch,acc}}]$ we have $C'\alpha\le \tilde\kappa/2$. Hence, 
setting $\kappa:=\tilde\kappa/2$ gives \eqref{eq:acceleration-inequality}:
\[
  c_\alpha \ge c_0 + \frac{\tilde\kappa}{2}\alpha=c_0+\kappa\alpha.
\]
\end{proof}
%%%%%%%%%%%%%%%%%%%%%%%

\subsection{Proof of Corollary~\ref{cor:global-acceleration}}
\label{app:global-acceleration}

\begin{proof}
Recall from \eqref{eq:rate-explicit} in Theorem~\ref{thm:master-contraction} that 
\[
  c(\lambda)=\frac{\gamma}{384}\min\left\{\widetilde\Lambda_{1,\alpha}(\lambda),\widetilde\Lambda_{2,\alpha}(\lambda),\widetilde\Lambda_{3,\alpha}(\lambda)\right\}.
\]
For the HFHR dynamics, set
\[
  f_1(\alpha):=\frac{\gamma}{384}\widetilde\Lambda_{1,\alpha}(\lambda_\alpha),\qquad
  f_2(\alpha):=\frac{\gamma}{384}\widetilde\Lambda_{2,\alpha}(\lambda_\alpha),\qquad
  f_3(\alpha):=\frac{\gamma}{384}\widetilde\Lambda_{3,\alpha}(\lambda_\alpha),
\]
so that $c_\alpha=\min\{f_1(\alpha),f_2(\alpha),f_3(\alpha)\}$ and $c_0=\min\{f_1(0),f_2(0),f_3(0)\}$.
In particular, $f_i(0)\ge c_0$ for each $i$.

\medskip
\noindent\emph{Step 1: Lyapunov branch lower bound.}
By Theorem~\ref{thm:lyapunov-acceleration}, for all $\alpha\in(0,\alpha_{\mathrm{branch,acc}}]$,
\begin{equation}\label{eq:f1-global}
  f_1(\alpha)\;\ge\; c_0+\kappa\,\alpha.
\end{equation}

\medskip
\noindent\emph{Step 2: Metric branch lower bounds.}
By Theorem~\ref{thm:metric-acceleration}, for all $\alpha\in(0,\alpha_{\mathrm{metric,acc}}]$,
\[
  f_i(\alpha)\ge f_i(0)(1+c_i\alpha)=f_i(0)+c_i f_i(0)\alpha,
  \qquad i=2,3,
\]
with $c_2,c_3>0$. Using $f_i(0)\ge c_0$ yields
\begin{equation}\label{eq:fi-global}
  f_i(\alpha)\ge c_0 + c_0 c_i\,\alpha,\qquad i=2,3.
\end{equation}

\medskip
\noindent\emph{Step 3: Take the minimum.}
For all $\alpha\in(0,\alpha_{\mathrm{global}}]$, both \eqref{eq:f1-global} and \eqref{eq:fi-global} hold. Hence
\[
  c_\alpha=\min_{i=1,2,3} f_i(\alpha)
  \;\ge\;
  \min\left\{c_0+\kappa\alpha,\; c_0+c_0c_2\alpha,\; c_0+c_0c_3\alpha\right\}
  \;=\;
  c_0 + \kappa_{\mathrm{global}}\alpha,
\]
with $\kappa_{\mathrm{global}}:=\min\{\kappa,c_0c_2,c_0c_3\}>0$.
\end{proof}
%%%%%%%%%%%%%%%%%%%%%%%

\subsection{Proof of Corollary~\ref{cor:W2-acceleration}}
\label{app:W2-acceleration}

\begin{proof}
Fix $\alpha\in(0,\alpha_{\mathrm{W2}}]$ and choose $\lambda_\alpha=\underline\lambda_\alpha$.
By $\alpha\le \alpha_{\mathrm{pos}}$ and \eqref{eq:alpha-pos-explicit}, we have $\lambda_\alpha\ge\lambda_-=\lambda/2$.
Also, since $\lambda_\alpha=\lambda+\delta\alpha-C_\lambda\alpha^2\le \lambda+\delta\alpha\le \lambda_+$,
we have $\lambda_\alpha\in I_\lambda$.

Apply Lemma~\ref{lem:rho-controls-W2} with $\mathcal V=\mathcal V_\alpha$ and $\varepsilon=\varepsilon_\alpha$.
Using the uniform bounds $k_1^-,R_1(\lambda)^+,g_*^-,c_r^-,c_0^-$ and $C_V^{\mathrm{unif}}$,
and the definition of $\varepsilon^-$, the explicit constant \eqref{eq:Crho-explicit} in
Lemma~\ref{lem:rho-controls-W2} yields
\[
  \mathcal W_2^2(\mu,\nu)\le C_{\rho}^{\mathrm{unif}}\,\cW_{\rho_{\mathcal V_\alpha}}(\mu,\nu),
\]
and hence we obtain:
\[
  \mathcal W_2(\mu P_t^\alpha,\nu P_t^\alpha)
  \le \left(C_{\rho}^{\mathrm{unif}}\right)^{1/2}
       \left(\cW_{\rho_{\mathcal V_\alpha}}(\mu P_t^\alpha,\nu P_t^\alpha)\right)^{1/2}.
\]
By Corollary~\ref{cor:global-acceleration},
\[
  \cW_{\rho_{\mathcal V_\alpha}}(\mu P_t^\alpha,\nu P_t^\alpha)
  \le e^{-c_\alpha t}\,\cW_{\rho_{\mathcal V_\alpha}}(\mu,\nu),
\]
and therefore
\[
  \mathcal W_2(\mu P_t^\alpha,\nu P_t^\alpha)
  \le \left(C_{\rho}^{\mathrm{unif}}\right)^{1/2}
       e^{-\frac12 c_\alpha t}
       \left(\cW_{\rho_{\mathcal V_\alpha}}(\mu,\nu)\right)^{1/2}.
\]
This proves \eqref{eq:W2-acceleration} with $c_\alpha^{(2)}=\frac12 c_\alpha$.

Finally, since $\alpha_{\mathrm{W2}}\le \alpha_{\mathrm{global}}$, Corollary~\ref{cor:global-acceleration} gives
$c_\alpha\ge c_0+\kappa_{\mathrm{global}}\alpha$, hence
\[
  c_\alpha^{(2)}=\frac12 c_\alpha
  \ge \frac12\left(c_0+\kappa_{\mathrm{global}}\alpha\right)
  = c_0^{(2)}+\kappa^{(2)}\alpha,
\]
for all $\alpha\in(0,\alpha_{\mathrm{W2}}]$.
This completes the proof.
\end{proof}

%%%%%%%%%%%%%%%%%%%%%%%%%%%%%%%%%%%%%%%%%%%%%%%%%%%%%%%%%%%%%%%%
\section{Proofs for the Results in Section~\ref{sec:case-study}}
\label{app:proof-case-study}

\subsection{Proof of Proposition~\ref{prop:MW-assumptions}}\label{app:MW-assumptions}
\begin{proof}
    We verify the properties sequentially based on the separable structure $U(q) = \sum_{i=1}^d v(q_i)$.

    \medskip\noindent
    \emph{(a) Regularity.}
    First, let us verify Assumption~\ref{assump:potential}(i)-(ii).
    The one-dimensional potential $v(s)$ has a continuous derivative $v'(s)$ satisfying $|v'(s) - v'(t)| \le |s-t|$ for all $s,t \in \R$ (since $v'$ is piecewise linear with slopes $\pm 1$ or $0$).
    For the $d$-dimensional potential, we sum the squares of the components:
    \[
        |\nabla U(q) - \nabla U(q')|^2 = \sum_{i=1}^d |v'(q_i) - v'(q'_i)|^2 \le \sum_{i=1}^d |q_i - q'_i|^2 = |q-q'|^2.
    \]
    Thus, $\nabla U$ is globally Lipschitz with constant $L=1$, independent of $d$. 

    \medskip\noindent
    \emph{(b) Dissipativity.}
    We verify Assumption~\ref{assump:potential}(iii).
    Set
    \[
      \bar\lambda:=\frac{1}{4+\gamma^2}\in\left(0,\frac14\right],
    \]
    and for $s\in\mathbb R$ define
    \[
      \Delta_{\bar\lambda}(s)
      :=\bar\lambda\left(v(s)+\frac{\gamma^2}{4}s^2\right)-\frac12\,s\,v'(s).
    \]
    We claim that $\sup_{s\in\mathbb R}\Delta_{\bar\lambda}(s)\le A_1(\gamma)$ with
    \[
      A_1(\gamma):=\frac{\gamma^4+6\gamma^2+16}{4(\gamma^4+10\gamma^2+24)}.
    \]
    
    \smallskip
    \noindent\textbf{Case 1: $|s|\le \frac12$.}
    Here $v(s)=\frac14-\frac12 s^2$ and $v'(s)=-s$, so that
    \[
      \Delta_{\bar\lambda}(s)
      =\frac{\bar\lambda}{4}
       +\left(\frac12+\bar\lambda\frac{\gamma^2-2}{4}\right)s^2.
    \]
    Since the coefficient of $s^2$ is positive, $\Delta_{\bar\lambda}$ is maximized at $|s|=\frac12$:
    \[
      \sup_{|s|\le 1/2}\Delta_{\bar\lambda}(s)
      =\Delta_{\bar\lambda}\!\left(\frac12\right)
      =\frac{3\gamma^2+10}{16(\gamma^2+4)}.
    \]
    
    \smallskip
    \noindent\textbf{Case 2: $|s|>\frac12$.}
    Here $v(s)=\frac12(|s|-1)^2=\frac12(s^2-2|s|+1)$ and $s v'(s)=s^2-|s|$. Hence
    \[
      \Delta_{\bar\lambda}(s)
      =-a|s|^2+b|s|+\frac{\bar\lambda}{2},
    \]
    where
    \[
      a:=\frac12-\bar\lambda\left(\frac12+\frac{\gamma^2}{4}\right)
        =\frac{\gamma^2+6}{4(\gamma^2+4)},
      \qquad
      b:=\frac12-\bar\lambda
        =\frac{\gamma^2+2}{2(\gamma^2+4)}.
    \]
    The concave quadratic $-ax^2+bx$ attains its maximum at $x_*=\frac{b}{2a}$, and therefore
    \[
      \sup_{|s|>1/2}\Delta_{\bar\lambda}(s)
      \le \frac{b^2}{4a}+\frac{\bar\lambda}{2}
      =\frac{\gamma^4+6\gamma^2+16}{4(\gamma^4+10\gamma^2+24)}
      =A_1(\gamma).
    \]
    Moreover,
    \[
      A_1(\gamma)-\Delta_{\bar\lambda}\!\left(\frac12\right)
      =\frac{(\gamma^2-2)^2}{16(\gamma^2+4)(\gamma^2+6)}\ge 0,
    \]
    so that $A_1(\gamma)$ also dominates the maximum in Case~1. Hence
    $\Delta_{\bar\lambda}(s)\le A_1(\gamma)$ for all $s\in\mathbb R$, i.e.
    \[
      \frac12\,s\,v'(s)
      \ge
      \bar\lambda\left(v(s)+\frac{\gamma^2}{4}s^2\right)-A_1(\gamma),
      \qquad s\in\mathbb R.
    \]
    Applying this inequality with $s=q_i$ for each coordinate $i=1,\dots,d$ and summing over $i$,
    we obtain
    \[
      \frac12 q\cdot\nabla U(q)
      = \sum_{i=1}^d \frac12 q_i v'(q_i)
      \ge \bar\lambda\left(\sum_{i=1}^d v(q_i)+\frac{\gamma^2}{4}\sum_{i=1}^d q_i^2\right) - d\,A_1(\gamma),
    \]
    which yields \eqref{eq:U-drift} with $\lambda=\bar\lambda$ and $A=dA_1(\gamma)$.
    
    \smallskip
    \noindent\textbf{Extension to all smaller $\lambda$.}
    Let $\lambda\in(0,\bar\lambda]$ be arbitrary. Since
    $v(s)+\frac{\gamma^2}{4}s^2\ge 0$ for all $s$, we have pointwise
    $\lambda\left(v(s)+\frac{\gamma^2}{4}s^2\right)
      \le
      \bar\lambda\left(v(s)+\frac{\gamma^2}{4}s^2\right)$,
    and therefore
    \[
      \Delta_{\lambda}(s)
      :=\lambda\left(v(s)+\frac{\gamma^2}{4}s^2\right)-\frac12\,s\,v'(s)
      \le
      \Delta_{\bar\lambda}(s)
      \le A_1(\gamma),
      \qquad s\in\mathbb R.
    \]
    Equivalently,
    \[
      \frac12\,s\,v'(s)
      \ge
      \lambda\left(v(s)+\frac{\gamma^2}{4}s^2\right)-A_1(\gamma),
      \qquad s\in\mathbb R,
    \]
    and summing over coordinates gives \eqref{eq:U-drift} for every
    $\lambda\in(0,1/(4+\gamma^2)]$ with the \emph{same} constant $A=dA_1(\gamma)$.

    \medskip\noindent
    \emph{(c) Asymptotic linear growth of the gradient.}
    Finally, we verify Assumption~\ref{assump:asymptotic-linear-drift}. Let us take $Q_\infty := I_d$.
    Since $|v'(s)-s|\le 1$ for all $s$, we have $|\nabla U(q)-q|\le \sqrt d$.
    For $|q|\ge \sqrt d$, define $\varrho(r):=\sqrt d/r\le 1$. Then
    \[
    |\nabla U(q)-q|\le \sqrt d = \varrho(|q|)\,|q|,\qquad |q|\ge \sqrt d,
    \]
    and clearly $\varrho(r)\to 0$ as $r\to\infty$ (for fixed $d$). This completes the proof.
\end{proof}

%%%%%%%%%%%%%%%%%

\subsection{Proof of Proposition~\ref{prop:MW-explicit-M}}\label{app:MW-explicit-M}
\begin{proof}
We prove (i)--(iii).

\medskip\noindent
\emph{Step 0: matrix form and preliminary constants.}
Write
\[
  a:=\frac{2+\gamma^2}{4\gamma},
  \qquad
  b:=\frac{1}{2\gamma},
  \qquad
  \mathcal M(q,p)=a|q|^2+b|p|^2.
\]
Then $\mathcal M(z)=\frac12 z^\top \mathsf K z$ with $z=(q,p)$ and 
\[
  \mathsf K=\begin{pmatrix}2a\,I_d&0\\0&2b\,I_d\end{pmatrix}
   =\begin{pmatrix}\frac{2+\gamma^2}{2\gamma}I_d&0\\0&\frac{1}{\gamma}I_d\end{pmatrix},
  \qquad
  \|\mathsf K\|_{\mathrm{op}}=\frac{2+\gamma^2}{2\gamma}.
\]
For the multi-well potential we have $U\ge 0$. Using \eqref{eq:V0-general-quadratic} and expanding,
\[
  \mathcal V_0(q,p)
  =U(q)+\frac{\gamma^2}{4}(1-\lambda)|q|^2+\frac12|p|^2+\frac\gamma2\langle q,p\rangle.
\]
Discarding $U(q)$ and writing the remaining quadratic form as
\[
  \frac{\gamma^2}{4}(1-\lambda)|q|^2+\frac12|p|^2+\frac\gamma2\langle q,p\rangle
  =\frac14\,(q,p)\cdot A\,(q,p),
  \qquad
  A:=\begin{pmatrix}\gamma^2(1-\lambda)I_d&\gamma I_d\\ \gamma I_d&2I_d\end{pmatrix},
\]
we obtain \eqref{eq:V0-lower-MW} with $c_1^{\mathrm{MW}}=\frac14\lambda_{\min}(A_1)$ where
$A_1=\left(\begin{matrix}\gamma^2(1-\lambda)&\gamma\\ \gamma&2\end{matrix}\right)$.
The eigenvalues of $A_1$ are explicit; hence \eqref{eq:c1MW-def} follows.

Finally, since $|\mathcal M(q,p)|\le a|q|^2+b|p|^2\le \frac{\|\mathsf K\|_{\rm op}}{2}(|q|^2+|p|^2)$,
combining with \eqref{eq:V0-lower-MW} gives \eqref{eq:M-growth-MW-tilde} with
$\widetilde C_{\mathcal M}^{\mathrm{MW}}=\frac{\|\mathsf K\|_{\rm op}}{2c_1^{\mathrm{MW}}}
=\frac{2+\gamma^2}{4\gamma c_1^{\mathrm{MW}}}$, proving (ii).

\medskip\noindent
\emph{Step 1: proof of (i).}
For this example, we may write $\nabla U(q)=q+r(q)$ with $|r(q)|\le \sqrt d$ for all $q$
(see Proposition~\ref{prop:MW-assumptions} with $Q_\infty=I_d$).
A direct computation gives
\begin{equation}\label{eq:generator-identity}
  \mathcal A_0\mathcal M(q,p)+\mathcal A'\mathcal V_0(q,p)
  = -B|q|^2-|p|^2+\mathcal R_{\mathrm{total}}(q,p),
\end{equation}
where $B=1+\frac{\gamma^2}{2}(1-\lambda)$ and
\[
  \mathcal R_{\mathrm{total}}(q,p)
  = -|r|^2- C_q\langle q,r\rangle - C_p\langle p,r\rangle,
  \qquad
  C_q:=2+\frac{\gamma^2}{2}(1-\lambda),\quad
  C_p:=\frac1\gamma+\frac\gamma2.
\]
We bound the cross terms using $|ab|\le \frac\varepsilon2 a^2+\frac{1}{2\varepsilon}b^2$.
For the $p$-term choose $a=\sqrt{C_{p}}|p|$, $b=\sqrt{C_{p}}|r|$ and $\varepsilon=1/C_p$:
\[
  C_p|\langle p,r\rangle|
  \le \frac12|p|^2+\frac{C_p^2}{2}|r|^2.
\]
For the $q$-term choose $a=\sqrt{C_{q}}|q|$, $b=\sqrt{C_{q}}|r|$ and $\varepsilon=B/C_q$:
\[
  C_q|\langle q,r\rangle|
  \le \frac{B}{2}|q|^2+\frac{C_q^2}{2B}|r|^2.
\]
Dropping the term $-|r|^2\le 0$ and using $|r|^2\le d$ in \eqref{eq:generator-identity} yields
\[
  \mathcal A_0\mathcal M+\mathcal A'\mathcal V_0
  \le -\left(\frac{B}{2}|q|^2+\frac12|p|^2\right)
     +\left(\frac{C_q^2}{2B}+\frac{C_p^2}{2}\right)d.
\]
Define
\[
  Q(q,p):=\frac{B}{2}|q|^2+\frac12|p|^2,
  \qquad
  C(\gamma,\lambda):=\frac{C_q^2}{2B}+\frac{C_p^2}{2}.
\]
Thus
\begin{equation}\label{eq:MW-Q-bound-proof-new}
  \mathcal A_0\mathcal M+\mathcal A'\mathcal V_0
  \le -Q(q,p)+C(\gamma,\lambda)\,d.
\end{equation}

Next we relate $Q$ to $\mathcal V_0$. Using $v(s)\le \frac12 s^2+\frac14$ and separability,
\[
  U(q)=\sum_{i=1}^d v(q_i)\le \frac12|q|^2+\frac d4.
\]
Hence, using again the explicit expansion of $\mathcal V_0$ (see \eqref{eq:V0-general-quadratic})
\[
  \mathcal V_0(q,p)
  \le \widetilde{\mathcal V}_0(q,p)+\frac d4,
  \qquad
  \widetilde{\mathcal V}_0(q,p)
  :=\frac{B}{2}|q|^2+\frac12|p|^2+\frac\gamma2\langle q,p\rangle.
\]
As quadratic forms, $Q$ and $\widetilde{\mathcal V}_0$ decompose into identical $2\times2$ blocks. Thus, it suffices to consider
\[
  Q_{\mathrm{mat}}:=\begin{pmatrix}B/2&0\\0&1/2\end{pmatrix},
  \qquad
  P_{\mathrm{mat}}:=\begin{pmatrix}B/2&\gamma/4\\ \gamma/4&1/2\end{pmatrix}.
\]
Since $\det(P_{\mathrm{mat}})=\frac{4B-\gamma^2}{16}=\frac{4+\gamma^2(1-2\lambda)}{16}>0$
(for $\lambda\le 1/4$), we have $P_{\mathrm{mat}}\succ 0$.
Therefore, the best constant $c$ in $Q\ge c\,\widetilde{\mathcal V}_0$ is the smallest generalized eigenvalue,
namely
\[
  c_{\mathrm{imp}}
  =\inf_{z\neq 0}\frac{z^\top Q_{\mathrm{mat}}z}{z^\top P_{\mathrm{mat}}z}
  =\frac{2\sqrt{B}}{2\sqrt{B}+\gamma}.
\]
Combining this with \eqref{eq:MW-Q-bound-proof-new} yields
\[
  \mathcal A_0\mathcal M+\mathcal A'\mathcal V_0
  \le -c_{\mathrm{imp}}\widetilde{\mathcal V}_0(q,p)+C(\gamma,\lambda)\,d.
\]
Finally, since $\mathcal V_0\le \widetilde{\mathcal V}_0+\frac d4$, we have
$-\widetilde{\mathcal V}_0\le -\mathcal V_0+\frac d4$, and therefore
\[
  \mathcal A_0\mathcal M+\mathcal A'\mathcal V_0
  \le -c_{\mathrm{imp}}\mathcal V_0(q,p)
     +\left(C(\gamma,\lambda)+\frac{c_{\mathrm{imp}}}{4}\right)d.
\]
This proves \eqref{eq:MW-first-order-ineq} with
\[
  C_{\mathrm{imp}}^{(d)}
  :=\left(C(\gamma,\lambda)+\frac{c_{\mathrm{imp}}}{4}\right)d.
\]

\medskip\noindent
\emph{Step 2: proof of (iii).}
By the definition in \eqref{eq:err-def}, 
\[
  \mathrm{Err}^{(d)}(q,p)=|\mathcal A'\mathcal M(q,p)|+|\Delta_q\mathcal M(q,p)|.
\]
Since $\mathcal M(q,p)=a|q|^2+b|p|^2$,
\[
  \Delta_q\mathcal M(q,p)=2ad=\frac{2+\gamma^2}{2\gamma}\,d.
\]
Moreover, $\mathcal A'=-\nabla U(q)\cdot\nabla_q$ and $\nabla_q\mathcal M(q,p)=2a\,q$.
Using $|\nabla U(q)|\le L|q|+|\nabla U(0)|=|q|$ and Young's inequality,
\[
  |\mathcal A'\mathcal M(q,p)|
  =|\nabla U(q)\cdot \nabla_q\mathcal M(q,p)|
  \le |q|\cdot 2a|q|
  =2a|q|^2
  \le \frac{2a}{c_1^{\mathrm{MW}}}\,\mathcal V_0(q,p).
\]
Thus \eqref{eq:ErrMW-bound} holds with
\[
  C_2^{\mathrm{MW}}:=\frac{2a}{c_1^{\mathrm{MW}}}
  =2\widetilde C_{\mathcal M}^{\mathrm{MW}}
  \quad\text{and}\quad
  C_2^{(d),\mathrm{MW}}:=\frac{2+\gamma^2}{2\gamma}\,d.
\]
Finally, with $\underline c_{\mathrm{imp}}=c_{\mathrm{imp}}$ from (i) and
$\widetilde C_{\mathcal M}=\widetilde C_{\mathcal M}^{\mathrm{MW}}$ from (ii),
the updated Proposition~\ref{prop:Valpha-drift} gives the drift-rate expansion
\eqref{eq:lambda-alpha-MW} with $\delta_{\mathrm{MW}}$ and $C_{\lambda,\mathrm{MW}}$ as in
\eqref{eq:deltaMW-ClambdaMW}. This completes the proof.
\end{proof}
%%%%%%%%%%%%%%%%%%%%%%

%%%%%%%%%%%%%%%%%%

\subsection{Proof of Lemma~\ref{lem:MW-feasibility}}\label{app:MW-feasibility}
\begin{proof}
\noindent\emph{Step 1: dissipativity for multi-well.}
For $|s|\le 1/2$, we have $v'(s)=-s$ and $s^2\le 1/4$. Hence
\begin{equation}\label{regime:1}
  s\,v'(s)=-s^2 \ge \lambda s^2 - \frac{1+\lambda}{4}.
\end{equation}

For $|s|\ge 1/2$, we have $v'(s)=s-\mathrm{sign}(s)$. Hence, $s\,v'(s)=s^2-|s|$.
Let $x:=|s|\ge 1/2$. Then for any $\lambda\in(0,1)$,
\[
  x^2-x
  =(1-\lambda)x^2-x+\lambda x^2
  \ge -\frac{1}{4(1-\lambda)}+\lambda x^2,
\]
because $\inf_{x\ge 0}\{(1-\lambda)x^2-x\}=-\frac{1}{4(1-\lambda)}$.
Therefore,
\begin{equation}\label{regime:2}
  s\,v'(s)\ge \lambda s^2-\frac{1}{4(1-\lambda)},\qquad |s|\ge \frac12.
\end{equation}
Combining the two regimes \eqref{regime:1} and \eqref{regime:2} yields the one-dimensional dissipativity bound
\[
  s\,v'(s)\ge \lambda s^2-D_0(\lambda),
  \qquad
  D_0(\lambda):=\max\left\{\frac{1+\lambda}{4},\ \frac{1}{4(1-\lambda)}\right\}.
\]
In particular this holds for every $\lambda\in(0,1/4]$ (with a finite additive constant
$D_0(\lambda)$). For the $d$-dimensional separable potential $U(q)=\sum_{i=1}^d v(q_i)$,
summing over coordinates gives
$\langle q,\nabla U(q)\rangle \ge \lambda|q|^2-d\,D_0(\lambda)$.

\smallskip
\noindent\emph{Step 2: feasibility of $\delta_{\mathrm{MW}}>\gamma\lambda$ for small $\lambda$.}
Recall $\delta_{\mathrm{MW}}=c_{\mathrm{imp}}-\lambda\widetilde C_{\mathcal M}^{\mathrm{MW}}$.
Define
\[
  F(\lambda):=\delta_{\mathrm{MW}}-\gamma\lambda
  =c_{\mathrm{imp}}(\lambda)-\left(\gamma+\widetilde C_{\mathcal M}^{\mathrm{MW}}(\lambda)\right)\lambda,
\]
where $c_{\mathrm{imp}}(\lambda)$ and $\widetilde C_{\mathcal M}^{\mathrm{MW}}(\lambda)$ are
given explicitly in Proposition~\ref{prop:MW-explicit-M}.
Both are continuous in $\lambda\in[0,1/4]$ and finite at $\lambda=0$. Moreover,
\[
  F(0)=c_{\mathrm{imp}}(0)
  =\frac{2\sqrt{1+\gamma^2/2}}{2\sqrt{1+\gamma^2/2}+\gamma}>0.
\]
Hence, by continuity, there exists $\lambda_\star(\gamma)\in(0,1/4]$ such that
$F(\lambda)>0$ for all $\lambda\in(0,\lambda_\star(\gamma)]$, i.e.,
$\delta_{\mathrm{MW}}>\gamma\lambda$.
This completes the proof.
\end{proof}

%%%%%%%%%%%%%%%%%%%%%%%%%%%%%%%%%%%%%%%%%

%%%%%%%%%%%%%%%%%%%%%%%%%%%%%%%%%%%%%%%%%%%%%%%%%%

\subsection{Proof of Theorem~\ref{thm:HFHR-MW-high-dim}}\label{app:HFHR-MW-high-dim}
\begin{proof}
We apply Corollary~\ref{cor:global-acceleration} in dimension $1$ to the multi-well model.
The condition $\delta_{\mathrm{MW}}>\gamma\lambda$ is ensured by the choice of $\lambda$
in Lemma~\ref{lem:MW-feasibility}. The $d$-dimensional statement then follows by tensorization
of the cost $\rho_{\alpha,d}=\sum_{i=1}^d \rho_{\alpha,1}$ and the product structure of $U$.

\emph{Step 1: One-dimensional accelerated contraction with explicit constants.}
Consider first $d=1$. By Proposition~\ref{prop:MW-assumptions}, the one-dimensional potential $v$ satisfies
Assumption~\ref{assump:potential}. Moreover, Proposition~\ref{prop:MW-explicit-M} provides
an explicit quadratic corrector $\mathcal M$ such that the first-order improvement condition
holds with $\underline c_{\mathrm{imp}}=c_{\mathrm{imp}}$, and it also provides explicit
choices of $\widetilde C_{\mathcal M}^{\mathrm{MW}}$ and $C_2^{\mathrm{MW}}$ controlling the perturbation terms.
Consequently, Proposition~\ref{prop:Valpha-drift} applies in dimension $1$ and yields the improved drift
\[
  \lambda_\alpha \;\ge\; \lambda+\delta_{\mathrm{MW}}\alpha-C_{\lambda,\mathrm{MW}}\alpha^2,
  \qquad
  \delta_{\mathrm{MW}}:=c_{\mathrm{imp}}-\lambda\,\widetilde C_{\mathcal M}^{\mathrm{MW}},
  \quad
  C_{\lambda,\mathrm{MW}}:=C_2^{\mathrm{MW}}+\widetilde C_{\mathcal M}^{\mathrm{MW}}\,c_{\mathrm{imp}}.
\]
In particular, if $\delta_{\mathrm{MW}}>0$, then $\lambda_\alpha>\lambda$ for all sufficiently small $\alpha$.
Applying Corollary~\ref{cor:global-acceleration} in dimension $1$, we obtain constants
$\alpha_{\mathrm{MW}}>0$ and $\kappa_{\mathrm{MW}}>0$ (depending only on the one-dimensional model and on $\gamma$) such that,
for all $\alpha\in(0,\alpha_{\mathrm{MW}}]$ and all probability measures $\mu,\nu$ on $\R^{2}$,
\begin{equation}\label{eq:1D-contract-MW}
  \cW_{\rho_{\alpha,1}}\!\left(\mu P_t^{\alpha,(1)},\nu P_t^{\alpha,(1)}\right)
  \le e^{-(c_0+\kappa_{\mathrm{MW}}\alpha)t}\,\cW_{\rho_{\alpha,1}}(\mu,\nu),
  \qquad t\ge0.
\end{equation}
Here $c_0$ is the \emph{one-dimensional} contraction rate at $\alpha=0$.

\medskip\noindent
\emph{Step 2: Tensorization.}
Because $U(q)=\sum_{i=1}^d v(q_i)$ is separable and the driving Brownian motion is coordinate-wise independent,
the $d$-dimensional HFHR dynamics decouples into $d$ independent copies of the one-dimensional HFHR dynamics, and hence
$P_t^{\alpha,(d)}=\bigotimes_{i=1}^d P_t^{\alpha,(1)}$.

Fix any coupling $\pi$ of $\mu$ and $\nu$, and let $(Z,Z')\sim\pi$ with
$Z=(Z_1,\dots,Z_d)$ and $Z'=(Z_1',\dots,Z_d')$, where $Z_i,Z_i'\in\R^2$.
Run, conditionally on $(Z,Z')$, independent (nearly optimal) one-dimensional couplings on each coordinate,
and denote the resulting coupled pair at time $t$ by $(Z_t,Z_t')$.
By additivity of the cost $\rho_{\alpha,d}(z,z')=\sum_{i=1}^d\rho_{\alpha,1}(z_i,z_i')$ and independence,
\[
  \mathbb E\!\left[\rho_{\alpha,d}(Z_t,Z_t')\,|\,Z,Z'\right]
  =\sum_{i=1}^d \mathbb E\!\left[\rho_{\alpha,1}(Z_{t,i},Z_{t,i}')\,|\,Z_i,Z_i'\right].
\]
Applying the one-dimensional contraction \eqref{eq:1D-contract-MW} to each coordinate gives
\[
  \mathbb E\!\left[\rho_{\alpha,d}(Z_t,Z_t')\,|\,Z,Z'\right]
  \le e^{-(c_0+\kappa_{\mathrm{MW}}\alpha)t}\sum_{i=1}^d \rho_{\alpha,1}(Z_i,Z_i')
  = e^{-(c_0+\kappa_{\mathrm{MW}}\alpha)t}\,\rho_{\alpha,d}(Z,Z').
\]
Taking expectation and then infimum over all couplings $\pi$ yields
\[
  \cW_{\rho_{\alpha,d}}\!\left(\mu P_t^{\alpha,(d)},\nu P_t^{\alpha,(d)}\right)
  \le e^{-(c_0+\kappa_{\mathrm{MW}}\alpha)t}\,\cW_{\rho_{\alpha,d}}(\mu,\nu),
  \qquad t\ge0,
\]
for all $\alpha\in(0,\alpha_{\mathrm{MW}}]$.

\medskip\noindent
\emph{Step 3: Dimension independence.}
The constants $\alpha_{\mathrm{MW}}$ and $\kappa_{\mathrm{MW}}$ come entirely from the one-dimensional estimate
\eqref{eq:1D-contract-MW} and therefore do not depend on $d$.
Specifically, we take $\alpha_{\mathrm{MW}} := \alpha_{\mathrm{global}}$ as defined in Corollary~\ref{cor:global-acceleration} for the case $d=1$ (with $L=1$), which is the minimum of the branching and metric acceleration thresholds derived in Section~\ref{sec:acceleration}.
The bound on $c_\alpha^{(d)}$ follows immediately.
\end{proof}

%%%%%%%%%%%%%%%%%%

\subsection{Proof of Proposition~\ref{prop:linear-lp-assumptions}}\label{app:linear-lp-assumptions}
\begin{proof}
(a) Since $\varepsilon>0$, each map $q_j\mapsto (q_j^2+\varepsilon^2)^{p/2}$ is smooth on $\mathbb R$; hence $g\in C^\infty$ and therefore $U\in C^\infty$.
Thus Assumption~\ref{assump:potential}(i) holds.
%%%%%%%%%%%%%%%%%%%%%%%
A direct computation gives
\[
\nabla^2 U(q)=\frac{1}{\sigma^2}X^\top X+\nabla^2 g(q),
\qquad
\nabla^2 g(q)=\iota\,\mathrm{diag}\left(\psi''(q_j)\right)_{j=1}^d,
\]
where $\psi(t):=(t^2+\varepsilon^2)^{p/2}$ and
\[
\psi''(t)=p\left(t^2+\varepsilon^2\right)^{\frac p2-2}\left(\varepsilon^2+(p-1)t^2\right)\ge 0.
\]
For the upper bound on $\psi''(t)$, one can use $\varepsilon^2+(p-1)t^2\le \varepsilon^2+t^2$ to obtain
\[
\psi''(t)\le p\left(t^2+\varepsilon^2\right)^{\frac p2-2}(t^2+\varepsilon^2)
= p(t^2+\varepsilon^2)^{\frac p2-1}
\le p(\varepsilon^2)^{\frac p2-1}=p\varepsilon^{p-2}.
\]
Thus $\|\nabla^2 g(q)\|_{\mathrm{op}}\le \ \iota p\varepsilon^{p-2}$, and
\[
\|\nabla^2 U(q)\|_{\mathrm{op}}
\le \frac{\|X^\top X\|_{\mathrm{op}}}{\sigma^2}+\iota p\varepsilon^{p-2}
= \frac{M}{\sigma^2}+\iota p\varepsilon^{p-2},
\]
which yields the claimed global Lipschitz constant for $\nabla U$.
Thus Assumption~\ref{assump:potential}(ii) holds.

(b) We can compute that
\[
\nabla U(q)=\frac{1}{\sigma^2}X^\top X q-\frac{1}{\sigma^2}X^\top y+\nabla g(q),
\qquad
\text{with}\quad\nabla g(q)=\iota p\left(q_j(q_j^2+\varepsilon^2)^{\frac p2-1}\right)_{j=1}^d.
\]
Note that $\langle \nabla g(q),q\rangle=\iota p\sum_{j=1}^d q_j^2\left(q_j^2+\varepsilon^2\right)^{\frac p2-1}\ge 0$.
Using $X^\top X\succeq mI_d$ and Cauchy--Schwarz inequality,
\[
\langle \nabla U(q),q\rangle
\ge \frac{1}{\sigma^2}\left\langle X^\top X q,q\right\rangle-\frac{1}{\sigma^2}\left\langle X^\top y,q\right\rangle
\ge \frac{m}{\sigma^2}|q|^2-\frac{|X^\top y|}{\sigma^2}|q|.
\]
Completing the square gives, for all $q$,
\[
\frac{m}{\sigma^2}|q|^2-\frac{|X^\top y|}{\sigma^2}|q|
\ge \frac{m}{2\sigma^2}|q|^2-\frac{|X^\top y|^2}{2m\sigma^2},
\]
which proves dissipativity.
Thus, Assumption~\ref{assump:potential}(iii) holds.

(c) Set $Q_\infty=\sigma^{-2}X^\top X$. Then
\[
\nabla U(q)-Q_\infty q
= -\frac{1}{\sigma^2}X^\top y+\iota p\,v(q),
\qquad
v_j(q):=q_j(q_j^2+\varepsilon^2)^{\frac p2-1}.
\]
For each coordinate, one has the elementary bound (e.g. split $|q_j|\ge \varepsilon$ and $|q_j|<\varepsilon$):
\[
|v_j(q)| \le |q_j|^{p-1}+\varepsilon^{p-1}.
\]
Hence,
\[
|v(q)|\le \left(\sum\nolimits_{j=1}^d |q_j|^{2p-2}\right)^{1/2}+\sqrt d\,\varepsilon^{p-1}.
\]
Recall the notation for the standard vector $r$-norm: $\|q\|_r := (\sum_{j=1}^d |q_j|^r)^{1/r}$ for $r>0$. Using the norm relation $\|q\|_{r}\le d^{\frac{1}{r}-\frac12}\|q\|_2$ for $0<r<2$ (here we apply it with $r=2p-2$, noting that $1<p<2$ implies $0<2p-2<2$), we have
\[
\left(\sum\nolimits_{j=1}^d |q_j|^{2p-2}\right)^{1/2}
= \|q\|_{2p-2}^{\,p-1}
\le \left(d^{\frac{1}{2p-2}-\frac12}\|q\|_2\right)^{p-1}
= d^{\frac{2-p}{2}}|q|^{p-1}.
\]
Therefore, for all $q$,
\[
|\nabla U(q)-Q_\infty q|
\le \frac{|X^\top y|}{\sigma^2}
+\iota p\left(d^{\frac{2-p}{2}}|q|^{p-1}+\sqrt d\,\varepsilon^{p-1}\right)
= c_0^{\mathrm{LR}}+c_1^{\mathrm{LR}}|q|^{p-1},
\]
with $c_0^{\mathrm{LR}},c_1^{\mathrm{LR}}$ as stated in \eqref{eq:c0lrc1lr-def}.
Dividing by $|q|$ (for $|q|\ge 1$) yields
\[
|\nabla U(q)-Q_\infty q|
\le \left(\frac{c_0^{\mathrm{LR}}}{|q|}+c_1^{\mathrm{LR}}|q|^{p-2}\right)|q|
=\varrho(|q|)|q|.
\]
Since $p-2<0$, both terms $c_0^{\mathrm{LR}}/r$ and $c_1^{\mathrm{LR}} r^{p-2}$ are decreasing in $r$, and $\varrho(r)\to 0$ as $r\to\infty$. This verifies Assumption~\ref{assump:asymptotic-linear-drift}. The proof is complete.
\end{proof}

%%%%%%%%%%%%%%%%%%%%%%%%%%%%%%%%%%%%%%%%%%%%%%%%%%%%%%%%%%%%%%%%%%%%%%%%%%%%%
\subsection{Proof of Proposition~\ref{prop:linear-lp-first-order}}\label{app:linear-lp-first-order}
\begin{proof}
    The results follow directly by applying Lemma~\ref{lem:first-order-improvement} to the Bayesian linear regression model defined in \eqref{eq:linear}, utilizing the explicit properties and bounds derived in Proposition~\ref{prop:linear-lp-assumptions}. Specifically, the spectral bounds, tail moduli, and corrector construction are obtained by substituting the specific forms of $U$ and $\nabla U$ into the general framework.
\end{proof}

%%%%%%%%%%%%%%%%%%%%%%%%%%%%%%%%%%%%%%%%%%%%%%%%%%%%%%%%%%%%%%%%%%%%%%%%%%%%%%%%%%%%%%%%%%%%%%%%%%%%%%

\subsection{Proof of Lemma~\ref{lem:LR-feasibility-explicit-full}}\label{app:LR-feasibility-explicit-full}
\begin{proof}
\noindent\emph{Step 1: dissipativity for Bayesian linear regression.}
By Proposition~\ref{prop:linear-lp-assumptions}(b), for all $q\in\mathbb R^d$:
\begin{equation}\label{to:weaken}
\langle \nabla U(q),q\rangle
  \ge \frac{m}{2\sigma^2}|q|^2-\frac{|X^\top y|^2}{2m\sigma^2}.
\end{equation}
  
Fix any $\lambda\in(0,\bar\lambda]$, where $\bar\lambda\le m/(2\sigma^2)$. Weakening the quadratic coefficient in \eqref{to:weaken} gives
\[
  \langle \nabla U(q),q\rangle
  \ge \lambda|q|^2-\frac{|X^\top y|^2}{2m\sigma^2},
\]
which is exactly Assumption~\ref{assump:potential}(iii) (up to an additive constant), proving (i).

\medskip\noindent
\emph{Step 2: an explicit uniform lower bound on $\underline c_{\mathrm{imp}}(\lambda)$.}
Fix $\lambda\in(0,\bar\lambda]$. In Lemma~\ref{lem:first-order-improvement}, the first-order improvement constant
$\underline c_{\mathrm{imp}}(\lambda)$ can be chosen as in \eqref{eq:cimp-lemma}, with
\[
  a_{\min}(\lambda)=\lambda_{\min}(Q_\infty)+\frac{\gamma^2}{2}(1-\lambda),
  \qquad
  a_{\max}(\lambda)=\lambda_{\max}(Q_\infty)+\frac{\gamma^2}{2}(1-\lambda).
\]
Since $\lambda\le\bar\lambda$, we have the deterministic bounds
\[
  a_{\min}(\lambda)\ge m_\infty+\frac{\gamma^2}{2}(1-\bar\lambda)=a_{\min}^{-},
  \qquad
  a_{\max}(\lambda)\le M_\infty+\frac{\gamma^2}{2}=a_{\max}^{+}.
\]
Moreover, by Proposition~\ref{prop:linear-lp-first-order}(ii) we have for all $R'\ge 1$,
\[
  \delta_U(R')
    \le
    \frac{|X^\top y|}{\sigma^2\,R'}
    + \iota\,d^{1-\frac p2}\,(R')^{p-2}
    + \frac{\iota\,d\,\varepsilon^p+\frac{1}{2\sigma^2}|y|^2}{(R')^2}.
\]
In Lemma~\ref{lem:first-order-improvement} the cutoff radius satisfies
$R_0(\lambda)\ge \max\{1,C_{\mathrm{linear}}\}=R$, and since $\delta_U(\cdot)$ is nonincreasing in its argument,
\[
  \delta_U(R_0(\lambda))\le \delta_U(R)\le \delta_U^{+},
\]
where $\delta_U^{+}$ is given by \eqref{eq:LR-deltaU-plus-lemma}. Plugging the three bounds above into
\eqref{eq:cimp-lemma} yields
\[
  \underline c_{\mathrm{imp}}(\lambda)\ge \underline c_{\mathrm{imp}}^{-},
\]
with $\underline c_{\mathrm{imp}}^{-}$ defined in \eqref{eq:LR-cimp-lower-explicit-lemma}.

\medskip\noindent
\emph{Step 3: an explicit uniform upper bound on $\widetilde C_{\mathcal M}^{\mathrm{LR}}(\lambda)$.}
By Lemma~\ref{lem:first-order-improvement}, the corrector can be chosen as a quadratic function:
$\mathcal M(z)=\frac12 z^\top \mathsf K(\lambda) z$, where
\[
  \mathsf K(\lambda)=\int_0^\infty e^{tB^\top}\,C_{B_1}(\lambda)\,e^{tB}\,dt.
\]
Taking operator norms yields
\[
  \|\mathsf K(\lambda)\|_{\mathrm{op}}
  \le
  \left(\int_0^\infty \|e^{tB}\|_{\mathrm{op}}^{\,2}\,dt\right)\,
  \|C_{B_1}(\lambda)\|_{\mathrm{op}}.
\]
Since $C_{B_1}(\lambda)$ is an explicit symmetric matrix depending on $\lambda$ only through the coefficient
$Q_\infty+\frac{\gamma^2}{2}(1-\lambda)I_d$ (see Lemma~\ref{lem:first-order-improvement}),
a crude but explicit bound gives
\[
  \|C_{B_1}(\lambda)\|_{\mathrm{op}}
  \le
  2\left(1+\gamma+\|Q_\infty\|_{\mathrm{op}}+\frac{\gamma^2}{2}\right)
  = C_{B_1}^{+},
\]
where $C_{B_1}^{+}$ is defined in \eqref{eq:LR-CB1-plus-lemma}.

Next we bound $\int_0^\infty \|e^{tB}\|_{\mathrm{op}}^{2}\,dt$. Diagonalize
$Q_\infty=S^\top \mathrm{diag}(\nu_1,\dots,\nu_d)S$ with $S$ orthogonal and $\nu_i\in[m_\infty,M_\infty]$.
Then $B$ is orthogonally similar to a block diagonal matrix with $2\times2$ blocks
$B_{\nu}:=\left(\begin{smallmatrix}0&1\\ -\nu&-\gamma\end{smallmatrix}\right)$.
A direct computation of $e^{tB_\nu}$ (equivalently, the fundamental matrix of $x''+\gamma x'+\nu x=0$)
implies the uniform bound
\[
  \|e^{tB}\|_{\mathrm{op}}
  =\max_{1\le i\le d}\|e^{tB_{\nu_i}}\|_{\mathrm{op}}
  \le C_B e^{-\eta t},
\]
with $\eta$ and $C_B$ as in \eqref{eq:LR-eta-def-lemma}. Consequently,
\[
  \int_0^\infty \|e^{tB}\|_{\mathrm{op}}^{2}\,dt
  \le
  \int_0^\infty C_B^2 e^{-2\eta t}\,dt
  =\frac{C_B^2}{2\eta}.
\]
Substituting the bound 
for the integral and the uniform bound $\|C_{B_1}(\lambda)\|_{\mathrm{op}} \le C_{B_1}^{+}$ into the inequality for $\|\mathsf K(\lambda)\|_{\mathrm{op}}$ yields the uniform estimate
\[
  \|\mathsf K(\lambda)\|_{\mathrm{op}}
  \le \frac{C_B^2}{2\eta}\,C_{B_1}^{+}.
\]
Moreover, since $\lambda\le\bar\lambda$, the quadratic lower bound constant of $\mathcal V_0$ satisfies
$c_1(\gamma,\lambda)\ge c_1(\gamma,\bar\lambda)=c_1$ (because $1-\lambda\ge 1-\bar\lambda$ increases the
$2\times2$ block defining the bound). Therefore,
\[
  \widetilde C_{\mathcal M}^{\mathrm{LR}}(\lambda)
  =\frac{\|\mathsf K(\lambda)\|_{\mathrm{op}}}{2c_1(\gamma,\lambda)}
  \le
  \frac{1}{2c_1}\cdot \frac{C_B^2}{2\eta}\,C_{B_1}^{+}
  =\widetilde C_{\mathcal M}^{+},
\]
where $\widetilde C_{\mathcal M}^{+}$ is defined in \eqref{eq:LR-CB1-plus-lemma}.

\medskip\noindent
\emph{Step 4: conclude $\delta_{\mathrm{LR}}>\gamma\lambda$ on an explicit interval.}
For any $\lambda\in(0,\bar\lambda]$,
\[
  \delta_{\mathrm{LR}}-\gamma\lambda
  =\underline c_{\mathrm{imp}}(\lambda)-\left(\gamma+\widetilde C_{\mathcal M}^{\mathrm{LR}}(\lambda)\right)\lambda
  \ge \underline c_{\mathrm{imp}}^{-}-\left(\gamma+\widetilde C_{\mathcal M}^{+}\right)\lambda.
\]
Thus, if
$\lambda\le \frac{\underline c_{\mathrm{imp}}^{-}}{\gamma+\widetilde C_{\mathcal M}^{+}}$,
then $\delta_{\mathrm{LR}}>\gamma\lambda$. Combining with $\lambda\le\bar\lambda$ gives the explicit
choice \eqref{eq:LR-lambda-star-explicit-lemma}, proving (ii). The proof is complete.
\end{proof}

%%%%%%%%%%%%%%%%%%%%%%%%%%%%%%%%%%%%%%%%%
\subsection{Proof of Theorem~\ref{thm:HFHR-LR}}
\label{proof:thm:HFHR-LR}

\begin{proof}
The result follows directly from Corollary~\ref{cor:global-acceleration}.
Proposition~\ref{prop:linear-lp-assumptions} establishes that Assumptions~\ref{assump:potential} and \ref{assump:asymptotic-linear-drift} hold.
Proposition~\ref{prop:linear-lp-first-order} provides the explicit construction of the quadratic corrector $\mathcal M$ and establishes the first-order drift improvement with explicit constants $\delta_{\mathrm{LR}}$ and $C_{\lambda,\mathrm{LR}}$.
Lemma~\ref{lem:LR-feasibility-explicit-full} guarantees that by choosing $\lambda \le \lambda_\star(\gamma)$, the acceleration condition $\delta_{\mathrm{LR}} > \gamma\lambda$ is satisfied.
Therefore, all conditions of Corollary~\ref{cor:global-acceleration} are met, implying the existence of the acceleration constants $\alpha_{\mathrm{LR}}$ and $\kappa_{\mathrm{LR}}$. The proof is complete.
\end{proof}

%%%%%%%%%%%%%%%%%%%%%%%%%%%%%%%%%%%%%%%%%%%%%%%%%%%%%%%%%%%%%%%%%%%%%%%%%%%%%

\subsection{Proof of Proposition~\ref{prop:classification-assumptions}}\label{app:classification-assumptions}
\begin{proof}
(a) Since $h,\varphi\in C^2$ and $q\mapsto \langle q,x_i\rangle$ is linear, each summand
$q\mapsto \varphi\!\left(y_i-h(\langle q,x_i\rangle)\right)$ is $C^2$, and hence so is $U$.
Moreover $\varphi\ge 0$ and $\frac{\iota}{2}|q|^2\ge 0$ imply $U\ge 0$. Thus, Assumption~\ref{assump:potential}(i) holds.

By differentiating $U$, we can compute that
\[
\nabla U(q)=-\frac1n\sum_{i=1}^n \varphi'\!\left(y_i-h(\langle q,x_i\rangle)\right)\,h'(\langle q,x_i\rangle)\,x_i+\iota q.
\]
Let $s_i(q):=\langle q,x_i\rangle$ and $t_i(q):=y_i-h(s_i(q))$.
Differentiating $\nabla U$ yields
\[
\nabla^2 U(q)=\iota I_d+\frac1n\sum_{i=1}^n a_i(q)\,x_i x_i^\top,
\]
with
\[
a_i(q)=\varphi''(t_i(q))\,\left(h'(s_i(q))\right)^2-\varphi'(t_i(q))\,h''(s_i(q)).
\]
Using the uniform bounds on $|\varphi'|$, $|\varphi''|$, $|h'|$ and $|h''|$, we get for all $q$,
\[
|a_i(q)|
\le \Phi_2 H_1^2+\Phi_1 H_2.
\]
Therefore,
\[
\|\nabla^2 U(q)\|_{\mathrm{op}}
\le \iota + \frac1n\sum_{i=1}^n |a_i(q)|\,\|x_i x_i^\top\|_{\mathrm{op}}
\le \iota + (\Phi_2 H_1^2+\Phi_1 H_2)\,B_x^2,
\]
since $\|x_i x_i^\top\|_{\mathrm{op}}=|x_i|^2\le B_x^2$. Thus, Assumption~\ref{assump:potential}(ii) holds.

(b) From the gradient expression and Cauchy--Schwarz inequality,
\[
|\nabla U(q)-\iota q|
\le \frac1n\sum_{i=1}^n |\varphi'(t_i(q))|\,|h'(s_i(q))|\,|x_i|
\le \Phi_1 H_1 B_x=:C_0.
\]
Hence
\[
\langle \nabla U(q),q\rangle
= \iota|q|^2 + \langle \nabla U(q)-\iota q,\,q\rangle
\ge \iota|q|^2 - C_0|q|
\ge \frac{\iota}{2}|q|^2 - \frac{C_0^2}{2\iota},
\]
where we completed the square to get
the last inequality. This gives the desired dissipativity inequality.
Thus, Assumption~\ref{assump:potential}(iii) holds.

(c) With $Q_\infty=\iota I_d$ and the bound $|\nabla U(q)-\iota q|\le C_0$,
for $|q|\ge 1$ we have
\[
|\nabla U(q)-Q_\infty q|
\le C_0
= \frac{C_0}{|q|}\,|q|
= \varrho(|q|)\,|q|.
\]
Since $\varrho(r)=C_0/r$ is decreasing and vanishes at infinity, Assumption~\ref{assump:asymptotic-linear-drift} holds. The proof is complete.
\end{proof}
%%%%%%%%%%%%%%%%%%%%%%%%%%%%%%%%%%%%%%%%%%%%%%%%

\subsection{Proof of Proposition~\ref{prop:classification-first-order}}\label{app:classification-first-order}

\begin{proof}  
    The results follow directly by applying Lemma~\ref{lem:first-order-improvement} to the Bayesian binary classification model defined in \eqref{eq:classification}. The spectral bounds, tail moduli, and corrector construction are obtained by substituting the specific potential properties derived in Proposition~\ref{prop:classification-assumptions} into the general framework.
\end{proof}

%%%%%%%%%%%%%%%%%%%%%%%%%%%%%%%%%%%%%%%%%%%%%%%%%%%%%%%%%%%%%%%%%%%%%%

\subsection{Proof of Lemma~\ref{lem:BC-feasibility-explicit-full}}\label{app:BC-feasibility-explicit-full}

\begin{proof}
\noindent\emph{Step 1: dissipativity with an arbitrary $\lambda\le \iota/2$.}
By Proposition~\ref{prop:classification-assumptions}(b) (where the ridge coefficient is denoted by $\iota$), for all $q\in\R^d$,
$\langle \nabla U(q),q\rangle
  \ge \frac{\iota}{2}|q|^2-\frac{C_0^2}{2\iota}$.

Fix any $\lambda\in(0,\bar\lambda]$; since $\bar\lambda\le\iota/2$, we obtain
\[
  \langle \nabla U(q),q\rangle
  \ge \lambda|q|^2-\frac{C_0^2}{2\iota},
\]
which is Assumption~\ref{assump:potential}(iii) (up to an additive constant).

\medskip\noindent
\emph{Step 2: a uniform lower bound on $\underline c_{\mathrm{imp}}(\lambda)$.}
For $\lambda\in(0,\bar\lambda]$, in Lemma~\ref{lem:first-order-improvement} we have
$a_{\min}=a_{\max}=a(\lambda)=\iota+\frac{\gamma^2}{2}(1-\lambda)\ge a^{-}$.
Moreover, the cutoff radius satisfies $R_0(\lambda)\ge R$, and by \eqref{eq:BC-deltaU} and monotonicity,
$\delta_U(R_0(\lambda))\le \delta_U(R)\le \delta_U^{+}$.
Plugging these bounds into \eqref{eq:cimp-lemma} yields
\[
  \underline c_{\mathrm{imp}}(\lambda)\ge \underline c_{\mathrm{imp}}^{-},
\]
with $\underline c_{\mathrm{imp}}^{-}$ defined in \eqref{eq:BC-cimp-minus} (using $a^-$ which depends on $\iota$).

\medskip\noindent
\emph{Step 3: a uniform upper bound on $\widetilde C_{\mathcal M}^{\mathrm{BC}}(\lambda)$.}
Lemma~\ref{lem:first-order-improvement} provides a quadratic corrector
$\mathcal M(z)=\frac12 z^\top \mathsf K(\lambda)z$ with
$\mathsf K(\lambda)=\int_0^\infty e^{tB^\top}C_{B_1}(\lambda)e^{tB}\,dt$.
Taking operator norms,
\begin{equation}\label{follows:1}
  \|\mathsf K(\lambda)\|_{\mathrm{op}}
  \le
  \left(\int_0^\infty \|e^{tB}\|_{\mathrm{op}}^{\,2}\,dt\right)\,
  \|C_{B_1}(\lambda)\|_{\mathrm{op}}.
\end{equation}
Since $Q_\infty=\iota I_d$, one has the crude bound
$\|C_{B_1}(\lambda)\|_{\mathrm{op}}\le C_{B_1}^{+}$ with $C_{B_1}^{+}$ as defined in the lemma statement (using $\iota$). Moreover, the block ODE
representation of $e^{tB}$ (equivalently the damped oscillator $x''+\gamma x'+\iota x=0$) implies
$\|e^{tB}\|_{\mathrm{op}}\le C_B e^{-\eta t}$ with $\eta,C_B$ defined using $\iota$. Hence
\begin{equation}\label{follows:2}
  \int_0^\infty \|e^{tB}\|_{\mathrm{op}}^{\,2}\,dt\le \frac{C_B^2}{2\eta}.
\end{equation}
Therefore, it follows from \eqref{follows:1} and \eqref{follows:2} that
\begin{equation}\label{follows:3}
  \|\mathsf K(\lambda)\|_{\mathrm{op}}
  \le \frac{C_B^2}{2\eta}\,C_{B_1}^{+}.
\end{equation}
Finally, for $\lambda\le\bar\lambda$, the baseline quadratic lower bound constant satisfies
$c_1(\gamma,\lambda)\ge c_1(\gamma,\bar\lambda)=c_1$. Hence, it follows from \eqref{follows:3} that
\[
  \widetilde C_{\mathcal M}^{\mathrm{BC}}(\lambda)
  =\frac{\|\mathsf K(\lambda)\|_{\mathrm{op}}}{2c_1(\gamma,\lambda)}
  \le
  \frac{1}{2c_1}\cdot \frac{C_B^2}{2\eta}\,C_{B_1}^{+}
  =\widetilde C_{\mathcal M}^{+}.
\]

\medskip\noindent
\emph{Step 4: conclude $\delta_{\mathrm{BC}}>\gamma\lambda$ on an explicit interval.}
For $\lambda\in(0,\bar\lambda]$,
\[
  \delta_{\mathrm{BC}}-\gamma\lambda
  =\underline c_{\mathrm{imp}}(\lambda)-\left(\gamma+\widetilde C_{\mathcal M}^{\mathrm{BC}}(\lambda)\right)\lambda
  \ge \underline c_{\mathrm{imp}}^{-}-\left(\gamma+\widetilde C_{\mathcal M}^{+}\right)\lambda.
\]
Hence $\delta_{\mathrm{BC}}>\gamma\lambda$ whenever
$\lambda\le \underline c_{\mathrm{imp}}^{-}/(\gamma+\widetilde C_{\mathcal M}^{+})$.
Combining with $\lambda\le\bar\lambda$ yields exactly \eqref{eq:BC-lambda-star}.
This completes the proof.
\end{proof}

%%%%%%%%%%%%%%%%%%%%%%%%%%%%%%%%%%%%%%%%%%
\subsection{Proof of Theorem~\ref{thm:HFHR-BC}}\label{proof:thm:HFHR-BC}

\begin{proof}
The result follows directly from Corollary~\ref{cor:global-acceleration}.
Proposition~\ref{prop:classification-assumptions} establishes that Assumptions~\ref{assump:potential} and \ref{assump:asymptotic-linear-drift} hold.
Proposition~\ref{prop:classification-first-order} provides the explicit construction of the quadratic corrector $\mathcal M$ and establishes the first-order drift improvement with explicit constants $\delta_{\mathrm{BC}}$ and $C_{\lambda,\mathrm{BC}}$.
Lemma~\ref{lem:BC-feasibility-explicit-full} guarantees that by choosing $\lambda \le \lambda_\star(\gamma)$, the acceleration condition $\delta_{\mathrm{BC}} > \gamma\lambda$ is satisfied.
Therefore, all conditions of Corollary~\ref{cor:global-acceleration} are met, implying the existence of the acceleration constants $\alpha_{\mathrm{BC}}$ and $\kappa_{\mathrm{BC}}$. The proof is complete.
\end{proof}

%%%%%%%%%%%%%%%%%%%%%%%%%%%%%%%%%%%%%%%%%%%%%%%%%%%%%%%%%%%%%%%%%%%%%%%%%%%%%%%%%%%%%%%%%%%%%%
\end{document}